\numberwithin{equation}{section}
\newtheorem{theorem}{Theorem}[section]
\newtheorem*{theorem*}{Theorem}
\newtheorem{lemma}[theorem]{Lemma}
\newtheorem{proposition}[theorem]{Proposition}
\newtheorem{definition}[theorem]{Definition}
\newtheorem{question}[theorem]{Question}
\DeclareMathOperator{\map}{Map}
\DeclareMathOperator{\Conf}{Conf}
\DeclareMathOperator{\End}{End}
\DeclareMathOperator{\Sym}{Sym}
\DeclareMathOperator{\Area}{Area}
\newcommand{\R}{\mathbb{R}}
\newcommand{\C}{\mathbb{C}}
\newcommand{\Z}{\mathbb{Z}}
\newcommand{\s}{\mathfrak{s}}
\newcommand{\F}{\mathfrak{F}}
\newcommand{\spinc}{$spin^c\ $}
\newcommand{\half}{\frac{1}{2}}
\newcommand{\E}{\mathcal{E}}
\newcommand{\re}{\text{Re}}
\newcommand{\SO}{\mathcal{O}}
\newcommand{\SC}{\mathcal{C}}
\newcommand{\SB}{\mathcal{B}}
\newcommand{\SG}{\mathscr{G}}
\newcommand{\Remark}{\textit{Remark.}\ }
\newcommand{\CP}{\mathbb{CP}^1}
\newcommand{\Id}{\text{Id}_{S}}
\newcommand{\Hom}{\text{Hom}}
\newcommand{\A}{\mathcal{A}}
\newcommand{\supp}{\text{supp}}
\newcommand{\SL}{\mathscr{L}}
\newcommand{\CL}{\mathcal{L}}
\newcommand{\embed}{\hookrightarrow}
\newcommand{\T}{\mathbb{T}}
\newcommand{\bpartial}{\bar{\partial}}
\newcommand{\Vol}{\text{Vol}}
\newcommand{\talpha}{{\tilde{\alpha}}}
\newcommand{\SH}{\mathcal{H}}
\newcommand{\Du}{\frac{D}{du}}
\newcommand{\Dv}{\frac{D}{dv}}
\title{On finite energy monopoles on $\C\times \Sigma$}
\author{Donghao Wang}
\date{\today}
\address{Department of Mathematics, Massachusetts Institute of Technology, Cambridge, MA 02139, USA}
\email{donghaow@mit.edu}
\begin{document}
	
\begin{abstract} We classify solutions to the Seiberg-Witten equations on $X=\C\times \Sigma$ with finite analytic energy. The spin bundle $S^+\to X$ splits as $L^+\oplus L^-$. When $2-2g\leq c_1(S^+)[\Sigma]<0$, the moduli space is identified with the moduli space of pairs $((L^+,\bpartial), f)$ where $(L^+,\bpartial)$ is a holomorphic structure on $L^+$ and $f: \C\to H^0(\Sigma, L^+,\bpartial)$ is a polynomial map. Moreover, the solution has analytic energy $-4\pi^2d\cdot c_1(S^+)[\Sigma]$ if $f$ has degree $d$. 

When $c_1(S^+)=0$, all solutions are reducible and it is the space of flat connections on $\bigwedge^2 S^+$.  

 Solutions will have either exponential decay or power law decay according to the polynomial map $f$. We give a complete criterion for different cases. 
\end{abstract}

\maketitle
\tableofcontents

\section{Introduction}

\subsection{Motivation in Floer Homology}The purpose of this paper is to give a complete classification of finite energy monopoles on $X=\C\times\Sigma$. This classification problem arises naturally in the context of Floer theory of 3-manifolds with cylindrical ends.

The Seiberg-Witten Floer Homology is defined for arbitrary closed oriented 3-manifold $Y$ by Kronheimer-Mrowka in \cite{Bible} and has greatly influenced the study of 3-dimensional topology. The underlying idea is to construct infinite dimensional Morse theory: solutions to the 3-dimensional Seiberg-Witten equation on $Y$ are critical points of the Chern-Simons-Dirac functional $\CL$, and solutions to the 4-dimensional equation on $\R\times Y$ are viewed as negative gradient flowlines of $\CL$. We take the chain group to be the free abelian group generated by critical points of $\CL$. Differentials are given by counting flowlines that connect critical points with adjacent indices. In order to make this picture work, suitable perturbations of $\CL$ are needed. 

One reason to develop a relative version of Floer theory for 3-manifolds with boundaries is to give a gluing formula for the absolute version, which may facilitate computations in some cases.  This version may also give topological applications in its own right. This goal is partly accomplished in Heegaard Floer Homology, developed by Ozsv\'{a}th and Szab\'{o} in \cite{HF} as a symplectic geometric replacement for gauge theory. Their construction relies on Gromov's theory of pseudo-holomorphic curves. Some generalizations for 3-manifolds with boundaries include Knot Floer Homology \cite{KFH, KFH1} and Bordered Floer Homology \cite{BHF}. It is now known that Heegaard Floer Homology and Seiberg-Witten Floer Homology are equivalent \cite{CGH}\cite{HF=HMI}. However, the gauge theoretic counterparts of Knot Floer Homology and Bordered Floer Homology are still missing. 

Some attempts that avoid analytic technicalities have been made towards this direction. In \cite{KS}, the Seiberg-Witten Floer Homology was developed for balanced sutured 3-manifolds and a version of Knot Floer Homology was defined. On the other hand, Nyugen \cite{NguyenI, NguyenII} studied the monopole equation on $Y$ directly and developed analytic foundations for constructing Floer theories with the Lagrangian boundary condition on $\Sigma$. 

\bigskip
We shall now describe a more direct approach to this problem. Suppose we wish to define Floer-theoretic invariants for a compact oriented 3-manifold $Y$ with boundary $\Sigma$. We allow $\Sigma$ to have multiple connected components $(\Sigma_1,\cdots,\Sigma_m)$. We attach cylindrical ends to $Y$ and study the monopole equation on $Y^*=Y\coprod_\Sigma \R^{\geq 0}\times \Sigma$.  In this case, the moduli space of finite energy solutions on $Y$ is automatically compact and in general has positive formal dimensions. It is also known that each solution will converge to a vortex on $\Sigma_i$ as it approaches infinity along each boundary end.

So far we do not have any means to produce invariants of $Y$ out of this picture.  Suppose we go one step further and consider the moduli space of finite energy monopoles on $\R\times Y^*$, which is expected to produce differentials and plays a role in the definition of Floer theory. We would hope this moduli space has a nice compactification. However, for a sequence of solutions on $\R\times Y^*$, it is possible that some amount of energy escapes through the cylindrical ends of $Y^*$, which makes the moduli space non-compact. It is believed that finite energy monopoles on $X=\C\times\Sigma$ should serve as models for these escaping ``bubbles" and contribute to correction terms in the definition of differentials. The purpose of this paper is then to give a complete classification of these monopoles on $X=\C\times\Sigma$. 
\medskip

\subsection{Statement of Main Results}

Let $X=\C\times\Sigma$ be the product of the complex plane $\C$ and $\Sigma$, endowed with the product metric. On the complex plane $\C$, it is the standard Euclidean metric and $\Sigma$ is any compact Riemann surface with a Hermitian metric. Let $g=g(\Sigma)$ be the genus of $\Sigma$.  
The main result of this paper establishes a bijection between the moduli space of finite energy monopoles and an object that is algebraic in nature. 
\begin{theorem}\label{main}
	When $2-2g\leq c_1(S^+)[\Sigma]<0$, 
	there is a bijection between sets:
	\begin{multline*}
	\{\text{solutions to the Seiberg-Witten equation $(\ref{SWEQ})$ of finite energy}\}/\SG \leftrightarrow \\ \{(\bpartial_B, f): f\neq 0:\C\to H^0(\Sigma,L^+,\bpartial_B)\text{ is a polynomial map}\}/\SG_\C(\Sigma). 
	\end{multline*}
	Furthermore, for the finite energy monopole  $(A,\Phi)$ that corresponds to $(\bpartial_B,f)$, its analytic energy $\E_{an}(A,\Phi)$ equals $-4\pi^2d\cdot c_1(S^+)[\Sigma]$ and the zero locus of the spin section $Z(\Phi^+)$ agrees with $Z(f)$. Here, $d=\deg(f)$ is the degree of $f$. 
\end{theorem}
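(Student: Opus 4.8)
The plan is to use the K\"ahler structure of $X=\C\times\Sigma$ to decouple the Seiberg--Witten equations into holomorphic data plus a vortex equation, and then to extract the behavior at $z=\infty$ from the finite-energy hypothesis. Write $\Phi=(\Phi^+,\Phi^-)$ according to $S^+=L^+\oplus L^-$. First I would run the standard K\"ahler argument: combining the Weitzenb\"ock formula for the Dirac operator with the curvature equation in $(\ref{SWEQ})$ and integrating by parts over $X$ --- legitimate on the non-compact manifold through an exhaustion argument once a priori decay of $F_A$ and $\nabla_A\Phi$ is available --- one shows $\Phi^-\equiv0$, $\bar\partial_A\Phi^+=0$, $F_A^{0,2}=0$, and that the $(1,1)$-part of the curvature equation becomes a vortex equation $i\Lambda F_A=\tfrac12|\Phi^+|^2-\tau$ with $\tau$ fixed by $c_1(S^+)[\Sigma]$. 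The hypothesis $2-2g\le c_1(S^+)[\Sigma]<0$ is precisely what makes this sign configuration the consistent one.

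Once decoupled, $(A,\Phi^+)$ is a holomorphic line bundle $\mathcal L\to X$ with a holomorphic section $\Phi^+$, necessarily not identically zero since the fibrewise degree of $\det S^+$ equals the nonzero constant $c_1(S^+)[\Sigma]$, which rules out finite-energy reducibles. Restricted to each fibre $\{z\}\times\Sigma$ this gives a holomorphic family of degree-$e$ line bundles with sections, $e=\deg_\Sigma L^+$; here the finite-energy hypothesis first bites: feeding it into the fibrewise integral of the vortex equation (which pins $\int_\Sigma|\Phi^+(z,\cdot)|^2$) together with the decay of $F_A$, I would show this family is constant in $z$, so that $\mathcal L\cong\pi_\Sigma^*(L^+,\bar\partial_B)$ for a fixed holomorphic structure $\bar\partial_B$ on $L^+$ and $\Phi^+$ becomes a holomorphic map $f\colon\C\to H^0(\Sigma,L^+,\bar\partial_B)$ with $Z(f)=Z(\Phi^+)$. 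The crux --- where I expect most of the work --- is to upgrade $f$ from holomorphic to \emph{polynomial}: the finite-energy bound, combined with an analysis of how the vortex metric $h$ degenerates as $z\to\infty$, should force $\|f(z)\|$ (in a fixed norm on the finite-dimensional space $H^0(\Sigma,L^+,\bar\partial_B)$) to grow at most polynomially, and a holomorphic map $\C\to\C^N$ of polynomial growth is a polynomial. The degree $d$ and the energy identity $\E_{an}(A,\Phi)=-4\pi^2 d\cdot c_1(S^+)[\Sigma]$ then follow from a Chern--Weil/Stokes computation, writing the analytic energy in terms of $F_A$ and the homology class of the divisor $Z(\Phi^+)\subset\C\times\Sigma$, which records $c_1(S^+)[\Sigma]$ along the fibres $\{z\}\times\Sigma$ and the number $d$ of zeros of $f$ along $\C\times\{pt\}$.

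For the reverse direction, given $(\bar\partial_B,f)$ with $f\not\equiv 0$ of degree $d$, I would produce the monopole by solving the vortex equation in the prescribed complex-gauge class. Put $\mathcal L=\pi_\Sigma^*(L^+,\bar\partial_B)$ with holomorphic section $\Phi^+$ corresponding to $f$, and seek a Hermitian metric $h=h_0 e^{-2u}$ on $\mathcal L$ whose unitary connection $A$ solves $i\Lambda F_A=\tfrac12|\Phi^+|_h^2-\tau$ with finite energy; this reduces to a Kazdan--Warner type scalar equation $\Delta u+c\,e^{-2u}|\Phi^+|_{h_0}^2=\rho$ on $X$. Existence and uniqueness of a solution $u$ with the correct decay at $z=\infty$ is the analytic heart of this direction; uniqueness up to residual unitary gauge is also what makes the two quotients match, since inside each complex-gauge class of integrable pairs there is a single real-gauge orbit of vortex solutions --- the Hitchin--Kobayashi correspondence in this setting. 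It then remains to check that this assignment inverts the forward map, is equivariant so as to descend to the asserted bijection between $\SG$-orbits and $\SG_\C(\Sigma)$-orbits, and is compatible with the energy and zero-locus statements, all of which are read off from the construction.

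The principal obstacle, in both directions, is the asymptotic analysis at $z=\infty$, i.e.\ at the puncture of $\CP=\C\cup\{\infty\}$. In the forward direction one must turn ``finite energy'' into genuine polynomial control on $f$ and on $A$ --- a removal-of-singularity statement for the $\Sigma$-family of vortices near the puncture --- which needs sharp a priori decay estimates for $F_A$ and $\nabla_A\Phi$ and a bootstrap ruling out sub-polynomial-but-non-polynomial growth. In the reverse direction the matching difficulty is to solve the non-compact Kazdan--Warner equation with exactly the asymptotics that render the energy finite and equal to $-4\pi^2 d\cdot c_1(S^+)[\Sigma]$; the continuity or variational argument has to be set up in weighted function spaces adapted to the polynomial end, and the sub-/super-solution barriers are delicate because the degree $d$ enters the leading-order behavior. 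I expect these asymptotic analyses, not the algebraic bookkeeping, to occupy the bulk of the proof.
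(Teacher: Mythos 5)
Your architecture matches the paper's (decouple into the vortex equation on $X$, show the fibrewise data is a polynomial, and invert by solving a scalar equation for a conformal factor), and you have correctly located the two hard points; but at both of them the proposal has genuine gaps, and one preliminary step would fail as stated. For the vanishing $\Phi^-\equiv 0$ you propose the standard K\"ahler integration by parts over an exhaustion, ``once a priori decay of $F_A$ and $\nabla_A\Phi$ is available'' --- but finite analytic energy gives no such decay at that stage, so the boundary terms cannot be discarded and the argument is circular. The paper avoids this: it sets $G(z)=\int_{\{z\}\times\Sigma}\langle D^+\Phi_+,\Phi_-\rangle$, computes $\bpartial G\le 0$ from the equations, and shows that the potential $K$ with $\Delta_\C K=-\bpartial G\ge 0$ and $|\nabla K|^2\le C\Delta_\C K$ would blow up at finite radius unless $\Delta_\C K\equiv 0$; no decay hypotheses enter. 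More seriously, the step ``finite energy forces $\|f(z)\|$ to grow at most polynomially'' is asserted without a mechanism, and this is exactly the crux the paper isolates as Lemma \ref{finitezeros} (it even records as an open Question whether one-variable complex analysis alone can do it). The paper's route is a Wehrheim-style boundary analysis: the restricted energy $T(r)$ on $\partial X_r\cong S^1\times\Sigma$ satisfies $\int_1^\infty T(r)/r\,dr<\infty$, so $T(r_n)\to 0$ along a sequence; after gauge fixing, the boundary configurations converge weakly in $L^2_1$ to a vortex, which yields a uniform lower bound $|\langle f',v\rangle|>c$ on the circles $\partial B(0,r_n)$ and, via a twisted argument principle, finiteness of $Z(f)$; only then does the maximum principle on annuli give the polynomial bound $|\eta(z)|\le C|z|^d$. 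The independence of the boundary limit from the chosen subsequence --- which you gloss as ``the family is constant in $z$'' --- is the real content here.

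For the reverse direction your Kazdan--Warner reduction is the right equation, but the weighted-space/sub-supersolution scheme you sketch would run into the fact that the potential $|\sigma_1|^2$ vanishes on the non-compact set $\C\times Z(\gamma_d)\subset X$, so there is no spectral gap and no uniform barrier. The paper instead follows Garcia-Prada: choose a background conformal factor $\alpha_0$ (itself requiring a correction $\delta$ built from $(\Delta_\Sigma+|\gamma_d|^2)^{-1}$ so that $\mu(0)\in L^2$), then minimize $\|\mu(\alpha)\|_{L^2}^2$ over the unweighted space $L^2_2(X)$. The analytic heart is the a priori estimate of Theorem \ref{apriori3}, $\|\alpha\|_{L^2_2}\le\eta(\E(\alpha))$, proved by a fibrewise coercivity estimate for $\Delta_\Sigma\alpha$ (Lemma \ref{fiber}), a good-set/bad-set decomposition of $\C$, and an Amrein--Berthier-type uncertainty principle (Lemma \ref{ABlemma}) to control the $L^2$-norm of the fibrewise average of $\alpha$. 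Your proposal does not identify this obstruction, and without it the minimizing sequence need not converge. Finally, the bijection on quotients also needs the uniqueness of the conformal factor (a maximum-principle argument using that the correction tends to $0$ at infinity), which is what guarantees the forward and backward maps are mutually inverse; this is absent from the proposal.
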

Fixing the degree $d$ of $f$, the object on the right corresponds to the space of divisors of the line bundle  $\pi_1^*\SO(d)\otimes \pi_2^*\SL\to \CP\times\Sigma$ that are nonzero at the fiber at infinity $\{\infty\}\times \Sigma$, as $\SL$ varying for all holomorphic structures on $L^+\to\Sigma$. If in addition $f\neq 0$ for any $z\in\C$, this is the space of holomorphic maps of degree $d$ from $\CP$ to $\Sym^m \Sigma$ where $m=c_1(L^+)[\Sigma]\geq 0$.
\smallskip

To clarify our notations, recall that a \spinc structure $\s$ on $X$ is a pair $(S,\rho)$ where $S=S^+\oplus S^-$ is the spin bundle, and the bundle map $\rho: T^*X\to \Hom(S,S)$ defines the Clifford multiplication. An element $(A,\Phi)$ in the configuration space $\SC(X,\s)$ consists of a smooth \spinc connection $A$ and a smooth section $\Phi$ of $S^+$. Let $A^t$ be the induced connection on $\bigwedge^2 S^+$ and $F^+_{A^t}$ be the self-dual part of the curvature form $F_{A^t}$. The Seiberg-Witten equation is defined on $\SC(X,\s)$ by the formula:
	\begin{equation}\label{SWEQ}
\left\{
\begin{array}{r}
\half \rho(F_{A^t}^+)-(\Phi\Phi^*)_0=0,\\
D_A^+\Phi=0,
\end{array}
\right.
\end{equation}
where $D_A^+$ is the Dirac operator and $(\Phi\Phi^*)_0$ is the traceless part of $\Phi\Phi^*$ as a bundle map $S^+\to S^+$. This equation is also called the monopole equation and solutions are called monopoles. We write $\F(A,\Phi)$ for the formulae on the right and $(\ref{SWEQ})$ is equivalent to that $\F(A,\Phi)=0$.

The gauge group $\SG(X)=\map (X,S^1)$ acts naturally on $\SC(X,\s)$:
\[
\SG(X)\ni u: \SC(X,\s)\to \SC(X,\s),\ (A,\Phi)\mapsto (A-u^{-1}du, u\Phi).
\]
The monopole equation (\ref{SWEQ}) is invariant under gauge transformations. 
 
We are interested in the space of solutions to (\ref{SWEQ}) modulo  gauge assuming finiteness of the analytic energy: 
\begin{equation}\label{an}
\E_{an}(A,\Phi)=\int_X \frac{1}{4}|F_{A^t}|^2+|\nabla_A\Phi|^2+\frac{1}{4}|\Phi|^4+ \frac{K}{2}|\Phi|^2,
\end{equation}
where $K$ is the Gaussian curvature of $\Sigma$. This is the main object that appears on the left hand side of the bijection in Theorem \ref{main}. 

To justify the choice of $\E_{an}$, recall that for a closed 4-manifold $X$, $(A,\Phi)$ solves the monopole equation (\ref{SWEQ}) if and only if it minimizes the analytic energy; indeed, we have the energy formula 
\[
\E_{an}(A,\Phi)-\E_{top}=\int_X |\F(A,\Phi)|^2
\]
where the topological energy $\E_{top}$ depends only on characteristic classes of $S^+$. A similar energy formula in the context of the non-compact manifold $X=\C\times\Sigma$ is proved in Lemma \ref{53}, where the topological energy $\E_{top}=-4\pi^2 d\cdot c_1(S^+)[\Sigma]$ and $d$ is an integer. Therefore, a monopole on $X$ is not necessarily a global minimizer of the analytic energy, but it does minimize $\E_{an}$ in a suitable smaller variational space. 
\smallskip

To explain the second object in Theorem \ref{main}, let $dvol_\C$ and $dvol_\Sigma$ denote volume forms on $\C$ and $\Sigma$ respectively. Since the symplectic form $\omega=dvol_\C+dvol_\Sigma$ on $X$ is parallel, the spin bundle $S^+$ splits as $L^+\oplus L^-$: they are $\mp 2i$ eigenspace of $\rho(\omega)$. The spin section $\Phi$ then decomposes as $(\Phi_+,\Phi_-)$ with $\Phi_\pm\in \Gamma(X,L^\pm)$. The first observation is that finite energy monopoles are in fact vortices on $X$:

\begin{theorem}\label{coro} 
If there exists a smooth solution $(A,\Phi)$ to the monopole equation $(\ref{SWEQ})$ on $X$ with $\Phi\not\equiv 0$ and $\E_{an}(A,\Phi)<\infty$, then $0<|c_1(S^+)|\leq 2g-2$.  In addition, if $c_1(S^+)> (resp.< )\ 0$, then $\Phi_+\  (resp.\ \Phi_- )\equiv 0$. 
\end{theorem}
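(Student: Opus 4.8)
The plan is to extract the decomposition $\Phi=(\Phi_+,\Phi_-)$ induced by $\rho(\omega)$ and to exploit the K\"ahler structure on $X=\C\times\Sigma$ to turn the monopole equation into a vortex-type equation. First I would recall that, because $\omega$ is parallel, the Clifford multiplication $\rho(\omega)$ is parallel, so its eigenbundle splitting $S^+=L^+\oplus L^-$ is preserved by $\nabla_A$; moreover, on a K\"ahler surface the Dirac operator $D_A^+$ is (up to a constant) $\sqrt 2(\bpartial_A+\bpartial_A^*)$ acting on $\Omega^{0,0}\oplus\Omega^{0,2}$ twisted by a line bundle. Thus $D_A^+\Phi=0$ becomes the pair of equations $\bpartial_A\Phi_+=0$ and $\bpartial_A^*\Phi_-=0$ (after identifying $L^-$ with $(0,2)$-forms valued in the appropriate bundle). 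Separately, the curvature equation $\half\rho(F_{A^t}^+)=(\Phi\Phi^*)_0$, paired against $\rho(\omega)$ and against the primitive part of $F_{A^t}^+$, yields the two scalar identities $\Lambda F_{A^t}=\tfrac{i}{2}(|\Phi_-|^2-|\Phi_+|^2)$ (a vortex equation) and $F_{A^t}^{0,2}=$ (a multiple of) $\Phi_+\otimes\overline{\Phi_-}$. This is the standard dictionary from Witten's original work; I would cite it and spend the bulk of the argument on what finiteness of $\E_{an}$ forces.

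Next I would use the energy formula of Lemma \ref{53}, $\E_{an}(A,\Phi)-\E_{top}=\int_X|\F(A,\Phi)|^2$ with $\E_{top}=-4\pi^2 d\cdot c_1(S^+)[\Sigma]$, together with $\F(A,\Phi)=0$, to conclude $\E_{an}=\E_{top}=-4\pi^2 d\cdot c_1(S^+)[\Sigma]\geq 0$; this already gives a sign constraint once I know $d\geq 0$. I expect $d$ to be (essentially by definition, or by a Chern--Weil/Stokes computation on $\C\times\Sigma$ that the paper sets up) the degree of $\Phi_+$ restricted to a generic fiber $\{z\}\times\Sigma$, or equivalently $c_1(L^+)[\Sigma]$ adjusted by the vortex number along $\C$; in any case the finite-energy hypothesis should pin it down and the positivity of $\E_{an}$ then forces $c_1(S^+)[\Sigma]\leq 0$ when the relevant bubble is nontrivial, and symmetrically $\geq 0$ in the mirror case — giving one half of the bound $|c_1(S^+)|\leq 2g-2$ combined with the adjunction/Riemann--Roch restriction that $H^0(\Sigma,L^+)\neq 0$ and $L^+$ nontrivial force $0\leq\deg L^+\leq 2g-2$ via Clifford's theorem applied to a section with zeros, hence $0<|c_1(S^+)|\leq 2g-2$.

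For the vanishing statement I would run the Weitzenb\"ock/integration-by-parts argument that is classical for vortices on K\"ahler surfaces: combine $\bpartial_A\Phi_+=0$, $\bpartial_A^*\Phi_-=0$, and the $(0,2)$-curvature equation to show that $\Phi_+$ and $\Phi_-$ cannot both be nonzero — pointwise one gets that $\overline\partial$ of $|\Phi_+|^2$ or a maximum-principle argument on $|\Phi_-|^2$ (which satisfies an elliptic inequality $\Delta|\Phi_-|^2\leq -c|\Phi_+|^2|\Phi_-|^2 + \text{(good terms)}$) forces one of them to vanish identically on the connected manifold $X$; the sign of $\Lambda F_{A^t}$, controlled by the sign of $c_1(S^+)$ through the degree computation, selects which one. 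Concretely, integrating $\Lambda F_{A^t}=\tfrac i2(|\Phi_-|^2-|\Phi_+|^2)$ over fibers and using finite energy to control boundary terms shows that if $c_1(S^+)>0$ the only consistent possibility is $\Phi_+\equiv 0$, and if $c_1(S^+)<0$ then $\Phi_-\equiv 0$.

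The main obstacle I anticipate is \emph{not} the algebra of the K\"ahler decomposition but the analysis on the non-compact end: every integration by parts and every application of the maximum principle requires a priori control of $\Phi$ and $F_{A^t}$ at infinity on $\C\times\Sigma$, and the energy integrand in $\E_{an}$ includes the indefinite term $\tfrac K2|\Phi|^2$, so finiteness of $\E_{an}$ does not immediately give $L^2$ control of each individual term. I would therefore need a preliminary step — presumably the decay estimates the abstract promises, or at least a weaker $L^2$-boundedness of $\nabla_A\Phi$, $|\Phi|^2$, and $F_{A^t}$ obtained by a cutoff argument against the energy identity — to justify discarding boundary contributions; handling the curvature term $\tfrac14|F_{A^t}|^2$ versus the Gauss curvature term is the delicate point, and I expect the proof to invoke an elliptic bootstrap plus the fact that $K$ has compact support in the $\C$-direction (it is pulled back from $\Sigma$) so that the $|\Phi|^4$ term dominates at infinity and forces the needed integrability.
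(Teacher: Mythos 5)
Your reduction to the K\"ahler/vortex dictionary matches the paper's starting point, but the two steps you defer are exactly where the content of the theorem lies, and the tools you propose to fill them are not available. First, you invoke Lemma \ref{53} to write $\E_{an}=-4\pi^2 d\cdot c_1(S^+)[\Sigma]$ for an integer $d$ and extract a sign constraint; but that energy identity is proved only for configurations of the form $e^\alpha\cdot(A_0,\sigma_0)$ built from a polynomial map in the ``Polynomials $\Rightarrow$ Vortices'' direction. For an arbitrary finite-energy solution the existence of such an integer $d$ is a \emph{consequence} of the classification, not an input, so this part of your argument is circular. Second, and more seriously, your proof that $\Phi_+$ and $\Phi_-$ cannot both be nonzero is the standard closed-manifold integration-by-parts, with the boundary terms at infinity ``to be controlled by a preliminary decay estimate.'' No such estimate is available at this stage, and the paper does not obtain one. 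Instead it proves the statement by a Liouville-type argument that needs no decay: setting $G(z)=\int_{\{z\}\times\Sigma}\langle D^+\Phi_+,\Phi_-\rangle$, a commutator computation using the curvature equation gives $\bpartial G=-\int_\Sigma(|D^+\Phi_+|^2+|D^-\Phi_-|^2+|\Phi_+|^2|\Phi_-|^2)\le 0$; one then builds a potential $K$ on $\C$ with $\Delta_\C K=-\bpartial G\ge 0$ and $|\nabla K|^2=|G|^2\le C\Delta_\C K$ (the constant coming from the uniform fiberwise $L^2$ bound on $\Phi$ of Lemma \ref{boundedness}), and an ODE comparison on $W(R)=\int_{B(0,R)}\Delta K$ shows $W$ would blow up at finite radius unless $\Delta K\equiv 0$. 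That differential inequality, not boundary control, is the missing idea in your proposal.

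Two further points. Your remark that the Gaussian curvature ``has compact support in the $\C$-direction'' is wrong: $K$ is pulled back from $\Sigma$, hence constant in $z$, so the indefinite term $\tfrac K2|\Phi|^2$ persists at infinity; the paper neutralizes it fiberwise via the Weitzenb\"ock rewriting of Lemma \ref{positivity}, which expresses the energy density over each fiber as a sum of manifestly nonnegative terms --- this lemma is the replacement for your hoped-for domination by $|\Phi|^4$. Finally, for the sign determination and the bound $0<|c_1(S^+)|\le 2g-2$: integrating $\Lambda F_{A^t}$ over a fiber does not work directly because the $\C$-component of the curvature is not topological; the paper instead translates the solution to infinity, extracts a zero-energy limit (a constant family of vortices), and integrates the fiber equation for that limit, after which nonvanishing of the limiting holomorphic section forces $c_1(L^+)\ge 0$, i.e.\ $c_1(S^+)\ge 2-2g$ via $c_1(S^+)=2(c_1(L^+)+1-g)$. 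Clifford's theorem is neither needed nor the right statement here; only $\deg L^+\ge 0$ for a line bundle with a nonzero holomorphic section is used.
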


 Here, $c_1(S^+)$ is the Chern class associated to $S^+$. The same symbol is used to denote the pairing $c_1(S^+)[\Sigma]\in \Z$. Reducible solutions occur only if $c_1(S^+)=0$. The converse is also true:

\begin{theorem}\label{flat}
If $c_1(S^+)=0$, then $\Phi\equiv 0$ and the induced connection $A^t$ on $\bigwedge^2S^+$ is flat. 
\end{theorem}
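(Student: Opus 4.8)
The plan is to read Theorem~\ref{flat} off from Theorem~\ref{coro} together with the energy identity of Lemma~\ref{53}. The hypothesis $c_1(S^+)=0$ falls outside the range $0<|c_1(S^+)|\le 2g-2$ of Theorem~\ref{coro}, so by the contrapositive of that theorem no finite-energy monopole on $X$ can have $\Phi\not\equiv 0$; hence the given solution satisfies $\Phi\equiv 0$. With $\Phi\equiv 0$ the Dirac equation in $(\ref{SWEQ})$ holds trivially and the curvature equation becomes $\rho(F^+_{A^t})=0$, which forces $F^+_{A^t}=0$ since $\rho$ is injective on imaginary self-dual $2$-forms. It then remains to upgrade anti-self-duality of $A^t$ to flatness.

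For that I would invoke Lemma~\ref{53}: for a finite-energy configuration, $\E_{an}(A,\Phi)-\E_{top}=\int_X|\F(A,\Phi)|^2$ with $\E_{top}=-4\pi^2 d\cdot c_1(S^+)[\Sigma]$ for an integer $d$. Since $c_1(S^+)[\Sigma]=0$ we have $\E_{top}=0$ regardless of $d$, and since $(A,\Phi)$ is a monopole $\F(A,\Phi)\equiv 0$; hence $\E_{an}(A,\Phi)=0$. On the other hand, putting $\Phi\equiv 0$ into the definition $(\ref{an})$ collapses the analytic energy to $\E_{an}(A,\Phi)=\int_X\frac14|F_{A^t}|^2$. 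Comparing the two evaluations forces $\int_X|F_{A^t}|^2=0$, i.e.\ $F_{A^t}\equiv 0$, so $A^t$ is flat and the proof is complete.

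If one prefers an argument not relying on Lemma~\ref{53}, the same conclusion can be obtained directly. Once $F^+_{A^t}=0$, the Bianchi identity and anti-self-duality give $dF_{A^t}=0$ and $d^*F_{A^t}=-*d*F_{A^t}=*\,dF_{A^t}=0$, so $F_{A^t}$ is a harmonic $2$-form, and finiteness of the analytic energy gives $F_{A^t}\in L^2(X)$. One then checks that $X=\C\times\Sigma$ carries no nonzero $L^2$ harmonic $2$-form: with the product metric the Hodge Laplacian splits as $\Delta_\C\otimes 1+1\otimes\Delta_\Sigma$ and preserves bidegree, so each bidegree component of $F_{A^t}$ is separately $L^2$ and harmonic; expanding such a component fibrewise over $\C$ in $\Delta_\Sigma$-eigenforms on $\Sigma$, a Fourier coefficient attached to a positive eigenvalue would be an $L^2$ eigenform of $\Delta_\C$ on $\R^2$ with negative eigenvalue (impossible, since $\Delta_\C\ge 0$), while a coefficient attached to the zero eigenvalue would be an $L^2$ harmonic form on $\R^2$ --- which vanishes, for functions and top forms by the mean value property and for $1$-forms because the Cauchy--Riemann equations exhibit such a form as an $L^2$ entire function. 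Either way $F_{A^t}\equiv 0$.

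Granted Theorem~\ref{coro} and the energy formula, the argument has no serious obstacle; the one place where the finite-energy hypothesis is genuinely used --- and where the analysis on the non-compact manifold $X$ is hidden --- is in the validity of the energy identity of Lemma~\ref{53} with \emph{no} boundary contribution at infinity (equivalently, in the self-contained route, the vanishing of $L^2$ harmonic $2$-forms on the complete open manifold $\C\times\Sigma$, where one must justify the term-by-term handling of the eigenfunction expansion and the integrations by parts on $\R^2$).
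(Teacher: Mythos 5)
The main step of Theorem \ref{flat} is the reduction $\Phi\equiv 0$, and your argument for it is circular. You deduce $\Phi\equiv 0$ from the contrapositive of Theorem \ref{coro}, but the ``$0<|c_1(S^+)|$'' part of that theorem's conclusion is exactly the statement $c_1(S^+)=0\Rightarrow\Phi\equiv 0$, i.e.\ it \emph{is} Theorem \ref{flat}. The proof of Theorem \ref{coro} given in Section \ref{2} only treats $c_1(S^+)<0$ and $c_1(S^+)>0$: there the rescaled limit $(A_\infty,\Phi_\infty)$ satisfies $\int_\Sigma|\Phi_{\infty,\pm}|^2=\mp 2\pi c_1(S^+)$, which pins down a \emph{nonzero} limit and yields the constraints; when $c_1(S^+)=0$ the same computation only says the limit vanishes, which is perfectly consistent with $\Phi\not\equiv 0$ decaying at infinity. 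The paper closes this case separately in Section \ref{4.0}: after the gauge fixing of Section \ref{3}, integrating (\ref{firsteq}) over each fiber gives $i\,Vol(\Sigma)F_\omega=-\tfrac12\int_\Sigma|\sigma|^2\le 0$, the Weitzenb\"ock formula on $\C$ then shows $T(z)=\int_{\{z\}\times\Sigma}|\sigma'|^2$ is a bounded subharmonic function on $\C$, and Liouville together with $T(z)\to 0$ (from the compactness argument of Lemma \ref{boundedness}, using that a zero-energy solution is trivial when $c_1(S^+)=0$) forces $T\equiv 0$. Some argument of this kind is needed; your proposal supplies none.

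On the flatness part, your first route misapplies Lemma \ref{53}: that energy identity is proved only for configurations $e^\alpha\cdot(A_1,\sigma_1)$ built from a polynomial map in the regime $2-2g\le c_1(S^+)<0$, not for an arbitrary finite-energy monopole with $c_1(S^+)=0$, so you cannot quote it off the shelf. Your second, self-contained route is sound and is a genuinely different argument from the paper's: you observe that $F_{A^t}$ is an $L^2$, closed, anti-self-dual (hence harmonic) $2$-form and kill it by the vanishing of $L^2$ harmonic $2$-forms on $\C\times\Sigma$ via separation of variables, whereas the paper shows $F_\omega\equiv 0$ from $T\equiv 0$ and then runs a maximum principle on the conformal factor $\alpha$ using $\Delta_X\alpha=0$ and $\alpha\to 0$ at infinity. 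Your version is arguably cleaner and avoids the gauge-fixing bookkeeping, at the cost of invoking (and justifying, e.g.\ via Gaffney-type integration by parts on the complete manifold $X$) standard $L^2$ Hodge theory; but it only becomes available after $\Phi\equiv 0$ is established, which is the step your proposal is missing.
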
 

Replacing the complex structure on $X$ by its complex conjugate will interchange the bundles $L^+$ and $L^-$. We focus on the case when $2-2g\leq c_1(S^+)<0$ and $\Phi_-\equiv 0$. Choose a holomorphic structure $\bpartial_B$ on $L^+$ and let $\nabla_B$ be the Chern connection associated to $\bpartial_B$ .  We say a map $f:\C\to H^0(\Sigma,L^+,\bpartial_B)$ is a polynomial map of degree $d$ if $f$ is a polynomial function on $\C$ with coefficients in $H^0(\Sigma,L^+,\bpartial_B)$. That is to say, we can find $\gamma_i\in H^0(\Sigma, L^+,\bpartial_B)$ for $0\leq i\leq d$ such that for any $z\in\C$,
\[
f(z)=\sum_{i=0}^d \gamma_iz^i.
\]
The group of complex gauge transformations $\SG_\C(\Sigma)=\map(\Sigma, \C^*)$ acts on the pair $(\nabla_{B_0}, f)$ by the formula:
\[
\SG_\C(\Sigma)\ni u=u_1\cdot e^\alpha: (\nabla_{B_0}, f) \mapsto (\nabla_{B_0}-u_1^{-1}du_1+i*d\alpha, u\cdot f). 
\]
where $\alpha$ is real and $u_1\in \map(\Sigma,S^1)$. Therefore, the second object in Theorem \ref{main} is the quotient space of pairs $(\bpartial_B, f)$ by complex gauge transformations.

\medskip
We also complexify the gauge group $\SG(X)$ and define its action on $\SC(X,\s)$ by the same formula of $\SG_\C(\Sigma)$. Then $\SG_\C(X)=\SG(X)\times \Conf(X)$ where $\Conf(X)=\map(X,\R_+)$ corresponds to conformal transformations on $S^+$. What is hidden behind the correspondence in Theorem \ref{main} is that for any pair $(\nabla_B,f)$, we can find a conformal transformation $e^\alpha$ such that
\[
 (A,\Phi^+)=e^\alpha\cdot (\nabla_B+\frac{\partial}{\partial u}+\frac{\partial}{\partial v}, f)
\]
is a finite energy monopole. This is also true in the opposite direction. For precise statements, see Theorem \ref{polynomial} and Theorem \ref{existence}.
\bigskip

To analyze the dynamics of $(A,\Phi)$ at infinity, define the configuration space $\SC(\Sigma, L^+)$ in the same manner of $\SC(X,\s)$. Then any solution $(B,\sigma)\in \SC(\Sigma, L^+)$ to the vortex equation on $\Sigma$
\begin{equation}\label{vortex}
\left\{
\begin{array}{r}
i*F_B+\half K+\half |\sigma|^2=0\\
\bpartial_B\sigma=0
\end{array}
\right.
\end{equation}
gives a solution to $(\ref{SWEQ})$ on $X$. Indeed, one can pull back $(B,\sigma)$ over $\C$. This corresponds to the case when $\deg(f)=0$ and $\E_{an}(A,\Phi)=0$. 

It is convenient to introduce the quotient space $\SB(\Sigma, L^+)=\SC(\Sigma, L^+)/\SG(\Sigma)$. For each $k\geq 2$ define a metric on $\SB(\Sigma,L^+)$ by the formula
\[
d_k([a],[b])\colonequals \min_{u\in \SG(\Sigma)} \|u\cdot a-b\|_{L^2_k(\Sigma)},
\]
where $[a]$ and $[b]$ denote equivalent classes of $a,b\in \SC(\Sigma, L^+)$. 

When $\deg(f)=d>0$, by Bradlow's theorem \cite{Bradlow}, there is a solution $(B,\sigma)$ to (\ref{vortex}) such that $Z(\sigma)=Z(\gamma_d)$ where $\gamma_d$ is the leading coefficient of $f$. We know that  $d_k((A,\Phi^+)|_{\{z\}\times \Sigma}, (B,\sigma))\to 0$ as $z\to\infty$. The question is what is the decay rate. Suppose
\[
f=\gamma_d(z^d+a_{d-1}z^{d-1}+\cdots+a_{d-m+1}z^{d-m+1})+\gamma_{d-m}z^{d-m}+\cdots
\]
where $a_{d-i}\in \C$ are complex numbers and $\gamma_{d-m}$ is the highest coefficient that is not proportional to $\gamma_d$. In general, the zero locus $Z(f)|_{\{z\}\times\Sigma}$, as a divisor on $\Sigma$, converges to $Z(\gamma_d)$ at rate $1/|z|^m$. Therefore,
\[
d_k((A,\Phi^+)|_{\{z\}\times \Sigma}, (B,\sigma))
\] 
can not decay faster than $1/|z|^{m+1}$. On the other hand, the decay rate $1/|z|^m$ is also achieved:
\begin{theorem}\label{powerlaw} Suppose the polynomial map $f$ is given by
	\[
	f=\gamma_d(z^d+a_{d-1}z^{d-1}+\cdots+a_{d-m+1}z^{d-m+1})+\gamma_{d-m}z^{d-m}+\cdots
	\]
 and $\gamma_{d-m}$ is not proportional to $\gamma_d$. For the monopole $(A,\Phi^+)$ that corresponds to $(B_0,f)$ and any $k\geq 2$, there exists $C_k>0$ such that for any $z\in \C$,
	\[
d_k((A,\Phi^+)|_{\{z\}\times \Sigma}, (B,\sigma))\leq \frac{C_k}{|z|^m}.
	\]
\end{theorem}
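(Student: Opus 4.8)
The plan is to reduce the decay estimate to a statement about the convergence of the correspondence $f\mapsto (A,\Phi^+)$ and then to use elliptic estimates on the slices $\{z\}\times\Sigma$ together with the explicit structure of $f$. First I would recall from Theorem~\ref{polynomial}/Theorem~\ref{existence} how the monopole $(A,\Phi^+)$ is produced from the pair $(B_0,f)$: one starts with the (non-monopole) configuration $e^0\cdot(\nabla_{B_0}+\partial_u+\partial_v,f)$ and finds a conformal factor $e^\alpha$, i.e.\ solves a semilinear elliptic PDE for $\alpha$ on $X$, so that $(A,\Phi^+)=e^\alpha\cdot(\nabla_{B_0}+\partial_u+\partial_v,f)$ is a monopole. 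The key point is that on the slice $\{z\}\times\Sigma$, the restriction $(A,\Phi^+)|_{\{z\}\times\Sigma}$ is gauge-equivalent (by a $\C^*$-gauge transformation on $\Sigma$) to the $\Sigma$-configuration $e^{\alpha(z,\cdot)}\cdot(\nabla_{B_0},f(z)/\|f(z)\|$-normalized$)$, and as $|z|\to\infty$ the normalized section $f(z)/|z|^d\to\gamma_d+O(1/|z|^m)$ in $L^2_k(\Sigma)$, where the $O(1/|z|^m)$ comes directly from the displayed expansion of $f$ (the first $m-1$ correction terms $a_{d-i}z^{d-i}$ are scalar multiples of $z^d$ and hence die under the $\C^*$-action/normalization, while $\gamma_{d-m}z^{d-m}$ contributes the leading genuine perturbation of size $|z|^{-m}$).

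Next I would set up the comparison with the limiting vortex $(B,\sigma)$. By Bradlow's theorem the vortex $(B,\sigma)$ on $\Sigma$ is the unique (up to $\SG(\Sigma)$) solution of $(\ref{vortex})$ with $Z(\sigma)=Z(\gamma_d)$, and it is the image of $\gamma_d$ under the analogous conformal-rescaling procedure on $\Sigma$. So both $(A,\Phi^+)|_{\{z\}\times\Sigma}$ and $(B,\sigma)$ are obtained by applying the "solve for the conformal factor on $\Sigma$" map to, respectively, a section that is within $O(1/|z|^m)$ of $\gamma_d$ (in $L^2_k$, after the obvious $\C^*$-normalization) and to $\gamma_d$ itself. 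The estimate $d_k\le C_k/|z|^m$ will then follow from Lipschitz dependence of this map — and of the associated conformal factor $\alpha$ — on the input section. Concretely, I would linearize the vortex equation at $(B,\sigma)$: the linearized operator is Fredholm and, because $2-2g\le c_1(S^+)<0$ (equivalently $0\le m$ with the vortex number in the admissible range), it has trivial cokernel on the relevant slice (or a cokernel that is killed after quotienting by $\SG(\Sigma)$), so it admits a bounded inverse on a complement of the gauge directions. Feeding the $O(1/|z|^m)$ error through this bounded inverse and controlling the quadratic remainder by a standard contraction-mapping / implicit-function argument in $L^2_k(\Sigma)$ gives a solution of the $\Sigma$-vortex equation within $C_k/|z|^m$ of $(B,\sigma)$; uniqueness of the vortex (Bradlow) forces this solution to be $(A,\Phi^+)|_{\{z\}\times\Sigma}$ up to gauge, which is exactly the claimed bound on $d_k$.

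For the $k$-dependence I would bootstrap: once the estimate holds in $L^2_2(\Sigma)$, elliptic regularity for the vortex equation on the compact surface $\Sigma$ (the equation is elliptic modulo gauge, with smooth coefficients) upgrades it to $L^2_k$ for every $k$, with constants $C_k$ depending on $k$ but uniform in $z$ for $|z|$ large; the region $|z|\le R_0$ is handled trivially by compactness since $d_k$ is a continuous function of $z$ there and $\gamma_{d-m}\not\propto\gamma_d$ ensures $f(z)$ is never identically zero, so $(A,\Phi^+)|_{\{z\}\times\Sigma}$ stays in a fixed compact family. The main obstacle I anticipate is making precise the claim that the conformal factor $\alpha(z,\cdot)$ itself — solving the full four-dimensional elliptic problem on $X$ — has restriction to $\{z\}\times\Sigma$ that converges at rate $1/|z|^m$ to the one-dimensional conformal factor of the limiting vortex; this requires separating the $\C$-direction decay of $\alpha$ (controlled by the finite-energy a priori decay estimates for $(A,\Phi)$ established earlier) from the transverse $\Sigma$-behavior, and showing the two match to leading order. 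I would handle this by writing $\alpha = \alpha_\infty + \eta$ with $\alpha_\infty$ the pullback of the vortex conformal factor and $\eta$ satisfying a linear elliptic equation on $X$ whose inhomogeneous term is $O(1/|z|^m)$, then applying the already-proven exponential-in-the-$\Sigma$-direction / polynomial-in-the-$\C$-direction decay machinery for such equations to conclude $\|\eta|_{\{z\}\times\Sigma}\|_{L^2_k(\Sigma)}=O(1/|z|^m)$.
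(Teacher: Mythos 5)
Your central step does not go through: you want to identify $(A,\Phi^+)|_{\{z\}\times\Sigma}$, up to gauge, with the vortex on $\Sigma$ produced by Bradlow's procedure from the normalized section $f(z)$, invoking uniqueness of the vortex with prescribed zero locus. But the restriction of a four-dimensional monopole to a fiber is \emph{not} a solution of the vortex equation (\ref{vortex}) on that fiber: equation (\ref{firsteq}) gives $iF_A^\Sigma+\tfrac12 K+\tfrac12|\sigma|^2=-iF_A^\C$, which vanishes only in the limit. So Bradlow uniqueness cannot be used to equate the slice restriction with the fiberwise-constructed vortex, and the entire ``Lipschitz dependence of the fiberwise conformal-factor map'' argument only bounds the distance from $(B,\sigma)$ to an object that is not the one in the statement. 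You flag this difficulty yourself in the last paragraph, but the proposed remedy --- write $\alpha=\alpha_\infty+\eta$ with $\eta$ solving ``a linear elliptic equation on $X$'' with $O(|z|^{-m})$ source and appeal to unspecified decay machinery --- is where the actual content of the theorem lives, and it is not supplied. The equation for the correction term is semilinear, $(\Delta_\C+(\Delta_\Sigma+|\gamma_d|^2))\alpha=k+h(\alpha)$ with $h(\alpha)=-\tfrac12(e^{2\alpha}-2\alpha-1)|\sigma_1|^2$, and turning an $O(|z|^{-m})$ bound on $k$ into an $O(|z|^{-m})$ bound on $\alpha$ requires the iteration/convolution argument of Lemma \ref{formallemma2} (convolving with the operator-valued Bessel kernel of $\Delta_\C+L$ and exploiting $\|h(u)\|\leq C\|u\|^q$ with $q>1$), which your proposal never constructs.

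A second, equally essential omission is the choice of approximate solution. To make the inhomogeneous term $k$ decay like $|z|^{-m}$ (rather than like $|z|^{-1}$, which is what a naive normalization such as $f(z)/(1+|z|^2)^{d/2}$ yields, since $|f_0(z)|^2/(1+|z|^2)^d-1=O(|z|^{-1})$ in general), the paper takes $\alpha_0$ to be the Taubes conformal factor making $e^{\alpha_0}\cdot(d,f_0)$ a finite-energy vortex on $\C$ for the scalar polynomial $f_0=z^d+a_{d-1}z^{d-1}+\cdots+a_{d-m+1}z^{d-m+1}$; the Jaffe--Taubes exponential decay (Theorem \ref{JT}) then makes $1-|\eta|^2$ exponentially small, so the only surviving polynomially-decaying contribution to $k$ is the cross term $2e^{\alpha_0}\re\langle\eta\gamma_d,\gamma_{d-m}z^{d-m}\rangle=O(|z|^{-m})$. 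Your normalization heuristic (``the terms $a_{d-i}z^{d-i}$ die under the $\C^*$-action'') identifies the right order of the perturbation of the zero locus, but without the Taubes vortex for $f_0$ and Theorem \ref{JT} you cannot get the source term below $O(|z|^{-1})$, and the claimed rate $|z|^{-m}$ for $m\geq 2$ is out of reach.
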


In the generic case, $\gamma_{d-1}$ is not proportional to $\gamma_d$, so $m=1$. Theorem \ref{powerlaw} states that generically we will only have $1/|z|$ decay. The only chance to obtain exponential decay is to let $m=d$. In this case, $Z(f)$ does not change among different fibers.

\begin{theorem}\label{exponentialdecay} Suppose $f=\gamma_d\cdot f_0$ where $f_0:\C\to\C$ is a monic polynomial of degree $d$. Then for the monopole $(A,\Phi^+)$ that corresponds to $(B_0,f)$, there exists $s(k, B_0, f)$ and $C(k, B_0, f)>0$ such that
\[
d_k((A,\Phi^+)|_{\{z\}\times \Sigma}, (B,\sigma))\leq Ce^{-s|z|}.
\]
In particular, when $c_1(L^+)[\Sigma]=0$ or $1$, solutions to the Seiberg-Witten equation $(\ref{SWEQ})$ have exponential decay. 
\end{theorem}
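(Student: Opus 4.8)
The plan is to reduce the statement, as in the proof of Theorem~\ref{powerlaw}, to a decay estimate for the real conformal factor $\alpha$ attached to $(A,\Phi^+)$, and then to use the special shape $f=\gamma_d f_0$ to eliminate the slowly decaying ``divisor motion'' mode responsible for the $1/|z|^m$ behaviour there, leaving only an exponentially small remainder. Since the bound is automatic for bounded $|z|$ --- the monopole is smooth and has finite analytic energy, so its slices stay in a compact set modulo $\SG(\Sigma)$, and enlarging $C$ absorbs this range --- I only need to treat $|z|\geq R$ with $R$ large.

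By Theorem~\ref{polynomial}/Theorem~\ref{existence} write $(A,\Phi^+)=e^{\alpha}\cdot(\nabla_{B_0}+\partial_u+\partial_v,\ f)$ for a unique real $\alpha\in C^\infty(X)$. Because the holomorphic pair $(\nabla_{B_0}+\partial_u+\partial_v,f)$ solves the Dirac part of $(\ref{SWEQ})$ for \emph{every} $\alpha$ (complex gauge transformations preserve holomorphicity of pairs), the monopole equation collapses to the curvature equation, a Kazdan--Warner type equation on $X=\C\times\Sigma$,
\[
(\Delta_\C+\Delta_\Sigma)\,\alpha \;=\; \pm\bigl(e^{2\alpha}|f|^2-h_0\bigr),
\]
with $h_0\in C^\infty(\Sigma)$ built from $i*F_{\nabla_{B_0}}$ and the Gaussian curvature $K$; the flat $\C$-directional part contributes nothing to curvature, so $h_0$ is genuinely $z$-independent. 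Likewise the limiting vortex is $(B,\sigma)=e^{\alpha_\infty}\cdot(\nabla_{B_0},\gamma_d)$ with $\Delta_\Sigma\alpha_\infty=\pm(e^{2\alpha_\infty}|\gamma_d|^2-h_0)$.

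Here is the point. As $f=\gamma_d f_0$ with $f_0\colon\C\to\C$ scalar, $|f(z,w)|^2=|f_0(z)|^2\,|\gamma_d(w)|^2$, and $\log|f_0|$ is harmonic on $\C$ off the finitely many zeros of $f_0$. So for $R$ with $Z(f_0)\subset\{|z|<R\}$, set $\beta=\alpha+\log|f_0|$ and $\gamma=\beta-\alpha_\infty$ on $\{|z|\geq R\}\times\Sigma$; since $\Delta_\C\log|f_0|=0$ there, $\gamma$ solves a \emph{$z$-translation invariant} semilinear equation
\[
(\Delta_\C+\Delta_\Sigma)\,\gamma \;=\; -W(w)\,\gamma+N(w,\gamma),\qquad W=\pm 2e^{2\alpha_\infty}|\gamma_d|^2,\quad N=O(\gamma^2),
\]
with $\gamma(z,\cdot)\to 0$ in $L^2_{k+1}(\Sigma)$ as $z\to\infty$ (this is where $d_k((A,\Phi^+)|_{\{z\}\times\Sigma},(B,\sigma))\to0$ is used, after absorbing the unit-modulus constant $f_0(z)/|f_0(z)|$ into a constant unitary gauge transformation and noting $d_\Sigma\beta=d_\Sigma\alpha$ on each slice). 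With the sign under which $(B,\sigma)$ minimizes the vortex energy, the model operator is $L\colonequals\Delta_\Sigma+W$, $W\geq 0$, which is \emph{nonnegative with trivial kernel}: $\phi\in\ker L$ has $\int_\Sigma(|\nabla\phi|^2+W\phi^2)=0$, so $\phi$ is constant and vanishes off $Z(\gamma_d)$, hence $\phi\equiv0$ as $\gamma_d\not\equiv0$. Thus $L$ has a positive spectral gap $\delta=\delta(B_0,f)$ and there is no $\Delta_\Sigma$-harmonic slow mode --- in Theorem~\ref{powerlaw} it is precisely the nonzero velocity of the divisor $Z(f)|_{\{z\}\times\Sigma}$ in $\Sym^m\Sigma$ that feeds such a mode. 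A standard Agmon/energy--convexity argument on $\{|z|\geq r\}\times\Sigma$ for $(\Delta_\C+L)\gamma=N$, using $\langle L\psi,\psi\rangle\geq\delta\|\psi\|_{L^2(\Sigma)}^2$ and absorbing the quadratically small $N$ (small since $\gamma\to0$), gives $\|\gamma(z,\cdot)\|_{L^2(\Sigma)}\leq Ce^{-s|z|}$ for any $s<\sqrt{\delta}$, and elliptic bootstrapping on $\Sigma$-slices upgrades this to $L^2_{k+1}(\Sigma)$. Translating back --- after the above gauge transformation the spinor on $\{z\}\times\Sigma$ equals $e^{\gamma(z,\cdot)}\sigma$ and the connection differs from $B$ by $\pm i*_\Sigma d_\Sigma\gamma(z,\cdot)$ (the $\C$-directional connection terms do not restrict to the slice) --- gives $d_k((A,\Phi^+)|_{\{z\}\times\Sigma},(B,\sigma))\leq Ce^{-s|z|}$. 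Finally, $2-2g\leq c_1(S^+)[\Sigma]<0$ forces $g\geq2$, so $\Sigma\not\cong\CP$, and a $2$-dimensional space of holomorphic sections of a line bundle of degree $\leq1$ would give a nonconstant degree-$\leq1$ morphism $\Sigma\to\CP$, which is impossible; hence $H^0(\Sigma,L^+,\bpartial)$ is at most a line when $c_1(L^+)[\Sigma]\in\{0,1\}$, every polynomial map $f$ is then of the form $f_0\gamma_d$, and exponential decay holds for every finite energy monopole.

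I expect the main obstacle to be the passage through the nonlinear equation for $\gamma$: one must use the a priori fact $\gamma(z,\cdot)\to0$ and run the convexity estimate sharply enough that $N=O(\gamma^2)$ is absorbed into the coercive term all the way to the essentially optimal rate $\sqrt\delta$, and one must correctly pin down the model $L$, verify $\ker L=0$, and establish the conformal-factor normal form --- this is where the analysis already in place for Theorem~\ref{powerlaw} is reused. The hypothesis $f=\gamma_d f_0$ enters only to make the $\gamma$-equation $z$-translation invariant and to force $\ker L=0$; everything else is soft or already available.
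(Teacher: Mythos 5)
Your proposal is correct, but it reaches the conclusion by a genuinely different route from the paper. The paper's proof (Theorem \ref{exponential2}) takes as its approximate conformal factor the Taubes solution $\alpha_0$ of the planar vortex equation for $f_0$, so that $\mu(0)=\tfrac12(1-|\eta|^2)(1-|\gamma_d|^2)$; the correction $\alpha$ then solves $(\Delta_\C+\Delta_\Sigma+|\gamma_d|^2)\alpha=k+h(\alpha)$ with an \emph{inhomogeneous} term $k$ whose exponential decay is imported from Jaffe--Taubes (Theorem \ref{JT}), and the decay of $\alpha$ is extracted by convolving with the Bessel kernel of $\Delta_\C+L$ and running the iteration of Lemma \ref{formallemma}. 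You instead subtract the exactly harmonic function $-\log|f_0|$, which for $|z|\geq R$ turns the correction equation into the \emph{homogeneous, $z$-translation-invariant} semilinear equation $(\Delta_\C+L)\gamma=N(\gamma)$ with $L=\Delta_\Sigma+e^{2\alpha_\infty}|\gamma_d|^2$ (your factor $2$ in $W$ is a harmless slip), and then get decay from the spectral gap of $L$ via an Agmon/convexity inequality. This buys you independence from Theorem \ref{JT} and from the planar existence theorem, and it isolates cleanly why the hypothesis $f=\gamma_d f_0$ matters (no inhomogeneous forcing, no divisor-motion mode); what it costs is generality --- the paper's kernel-iteration machinery is set up so that the same Lemma \ref{formallemma}/\ref{formallemma2} pair also delivers the power-law case of Theorem \ref{powerlaw}, whereas your homogeneous reduction is special to the product case. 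The remaining ingredients you invoke --- $\ker L=0$ and discreteness of its spectrum, the uniform bound and decay of the slices needed to start the bootstrap, the identification of the slice data as $(\nabla_{B_0}-i*_\Sigma d_\Sigma\gamma,\ e^{\gamma}\tfrac{f_0}{|f_0|}\gamma_d)$ modulo constant gauge, and the observation that $\dim H^0(\Sigma,L^+,\bpartial)\leq 1$ when $c_1(L^+)[\Sigma]\in\{0,1\}$ and $g\geq 2$ --- are all available in the paper (the operator $T=\Delta_\Sigma+|\gamma_d|^2$ and its invertibility already appear in Lemma \ref{53}), so I see no gap, only the expected care needed to run the convexity estimate up to rate $\sqrt{\lambda_1(L)}-\epsilon$ while absorbing $N=\SO(\gamma^2)$.
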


\Remark The reason to pass to the quotient space $\SB(\Sigma, L^+)$ is to identify $(A,\Phi^+)|_{\{z\}\times \Sigma}$ with $(A,e^{i\theta}\Phi^+)|_{\{z\}\times \Sigma}$ for any $e^{i\theta}\in S^1$. In fact, if we take into account this argument, by imposing a proper gauge fixing condition for $(A,\Phi^+)$, we have for $z=|z|e^{i\theta}\in\C$,
\[
(A,\Phi^+)|_{\{z\}\times \Sigma}\sim e^{i\theta d}(B,\sigma),
\]
as $|z|\to \infty$ where $d=\deg f$. But the decay rate of their difference would depend on the gauge fixing condition. 
\medskip

In view of the previous subsection, we would expect some nice Floer theories to be developed on $Y^*$ when $c_1(L^+)=0,1$. The first reason is that in these cases, bubbles have exponential decay at infinity, as asserted in Theorem \ref{exponentialdecay}.

 The second reason is that we have a natural compactification for these bubbles. Since a degree $d$ polynomial on $\C$ is determined by its zero locus, we only need a compactification for $\Sym^d \C$ modulo translations. 

However, the situation is different when $\dim H^0(\Sigma, L^+,\bpartial_B)\geq 2$. Let us take $\gamma_1,\gamma_2\in H^0$ such that they are linearly independent. Let $t\in\C$ be a complex number and consider the family of sections 
\[
f_t(z)=\gamma_1 z+t\gamma_2. 
\]
The sequence of monopoles that correspond to $(A_0, f_t)$ does not have a good limit in any naive sense. This sequence is constructed by rescaling the $z$-coordinate on $\C$, yet the rescaling process does not preserve the metric.

Therefore, there are several natural questions to be answered based on our work:
\begin{itemize}
\item What is the compactification of the moduli space of finite energy monopoles on $X$ in general, based on Theorem \ref{main}?
\item How to compactify the moduli space of finite energy monopoles on $\R\times Y^*$? When $Y=[0,1]\times \Sigma$ and $Y^*=\R\times \Sigma$, this question is reduced to the previous one. 
\end{itemize}

\subsection{Connection to Previous Work}

This is a good time to recall classification theorems in dimension 2 and draw a comparison. The classical vortex equation on $\C$ was designed for a mathematical model of superconductors, also called the first order Ginzburg-Landau equation. Let $L\to \C$ be the trivial complex line bundle over $\C$. A configuration $(B,\gamma)\in \SC(\C,L)$ consists of a smooth unitary connection $B$ and a smooth section $\gamma$ of $L$. We set $K\equiv -1$ in (\ref{vortex}) and this term is no longer interpreted as the Gaussian curvature:
\begin{equation}\label{vortex0}
\left\{\begin{array}{r}
*iF_B+\half(|\gamma|^2-1)=0,\\
\bpartial_B\gamma=0.
\end{array}
\right.
\end{equation}

The analogous correspondence, established by Taubes \cite{Taubes} for $Y=\C$, states that 
 \begin{theorem}[\cite{Taubes, JT}]\label{1.6}There is an identification between sets
	\[
	\{\text{degree $d$ polynomials on }\C\}-\{0\}/\C^* \leftrightarrow \{\text{Vortices on $\C$ of energy $\pi d$}\}/\SG(\C).
	\]
\end{theorem}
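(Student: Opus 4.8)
\emph{Reduction to a scalar equation.} The plan is to prove the two directions of the correspondence separately, the common tool being the reduction of the vortex equations $(\ref{vortex0})$ to a single semilinear equation on $\C$. If $(B,\gamma)\in\SC(\C,L)$ is a vortex with $\gamma\not\equiv 0$, then $\bpartial_B\gamma=0$ makes $\gamma$ ``holomorphic'', so its zeros are isolated of positive integer multiplicities $n_1,\dots,n_k$ at points $z_1,\dots,z_k$; away from these points one may trivialize $L$ so that the connection form is $-\bpartial\log\gamma$ in its $(0,1)$-part, and a direct computation turns the first equation of $(\ref{vortex0})$ into $\Delta\log|\gamma|^2=|\gamma|^2-1$, while the local model near each $z_j$ shows that $u\colonequals\log|\gamma|^2$ solves, in the distributional sense on all of $\C$,
\[
\Delta u=e^{u}-1+4\pi\sum_{j}n_j\,\delta_{z_j},\qquad u\to 0\ \text{at infinity}.
\]
Conversely, from such a $u$ one recovers $\gamma$ with $|\gamma|^2=e^{u}$ up to a phase (an element of $\SG(\C)$) and then $B$ from $\bpartial_B\gamma=0$. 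Hence the classification, and the bijection, both follow from existence and uniqueness — up to gauge — of a vortex with prescribed zero divisor $D=\sum_j n_j z_j$ of degree $d$; and $\Sym^d\C$ is exactly the set of such $D$, which in turn is $\{\text{degree }d\text{ polynomials}\}\setminus\{0\}$ modulo $\C^*$.

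\emph{Vortex $\Rightarrow$ polynomial.} Given a finite-energy vortex, I would first prove $|\gamma|\to 1$ at infinity: outside a large disk $1-|\gamma|^2$ satisfies a linear equation with a positive zeroth-order coefficient, so a barrier argument against the maximum principle gives $|\gamma|^2\to 1$, in fact exponentially. Consequently $\gamma$ has only finitely many zeros, $u=\log|\gamma|^2$ solves the scalar equation above, and the vortex number $d=\sum_j n_j$ equals $\tfrac1{2\pi}\int_\C iF_B$. Passing to the zero divisor $Z(\gamma)$, which is gauge invariant and of degree $d$, and using that every effective divisor of degree $d$ on $\C$ is the divisor of a polynomial of degree $d$, unique up to $\C^*$, assigns to $(B,\gamma)$ a well-defined class in $\{\text{degree }d\text{ polynomials}\}\setminus\{0\}/\C^*$. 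Finally the Bogomolny identity — completing the square in $\E(B,\gamma)=\int_\C|F_B|^2+|\nabla_B\gamma|^2+\tfrac14(|\gamma|^2-1)^2$ and integrating the curvature term, using $\bpartial_B\gamma=0$ to dispose of the cross term — yields $\E(B,\gamma)=\pi d$ and, en passant, that a vortex exists only for $d\geq 0$.

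\emph{Polynomial $\Rightarrow$ vortex (existence and uniqueness).} Fix an effective divisor $D$ of degree $d$ and a smooth background $u_0$ with $\Delta u_0=4\pi\sum_j n_j\delta_{z_j}-g$, where $g\geq 0$ is smooth with $\int_\C g=4\pi d$ and $u_0\to 0$ at infinity (for instance $u_0=\sum_j n_j\log\frac{|z-z_j|^2}{1+|z-z_j|^2}$). Writing $u=u_0+v$, the equation becomes $\Delta v=e^{u_0+v}-1+g$ with $v\to 0$, and I would produce $v$ as the unique minimizer of the (renormalized, strictly convex, coercive) functional
\[
J(v)=\int_\C\Big(\tfrac12|\nabla v|^2+e^{u_0}\big(e^{v}-1-v\big)+(e^{u_0}-1+g)\,v\Big)\,dvol_\C
\]
on a suitable completion of $C_c^\infty(\C)$: strict convexity forces uniqueness, the direct method of the calculus of variations gives existence, and elliptic bootstrapping gives smoothness of $v$ and the expected logarithmic behaviour of $u$ at $D$. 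A separate barrier argument establishes $v\to 0$ and the exponential decay of $1-|\gamma|^2$, hence finite energy. Reassembling $(B,\gamma)$ and invoking uniqueness, each $D$ determines a vortex up to $\SG(\C)$; with the easy direction and the identifications of the preceding paragraphs this gives the bijection, energies matching by the Bogomolny identity.

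\emph{Main obstacle.} The heart of the matter is the analysis of the scalar equation on the \emph{non-compact} plane: selecting function spaces in which $J$ is simultaneously coercive and weakly lower semicontinuous in spite of the non-integrable constant ``$-1$'' and the singular background $u_0$ (this is where the renormalization in $J$ and Hardy-type control near $D$ enter), and then upgrading the weak minimizer to a classical solution with the sharp decay $|\gamma|^2-1=O(e^{-c|z|})$. The remaining ingredients — the local structure of $\gamma$ near its zeros, the Bogomolny energy identity, and the translation of the residual phase freedom into the $\SG(\C)$-action — are comparatively routine.
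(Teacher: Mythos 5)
Your proposal is correct in outline, but it follows the original Taubes/Jaffe--Taubes route rather than the one this paper actually takes in Appendix \ref{C}. You reduce to the scalar equation $\Delta u=e^{u}-1+4\pi\sum_j n_j\delta_{z_j}$ for $u=\log|\gamma|^2$ and minimize the convex renormalized functional $J$ on (essentially) $L^2_1(\C)$; uniqueness then comes for free from strict convexity, and the hard step --- which you correctly identify --- is coercivity and weak lower semicontinuity of $J$ on the noncompact plane with the degenerate weight $e^{u_0}$. The paper instead stays gauge-theoretic: it writes the candidate solution as $e^{\alpha}\cdot(A_1,\Phi_1)$ with the same background ($|\Phi_1|^2=e^{u_0}$ in your notation) and minimizes the \emph{non-convex} functional $\E_g(\alpha)=\tfrac12\|\mu(\alpha)-g\|_{L^2}^2$ over $L^2_2(\C)$, where $\mu(\alpha)=\Delta\alpha+\tfrac12|\Phi_1|^2(e^{2\alpha}-1)$; critical points are shown to be exact zeros of $\mu-g$ via self-adjointness of the linearization and integration by parts, and the role of your coercivity is played by the a priori estimate of Theorem \ref{apriori2}, proved from the Bogomol'nyi identity, a good-set/bad-set decomposition of $\C$, and the uncertainty-principle Lemma \ref{ABlemma}. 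What your route buys is uniqueness by convexity and the classical pedigree; what the paper's route buys is that working in $L^2_2$ (where Trudinger's inequality immediately gives $e^{\alpha}-1\in L^p$) sidesteps the delicate function-space issues you flag, and --- the point of including Appendix \ref{C} at all --- the same scheme generalizes to the four-dimensional problem on $\C\times\Sigma$ (Theorem \ref{apriori3}), where the convex scalar reduction is unavailable. Two small cautions: your forward direction quietly relies on $|\gamma|\le 1$ and the pointwise convergence $|\gamma|\to 1$, which is exactly the content of Theorem \ref{JT} and needs the full Jaffe--Taubes argument rather than only a barrier remark; and you should fix a sign convention for $\Delta$, since the paper uses the geometer's Laplacian $-\partial_u^2-\partial_v^2$.
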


When $Y=\Sigma$ is a compact Riemann surface, let $L$ be a complex line bundle over $\Sigma$ of degree $d:=c_1(L)$, with a Hermitian metric. In this case, the equation (\ref{vortex0}) is subject to a solvability constraint and we have a similar correspondence established by Bradlow \cite{Bradlow}:
 \begin{theorem}[{\cite[Theorem 4.3]{Bradlow}}]\label{1.8}
 When the solvability constraint $0\leq d<Vol(\Sigma)/4\pi$ is satisfied, there is a bijection between sets
 \[
 \{(\bpartial_B,f): f\neq 0\in H^0(Y,L, \bpartial_B)\}/\SG_\C(\Sigma)\leftrightarrow \{\text{Vortices on $\Sigma$ of energy $\pi d$}\}/\SG(\Sigma).
 \]
 \end{theorem}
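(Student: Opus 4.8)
The plan is to exhibit the two maps realizing the bijection and check they are mutually inverse; the analytic heart of the matter is the unique solvability of a Kazdan--Warner type equation, following Bradlow's argument, which parallels Taubes' proof of Theorem~\ref{1.6}. \textbf{From a vortex to a pair.} Given a solution $(B,\gamma)$ of $(\ref{vortex0})$ on $\Sigma$, the equation $\bpartial_B\gamma=0$ says that $\gamma\in H^0(\Sigma,L,\bpartial_B)$, where $\bpartial_B$ is the holomorphic structure cut out by the $(0,1)$-part of $\nabla_B$. Integrating the first equation of $(\ref{vortex0})$ over $\Sigma$ and using $\int_\Sigma *iF_B\,dvol_\Sigma=2\pi c_1(L)[\Sigma]=2\pi d$ gives $\int_\Sigma|\gamma|^2=\Vol(\Sigma)-4\pi d$, which is strictly positive under the hypothesis $d<\Vol(\Sigma)/4\pi$; hence $\gamma\not\equiv 0$ and $(\bpartial_B,\gamma)$ is admissible. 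The assignment $(B,\gamma)\mapsto(\bpartial_B,\gamma)$ is equivariant for the inclusion $\SG(\Sigma)\hookrightarrow\SG_\C(\Sigma)$, so it descends to a well-defined map between the quotient spaces in the statement.

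\textbf{From a pair to a vortex.} Conversely, fix an admissible pair $(\bpartial_B,f)$, $f\ne 0$, with Chern connection $\nabla_B$. The $S^1$-part of a complex gauge transformation acts on pairs and on configurations $(B,\gamma)$ by the same formula, so it is enough to produce a conformal factor $e^{\alpha}$, $\alpha\colon\Sigma\to\R$, so that $e^\alpha\cdot(\nabla_B,f)$ solves $(\ref{vortex0})$. Using that $e^\alpha$ changes the connection by the imaginary form $i*d\alpha$ and scales $|f|^2\mapsto e^{2\alpha}|f|^2$, the first equation of $(\ref{vortex0})$ for $e^\alpha\cdot(\nabla_B,f)$ becomes the semilinear elliptic equation
\[
\Delta\alpha+\tfrac12 e^{2\alpha}|f|^2=h_B,\qquad h_B\colonequals\tfrac12-*iF_B\in C^\infty(\Sigma),
\]
with $\Delta=d^*d$, where $\int_\Sigma h_B\,dvol_\Sigma=\tfrac12\Vol(\Sigma)-2\pi d>0$ precisely when the solvability constraint holds. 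Integrating the equation shows that $\int h_B>0$ is also \emph{necessary}, since the left side integrates to $\tfrac12\int e^{2\alpha}|f|^2>0$; together with $d\ge0$, automatic once $f\ne0$, this is the origin of the constraint $0\le d<\Vol(\Sigma)/4\pi$.

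\textbf{Solving the equation, and conclusion.} The displayed equation has a unique smooth solution $\alpha$, obtained by the direct method applied to the strictly convex, coercive functional
\[
J(\alpha)=\int_\Sigma\Big(\tfrac12|d\alpha|^2+\tfrac14 e^{2\alpha}|f|^2-h_B\,\alpha\Big)\,dvol_\Sigma
\]
on $L^2_1(\Sigma)$: the second variation $\int|d\phi|^2+e^{2\alpha}|f|^2\phi^2$ vanishes only for $\phi\equiv0$ since $f\not\equiv0$, giving strict convexity; decomposing $\alpha$ into its mean and mean-zero parts, the exponential term controls $\alpha\to+\infty$, the term $-\int h_B\,\alpha$ (using $\int h_B>0$) controls $\alpha\to-\infty$, and the Dirichlet term with a Moser--Trudinger inequality controls the mean-zero part, so $J$ is coercive and weakly lower semicontinuous, and elliptic bootstrapping makes the minimizer smooth. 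Since $\alpha$ is \emph{unique}, the vortex $e^\alpha\cdot(\nabla_B,f)$ is well-defined modulo $\SG(\Sigma)$ independently of the representative of $[(\bpartial_B,f)]$. The two maps are mutually inverse: vortex $\to$ pair $\to$ vortex recovers the original vortex because $\alpha\equiv0$ solves the equation for the pair attached to an honest vortex and the solution is unique; pair $\to$ vortex $\to$ pair returns $(\bpartial_B-\bpartial\alpha,\,e^\alpha f)$, which is $\SG_\C(\Sigma)$-equivalent to $(\bpartial_B,f)$. Finally, the energy of each vortex equals $\pi d$ by the Bogomolny/Bradlow identity: completing the square in the Ginzburg--Landau energy and using $(\ref{vortex0})$ leaves only a topological term proportional to $c_1(L)[\Sigma]=d$.

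\textbf{Main obstacle.} The substantive step is the PDE analysis of the exponential-nonlinearity equation above — existence, uniqueness and regularity, and pinning down that the role of the solvability constraint is exactly to guarantee $\int_\Sigma h_B>0$, equivalently the coercivity of $J$. Everything else — the equivariance, the descent to quotients, the inverse property, and the energy computation — is formal once the unique solvability of the Kazdan--Warner equation is in hand.
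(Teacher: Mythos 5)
Your proof is correct, but it follows a genuinely different route from the one the paper actually gives (Appendix \ref{B}, Theorem \ref{B1}). You run the classical Kazdan--Warner argument, which is essentially Bradlow's original proof: after reducing to the conformal gauge equation $\Delta\alpha+\tfrac12 e^{2\alpha}|f|^2=h_B$, you minimize the strictly convex functional $J(\alpha)=\int_\Sigma\tfrac12|d\alpha|^2+\tfrac14 e^{2\alpha}|f|^2-h_B\alpha$ on $L^2_1(\Sigma)$, with coercivity coming from $\int_\Sigma h_B>0$ (this is exactly where the constraint $d<Vol(\Sigma)/4\pi$ enters) together with a Jensen-type lower bound $\int_\Sigma e^{2\alpha_0}|f|^2\geq c_0>0$ for mean-zero $\alpha_0$, which uses the integrability of $\log|f|^2$; you should make that last point explicit, and note that the Moser--Trudinger inequality is what makes the exponential term finite and weakly continuous on $L^2_1$ in dimension two. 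The paper instead deliberately rehearses Garc\'{i}a-Prada's gauge-theoretic proof: it minimizes the Yang--Mills--Higgs energy $\E(\alpha)=\tfrac12\|\mu(\alpha)\|_{L^2}^2$ over the constraint set $H_c=\{\alpha:\tfrac12\int_\Sigma|\Phi_0|^2e^{2\alpha}=c\}$, where the work is the a priori estimate (Theorem \ref{apriori1}) bounding $\|\alpha\|_{L^2_2}$ by $\E(\alpha)$, and critical points are shown to satisfy $\mu(\alpha)=0$ via self-adjointness of the linearized operator. Your approach is more elementary and gives uniqueness for free from strict convexity; the paper's choice is not an accident of taste, since the convexity/coercivity structure of $J$ is unavailable on the non-compact four-manifold $X=\C\times\Sigma$, and the Garc\'{i}a-Prada scheme is precisely the one that is upgraded in Sections \ref{4} and \ref{sec6} (Theorem \ref{apriori3}). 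Everything else in your write-up --- the forward map, equivariance, the mutual-inverse check via uniqueness of $\alpha$, and the Bogomolnyi energy identity --- matches the paper's treatment.
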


In both cases, sets on the left are identified with the space of effective divisors of degree $d$ and isomorphic to $\Sym^d\Sigma$ and $\Sym^d\C$ respectively.

In fact, Bradlow \cite{Bradlow} defined the generalized vortex equation for any closed K\"{a}hler manifold $M$ and any Hermitian vector bundle $E^n\to M$: 
\begin{equation*}
\left\{\begin{array}{l}
i\Lambda F_B+\half \gamma\otimes \gamma^*=\half I_E\in \End(E,E)\\
\bpartial_B^2=0,\ \ \bpartial_B\gamma=0.
\end{array}
\right.
\end{equation*}
where $B\in \A(E)$ is a unitary connection and $\gamma\in \Gamma(M,E)$. The second and the third equations state that $\bpartial_B$ is integrable and $\gamma$ is holomorphic with respect to $B$. In light of Theorem \ref{coro}, Theorem \ref{main} also gives a classification for vortices on $X=\C\times \Sigma$ when $E$ is a line bundle. For details, see Section \ref{2.2}.
\smallskip

For both Theorem \ref{1.6} and \ref{1.8}, backward maps are easier to define, while constructing vortices out of holomorphic sections is hard. In \cite{Bradlow}, Bradlow proved the general existence of solutions using Kazdan-Warner's theorem \cite{KW} for any closed K\"{a}hler manifold when $\dim E=1$. However, we do not have a direct generalization of this theorem for our non-compact 4-manifold $X$. Several different proofs \cite{Bradlow2, Garcia2} to Theorem \ref{1.8} were found later. Finally, a direct gauge-theoretic proof was discovered by Garcia-Prada \cite{Garcia} where variational principle was applied to the Yang-Mills-Higgs functional. Appendix \ref{B} contains a brief review of his approach. Many of his insights have their roots in symplectic geometry, but we will not emphasis this perspective. Using his method, we will recover Taubes' theorem in Appendix \ref{C}. In \cite{Taubes}, Taubes established his theorem using variational principle on the Sobolev space $L^2_1(\C)$. Since we will work with $L^2_2(\C)$, our proof will become simpler. 

In fact, when $\Sigma$ has constant Gaussian curvature $K\equiv -1$ and $(L^+,\bpartial)$ is the trivial holomorphic line bundle over $\Sigma$, $H^0(\Sigma,L^+,\bpartial)=\C$ and we will recover Taubes' theorem from Theorem \ref{main}. 

Finally, for vortices on $\C$, there is an exponential decay result established by Taubes \cite{Taubes-thesis}:
\begin{theorem}[{\cite[P.59, Theorem 1.4]{JT}}]\label{JT}
	Let $(B,\gamma)$ be a smooth finite energy solution of the vortex equation $(\ref{vortex0})$. Given any $\epsilon>0$, there exists $M=M(\epsilon,(B,\gamma))<\infty$ such that
	\[
	0\leq * i F_B=\half(1-|\gamma|^2)<Me^{-(1-\epsilon)|z|}.
	\]
\end{theorem}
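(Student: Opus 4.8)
The plan is to run the classical maximum-principle argument of Jaffe--Taubes, which I now outline. Throughout write $w:=1-|\gamma|^2$, so that the first equation of $(\ref{vortex0})$ reads $w=2*iF_B$ and the theorem is the assertion $0\le w\le 2Me^{-(1-\epsilon)|z|}$. First I would establish the pointwise bound $|\gamma|\le 1$. From $\bpartial_B\gamma=0$ together with the curvature equation, the Weitzenb\"ock identity on $\C$ gives
\[
\Delta|\gamma|^2 \;=\; 2|\nabla_B\gamma|^2-(1-|\gamma|^2)|\gamma|^2,\qquad \Delta=\partial_x^2+\partial_y^2,
\]
which is equivalent (away from zeros of $\gamma$) to the familiar scalar vortex equation $\Delta\log|\gamma|^2=|\gamma|^2-1$. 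Finiteness of the energy forces $|\gamma|^2\to 1$ at infinity, so if $|\gamma|^2$ exceeded $1$ anywhere then on a nonempty set $\{|\gamma|^2>1\}$ the function $|\gamma|^2$ would be strictly subharmonic while being $\le 1$ on the boundary and at infinity, contradicting the maximum principle. Hence $|\gamma|\le 1$, i.e.\ $0\le w=2*iF_B$, which is the first half of the statement.

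Next, finiteness of the vortex energy together with interior $L^p$ estimates for $(\ref{vortex0})$ and Sobolev embedding gives $w$ and $|\nabla_B\gamma|\to 0$ uniformly as $|z|\to\infty$ (otherwise a sequence of unit disks escaping to infinity would each carry a fixed positive amount of energy); in particular, for every $\delta\in(0,1)$ the sublevel set $\{w<\delta\}$ contains an exterior region $\{|z|>R_\delta\}$. The key step is then to produce a differential inequality \emph{with the correct sign}. The ``obvious'' consequence of the identity above, $\Delta w = w-w^2-2|\nabla_B\gamma|^2\le w$, goes the wrong way for a decay estimate, so one differentiates the equations once more and applies Weitzenb\"ock a second time to get
\[
\Delta w \;=\; w-w^2-2|\nabla_B\gamma|^2,\qquad
\Delta|\nabla_B\gamma|^2 \;\ge\; \bigl(2-Cw\bigr)|\nabla_B\gamma|^2
\]
for a universal constant $C>0$ (using $|\gamma|\le1$). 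Combining these, for $A$ large and $\delta$ small the function $\Phi:=w+A|\nabla_B\gamma|^2\ge 0$ satisfies $\Delta\Phi\ge(1-\delta)\Phi$ on $\{w<\delta\}\supseteq\{|z|>R_\delta\}$, and $\Phi\to 0$ at infinity.

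Finally I would compare $\Phi$ on the exterior domain $\{|z|>R_\delta\}$ with an appropriate multiple of the radial supersolution $K_0\!\bigl(\sqrt{1-\delta}\,|z|\bigr)$ of $\Delta-(1-\delta)$, fixed so as to dominate $\Phi$ on the inner circle, with no boundary contribution from infinity since $\Phi\to 0$ there. The weak maximum principle on the exterior domain then yields $\Phi(z)\le M_\delta e^{-\sqrt{1-\delta}\,|z|}$ outside a compact set, hence $0\le 2*iF_B=w\le\Phi\le M_\delta e^{-\sqrt{1-\delta}\,|z|}$ on all of $\C$ after enlarging the constant on the compact remainder. Since $\delta>0$ is arbitrary and $\sqrt{1-\delta}\to 1$, given $\epsilon>0$ I choose $\delta$ with $\sqrt{1-\delta}>1-\epsilon$ and obtain the theorem.

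The main obstacle is the second Weitzenb\"ock computation in the third step, together with the bookkeeping that forces the coefficient of $\Phi$ to equal $1-\delta$ with $\delta$ arbitrarily small: equivalently, one must exploit that the linearized vortex operator has ``mass'' $1$ on $w$ but strictly larger mass on $|\nabla_B\gamma|$, so that the gradient term $2|\nabla_B\gamma|^2$ appearing in $\Delta w = w-w^2-2|\nabla_B\gamma|^2$ — which a priori obstructs any decay estimate — in fact decays faster than $w$ and can be absorbed (one could equally well first prove $|\nabla_B\gamma|^2\le M_\delta e^{-\sqrt{2-\delta}|z|}$ on its own and feed that into the equation for $w$). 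A secondary technical point is justifying the maximum principle on a non-compact exterior domain, but here this is routine because all the relevant quantities vanish at infinity, and it is precisely the place where finiteness of the energy is used.
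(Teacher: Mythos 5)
The paper offers no proof of this statement: it is imported verbatim from Jaffe--Taubes \cite{JT} and used as a black box in Section \ref{7}, so there is nothing in-paper to compare your argument against. Judged on its own, your reconstruction is essentially the classical Jaffe--Taubes maximum-principle proof and is correct, but the inequality you flag as the main obstacle, $\Delta|\nabla_B\gamma|^2\ge(2-Cw)|\nabla_B\gamma|^2$, is more delicate than the outline suggests and is exactly where the argument could silently fail. Differentiating the equations once more yields, schematically,
\[
\Delta|\nabla_B\gamma|^2=(\text{nonnegative})+c_1\bigl|\langle\nabla_B\gamma,\gamma\rangle\bigr|^2-c_2\,w\,|\nabla_B\gamma|^2,
\]
with \emph{no} zeroth-order mass term: the curvature contribution $-c_2\,w\,|\nabla_B\gamma|^2$ vanishes as $w\to0$. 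The mass is recovered only from the middle term, using that $L$ is a line bundle so that Cauchy--Schwarz is a fiberwise \emph{equality}, $|\langle\nabla_B\gamma,\gamma\rangle|^2=|\gamma|^2|\nabla_B\gamma|^2=(1-w)|\nabla_B\gamma|^2$; if one merely discards that term as nonnegative, the inequality degenerates to $\Delta|\nabla_B\gamma|^2\ge-c_2\,w\,|\nabla_B\gamma|^2$ and the comparison argument collapses. With that identity in hand, the barrier $w+A|\nabla_B\gamma|^2$ and the exterior comparison with $K_0(\sqrt{1-\delta}\,|z|)$ go through as you describe. You might also note that for the precise conclusion quoted here (decay of $1-|\gamma|^2$ alone) Jaffe--Taubes can bypass the gradient term entirely: outside a disk containing the finitely many zeros of $\gamma$, the function $u=-\log|\gamma|^2\ge0$ satisfies the autonomous equation $\Delta u=1-e^{-u}\ge(1-\delta)u$ on $\{u<\delta\}$ and dominates $1-|\gamma|^2$; your route through $|\nabla_B\gamma|^2$ is what one needs anyway to obtain the companion decay of $|\nabla_B\gamma|$.
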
 

Our proof of Theorem \ref{powerlaw} and Theorem \ref{exponentialdecay} will rely on this result. Our results, however, provide another perspective for Theorem \ref{JT}: we have exponential decay for vortices on $\C$ because for nonzero constant functions on $\Sigma$, their zero loci do not change among different fibers (since they are  empty). 
\medskip

The same classification problem is also asked for the anti-self-dual connections on the trivial $SU(2)$-bundle over $X$. Wehrheim proved an energy identity in \cite{Wehrheim}. She showed that the energy of an anti-self-dual connection, if finite, must be an integer after suitable normalization. A general classification result is still unknown. In \cite{Biquard-Jardim}, Biquard and Jardim studied finite energy instantons on $\T^2\times \C$ assuming quadratic curvature decay and proved the correspondence with algebraic objects. 

\subsection{Strategy of Proof} This paper contains several independent proofs and they could be read separately:

In Section \ref{2}, we will cover some preliminaries and prove the positivity of the analytic energy $\E_{an}$. This is not so obvious at the first glance because the Gaussian curvature shows up in (\ref{an}) and it is negative in general. As an application, we will prove Theorem \ref{coro}. In Section \ref{2.2}, we will summarize some useful facts about the vortex equation on $X$, which will be the foundation of subsequent sections.

In Section \ref{3}, we will establish the first part of Theorem \ref{main}: ``Vortices$\Rightarrow$ Polynomials". By the compactness Lemma \ref{boundedness} in Section \ref{2}, when a solution $(A,\Phi)$ is restricted to fibers $\{z\}\times\Sigma$, a subsequence will converge to a vortex on $\Sigma$. The main obstacle is to show that this limit is independent of subsequences and hence $(A,\Phi)|_{\{z\}\times\Sigma}\to (B_0,\gamma)$ as $z\to\infty$ for a fixed solution $(B,\gamma)$ to (\ref{vortex}). For this part, we will borrow ideas from \cite{Wehrheim}. 

Section \ref{4.0} is devoted to the case when $c_1(S^+)=0$. It is a simple application of maximum principle.

In section \ref{4}, we prove the second half of Theorem \ref{main}, ``Polynomials$\Rightarrow$ Vortices", by following Garcia-Prada's approach \cite{Garcia}. Our existence proof of monopoles on $X$, to a large extent, is an enhanced version of Appendix \ref{C}. To find the correct conformal factor $\alpha$, we start with an initial guess $\alpha_0$ so that
\[
(A_1,\Phi_1)\colonequals e^{\alpha_0}\cdot (\nabla_{B_0}+\frac{\partial}{\partial u}+\frac{\partial}{\partial v}, f)
\]
has finite analytic energy. A second conformal factor $\alpha_1$ is applied to minimize $\E(\alpha_1)\colonequals\E_{an}(e^{\alpha_1}\cdot(A_1,\Phi_1))$. The most technical part of the proof is an a priori estimate which allows us to control $L^2_2$ norm of $\alpha_1$ in terms of  $\E(\alpha_1)$. Thus, there is a weakly convergent subsequence in $\{\alpha_n\}$ if $\lim\E(\alpha_n)=\inf \E(\alpha)$.  We will also establish the smoothness and uniqueness of the solution.

Section \ref{sec6} is devoted to the proof of the technical estimate. It is accomplished in two steps: using the energy equation to control $\|\Delta_X\alpha\|_2$ and estimating $\|\alpha\|_2$ by decomposing $\alpha$ into high frequency and low frequency modes. Since the dimension is higher, each step here is more technical than the proof in Appendix \ref{C}. 

In Section \ref{7}, we establish the power law and exponential decay of finite energy monopoles in different cases. We prove Theorem \ref{powerlaw} and Theorem \ref{exponentialdecay}. The idea is to construct an approximating solution $\alpha_0$ using Theorem \ref{JT} and to show the correction term $\alpha_1=\alpha-\alpha_0$ has desired decay. Some elementary PDE lemmas will be used here. In the simplest form, these lemmas state that for a suitable function $u\in\SC^\infty(\R^2,\R)$ satisfying
\[
(\Delta_{\R^2}+1)u(z)=k(z)+(h\circ u)(z)
\]
where $h:\R\to\R$ is a function such that $|h(x)|<C|x|^q$ for some $q>1$ and $C>0$, the decay (power law or exponential) of the function $k: \R^2\to \R$ will produce roughly the same decay for $u$. For details, see Lemma \ref{formallemma} and Lemma \ref{formallemma2}. 

Some analytic results are collected in Appendix \ref{A}. We state a weak version of Trudinger's inequality that is used elsewhere in this paper.

\textbf{Acknowledgments.} The author is extremely grateful to his advisor, Tom Mrowka, for suggesting this project and for his invaluable support. The author would like to thank Ao Sun and Jianfeng Lin for valuable discussions and comments. This material is based upon work supported by the National Science Foundation under Grant No. 1808794. 

\section{Monopoles $\Leftrightarrow$ Vortices}\label{2}

In this section, we prove some basic properties of finite energy monopoles. In Section \ref{2.1},  we describe \spinc structures on $X=\C\times\Sigma$ and establish the positivity of the analytic energy $\E_{an}$. In Theorem \ref{coro}, it is shown that monopoles with $\E_{an}<\infty$ on $X$ are degenerate, in the sense that either $\Phi_+\equiv 0$ or $\Phi_-\equiv 0$. Once this reduction to vortices is made, we will not work with \spinc structures any longer. In Section \ref{2.2}, we list several useful facts about the vortex equation on $X$  which form the foundation of later sections. 

The same reduction was also proved for the monopole equations on a Seifert-fibered 3-manifold $Y$ and $\R\times Y$ in \cite{Mrowka-Ozsvath-Yu}. Our proof is parallel to the vanishing spinor argument in \cite[Theorem 4 \& Proposition 6.5]{Mrowka-Ozsvath-Yu}.

\subsection{Preliminaries}\label{2.1}

Since $X=\C\times \Sigma$ is a complex manifold, it is endowed with the complex orientation. A \spinc structure of $X$ can be described concretely. The decomposition $S^+=L^+\oplus L^-$ is parallel, so any \spinc connection $A$ must split as 
\[
\nabla_A=\begin{pmatrix}
\nabla_{A_+} & 0 \\
0 & \nabla_{A_-}\\
\end{pmatrix},
\]
where $A_\pm$'s are unitary connections of $L_\pm$. Let $z=u+iv$ be the coordinate function on $\C$. The Clifford multiplication $\rho=\rho_4: T^*X\to \Hom(S,S)$ can be constructed by setting: 
\begin{eqnarray*}
\rho_4(du)=\begin{pmatrix}
0& -id\\
id & 0\\
\end{pmatrix},\ 
\rho_4(dv)=\begin{pmatrix}
0& \sigma_1\\
\sigma_1 & 0\\
\end{pmatrix} :\ S^+\oplus S^-\to S^+\oplus S^-,
\end{eqnarray*}
where $\sigma_1=\begin{pmatrix}
i & 0\\
0 & -i\\
\end{pmatrix}: S^+\to  S^+$ is the first Pauli matrix.  The bundle $L^-$ is isomorphic to $L^+\otimes \bigwedge^{0,1}\Sigma$ and under this identification,  
\[
\rho_3(w)\colonequals\rho_4(du)^{-1}\cdot\rho_4(w)= \begin{pmatrix}
0 & -\iota(\sqrt{2}w^{0,1})\ \cdot \\
\sqrt{2}w^{0,1}\otimes \cdot  & 0\\
\end{pmatrix}: S^+\to S^+,
\]
for any $x\in \Sigma$ and $w\in T_x\Sigma$. This relation defines $\rho_4(w)$. $\rho_3$ is used to denote the 3-dimensional Clifford multiplication.
\medskip

We regard $L^+$ and $L^-$ as bundles on $\Sigma$, and they pull back to line bundles over $X$ via the projection map $X\to \Sigma$. Choose a unitary connection $B_+$ on $L^+\to \Sigma$. $B_+$  and the Levi-Civita connection on $\bigwedge^{0,1}\Sigma$ induce a unitary connection $B_-$ on the line bundle $L^-=L^+\otimes \bigwedge^{0,1}\Sigma$. We obtain a background connection $A_0$ on $S^+$ by the setting
\[
\nabla_{A_0}=\nabla_{B_0}+\frac{d}{du}+\frac{d}{dv},
\]
where $B_0=(B_+,B_-)$ is the unitary connection on $S^+\to \Sigma$. One can easily check that $A_0$ is a \spinc connection. Any other \spinc connection $A$ differs from $A_0$ by an imaginary 1-form $a\in \Gamma(X,iT^*X)$. Their curvature tensors are related by
\[
F_A=F_{A_0}+da\otimes \Id. 
\]

Using the product structure on $X$,  the covariant derivative $\nabla_A=(\nabla^\C_A,\nabla_A^\Sigma)$ is decomposed into $\C$-direction part and $\Sigma$-direction part. The curvature tensor $F_A$ is decomposed accordingly as:
\[
F_A=F_A^\Sigma+F_A^\C+F^m_A,
\]
where $F_A^m$ is the mixed term. Let $A^t$ be the induced connection on $\bigwedge^2 S^+=L^+\otimes L^-$. A Similar decomposition applies to the curvature form $F_{A^t}$ of $A^t$:  
 \[
F_{A^t}=F_{A^t}^\Sigma dvol_\Sigma+ F_{A^t}^\C dvol_\C+F_{A^t}^m,
\]
where $F_{A^t}^m\in \Gamma(X,i\Omega^1(\C)\wedge\Omega^1(\Sigma))$. Our description of $F_A$ then shows
\begin{equation}\label{22}
F^m_A=\half F_{A^t}^m\otimes \Id,
\end{equation}
and 
\begin{equation}\label{23}
F^\Sigma_A=\begin{pmatrix}
F_{A_+}^\Sigma & 0 \\
0 & F_{A_-}^\Sigma
\end{pmatrix}dvol_{\Sigma}=\begin{pmatrix}
\half F_{A^t}^\Sigma+\frac{i}{2} K& 0 \\
0 & \half F_{A^t}^\Sigma-\frac{i}{2} K
\end{pmatrix}dvol_{\Sigma}.
\end{equation}

In particular, we obtain that
\begin{equation}\label{chernclass}
c_1(S^+)=c_1(\bigwedge\nolimits^{\!2}  S^+)[\Sigma]=2(c_1(L^+)+1-g)=2(c_1(L^-)-1+g).
\end{equation}

To prove Theorem \ref{coro}, we need a more useful expression of $\E_{an}$. The following lemma establishes the positivity of the analytic energy:
\begin{lemma}\label{positivity}
Over each fiber $\{z\}\times \Sigma$, we have energy identity:
\begin{align*}
\int_\Sigma \frac{1}{4}|F_{A^t}|^2+|\nabla_A\Phi|^2+\frac{1}{4}|\Phi|^4+ \frac{K}{2}|\Phi|^2&=\int_\Sigma \frac{1}{4}|F_{A^t}^\C|^2+\frac{1}{4} |F_{A^t}^m|^2+|\nabla_A^\C\Phi|^2\\
&+\int_\Sigma |\Phi_+|^2|\Phi_-|^2+|D_A^\Sigma\Phi|^2\\
&+\int_\Sigma \frac{1}{4}|iF_{A^t}^\Sigma+|\Phi_+|^2-|\Phi_-|^2|^2,
\end{align*}
where $D_A^\Sigma=\sum_{i=1,2}\rho_3(e_i)\nabla_{A,e_i}$ is the Dirac operator on $\Sigma$. Here, $\{e_1,e_2\}$ is any orthonormal frame at some point $p\in \Sigma$. 
\end{lemma}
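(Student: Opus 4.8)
The plan is to localize the identity to the fibers $\{z\}\times\Sigma$ and, after peeling off the $\C$–directions with the product structure, reduce everything to the two–dimensional $\mathrm{spin}^c$ Weitzenb\"och (Lichnerowicz) formula. Since the metric on $X=\C\times\Sigma$ is a product and the splitting $S^+=L^+\oplus L^-$ is parallel, the pointwise norms decompose orthogonally as
\[
|\nabla_A\Phi|^2=|\nabla_A^\C\Phi|^2+|\nabla_A^\Sigma\Phi|^2,\qquad |F_{A^t}|^2=|F_{A^t}^\Sigma|^2+|F_{A^t}^\C|^2+|F_{A^t}^m|^2,
\]
where in the second identity I use that $dvol_\Sigma$, $dvol_\C$ have unit pointwise norm and that the $\Sigma\wedge\Sigma$, $\C\wedge\C$, and mixed parts of a $2$–form are mutually orthogonal (and $|F_{A^t}^\Sigma|^2=|iF_{A^t}^\Sigma|^2$ since $F_{A^t}^\Sigma$ is imaginary). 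Hence it suffices to prove, on each fiber,
\[
\int_\Sigma|\nabla_A^\Sigma\Phi|^2+\tfrac{K}{2}|\Phi|^2=\int_\Sigma|D_A^\Sigma\Phi|^2+\tfrac12 iF_{A^t}^\Sigma\bigl(|\Phi_+|^2-|\Phi_-|^2\bigr),
\]
since adding $\int_\Sigma\tfrac14|F_{A^t}|^2+\tfrac14|\Phi|^4$ to both sides and inserting the orthogonal splittings then produces the two sides of the lemma, once a square is completed.

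For this fiber identity I invoke the Lichnerowicz formula in dimension two. Restricted to $\{z\}\times\Sigma$, the operator $\nabla_A^\Sigma$ is the covariant derivative of the $\mathrm{spin}^c$ connection that $A$ induces on $L^+\oplus L^-$ over $\Sigma$ (the Levi--Civita connection on $\bigwedge^{0,1}\Sigma$ being built into $B_-$), and $D_A^\Sigma=\sum_i\rho_3(e_i)\nabla_{A,e_i}$ is the associated $\mathrm{spin}^c$ Dirac operator of the surface; it is formally self–adjoint since each $\rho_3(e_i)$ is skew–Hermitian, so $(D_A^\Sigma)^{*}D_A^\Sigma=(D_A^\Sigma)^2$. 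The standard $\mathrm{spin}^c$ Weitzenb\"och identity gives
\[
(D_A^\Sigma)^2=(\nabla_A^\Sigma)^{*}\nabla_A^\Sigma+\tfrac{s_\Sigma}{4}+\tfrac12\rho_3\bigl(F_{A^t}^\Sigma\,dvol_\Sigma\bigr),
\]
with $s_\Sigma=2K$ the scalar curvature of $\Sigma$ and $F_{A^t}^\Sigma\,dvol_\Sigma$ the $\Sigma$–component of the curvature of $A^t$ on $\bigwedge^2S^+$. Pairing with $\Phi$, integrating over the closed surface $\Sigma$ (so the term from $(\nabla_A^\Sigma)^{*}\nabla_A^\Sigma$ integrates to $\int_\Sigma|\nabla_A^\Sigma\Phi|^2$), and using $s_\Sigma/4=K/2$, reduces the desired identity to the computation of the Clifford term.

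That computation is elementary from the formula for $\rho_3$: one finds $\rho_3(dvol_\Sigma)=\mathrm{diag}(-i,i)$ as an endomorphism of $L^+\oplus L^-$ (it is the chirality element of $\mathrm{Cl}(\Sigma)$, and coincides with $\rho_4(dvol_\Sigma)$ whose eigenspaces are $L^{\pm}$), so that
\[
\bigl\langle\rho_3\bigl(F_{A^t}^\Sigma\,dvol_\Sigma\bigr)\Phi,\Phi\bigr\rangle=-iF_{A^t}^\Sigma\bigl(|\Phi_+|^2-|\Phi_-|^2\bigr),
\]
which turns the previous display into exactly the fiber identity above. It then remains to assemble: expand $\tfrac14|\Phi|^4=\tfrac14\bigl(|\Phi_+|^2-|\Phi_-|^2\bigr)^2+|\Phi_+|^2|\Phi_-|^2$ and complete the square
\[
\tfrac14|iF_{A^t}^\Sigma|^2+\tfrac14\bigl(|\Phi_+|^2-|\Phi_-|^2\bigr)^2+\tfrac12 iF_{A^t}^\Sigma\bigl(|\Phi_+|^2-|\Phi_-|^2\bigr)=\tfrac14\bigl|iF_{A^t}^\Sigma+|\Phi_+|^2-|\Phi_-|^2\bigr|^2,
\]
all quantities being real. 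Collecting these with the orthogonal splittings of the first step yields the stated energy identity.

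The splittings and the final algebra are routine. The step that needs care is identifying $D_A^\Sigma$, as defined through $\rho_3$, with the honest two–dimensional $\mathrm{spin}^c$ Dirac operator of the fiber for the restricted connection, so that the standard Lichnerowicz formula applies verbatim, and pinning down the Clifford action of the fiber curvature — in particular that only the determinant–line curvature $F_{A^t}^\Sigma$ enters the $\tfrac12\rho_3(\cdot)$ term, the $\pm\tfrac{i}{2}K$ pieces visible in $(\ref{23})$ having been absorbed into the scalar–curvature term $s_\Sigma/4$ — together with tracking the signs carefully enough that the cross term carries exactly the sign required to complete the square as a perfect square.
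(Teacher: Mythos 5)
Your proof is correct and follows the same overall plan as the paper's: split the energy density pointwise using the product structure, reduce to a Weitzenb\"ock identity on each fiber, and complete the square in the $\Sigma$-curvature. The only real difference is how the fiber identity is obtained. The paper writes $D_A^\Sigma$ as the off-diagonal matrix with entries $D^+=\sqrt{2}\bpartial_{A_+}$ and $D^-=\sqrt{2}\partial_{A_-}$ and applies the two Dolbeault-type Weitzenb\"ock formulas $2\int_\Sigma|\bpartial_B\sigma|^2=\int_\Sigma|\nabla_B\sigma|^2-\langle\sigma,iF_B\sigma\rangle$ (and its $\partial_B$ counterpart) to the summands $L^{\pm}$ separately, with the curvature splitting $(\ref{23})$, $F^\Sigma_{A_\pm}=\tfrac12(F^\Sigma_{A^t}\pm iK)$, supplying both the $K/2$ term and the $F^\Sigma_{A^t}$ term; you instead quote the packaged two-dimensional spin$^c$ Lichnerowicz formula on $S=L^+\oplus L^-$, in which $K/2$ appears as $s_\Sigma/4$ and the determinant-line curvature enters through $\tfrac12\rho_3(F^\Sigma_{A^t}\,dvol_\Sigma)$. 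These are the same identity in different packaging (summing the paper's two line-bundle formulas and inserting $(\ref{23})$ reproduces your Lichnerowicz formula verbatim), and your sign bookkeeping --- in particular $\rho_3(dvol_\Sigma)=\mathrm{diag}(-i,i)$ on $L^+\oplus L^-$, which yields the cross term $\tfrac12\,iF^\Sigma_{A^t}\bigl(|\Phi_+|^2-|\Phi_-|^2\bigr)$ with exactly the sign needed for the perfect square --- is consistent with the paper's conventions, so the assembly goes through as you describe.
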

In particular, Lemma \ref{positivity} implies that the analytic energy $(\ref{an})$ is always a non-negative number. 
\begin{proof} The Dirac operator $D_A^\Sigma$ interchanges bundles $L^+$ and $L^-$. In other words, we have
\[
D_A^\Sigma=
\begin{pmatrix}
0 & D^-\\
D^+ & 0\\
\end{pmatrix}.
\] 
Under the isomorphism $L^-\cong L^+\otimes \bigwedge^{0,1}\Sigma$ and $L^+\cong L^-\otimes \bigwedge ^{1,0}\Sigma$, $D^+$ and $D^-$ are written as
\[
D^+=\sqrt{2}\bpartial_{A_+}\colonequals \sqrt{2}(\nabla_{A_+})^{0,1},\  D^-=\sqrt{2}\partial_{A_-}\colonequals \sqrt{2}(\nabla_{A_-})^{1,0}. 
\]

Therefore, it is sufficient to prove
\begin{align*}
\int_\Sigma |D^+\Phi_+|^2&=\int_\Sigma |\nabla_{A_+}^\Sigma\Phi_+|^2+\langle \Phi, \half(K-iF^\Sigma_{A^t})\Phi\rangle,\\
\int_\Sigma |D^-\Phi_-|^2&=\int_\Sigma |\nabla_{A_-}^\Sigma\Phi_-|^2+\langle \Phi, \half(K+iF^\Sigma_{A^t})\Phi\rangle.
\end{align*}

By (\ref{23}),  $F_{A_+}^\Sigma=\half (F^\Sigma_{A^t}+iK)$ and $F_{A_-}^\Sigma=\half (F^\Sigma_{A^t}-iK)$. At this stage, we apply Weitzenb\"{o}ck formulas:  for any line bundle $L\to \Sigma$, a unitary connection $B$ and a section $\sigma\in \SC^\infty(\Sigma,L)$, we mush have
\begin{align*}
2\int_\Sigma |\bpartial_B\sigma|^2&=\int_\Sigma |\nabla_B\sigma|^2- \langle \sigma, iF_B\sigma\rangle, \\
2\int_\Sigma |\partial_B\sigma|^2&=\int_\Sigma |\nabla_B\sigma|^2+\langle \sigma, iF_B\sigma\rangle. 
\end{align*}
\end{proof}

\begin{lemma}\label{boundedness}
If $(A,\Phi)$ is any smooth solution to (\ref{SWEQ}) with $\E_{an}(A,\Phi)<\infty$, there is a constant $C=C(\E_{an}(A,\Phi))>0$ such that for any $z\in \C$,
\[
\int_{\{z\}\times \Sigma}|\Phi|^2<C. 
\]
\end{lemma}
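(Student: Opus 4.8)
The plan is to bound the fiberwise $L^2$-norm of $\Phi$ uniformly in $z$ by combining the pointwise maximum principle estimate for $|\Phi|$ on each fiber with the finiteness of the analytic energy. First I would record the standard Weitzenb\"ock argument on the fiber $\{z\}\times\Sigma$: since $D_A^+\Phi=0$, applying $D_A^-D_A^+$ (or rather the full Dirac Laplacian together with the curvature equation $\half\rho(F_{A^t}^+)=(\Phi\Phi^*)_0$) yields a differential inequality of the form
\[
\Delta_X |\Phi|^2 + 2|\nabla_A\Phi|^2 \leq C_1(1+|\Phi|^2)|\Phi|^2 - \tfrac12|\Phi|^4
\]
where $C_1$ depends only on the geometry of $\Sigma$ (via $K$ and the curvature $F_{A_0}$ of the background connection). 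Restricting the Laplacian to a fiber picks up only favorable terms from the $\C$-directions after integrating, so $\int_{\{z\}\times\Sigma}|\Phi|^2$ satisfies an ODE-type bound in the $z$-variable. Alternatively, and more cleanly, I would use the Kato inequality and the maximum principle directly: $|\Phi|$ cannot have an interior maximum exceeding $O(1)$ because at such a point $\Delta|\Phi|^2\geq 0$ forces $|\Phi|^4\lesssim |\Phi|^2$, i.e. $|\Phi|^2\leq C_2$ pointwise with $C_2$ purely geometric. This already gives the lemma with $C = C_2\cdot\Vol(\Sigma)$, independent even of $\E_{an}$.

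The subtlety is that $X$ is noncompact, so "interior maximum" reasoning needs the behavior of $|\Phi|$ at infinity. Here is where finiteness of $\E_{an}$ enters. From Lemma \ref{positivity}, $\int_X(|\nabla_A\Phi|^2 + \tfrac14|\Phi|^4 + \tfrac K2|\Phi|^2)<\infty$ together with the rearranged positive expression shows in particular that $\int_X |\Phi_+|^2|\Phi_-|^2 < \infty$ and each of the squared terms is integrable. I would argue that $\int_{\{z\}\times\Sigma}|\Phi|^2$ must be bounded: if not, there is a sequence $z_n\to\infty$ along which the fiber integral diverges, and by the elliptic estimate on a unit polydisk times $\Sigma$ (using that $(A,\Phi)$ solves \eqref{SWEQ} and the energy is finite, so $\nabla_A\Phi\in L^2$ of a neighborhood tending to zero) one derives that the $L^4$-mass $\int |\Phi|^4$ over that neighborhood also diverges, contradicting $\E_{an}<\infty$. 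Making this quantitative: a bound on $\int_{B_1(z)\times\Sigma}(|\nabla_A\Phi|^2+|\Phi|^4)$ bootstraps via the Seiberg-Witten equations to a $C^0$ bound on $|\Phi|$ over $B_{1/2}(z)\times\Sigma$, hence a bound on the central fiber integral; and the local energies $\to 0$ as $z\to\infty$ by finiteness of the total energy, so for $|z|$ large the bound is uniform, while for $|z|$ in a compact set it follows from smoothness.

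The main obstacle I anticipate is the bootstrapping step on the noncompact fibered neighborhood: turning the $L^2$-smallness of $\nabla_A\Phi$ and $L^4$-control of $\Phi$ near $z$ into a genuine pointwise bound requires the standard but slightly delicate Seiberg-Witten a priori estimate (controlling the connection $A$ in a good gauge on the polydisk, then elliptic regularity for the Dirac equation), carried out with constants independent of $z$. Since the metric on $X$ is a product and hence has bounded geometry, these constants are uniform, so the argument closes. I would present the clean version first — the interior maximum principle giving a purely geometric pointwise bound $|\Phi|^2\leq C_2$, with a short remark that finite energy rules out escape of the supremum to infinity — and relegate the elliptic bootstrap to a citation of the standard compactness package (as the paper does via Lemma \ref{boundedness}'s role in Section \ref{3}).
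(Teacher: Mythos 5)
Your operative argument---translate $(A,\Phi)$ so that the fiber of interest sits over the origin, observe via Lemma \ref{positivity} that the energy density is fiberwise nonnegative so the translates have uniformly bounded energy on a fixed compact piece $\overline{B(0,10)}\times\Sigma$, and invoke the standard Seiberg--Witten compactness/bootstrap to extract a uniform $C^0$ bound on $\Phi$ there---is exactly the paper's proof, which cites \cite[Theorem 5.1.1]{Bible} for that compactness package. One caution on a side remark: the claim that a divergent local $L^4$-mass of $\Phi$ would ``contradict $\E_{an}<\infty$'' is not literally correct, since the density in (\ref{an}) is not pointwise nonnegative and $\int_X|\Phi|^4$ can be infinite even for zero-energy solutions (pulled-back vortices on $\Sigma$ with $\Phi$ nowhere vanishing); the uniform \emph{local} $L^4$ bound has to be routed through the fiberwise rearrangement of Lemma \ref{positivity} together with the curvature equation $\half\rho(F_{A^t}^+)=(\Phi\Phi^*)_0$, which is precisely what the translation argument accomplishes.
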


\begin{proof}
This is a consequence of Lemma \ref{positivity} and the classical compactness theorem. Let $n=(n_1,n_2)\in \Z\times\Z\subset \C$. Then for $(z,x)\in X'\colonequals\overline{B(0, 10)}\times \Sigma\subset X$, set 
 \[(A_n,\Phi_n)(z,x)\colonequals(A,\Phi)(z-n,x).
 \] 
 
Then $(A_n,\Phi_n)$ solves $(\ref{SWEQ})$ on $X'$. In light of Lemma \ref{positivity}, 
 \begin{align*}
\E_{an}(A_n,\Phi_n)&\colonequals\int_{X'} \frac{1}{4}|F_{A_n}|^2+|\nabla_{A_n}\Phi|^2+\frac{1}{4}|\Phi_n|^4+ \frac{K}{2}|\Phi_n|^2\leq \E_{an}(A,\Phi).
 \end{align*}
 
By \cite[Theorem 5.1.1]{Bible}, after proper gauge transformations, a subsequence of $(A_n,\Phi_n)$ will converge in $\SC^\infty$-topology in the interior. This shows 
\[
\|\Phi_n\|_{L^\infty(B(0,5)\times\Sigma)}
\]
is uniformly bounded by some constant $C>0$. It is clear that $C$ can be made to be independent of $(A,\Phi)$ and to depend only on $\E$. 
\smallskip

Finally, let $(A_\infty,\Phi_\infty)$ be the limit of this subsequence on $B(0,5)\times \Sigma$. Then $(A,\Phi)$ solves equation (\ref{SWEQ}) and its analytic energy is zero. By the computation in Section \ref{3}, $(A,\Phi)$ is a constant family of vortices on $B(0,5)$. In other words, up to a gauge transformation, 
\[
(A_\infty,\Phi_\infty)=(\nabla_A^\Sigma+\frac{d}{du}+\frac{d}{dv},\sigma),
\]
where the pair $(\nabla_A^\Sigma,\sigma)$ is independent of $z\in\C$. This observation will be useful later. In fact, it is the fundamental issue to be resolved in Section \ref{3} that this limit is independent of the subsequence we choose.
\end{proof}

Now we have the ammunition to attack Theorem \ref{coro}. We start with a weak statement. 

\begin{proposition}
	If $(A,\Phi)$ is any smooth solution to the monopole equation $(\ref{SWEQ})$ on $X$ with $\E_{an}(A,\Phi)<\infty$, then either $\Phi_+\equiv 0$ or $\Phi_-\equiv 0$. 
\end{proposition}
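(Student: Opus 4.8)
The plan is to use the pointwise positivity of the fiber-integrated energy density together with the fact that the full analytic energy is finite, to show that $|\Phi_+|^2|\Phi_-|^2$ is forced to vanish identically. Concretely, Lemma \ref{positivity} exhibits, on each fiber $\{z\}\times\Sigma$, a sum of manifestly nonnegative terms equal to the fiber-energy density of $\E_{an}$; one of these terms is $\int_{\{z\}\times\Sigma}|\Phi_+|^2|\Phi_-|^2$. Integrating over $z\in\C$ gives
\[
\int_X |\Phi_+|^2|\Phi_-|^2 \leq \E_{an}(A,\Phi)<\infty,
\]
so in particular $|\Phi_+|^2|\Phi_-|^2\in L^1(X)$.

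Next I would exploit the elliptic/parabolic structure of the equation. The function $|\Phi_+|^2|\Phi_-|^2$ is a smooth nonnegative function whose $L^1$ norm over $X$ is finite, and I want to conclude it vanishes. The key is that $|\Phi|^2$ satisfies a Weitzenb\"ock-type differential inequality: from the Dirac equation $D_A^+\Phi=0$ and the curvature equation in (\ref{SWEQ}) one derives the standard Kronheimer--Mrowka estimate
\[
\tfrac12\,\Delta_X|\Phi|^2 + |\nabla_A\Phi|^2 + \tfrac12|\Phi|^4 + \tfrac{s}{4}|\Phi|^2 \leq 0
\]
(where $s$ is the scalar curvature of $X$, which here equals $2K$ pulled back from $\Sigma$). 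Since $X=\C\times\Sigma$ is flat in the $\C$-directions, $s$ is bounded, so $\Delta_X|\Phi|^2\leq C|\Phi|^2$ for a constant $C$ depending only on $\min K$. By Lemma \ref{boundedness}, $|\Phi|^2$ is uniformly bounded on $X$; hence $|\Phi|^2$ is a bounded subsolution of a Schr\"odinger-type operator on a complete manifold.

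Now the decisive step: I claim each of $\Phi_+$ and $\Phi_-$ separately satisfies a similar differential inequality. Indeed, applying the splitting $\nabla_A=\mathrm{diag}(\nabla_{A_+},\nabla_{A_-})$ and the formulas (\ref{23}) for the fiberwise curvatures, the curvature equation in (\ref{SWEQ}) decouples into $iF_{A^t}^\Sigma = |\Phi_-|^2-|\Phi_+|^2$ plus the $\C$-direction and mixed components, and combining this with the Dirac equation one obtains (as in the proof of Lemma \ref{positivity}) Weitzenb\"ock identities for $|\Phi_+|^2$ and $|\Phi_-|^2$ individually. The upshot is a coupled system of the form
\[
\Delta_X|\Phi_+|^2 \leq (a - |\Phi_-|^2)\,|\Phi_+|^2,\qquad
\Delta_X|\Phi_-|^2 \leq (b - |\Phi_+|^2)\,|\Phi_-|^2
\]
for bounded functions $a,b$ determined by $K$. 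Since $|\Phi_+|^2|\Phi_-|^2\in L^1(X)$ and both factors are bounded, the set where both are bounded below is of finite measure; a strong maximum principle / Agmon-type argument on the complete manifold $X$ (using that $|\Phi_\pm|^2$ are bounded nonnegative subsolutions) then forces the product to vanish identically. Once $|\Phi_+|^2|\Phi_-|^2\equiv 0$, connectedness of $X$ and unique continuation for the Dirac operator $D_A^+$ (so that the zero set of $\Phi_+$, if proper, has empty interior, and likewise for $\Phi_-$) give that globally one of $\Phi_+,\Phi_-$ is identically zero.

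\textbf{Main obstacle.} The hard part is the last step: passing from ``$|\Phi_+|^2|\Phi_-|^2\in L^1(X)$'' to ``$|\Phi_+|^2|\Phi_-|^2\equiv 0$'' on the \emph{non-compact} manifold $X$. On a closed manifold one would simply integrate the Weitzenb\"ock identity, but here one must control behavior at infinity along $\C$. The finiteness of $\E_{an}$ and the uniform $L^\infty$ bound from Lemma \ref{boundedness} are exactly what is needed to run a maximum-principle argument: if $|\Phi_+|^2|\Phi_-|^2$ is not identically zero, its $L^1$-finiteness forces it to decay along most of $\C$, which via the subsolution inequalities forces one of $|\Phi_\pm|^2$ to decay, and then the decoupled equation for the other factor combined with unique continuation produces a contradiction with it being nonzero somewhere. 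Making this rigorous — rather than the cleaner but here-unavailable ``integrate by parts on a closed manifold'' — is the technical crux, and I expect the author handles it via the already-established compactness (Lemma \ref{boundedness}) plus a Weitzenb\"ock estimate and a connectedness argument.
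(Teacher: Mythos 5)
Your preliminary steps are sound: Lemma \ref{positivity} does give $\int_X|\Phi_+|^2|\Phi_-|^2\le\E_{an}(A,\Phi)<\infty$, the Weitzenb\"{o}ck identity (since $\nabla_A$ is diagonal and $(\Phi\Phi^*)_0\Phi=\tfrac12|\Phi|^2\Phi$) does yield componentwise inequalities of the form $\Delta_X|\Phi_\pm|^2\le(-\tfrac{s}{2}-|\Phi_\mp|^2)|\Phi_\pm|^2$, and you have correctly located the crux in passing from ``$|\Phi_+|^2|\Phi_-|^2\in L^1(X)$'' to ``$|\Phi_+|^2|\Phi_-|^2\equiv 0$'' on the non-compact $X$. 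But at exactly that point the proposal stops being a proof: ``a strong maximum principle / Agmon-type argument forces the product to vanish'' is a placeholder, not an argument, and I do not see how to fill it with the stated ingredients. The coefficient $-\tfrac{s}{2}-|\Phi_\mp|^2=-K-|\Phi_\mp|^2$ is positive wherever $|\Phi_\mp|^2<-K$, so each $|\Phi_\pm|^2$ is merely a bounded nonnegative subsolution of a Schr\"{o}dinger operator whose zeroth-order term has the wrong sign; such subsolutions satisfy no Liouville property, and they cannot, because the paper constructs nontrivial finite-energy solutions with $\Phi_+\not\equiv 0$ (the constant-in-$z$ vortices already give $\Delta_X|\Phi_+|^2\le(-K-0)|\Phi_+|^2$ with $|\Phi_+|\not\equiv 0$). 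The product of two subsolutions is not a subsolution, ``$fg$ small'' does not force either factor to be small on any fixed region, and finiteness of the measure of $\{f\ge\epsilon\}\cap\{g\ge\epsilon\}$ does not interact with the maximum principle in any way you have specified. Any correct argument must use the coupling between $\Phi_+$ and $\Phi_-$ coming from the off-diagonal part of the curvature equation, which your decoupled inequalities discard.

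The paper supplies exactly this missing mechanism, and it is of a different nature. It introduces $G(z)=\int_{\{z\}\times\Sigma}\langle D^+\Phi_+,\Phi_-\rangle$ and computes (Lemma \ref{partialG}), via the commutator identity $[\Du+i\Dv,D^+]=-(\Phi\Phi^*)_\Pi$ which uses the \emph{off-diagonal} part of the first Seiberg-Witten equation, that $\bpartial G=-\int_\Sigma(|D^+\Phi_+|^2+|D^-\Phi_-|^2+|\Phi_+|^2|\Phi_-|^2)\le 0$. The potential $K$ of $G$ then satisfies $\Delta_\C K=-\bpartial G\ge 0$ together with the Cauchy--Schwarz bound $|\nabla K|^2=|G|^2\le C\,\Delta_\C K$ — this is where Lemma \ref{boundedness} enters, through the uniform bound on $\|\Phi_-\|_{L^2(\Sigma)}$ — and an ODE comparison for $W(R)=\int_{B(0,R)}\Delta_\C K$ shows $W$ would blow up at finite radius unless $W\equiv 0$. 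This produces $D^+\Phi_+\equiv 0$, $D^-\Phi_-\equiv 0$ and $|\Phi_+||\Phi_-|\equiv 0$ simultaneously; note that the first two identities are also needed for your final step, since it is the resulting fiberwise (anti)holomorphicity of $\Phi_\pm$ separately — not unique continuation for the coupled operator $D_A^+$, which only controls the full spinor $\Phi$ — that guarantees discrete zero sets on each fiber and hence that one of $\Phi_+,\Phi_-$ vanishes identically. As written, your proposal has a genuine gap at its central step.
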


\begin{proof} Define a complex-valued function $G$ on $\C$ by the formula
\begin{align*}
G(z)=\int_{\{z\}\times \Sigma} \langle D^+ \Phi_+,\Phi_- \rangle,
\end{align*}
for any $z\in \C$. We compute $\bpartial G$:
\begin{lemma}\label{partialG} There is an identity:
\[
\bpartial G(z)=-\int_{\{z\}\times \Sigma} (|D^+\Phi_+|^2+|D^-\Phi_-|^2+|\Phi_+|^2|\Phi_-|^2).
\]
In particular, $\bpartial G$ is real and non-positive.
\end{lemma}
\begin{proof} Let $\frac{D}{du}$ and $\frac{D}{dv}$ denote the covariant derivative $\nabla_{A,\frac{\partial}{\partial u}}$ and $\nabla_{A,\frac{\partial}{\partial v}}$ respectively. The second equation of $(\ref{SWEQ})$ implies
\[
\frac{D}{du}\Phi+\sigma_1\cdot \frac{D}{dv}\Phi+ D_A^\Sigma \Phi=0.
\]
In particular, this shows 
\begin{align}\label{10}
(\Du+i\Dv)\Phi_+=-D^-\Phi_-,\ (\Du-i\Dv)\Phi_-=-D^+\Phi_+.
\end{align}

Since $D^+$ is the adjoint of $D^-$ as operators on $L^2(\Sigma)$, we have 
\begin{align*}
\bpartial G(z)&=(\frac{\partial}{\partial u}+i\frac{\partial}{\partial v})\int_{\{z\}\times \Sigma} \langle D^+ \Phi_+,\Phi_- \rangle\\
&= \int_{\{z\}\times \Sigma} \langle (\Du+i\Dv)D^+ \Phi_+,\Phi_- \rangle+\int_{\{z\}\times \Sigma} \langle D^+ \Phi_+,(\Du-i\Dv)\Phi_- \rangle\\
&=-\int_{\{z\}\times \Sigma} (|D^+\Phi_+|^2+|D^-\Phi_-^2|^2)+\int_{\{z\}\times\Sigma}\langle [\Du+i\Dv, D^+]\Phi_+,\Phi_-\rangle. 
\end{align*}

By the formula $D_A^\Sigma=\rho_3(e_i)\nabla_{A,e_i}$ and (\ref{22}), the commutator can be computed as 
\begin{align*}
\qquad &(\Du-\sigma_1 \Dv) D_A^\Sigma-D_A^\Sigma (\Du+\sigma_1 \Dv)\\
&=\sum_{i=1,2}\rho_3(e_i)[\Du, \nabla_{A,e_i}]-\sigma_1\cdot \rho_3(e_i)[\Dv, \nabla_{A,e_i}]\\
&= \sum_{i=1,2}\rho_3(e_i)F^m_A(\frac{\partial}{\partial u}, e_i)-\sigma_1\cdot \rho_3(e_i)F^m_A(\frac{\partial}{\partial v}, e_i)\\
&=\half\sum_{i=1,2} \rho_3(e_i)F_{A^t}(\frac{\partial}{\partial u}, e_i)-\sigma_1\cdot \rho_3(e_i)F_{A^t}(\frac{\partial}{\partial v}, e_i)\\
&=-\half \rho_4(F_{A^t}^m)|_{S^+}=-\half \rho_4((F_{A^t}^m)^+)=-(\Phi\Phi^*)_\Pi. 
\end{align*}
At the last step, we used the first equation of (\ref{SWEQ}). Here, $\Pi$ denotes the projection map from a 2 by 2 matrix to its off-diagonal part. Therefore, 
\[
\int_\Sigma\langle [\Du+i\Dv, D^+]\Phi_+,\Phi_-\rangle=-\int_\Sigma\langle (\Phi_-\Phi_+^*)\Phi_+,\Phi_-\rangle=-\int_\Sigma |\Phi_+|^2|\Phi_-|^2. \qedhere
\]
\end{proof}

Write $G=X+iY$ with $X,Y$ real. Then Lemma \ref{partialG} implies
\[
\partial_u X-\partial_vY\leq 0, \partial_u Y+\partial_v X=0. 
\]

Set $K(z)=\int_0^z Xdu-Ydv$. By the second equation, this integral is independent of the path we choose. Therefore,
\[
X=\partial_uK, Y=-\partial_v K,
\]
and 
\[
\Delta_\C K=(-\partial_u^2-\partial_v^2)K=-\bpartial G\geq 0. 
\]
By Lemma \ref{boundedness} and the Cauchy-Schwartz inequality, we have
\begin{align*}
|\nabla K|^2=|G|^2\leq \|D^+\Phi_+\|_{L^2(\Sigma)}^2\|\Phi_-\|_{L^2(\Sigma)}^2 \leq C \Delta K.
\end{align*}

Our goal is to show $K\equiv 0$. Let $Z(r)\colonequals \int_{\partial B(0,r)} \Delta K\geq 0$. Then integration by parts shows
\begin{multline*}
0\leq W(R)\colonequals\int_0^R Z(r)dr=\int_{B(0,R)} \Delta K= \Bigl|\int_{\partial B(0,R)} \vec{n}\cdot \nabla K\Bigr|\\
\leq (2\pi R)^\half (\int_{\partial B(0,R)} |\nabla K|^2)^\half \leq  (2\pi CR\int_{\partial B(0,R)} \Delta K)^\half\leq  C_2R^\half Z(R)^\half. 
\end{multline*}

Suppose $W(r_0)>0$ for some $r_0$. Then for $r>r_0$, 
\[
\ln' (r)\leq C_3\Bigl(-\frac{1}{W}\Bigr)',
\]
and hence for any $r_1>r_2>r$, 
\[
\ln(r_1)-\ln(r_2)\leq C_3\Big(\frac{1}{W(r_2)}-\frac{1}{W(r_1)}\Big). 
\]

Therefore, $W(r)$ must blow up in finite time if $W(r)\not\equiv 0$. Hence, $\Delta K\equiv 0$ and
\[
D^+\Phi_+\equiv 0,\ D^-\Phi_-\equiv 0,\ |\Phi_+||\Phi_-|\equiv 0. 
\]

This shows over each fiber, $\Phi_+$ and $\Phi_-$ are either holomorphic or anti-holomorphic with respect to some connections. They have discrete zero locus unless the whole section is zero. Therefore, either $\Phi_+$ or $\Phi_-$ is zero over that fiber. By (\ref{10}), they are also holomorphic or anti-holomorphic on $\C\times\{x\}$ for any $x\in\Sigma$, so one of them is identically zero on $X$.  
\end{proof}
To prove Theorem \ref{coro}, it remains to verify that if any finite energy monopole exists, then we have the constraint $0<|c_1(S^+)|<2g-2$ and the sign of $c_1(S^+)$ will determine which of $\Phi_+$ and $\Phi_-$ vanishes.  
\begin{proof}[Proof of Theorem \ref{coro}]
In light of Lemma \ref{boundedness}, a subsequence of $(A_n,\Phi_n)$ will converge to $(A_\infty,\Phi_\infty)$ on $B(0,5)\times \Sigma$ and the energy of this limit vanishes: $\E_{an}(A_\infty,\Phi_\infty)=0$. By Lemma \ref{positivity}, for this limit, we must have
\[
iF_{A^t_\infty}^\Sigma+|\Phi_{\infty, +}|^2-|\Phi_{\infty, -}|^2\equiv 0.
\]
If $c_1(S^+)<0$ and $\Phi_+\equiv 0$, then integrating over $\Sigma$ yields a contradiction:
\[
0\geq -\int_{\Sigma}|\Phi_{\infty,-}|^2= -\int_{\Sigma} iF_{A^t_\infty}^\Sigma=-2\pi c_1(S^+)>0. 
\]
 
Therefore, $c_1(S^+)<0$ implies $\Phi_-\equiv 0$. Since $D^+\Phi_{+,\infty}\equiv 0$ and this section is nonzero, we must have $c_1(L^+)\geq 0$. By (\ref{chernclass}), this forces $c_1(S^+)\geq 2-2g$. The case when $c_1(S^+)>0$ is dealt with similarly.  
\end{proof}

\subsection{Vortices and the energy equation}\label{2.2}
From now on, we will assume $2-2g\leq c_1(S^+)<0$ and $\Phi_-\equiv 0$. For simplicity, we will change our notation. Let $L=L^+$ and $\sigma=\Phi_+$. We will use $A$ to denote a unitary connection on $L$ and use $\hat{A}$ for the induced \spinc connection on $S^+\cong L\oplus (L\otimes \bigwedge^{0,1}\Sigma)$. Recall that the curvature form $F_A$ is divided into three parts:
\[
F_A=F_A^\Sigma dvol_\Sigma+F_A^\C dvol_\C+F^m_A,
\]
where $F^m_A\in \Gamma(X,i\Omega^1(\C)\wedge\Omega^1(\Sigma))$ is the mixed term. Then the monopole equation (\ref{SWEQ}) is simplified as 
\begin{subequations}\label{truncatedmonopole}
\begin{align}
i(F_A^\Sigma+F_A^\C)+\half K+\half |\sigma|^2&=0,\label{firsteq}\\
\bpartial_A\sigma&=0, \label{secondeq}\\
F^m_A&\in \Lambda^-(X).\label{thirdeq}
\end{align}
\end{subequations}

This is precisely the vortex equation on $X$ (compare \cite{Bradlow}). The last equation (\ref{thirdeq}) is equivalent to $\bpartial_A^2=0$, or $F^{0,2}_A=0$, i.e., $\bpartial_A$ is integrable. Since $\nabla_{A}=(\nabla_A^\C,\nabla_{A}^\Sigma)$, the second equation (\ref{secondeq}) is equivalent to two equations: 
\[
\bpartial_A^\C\sigma=0=\bpartial_A^\Sigma \sigma.
\]

By Lemma \ref{positivity}, for a smooth configuration $(A,\sigma)$, its analytic energy is given by the formula,
\begin{align}\label{analyticenergy}
\E_{an}(A,\sigma)&=\int_X  |iF_A+\frac{1}{2}Kdvol_\Sigma|^2+|\nabla_A\sigma|^2+\frac{1}{4}|\sigma|^4+ \frac{K}{2}|\sigma|^2,\\
&=\int_X  |F_A^\C|^2+|F^m_A|^2+|\nabla_A^\C\sigma|^2+2|\bpartial_A^\Sigma\sigma|^2\nonumber\\
&\qquad+|iF_A^\Sigma+\half K+\half|\sigma|^2|^2.\nonumber
\end{align}

The energy formula in below concerns the analytic energy on a compact region $X_r\colonequals B(0,r)\times\Sigma\subset X$. This is just the energy equation for the Seiberg-Witten map (see \cite[Proposition 4.5.2]{Bible}), but this particular expression will be convenient to use: 
\begin{lemma}\label{energyball} Let $F_A'= F_A-i\frac{K}{2}dvol_\Sigma$. Define 
	\[
	\E(r)= \int_{X_r} |F_A'|^2+|\nabla_A\sigma|^2+\frac{1}{4}|\sigma|^4+ \frac{K}{2}|\sigma|^2.
	\]
	Suppose the configuration $(A,\sigma)$ satisfies $(\ref{secondeq})$ and $ (\ref{thirdeq})$, then
	\begin{align*}
	\E(r)&=\int_{X_r}|i(F_A^\Sigma+F_A^\C)+\half K+\half |\sigma|^2|^2+\int_{\partial X_r} \langle \sigma, \nabla_{A,\vec{n}}\sigma\rangle+ \int_{X_r} F_A'\wedge F_A'. 
	\end{align*}
\end{lemma}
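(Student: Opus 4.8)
The plan is to prove this exactly as one proves the Seiberg--Witten energy identity on a closed manifold (cf.\ \cite[Proposition 4.5.2]{Bible}), while keeping track of the boundary of $X_r=B(0,r)\times\Sigma$. Since $\Sigma$ is closed, the only integration by parts that can create a boundary contribution is the one in the $\C$-directions, and that is exactly where the term $\int_{\partial X_r}\langle\sigma,\nabla_{A,\vec{n}}\sigma\rangle$ comes from.

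First I would start from the fiberwise Weitzenb\"ock computation already carried out in the proof of Lemma \ref{positivity}: restricting the identity $(\ref{analyticenergy})$ to each slice $\{z\}\times\Sigma$ and integrating over $z\in B(0,r)$ produces no boundary term, because that computation only integrates by parts over the closed surface $\Sigma$. This gives
\[
\E(r)=\int_{X_r}\Bigl(|F_A^\C|^2+|F^m_A|^2+|\nabla_A^\C\sigma|^2+2|\bpartial_A^\Sigma\sigma|^2+\bigl|iF_A^\Sigma+\half K+\half|\sigma|^2\bigr|^2\Bigr).
\]
Now invoke $(\ref{secondeq})$ in its equivalent form $\bpartial_A^\Sigma\sigma=0=\bpartial_A^\C\sigma$: the term $2|\bpartial_A^\Sigma\sigma|^2$ drops out, and for $\int_{X_r}|\nabla_A^\C\sigma|^2$ I would apply the two-dimensional Bochner identity in the $\C$-plane to the connection $\nabla_A^\C$, whose curvature form is $F_A^\C\,dvol_\C$; using $\bpartial_A^\C\sigma=0$ and integrating by parts over $B(0,r)$ turns it into the interior term $\int_{X_r}\langle\sigma,iF_A^\C\sigma\rangle$ plus the boundary integral $\int_{\partial X_r}\langle\sigma,\nabla_{A,\vec{n}}\sigma\rangle$, with $\vec{n}$ the outward radial normal of $\partial X_r$.

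It then remains to check the identity of integrals over $X_r$
\begin{multline*}
\int_{X_r}\Bigl(|F_A^\C|^2+|F^m_A|^2+\langle\sigma,iF_A^\C\sigma\rangle+\bigl|iF_A^\Sigma+\half K+\half|\sigma|^2\bigr|^2\Bigr)\\
=\int_{X_r}\bigl|i(F_A^\Sigma+F_A^\C)+\half K+\half|\sigma|^2\bigr|^2+\int_{X_r}F_A'\wedge F_A',
\end{multline*}
which, since $X_r$ carries the product volume form, reduces to a pointwise identity once $F_A'\wedge F_A'$ is written as a multiple of $dvol_X$. This is the usual ``complete the square / Chern--Weil'' step: writing $F_A'=(F_A^\Sigma-\tfrac{i}{2}K)\,dvol_\Sigma+F_A^\C\,dvol_\C+F^m_A$ as an orthogonal sum and expanding the wedge square, the $dvol_\Sigma\wedge dvol_\C$ cross term contributes $2F_A^\C(F_A^\Sigma-\tfrac{i}{2}K)\,dvol_X$, the term $F^m_A\wedge F^m_A$ is pinned down by $(\ref{thirdeq})$ (the (anti-)self-duality of $F^m_A$ determines $F^m_A\wedge F^m_A$ up to sign in terms of $|F^m_A|^2\,dvol_X$), and the remaining wedge products vanish for degree reasons. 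Substituting this, expanding both squares and pairing the cross term of the right-hand square against $|F_A^\C|^2$ and $\langle\sigma,iF_A^\C\sigma\rangle$ gives the identity. Combining the three steps yields the Lemma.

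All three steps are elementary, and the content is essentially bookkeeping. The points that require care are: making sure no boundary term other than the one in the $\C$-directions is produced (which uses that $\Sigma$ is closed); and, in the last step, correctly fixing the sign of $F^m_A\wedge F^m_A$ from $(\ref{thirdeq})$ together with the factors of $i$ hidden in $F_A^\C$ and $F_A^\Sigma$, so that the topological term comes out exactly as $\int_{X_r}F_A'\wedge F_A'$ with no leftover curvature terms. I expect this last step to be the most delicate, since it is where all the self-duality and orientation conventions come together.
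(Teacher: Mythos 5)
Your proposal is correct and follows essentially the same route as the paper: it combines the fiberwise identity of Lemma \ref{positivity} (formula (\ref{analyticenergy})) restricted to $X_r$, the Weitzenb\"ock/Bochner identity over $\C$ with integration by parts over $B(0,r)$ producing the boundary term, and the Chern--Weil expansion of $F_A'\wedge F_A'$ using the anti-self-duality of $F^m_A$. The only cosmetic difference is that the paper organizes the computation by expanding the square $|i(F_A^\Sigma+F_A^\C)+\half K+\half|\sigma|^2|^2$ first, whereas you start from $\E(r)$; the sign point you flag is resolved as in the paper by $*F^m_A=-F^m_A$ together with $F^m_A$ being imaginary, giving $F^m_A\wedge F^m_A=+|F^m_A|^2\,dvol_X$.
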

\begin{proof} We expand the bracket:
\begin{align*}
|i(F_A^\Sigma+F_A^\C)+\half K+\half |\sigma|^2|^2&=  |iF_A^\Sigma+\half K+\half |\sigma|^2|^2+2\langle iF_A^\C,\half|\sigma|^2\rangle\\
&\qquad+|F_A^\C|^2+2\langle iF_A^\C, \half K+iF_{A}^\Sigma\rangle.  
\end{align*}

\textit{Step 1.} The Weitzenb\"{o}ck formula shows that over $\C$:
\begin{equation}\label{overC}
0=2(\bpartial_A^\C)^*\bpartial_A^\C\sigma=(\nabla_A^\C)^*\nabla_A^\C\sigma-iF_A^\C \sigma.
\end{equation}

Take inner product with $\sigma$ and do integration by parts:
\begin{equation*}
\int_{X_r} iF_A^\C|\sigma|^2=\int_{X_r}|\nabla_A^\C\sigma|^2-\int_{\partial X_r}\langle \sigma, \nabla_{A,\vec{n}}\sigma\rangle.
\end{equation*}

\textit{Step 2.} Since $F^m_A$ is imaginary and anti-self-dual, we have
\begin{align*}
\int_{X_r} F_A'\wedge F_A'&=2\int_{X_r} F_A^\C(F_A^\Sigma-\frac{i}{2}K)dvol_X-\int_{X_r}F^m_A\wedge *F^m_A\\
&=-2\int_{X_r} \langle iF_A^\C, iF_A^\Sigma+\frac{1}{2}K\rangle dvol_X+\int_{X_r}|F^m_A|^2.
\end{align*}

Now we use Lemma \ref{positivity} or  formula (\ref{analyticenergy}) to conclude.
\end{proof}

For the rest of the paper, we will not work with \spinc structures and the Seiberg-Witten equation (\ref{SWEQ}); at least, not in a direct way. Instead, the equation (\ref{truncatedmonopole}) will become the main object of study.

\section{Vortices $\Rightarrow$ Polynomials}\label{3}

It is the object of this section to show any smooth solution $(A,\sigma)$ to equation (\ref{truncatedmonopole}) with finite energy comes from a holomorphic line bundle $\SL$ and a polynomial map $f$ provided that $0\leq c_1(L)<g-1$. This proves one direction of Theorem \ref{main}. First, we recall some definitions.

Let $\SL=(L,\bpartial_\SL)$ be a holomorphic structure  on $L\to\Sigma$. Then $H^0(\Sigma, \SL)$ is a complex vector space of finite dimension. The Chern connection on $\SL\to\Sigma$ is the unique unitary connection $B_0=\nabla_0$ such that
\[
\nabla^{0,1}_0=\bpartial_{\SL}. 
\]

Note that $\SL$ pulls back to a holomorphic line bundle on $X$. By abuse of notation, we still denote it by $\SL$. A polynomial map $f:\C\to H^0(\Sigma, \SL)$ is regarded as a holomorphic section $\sigma_0$ of $\SL\to X$ by setting $\sigma_0(z,x)=f(z)(x)$. 
The connection $B_0$ also induces a unitary connection on $X$ by the formula:
\[
\nabla_{A_0}=\nabla_0+\frac{\partial}{\partial u}+\frac{\partial}{\partial v},
\]
and $\bpartial_{A_0}=\bpartial_\SL$. 

An element in the configuration space $\SC(X, L)$ consists of a pair $(A,\sigma)$ where $A$ is a smooth unitary connection of $L$ and $\sigma\in \Gamma(X,L)$ is a smooth section. Thus,  $\SC(X, L)=\A(X,L)\times \Gamma(X,L)$. The gauge group $\SG_\C(X)=\map(X,\C^*)=\SG(X)\times \Conf(X)$ acts on $\SC(X,L)$ by the formula 
\begin{align}\label{gaugetransformation}
g=u\cdot e^{\alpha}: (A,\sigma)&\mapsto( A+i*_\C d_\C\alpha+i*_\Sigma d_\Sigma\alpha-u^{-1}du,u\cdot e^{\alpha}\sigma),
\end{align}
for any $\alpha\in \SC^\infty(X,\R)$ and $u\in \SC^\infty(X,S^1)$. Whenever a subscript $\C$, $\Sigma$ or $X$ is used, it denotes the operator on corresponding manifolds. For instance, $d_\C$ and $*_\C$ denote the exterior differential and the Hodge $*$-operator on $\C$. The same holds for $\Sigma$. 

When $u\in \SG(X)$ the gauge action on $\A(X,L)$ is defined by pulling back connections:
\[
\nabla_{u(A)}=u\nabla_{A}(u^{-1}\cdot ).
\]
But this is not the case when $e^\alpha\in \Conf(X)$. In fact, (\ref{gaugetransformation}) is designed by requiring two properties:
\begin{enumerate}
\item $u(A)$ is a unitary connection. In other words, $u(A)-A$ is an imaginary  1-form on $X$. 
\item $\bpartial_{u(A)}\sigma =u\bpartial_A (u^{-1}\sigma)$. That is to say, the $(0,1)$-part of $u(A)$ is the pull back of the $(0,1)$-part of $A$. 
\end{enumerate}

Under the action of $\Conf(X)$, the curvature form and the covariant derivative are changed by the formula:
\begin{align}\label{covariantderivative}
g=e^\alpha: F_A&\mapsto F_A-(i\Delta_\C\alpha) dvol_\C-(i\Delta_\Sigma\alpha) dvol_\Sigma+F^-(\alpha),\\
\nabla_A\sigma&\mapsto e^{\alpha}(\nabla_A\sigma+2(d\alpha)^{1,0}\otimes \sigma), \nonumber
\end{align}
where $F^-(\alpha)$ reflects the change of the mixed term (the $F^m_A$-part). It lies in $\Lambda^-(X)\subset \Lambda^2(X)$. Indeed,
\[
F^-(\alpha)=i(-*_\C+*_\Sigma)d_\C d_\Sigma\alpha,
\] 
and $*_XF^-(\alpha)=-(*_\C*_\Sigma)F^-(\alpha)=-F^-(\alpha)$. This shows that for any $\alpha\in \Gamma(X,\C)$, if a configuration $(A,\sigma)$ satisfies equations (\ref{secondeq}) and (\ref{thirdeq}), so does $e^\alpha\cdot (A,\sigma)$. 

The main result of this section is the following theorem:
\begin{theorem}\label{polynomial}
Suppose $0\leq c_1(L)<g-1$ and $(A,\sigma)$ is any smooth solution to $(\ref{truncatedmonopole})$ with finite analytic energy, then there is a complex gauge transformation $e^\alpha$ with $\alpha\in \SC^\infty(X,\C)$ such that 
\[
e^\alpha\cdot (A,\sigma)=(A_0, \sigma_0), 
\]
where $(A_0,\sigma_0)$ is the configuration induced from some pair ($\SL, f$). The pair $(\SL, f)$ is unique up to complex gauge transformation on $\Sigma$.   
\end{theorem}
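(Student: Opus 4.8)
The plan is to exhibit the complex gauge transformation in two stages, first straightening out the connection in the $\Sigma$-directions and then in the $\C$-direction, exploiting that complex gauge transformations by $\Conf(X)$ preserve equations $(\ref{secondeq})$ and $(\ref{thirdeq})$ while by $\SG(X)$ they genuinely pull back $\bpartial_A$. First I would use Theorem \ref{coro} together with Lemma \ref{boundedness} and the compactness input of Lemma \ref{boundedness}'s proof (via \cite[Theorem 5.1.1]{Bible}): restricting $(A,\sigma)$ to each slice $\{z\}\times\Sigma$ gives a family of configurations on $\Sigma$ satisfying $\bpartial^\Sigma_A\sigma=0$, $F_A^{0,2}|_\Sigma=0$, and one bounds $\int_\Sigma|\sigma|^2$ uniformly. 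The $(0,1)$-part $\bpartial_A^\Sigma$ then defines, for each $z$, a holomorphic structure on $L\to\Sigma$; the content of equation $(\ref{thirdeq})$ is exactly the integrability statement $F^{0,2}_A=0$ on all of $X$, so these holomorphic structures vary holomorphically in $z$. The goal of the first stage is to apply a complex gauge transformation (in fact one coming from $\Conf(X)$ in the $\Sigma$-directions plus an honest unitary gauge transformation on $\Sigma$, allowed to depend on $z$) to make $\bpartial_A^\Sigma$ equal to a \emph{fixed} $\bpartial_\SL$ independent of $z$; here I would invoke that $0\le c_1(L)<g-1$ forces, by Bradlow's theorem \cite{Bradlow}, the relevant slice equations to be solvable, and more importantly that the space of holomorphic structures on $L\to\Sigma$ is a complex affine space on which $\SG_\C(\Sigma)$ acts, so a holomorphically-varying family over $\C$ can be trivialized by a holomorphically-varying complex gauge transformation (a $\bpartial$-problem on $\C$ with values in $\Omega^{0,1}(\Sigma)$ that has no cohomological obstruction over the plane).

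After this first stage, the connection is of the form $\nabla_\SL$ in the $\Sigma$-directions for a fixed holomorphic structure $\SL$, plus some connection $1$-form in the $du,dv$ directions; the section $\sigma$ is then, for each $z$, an element of $H^0(\Sigma,\SL)$ — a finite-dimensional complex vector space — so $\sigma$ becomes a map $f:\C\to H^0(\Sigma,\SL)$. Equations $(\ref{secondeq})$ in the $\C$-direction, $\bpartial_A^\C\sigma=0$, combined with having normalized the $\Sigma$-connection, say precisely that after a further complex gauge transformation supported in the $\C$-factor we may assume the $\C$-directional connection form vanishes, at which point $\bpartial^\C$ of $f$ is the ordinary Cauchy--Riemann operator on $\C$ with values in $H^0(\Sigma,\SL)$, so $f$ is holomorphic. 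The remaining point is to upgrade ``holomorphic'' to ``polynomial'': this is where the finiteness of the analytic energy enters decisively. Using the energy formula $(\ref{analyticenergy})$ and Lemma \ref{energyball}, finite $\E_{an}$ controls $\int_X|\nabla_A^\C\sigma|^2$ and $\int_X|\sigma|^4+\tfrac K2|\sigma|^2$; combined with the slicewise $L^2(\Sigma)$ bound on $\sigma$ and elliptic regularity, this yields a polynomial-in-$|z|$ growth bound on $\|f(z)\|$. A holomorphic $H^0$-valued function of polynomial growth on $\C$ is a polynomial (entrywise, by Liouville/Cauchy estimates), so $f$ is a polynomial map, say of degree $d$, and $(A,\sigma)$ is gauge-equivalent to the configuration $(A_0,\sigma_0)$ attached to $(\SL,f)$.

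For uniqueness, if $(\SL,f)$ and $(\SL',f')$ give $\SG_\C(X)$-equivalent configurations $(A_0,\sigma_0)\sim(A_0',\sigma_0')$, I would show the equivalence must be (up to a $z$-independent element) pulled back from $\SG_\C(\Sigma)$: the equivalence has to carry $\bpartial_\SL$ to $\bpartial_{\SL'}$ and both have vanishing $\C$-directional connection form, which forces the $\Conf$- and $\SG$-components to be $z$-independent in the $\C$-directions; what is left acts on $\Sigma$ alone and intertwines $(\bpartial_\SL,f)$ with $(\bpartial_{\SL'},f')$, i.e.\ lies in $\SG_\C(\Sigma)$. I expect the main obstacle to be the first stage: producing a \emph{smooth, globally defined} complex gauge transformation on $X$ that simultaneously kills the $z$-dependence of the holomorphic structure on $\Sigma$ and stays compatible with the subsequent normalization in the $\C$-direction, i.e.\ solving the relevant $\bpartial$-equation on $\C$ with values in an infinite-dimensional space of $(0,1)$-forms on $\Sigma$ while retaining the quantitative control (polynomial growth) needed at the end. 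The energy bound is what makes this tractable, but threading it through the gauge-fixing is the delicate part; once the family of holomorphic structures is trivialized, the reduction of $f$ to a polynomial is comparatively soft.
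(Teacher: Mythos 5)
Your overall architecture (straighten the $\Sigma$-directions, then the $\C$-direction, then upgrade holomorphic to polynomial via a growth bound) matches the paper's in outline, and your first stage is essentially what the paper does — though the paper does it not by solving a $\bpartial$-problem with values in $\Omega^{0,1}(\Sigma)$ but by a Hodge decomposition of $B(z)-B_0$: the exact part is removed by the gauge-fixing condition $(\ref{GFC})$, the harmonic part is a holomorphic map $\C\to H^1(\Sigma,i\R)$ with $L^2$ derivative (hence constant, by finiteness of $\E_{an}$), and the co-exact part is absorbed into a conformal factor. That is cleaner than trivializing a holomorphically varying family of holomorphic structures over the non-compact plane, which would face exactly the growth-control issues you flag.

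The genuine gap is in your second stage. You assert that $\bpartial_A^\C\sigma=0$ ``says precisely that after a further complex gauge transformation supported in the $\C$-factor we may assume the $\C$-directional connection form vanishes,'' and then apply Liouville to a holomorphic $H^0$-valued map of polynomial growth. But killing the residual connection form $\omega$ on $\C$ requires solving $\bpartial(\alpha_1+i\beta_1)=\omega^{0,1}$ \emph{globally} on $\C$ with a solution of controlled growth, and this is exactly the crux that the paper cannot do directly — it is stated there as an open Question. What finite energy gives you directly is only $F_\omega,\nabla_\omega f'\in L^2(\C)$ and $f'\in L^\infty(\C)$; the section $f'$ is $\bpartial_\omega$-holomorphic, not holomorphic, and an arbitrary local solution of the $\bpartial$-problem (which always exists by the $\bpartial$-Poincar\'e lemma) carries a conformal factor $e^{-\zeta}$ with no a priori growth bound, so $\eta=e^{-\zeta}f'$ need not have polynomial growth. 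The paper's route around this is Lemma \ref{finitezeros}: a Wehrheim-style analysis of the restricted configurations on circles $\partial B(0,r_n)$ with $T(r_n)\to 0$, producing weak convergence to a vortex, a lower bound $|f_1|>c$ on those circles, and — via the argument principle for $\bpartial_\omega$-holomorphic functions (Lemma \ref{complexanalysis}) together with control of $\bar\omega(r_n)$ — finiteness of $Z(f_1)$. Only then can one write $e^\zeta=f_1/f_2$ with $f_2$ the monic polynomial on $Z(f_1)$, obtain the explicit bound $|e^{-\zeta}|<C|z|^d$ on the circles, and run the maximum principle on annuli to conclude $\eta$ is a polynomial. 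Your proposal silently assumes this entire mechanism; without it, neither the global gauge transformation nor the polynomial growth bound on $f$ is available.
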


The case when $c_1(L)=g-1$, i.e. $c_1(S^+)=0$ is dealt with in the next section.
\begin{proof} Write $\nabla_A$ as
\[
\nabla_A =\nabla_B^\Sigma+\frac{\partial}{\partial u}+hdu+\frac{\partial}{\partial v}+gdv,
\]
where for each $z\in \C$, $B(z)$ is a unitary connection on $L\to \Sigma$ and $g,h\in\SC^\infty(X,i\R)$ are smooth functions. Then 
\[
F_A=F_B^\Sigma dvol_\Sigma+(\frac{\partial g}{\partial u}-\frac{\partial h}{\partial v})du\wedge dv+ du\wedge (\frac{\partial B}{\partial u}-d_\Sigma h)+dv\wedge (\frac{\partial B}{\partial v}-d_\Sigma g).
\]

 We start by analyzing the equation (\ref{thirdeq}). In light of the decomposition above, this equation is equivalent to
\begin{equation}\label{hodge}
(\frac{\partial }{\partial u}+*_\Sigma\frac{\partial }{\partial v})B=d_\Sigma h+*_\Sigma d_\Sigma g.
\end{equation}

Suppose a background unitary connection $B_0$ on $L$ is chosen. Then in terms of Hodge decomposition, we have 
\begin{equation*}
B(z)-B_0=b^1(z)+b^h(z)+b^2(z),
\end{equation*}
where $b^1, b^h$ and $b^2$ are imaginary exact, harmonic and co-exact 1-forms on $\Sigma$ respectively. We impose the following gauge fixing condition:
\begin{flalign}\label{GFC}
 b^1\equiv 0. & 
\end{flalign}

This can be achieved since $b^1(z)=id_\Sigma \beta(z)$ for a unique function $\beta(z)\in (\ker\Delta_\Sigma)^\perp$ where $\Delta_\Sigma$ is the Hodge Laplacian operator on $\Sigma$ and $(\ker\Delta_\Sigma)^\perp$ denotes the $L^2$-orthogonal complement of the kernel. The function $\beta\in \SC^\infty(X, \R)$ is smooth since $b^1$ is. Then we can work instead with $e^{i\beta}\cdot (A,\sigma)$. 
 
 Suppose the gauge fixing condition (\ref{GFC}) has been imposed. Equation (\ref{hodge}) implies
\[(\frac{\partial }{\partial u}+*_\Sigma\frac{\partial }{\partial v})b^h=0.
\]
 
 On the other hand, by identity (\ref{analyticenergy}), 
 \[
 \|\nabla b^h\|_{L^2(\C)}^2\leq  \|F^m_A\|^2_{L^2(X)}\leq \E_{an}(A,\sigma)<\infty. 
 \]
 
This shows that the function $b^h:\C \to (H^1(\Sigma, i\R),*_\Sigma)$ is holomorphic and its derivative lies in $L^2(\C)$. Therefore, $b^h$ is a constant function on $\C$. By changing the background connection, we assume $b^h\equiv 0$. 
 
Now it remains to analyze $b^2$. We know $b^2(z)=-i*_\Sigma d_\Sigma \alpha(z)$ for a unique function $\alpha(z)\in (\ker\Delta_\Sigma)^\perp$. The function $\alpha\in\SC^\infty(X, \R)$ is smooth. By comparing exact and co-exact parts of the equation (\ref{hodge}), we have
 \[
 d_\Sigma(\frac{\partial }{\partial u} i\alpha+g)=0,\ d_\Sigma(\frac{\partial }{\partial v} i\alpha-h)=0,
\]
so
\begin{equation}\label{omega}
\nabla_A =\nabla_B+\frac{\partial}{\partial u}+\frac{\partial}{\partial v}-i*_\C d_\C\alpha+\omega. 
\end{equation}
for some imaginary $1$-form $\omega\in \Gamma(\C, iT^*\C)$. Let $(A',\sigma')=e^{\alpha}\cdot (A,\sigma)$. By (\ref{gaugetransformation}), we have 
\[
\nabla_{A'}=\nabla_{0}+\frac{\partial}{\partial u}+\frac{\partial}{\partial v}+\omega.
\]

Let $\nabla_\omega=\nabla_{A'}^\C$. Then the equation (\ref{secondeq}) implies $
\bpartial_{B_0}\sigma'(z)=0$
for each $z\in \C$. Thus, we obtain a map
\[
f':\C\to H^0(\Sigma,L, \bpartial_{B_0}),
\]
and $\bpartial_\omega f'\equiv 0$ by (\ref{secondeq}). 

At this moment, it suffices to show that we can eliminate $\omega$ by applying a further conformal transformation and obtain $(\nabla_\omega,f')$ from the trivial connection and a polynomial map. However, it is hard to do this directly. The main obstacle is to verify the following property:
\begin{lemma}\label{finitezeros}
There exists  a section $v\neq 0\in H^0(\Sigma,\SL,\bpartial_{B_0})$, a positive number $c>0$ and a sequence of numbers $r_{n+1}>r_n>0$ with $\lim_{n\to \infty}r_n= 0$ such that $f_1=\langle f', v\rangle$ has finitely many zeros on $\C$ and $|f_1(z)|>c$ for any $z\in \partial B(0,r_n)$. 
\end{lemma}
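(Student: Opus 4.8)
\emph{Strategy.} The plan is to show that over $\C$ the map $f'$ is, after removing the elementary factor produced by the residual $\C$-connection $\omega$, essentially a vector of polynomials in $z$, and to take $v$ to be its leading coefficient; the circles $\partial B(0,r_n)$ are then the ones avoiding the finitely many zeros of $\langle f',v\rangle$. Write $V=H^0(\Sigma,\SL,\bpartial_{B_0})$.

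First I would check that $f'$ is not identically zero and does not vanish near infinity. If $\sigma'\equiv 0$, then $(A,\sigma)$ is reducible and integrating the first equation of $(\ref{truncatedmonopole})$ over a fibre $\{z\}\times\Sigma$ gives $2\pi c_1(L)+\int_{\{z\}\times\Sigma}iF^\C_A+2\pi(1-g)=0$; here the constant $\int_{\{z\}\times\Sigma}iF^\C_A$ must vanish because $\int_X|F^\C_A|^2\le\E_{an}<\infty$, forcing $c_1(L)=g-1$, contrary to hypothesis, so $V\neq0$ and $f'\not\equiv0$. For $\sigma'\not\equiv 0$ the same integration reads
\[
\tfrac12\|\sigma(z)\|^2_{L^2(\Sigma)}=2\pi\bigl(g-1-c_1(L)\bigr)-\int_{\{z\}\times\Sigma}iF^\C_A ,
\]
and since $z\mapsto\int_{\{z\}\times\Sigma}iF^\C_A$ is uniformly continuous --- by the uniform $\SC^\infty$ a priori bounds from the compactness argument in Lemma~\ref{boundedness} --- and lies in $L^2(\C)$, it tends to $0$ as $|z|\to\infty$. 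Hence $\|\sigma(z)\|_{L^2(\Sigma)}$, and after transporting through the conformal factor $\alpha$ (whose fibrewise $L^\infty(\Sigma)$-norm I would bound by elliptic estimates on $\Sigma$ together with the bound on $F^m_A$) also $\|f'(z)\|_{L^2(\Sigma)}$, is $\ge c_0>0$ once $|z|\ge R_0$; in particular $f'(z)\neq 0$ there.

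Next I would solve $\bpartial_\C\psi=\omega^{0,1}$ globally on $\C$ (which is Stein), making $g\colonequals e^{\psi}f'$ an honestly holomorphic $V$-valued function, so that $[f']=[g]\colon\{|z|\ge R_0\}\to\PP(V)$ is holomorphic with finite Dirichlet energy (the $\C$-derivative of $\sigma'=e^{\alpha}\sigma$ has finite $L^2(X)$-norm, controlling $\alpha$ through $F^m_A$ as above). By the removability of point singularities of finite-energy holomorphic maps into a projective variety, $[f']$ extends holomorphically across $\infty$, so in a basis of $V$ one has $g(z)=q_0(z)\cdot(p_1(z),\dots,p_N(z))$ with $p_1,\dots,p_N$ polynomials without common zero and $q_0$ entire; then $\tilde q\colonequals e^{-\psi}q_0=f'/(p_1,\dots,p_N)$ vanishes exactly on $Z(f')$, a finite set by the previous step. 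I would take $v$ to be the leading coefficient of the vector $(p_1,\dots,p_N)$, so that $P(z)\colonequals\langle(p_1(z),\dots,p_N(z)),v\rangle$ is a polynomial of degree $D=\max_j\deg p_j$ and $f_1=\langle f',v\rangle=\tilde q\cdot P$; then $Z(f_1)=Z(f')\cup Z(P)$ is finite, and since $|(p_1(z),\dots,p_N(z))|\asymp|z|^D$ while $|f'(z)|\ge c_0$ we get $|\tilde q(z)|\gtrsim|z|^{-D}$ and hence $|f_1(z)|=|\tilde q(z)|\,|P(z)|\gtrsim|z|^{-D}\cdot|z|^D\ge c>0$ for $|z|\ge R_1$; any increasing sequence $r_n\to\infty$ with $r_n\ge R_1$ then works (this in fact proves $|f_1|\ge c$ on every circle of large radius).

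The main obstacle is analytic rather than geometric: one must control the conformal factor $\alpha$ and the residual $1$-form $\omega$ at infinity well enough to (i) transport the $L^2(\Sigma)$-lower bound and the $\SC^\infty$ a priori bounds from $\sigma$ to $f'=e^{\alpha}\sigma$ and (ii) exhibit $[f']$ as a finite-energy holomorphic map so that the removable-singularity theorem applies. Since the finite-energy hypothesis controls $\alpha$ and $\omega$ only in an $L^2$-averaged sense over $\C$, a proof that avoids the sharper uniform estimates will only locate a sequence $r_n\to\infty$ along which the required pointwise bounds hold --- which is precisely the form in which the Lemma is stated.
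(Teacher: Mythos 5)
Your strategy is correct in outline, but it is a genuinely different proof from the one in the paper, so let me compare the two. The paper follows Wehrheim's scheme: it restricts the solution to the boundaries $\partial X_r\cong S^1\times\Sigma$, finds a sequence $r_n\to\infty$ along which the radial energy density $T(r_n)\to 0$, shows (via the energy identity and the convergence of $\bar\omega(r_n)$) that the boundary configurations converge weakly in $L^2_1$ to a single vortex $(B_0',\gamma)$ independent of the subsequence, takes $v$ to be (a conformal transform of) that limiting section, and then bounds the number of zeros of $f_1$ by the gauged argument principle of Lemma \ref{complexanalysis}, using the smallness of $\nabla^\theta_{\omega_r}f_1$ on the circles $\partial B(0,r_n)$. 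You instead (i) get a lower bound $\|f'(z)\|_{L^2(\Sigma)}\geq c_0$ for \emph{all} large $|z|$ directly from the fiberwise-integrated curvature equation (the function $z\mapsto\int_{\{z\}\times\Sigma}iF_A^\C$ is $L^2$ by $(\ref{analyticenergy})$ and uniformly continuous by the translation-compactness of Lemma \ref{boundedness}, hence tends to $0$, so $\tfrac12\|\sigma(z)\|^2\to-\pi c_1(S^+)>0$), and (ii) feed the projectivized map $[f']:\{|z|\geq R_0\}\to\PP(H^0(\Sigma,\SL))$ into the removal-of-singularities theorem for finite-energy holomorphic curves, which forces $f'$ to be a nowhere-small-at-infinity multiple of a $V$-valued polynomial; taking $v$ to be the leading coefficient then gives the conclusion on \emph{every} large circle, not just a sequence. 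This is a legitimate and in some respects stronger argument: it bypasses the subsequence-independence difficulty that the paper works hard to resolve, at the price of importing the removable-singularity theorem for holomorphic maps into a compact K\"ahler target. Two deferred details must be supplied for it to be complete, and both are available in the paper: the uniform bound $\alpha\in L^\infty(X)$ needed to transport the lower bound from $\sigma$ to $f'=e^\alpha\sigma$ is Lemma \ref{boundedness2}, and the finiteness of the Fubini--Study energy of $[f']$ requires $\|d_\C\alpha\|_{L^2(X)}<\infty$ in addition to $\|\nabla_A^\C\sigma\|_{L^2(X)}^2\leq\E_{an}$; the former follows from the fiberwise Poincar\'e inequality together with the $L^2$ bound on $F^m_A$, which is exactly the computation in Lemma \ref{exact}. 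With those two references made explicit, your argument closes.
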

This lemma is hard because we need to connect the finiteness of zeros of $f_1$ with the finiteness of $\E_{an}$. It is not clear to the author whether there is a clean and straightforward solution. In fact, by (\ref{analyticenergy}), the finiteness of $\E_{an}$ implies $F_\omega, \partial_\omega f'\in L^2(\C)$. 
\begin{question}
Suppose $d+\omega$ is a unitary connection of the trivial line bundle on $\C$ and $f': \C\to\C^n$ is a holomorphic section with respect to $\omega$, i.e. $\bpartial_\omega f'=0$. If $F_\omega, \nabla_{\omega}f'\in L^2(\C)$, then there exists a real valued function $\alpha\in\Gamma(X,\R)$ such that $e^\alpha\cdot (\nabla_\omega, f')=(d,f_0)$ where $d$ is the exterior differential and $f_0$ is a polynomial map.  
\end{question}

The author does not know if this question could be answered by using complex analysis of one variable.  Lemma \ref{finitezeros} is easily proven when $\dim H^0(\Sigma, \SL, \bpartial_{B_0})=1$ since in this case the vortex moduli space is a single point and we conclude by the compactness argument used in Lemma \ref{boundedness}. 

We shall prove Theorem \ref{polynomial} assuming Lemma \ref{finitezeros}. The proof of Lemma \ref{finitezeros} is postponed to the end of section. The following lemma is a direct consequence of the proof of Lemma \ref{boundedness}:

\begin{lemma}\label{boundedness2}
The sections $\alpha, \sigma\in L^\infty_k(X)$ for any $k\geq 0$. In particular, $f'\in L^\infty(\C)$. 
\end{lemma}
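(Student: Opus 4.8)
The plan is to bootstrap from the uniform $L^2$ bound on $\Phi$ (Lemma \ref{boundedness}) together with the local compactness argument already used in the proof of Lemma \ref{boundedness}, which gives uniform $\SC^\infty$-bounds on $(A,\Phi)$ over translated balls $B(0,5)\times\Sigma$ after gauge transformation. The point is that after imposing the gauge-fixing conditions (\ref{GFC}) and $b^h\equiv 0$ from the proof of Theorem \ref{polynomial}, the connection $A$ is written in the explicit form (\ref{omega}), so the gauge-invariant local bounds translate into genuine bounds on the functions $\alpha$, $\sigma$ (and hence on $f'$) in the chosen gauge. First I would recall that the $n$-translated solutions $(A_n,\Phi_n)(z,x)=(A,\Phi)(z-n,x)$ on $X'=\overline{B(0,10)}\times\Sigma$ have analytic energy bounded by $\E_{an}(A,\Phi)$, so by \cite[Theorem 5.1.1]{Bible} a subsequence converges in $\SC^\infty$ on $B(0,5)\times\Sigma$ after gauge; the key upgrade over Lemma \ref{boundedness} is that \emph{every} such $n$ admits a uniform bound (not just a convergent subsequence), because the limiting energy-zero configurations form a compact family and the elliptic estimates of \cite[Theorem 5.1.1]{Bible} are uniform in the energy bound. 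This yields a constant $C_k$, independent of $n$, with $\|(A_n,\Phi_n)\|_{L^2_k(B(0,5)\times\Sigma)}\leq C_k$ \emph{in the local Coulomb gauge}.

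Next I would reconcile the local Coulomb gauge with the global gauge fixed in the proof of Theorem \ref{polynomial}. Since the latter determines $\alpha$ (fiberwise) as the unique function in $(\ker\Delta_\Sigma)^\perp$ with $b^2(z)=-i*_\Sigma d_\Sigma\alpha(z)$, and $b^2(z)$ is the co-exact part of $B(z)-B_0$ — a gauge-invariant quantity up to the residual $S^1$-valued gauge on $\Sigma$, which does not affect $|{\cdot}|$ — the pointwise size of $\alpha$ and its $\Sigma$-derivatives at a point $(z,x)$ is controlled by the $L^2_k$ norm of the curvature $F_A$ near that fiber, hence by $C_k$. The $\C$-derivatives of $\alpha$ are then controlled via (\ref{omega}): they appear as the $-i*_\C d_\C\alpha$ term in $\nabla_A$, so $\|d_\C\alpha\|$ is bounded by $\|\nabla_A\Phi\|$-type quantities plus $\|\nabla_{B_0}\Phi\|$, again locally uniformly bounded. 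Iterating the Seiberg-Witten equation (\ref{truncatedmonopole}) — elliptic in each fiber and in the $\C$-direction separately — upgrades these to bounds in $L^\infty_k(X)$ for all $k$ by the standard bootstrap. Finally, $\sigma=\Phi_+$ is controlled directly by the local $\SC^\infty$ bounds, and $f'=e^{-\alpha}\sigma$ (restricted to a fiber and read as an element of $H^0(\Sigma,\SL,\bpartial_{B_0})$) inherits an $L^\infty(\C)$ bound since $\alpha$ is bounded below: indeed the uniform lower bound on $\alpha$ comes from the same compactness, as the energy-zero limit is a genuine vortex with $\alpha$-part in a fixed compact set.

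The main obstacle I expect is the uniformity step: passing from ``every translate has a $\SC^\infty$-convergent subsequence'' to ``a single constant $C_k$ works for all translates''. This requires knowing that the moduli space of energy-zero configurations on $B(0,5)\times\Sigma$ — equivalently the vortex moduli space $\SB(\Sigma,L)$ — is compact (which it is, being $\Sym^m\Sigma$ by Bradlow's theorem \cite{Bradlow}, or a point in the cases where $\dim H^0=1$), and that the a priori estimates of \cite[Theorem 5.1.1]{Bible} depend only on the energy bound and the geometry of the compact piece, both of which are fixed here. The secondary subtlety is bookkeeping the residual gauge: one must check that the global gauge fixing of the proof of Theorem \ref{polynomial} is compatible, on each ball, with a choice of local Coulomb gauge up to a harmless $\map(\Sigma,S^1)$ factor, so that no large gauge transformations are needed to match them; this is where the normalizations $b^1\equiv 0$ and $b^h\equiv 0$ do the essential work.
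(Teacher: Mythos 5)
Your proof is correct and follows exactly the route the paper intends: the paper dismisses this lemma as ``a direct consequence of the proof of Lemma \ref{boundedness}'', i.e.\ the translation-plus-compactness argument you spell out, combined with the observation that the gauge-fixing $b^1\equiv 0$, $b^h\equiv 0$ makes $\alpha$ (via the co-exact part of $B(z)-B_0$, which is untouched by unitary gauge since $u^{-1}du$ is closed) and $|\sigma|$ gauge-invariantly controlled by the uniform local $\SC^\infty$ bounds. The only blemish is a sign slip at the end ($f'=e^{\alpha}\sigma$ rather than $e^{-\alpha}\sigma$), which is harmless since you establish a two-sided $L^\infty$ bound on $\alpha$ anyway.
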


Take a complex gauge transformation $u=e^{\alpha_1+i\beta_1}$ with $\alpha_1,\beta_1\in \SC^\infty(\C,\R)$. Consider $u\cdot (\omega,f_1)$. The connection form $\omega$ is changed into
\[
\omega+i(*_\C d_\C \alpha_1-d_\C\beta_1).
\]

To make it zero, we need to solve the equation
\begin{equation}\label{holo}
\bpartial (\alpha_1+i\beta_1)=\omega^{0,1}. 
\end{equation}

In general, this equation can not be solved on $\C$. But the $\bpartial$-Poincar\'{e} lemma says that we can alway solve it on $B(0,2R)$ for any $R>0$. Suppose $u_1$ is such a solution on $B(0,2R)$. Then $\eta_1\colonequals u_1\cdot f_1$ is holomorphic on $B(0,R)$ and the zero locus $Z(f)=Z(\eta_1)$ is discrete on $B(0,R)$. Since $R$ is arbitrary, $Z(f_1)$ is discrete. By Lemma \ref{finitezeros}, $Z(f_1)$ is also finite, so it lies in a compact region of $\C$. Set
\[
f_2=\prod_{z_i\in Z(f)} (z-z_i). 
\] 

The function $u\colonequals f_1/f_2$ is non-vanishing on $\C$ and $\bpartial_\omega u=0$. Since $\C$ is simply connected, $u=e^\zeta$ for some smooth $\zeta: \C\to \C$. Then
\[
\bpartial u+\omega^{0,1} u=0\Rightarrow\bpartial(-\zeta)=\omega^{0,1}. 
\]

This shows $\zeta$ is a global solution to the equation (\ref{holo}). Since on each circle $\partial B(0,r_n)$, $|f_1|>c>0$, we can find $C>0$ such that
\[
|e^{-\zeta(z)}|=|f_2/f_1|<C|z|^d
\]
for any $n$ and $z\in \partial B(0,r_n)$.

Now consider $(d,\eta)\colonequals e^{-\zeta}(\nabla_\omega, f')$. Then $\bpartial\eta=0$. By Lemma \ref{boundedness2}, $f_1\in L^\infty(\C)$. Thus, 
\begin{equation}\label{polygrowth}
|\eta(z)|<C_1|z|^d
\end{equation}
for any $n$ and $z\in\partial B(0,r_n)$. Apply maximal principle to $\eta/z^d$ on the annulus $B(0, r_{n+1})\backslash B(0, r_n)$. We conclude that $\eta/z^d$ is uniformly bounded when $|z|>r_1$. Hence, $\eta/z^d$ extends to $\infty$ and $\eta$ is a polynomial map. This completes the proof of Theorem \ref{polynomial}. \qedhere
\end{proof}

Now we turn to the proof of Lemma \ref{finitezeros}. We start with a lemma that generalizes the classical theorem in complex analysis:
\begin{lemma}\label{complexanalysis}
Suppose $d+\omega$ is a unitary connection on the trivial line bundle on $\C$ and a complex-value function $f:\C\to \C$ is a holomorphic with respect to $\omega$, i.e, $\bpartial_\omega f=(df+\omega\otimes f)^{0,1}=0$. If $f$ is non-vanishing on $|z|=R$, then 
\[
\#\{z\in B(0,R):f(z)=0\}=\frac{1}{2\pi i}\int_{|z|=R}\frac{\nabla_\omega f}{f}-\omega.
\]
\end{lemma}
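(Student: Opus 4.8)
The plan is to reduce Lemma \ref{complexanalysis} to the classical argument principle by trivializing the connection locally. Since $d+\omega$ is a unitary connection on the trivial line bundle over $\C$, and since the disk $\overline{B(0,R)}$ is simply connected, we can solve the $\bpartial$-equation $\bpartial\xi = \omega^{0,1}$ on a neighborhood of $\overline{B(0,R)}$ by the $\bpartial$-Poincar\'{e} lemma, obtaining a smooth $\xi:\overline{B(0,R)}\to\C$. Then $e^{-\xi}$ is a (non-unitary) change of trivialization under which $\nabla_\omega$ becomes the trivial connection $d$: more precisely, setting $h = e^{\xi}f$, one checks $\bpartial h = e^\xi(\bpartial\xi \cdot f + \bpartial f) = e^\xi(\omega^{0,1}f + \bpartial f) = e^\xi \bpartial_\omega f = 0$, so $h$ is an honest holomorphic function on a neighborhood of $\overline{B(0,R)}$. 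Since $e^\xi$ is nowhere zero, $h$ and $f$ have exactly the same zeros, with the same multiplicities, inside $B(0,R)$, and $h$ is non-vanishing on $|z|=R$.

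Next I would apply the classical argument principle to $h$:
\[
\#\{z\in B(0,R): h(z)=0\} = \frac{1}{2\pi i}\int_{|z|=R}\frac{h'(z)}{h(z)}\,dz = \frac{1}{2\pi i}\int_{|z|=R}\frac{dh}{h},
\]
where $dh = \partial h$ since $h$ is holomorphic. It then remains to translate $dh/h$ back into the connection language. From $h = e^\xi f$ we get $dh = e^\xi(d\xi\cdot f + df)$, so on $\{f\neq 0\}$,
\[
\frac{dh}{h} = d\xi + \frac{df}{f}.
\]
On the other hand, $\nabla_\omega f = df + \omega f$, so $\dfrac{\nabla_\omega f}{f} = \dfrac{df}{f} + \omega$, which gives $\dfrac{df}{f} = \dfrac{\nabla_\omega f}{f} - \omega$. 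Combining,
\[
\frac{dh}{h} = d\xi + \frac{\nabla_\omega f}{f} - \omega.
\]
Integrating around the closed loop $|z|=R$, the exact form $d\xi$ contributes zero (here one uses that $\xi$ is single-valued on a neighborhood of the circle, which it is by construction), and I obtain exactly the claimed formula.

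One should double-check the orientation and normalization conventions, and verify that the contour integral is insensitive to whether one integrates the full $(1,0)+(0,1)$ form or only its $(1,0)$ part — but since $h$ is holomorphic, $dh/h$ is already of type $(1,0)$, and since $f$ is $\bpartial_\omega$-holomorphic rather than $\bpartial$-holomorphic, the form $\dfrac{\nabla_\omega f}{f}-\omega$ is the correct replacement. The only real subtlety, and the step I expect to require the most care, is the solvability of $\bpartial\xi=\omega^{0,1}$ on a neighborhood of the \emph{closed} disk together with regularity of $\xi$ up to the boundary; this is standard (solve on $B(0,2R)$, say, via the Cauchy transform of $\omega^{0,1}$, which is smooth since $\omega$ is), so the lemma follows without genuine difficulty. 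The content is really just that $\nabla_\omega$-holomorphic functions obey the same argument principle as ordinary holomorphic functions, once the connection term is accounted for in the logarithmic derivative.
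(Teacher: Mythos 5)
Your proof is correct and follows essentially the same route as the paper: solve $\bpartial\xi=\omega^{0,1}$ on a disk via the $\bpartial$-Poincar\'{e} lemma, pass to the genuinely holomorphic function $h=e^\xi f$, apply the classical argument principle, and observe that the exact form $d\xi$ integrates to zero around the circle. The computation relating $dh/h$ to $\nabla_\omega f/f-\omega$ matches the paper's identity $\frac{df}{f}=\frac{d\eta}{\eta}+u\,du^{-1}$ with $u=e^{\xi}$.
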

\begin{proof} The $\bpartial$-Poincar\'{e} lemma allows us to find a complex gauge transformation $u$ such that $\eta=u\cdot f$ is holomorphic on $B(2R,0)$. Then $\eta$ and $f$ have the same zero locus $Z(\eta)=Z(f)$. Because the 1-form $udu^{-1}$ is closed on $B(0,R)$, we have, 
\begin{align*}
\frac{1}{2\pi i}\int_{|z|=R}\frac{\nabla_\omega f}{f}-w&=\frac{1}{2\pi i}\int_{|z|=R} \frac{df}{f}=\frac{1}{2\pi i}\int_{|z|=R}\frac{d\eta}{\eta}+udu^{-1}\\
&= \frac{1}{2\pi i}\int_{|z|=R}\frac{\partial\eta}{\eta}=\# Z(f)\cap B(0,R).
\end{align*}
\end{proof}

From now on, we borrow ideas from Wehrheim's paper \cite{Wehrheim}. We use the polar coordinate $(r,\theta)$ on $\C$. Write $\nabla_A$ as 
\[
\nabla_A =\nabla_B+(\frac{\partial}{\partial r}+h)\otimes dr+(\frac{1}{r}\frac{\partial}{\partial\theta}+g)\otimes rd\theta. 
\]
We compute the curvature form:
\[
F_A=F_Bdvol_\Sigma+F_A^\C dvol_\C+dr\wedge (\frac{\partial B}{\partial r}-d_\Sigma h)+rd\theta\wedge (\frac{1}{r}\frac{\partial B}{\partial \theta}-d_\Sigma g). 
\]

If we regard $(A,\sigma)$ as configuration on $\R\times S^1\times \Sigma$ and ignore the $dr$ component of $\nabla_A$, we get a family of configurations on $Y=S^1\times \Sigma$. Let us denote them by $(A_r,\sigma_r)$. Then 
\[
\nabla_{A_r}^\Sigma|_{(\theta,x)}=\nabla_{B(r,\theta,x)},\ \nabla_{A_r}^{\theta}|_{(\theta,x)}=\frac{\partial}{\partial\theta}+rgd\theta,\ \sigma_r(\theta,x)=\sigma(r,\theta,x).
\]

If we decompose $F_{A_r}$ into its $\Sigma$-part and its mixed part, we obtain
\[
F_{A_r}^\Sigma(\theta)=F_{B(r,\theta,x)},\ F^1_{A_r}=d\theta\wedge (\frac{\partial B}{\partial \theta}-rd_\Sigma g).
\]

Note there are two different metrics on $\R\times Y$. One is the product metric, the other is induced from polar coordinates. Whenever the symbol $Y$ is used, we indicate the first metric, while the second is used implicitly for $\partial X_r$. 
For any $r>0$, define $T(r)$ by the formula:
\begin{align*}
\int_{\partial X_r} r|F^m_A|^2+r|\nabla_A^\C\sigma|^2+2|\bpartial_A^\Sigma\sigma|^2+|iF_{A}^\Sigma+\half K+\half|\sigma|^2|^2.
\end{align*}

Note that $T(r)$ controls the analytic energy of $(A_r,\sigma_r)$ on $Y$ when $r>1$. Indeed, 
\begin{align*}
\E_{an}(A_r,\sigma_r):&= \int_Y |F_{A_r}^1|^2+|\nabla_{A_r}^\theta \sigma|^2+2|\bpartial_{A_r}^\Sigma\sigma_r|^2+|iF_{A_r}^\Sigma+\half K+\half|\sigma_r|^2|^2\\
&\leq \int_{\partial X_r} r|F^m_A|^2+r|\nabla_A^\C\sigma|^2+\frac{2}{r}|\bpartial_A^\Sigma\sigma|^2+\frac{1}{r}|iF_{A}^\Sigma+\half K+\half|\sigma|^2|^2\\
&\leq T(r). 
\end{align*}

In addition,  for $r>1$, 
\begin{align*}
\E_{an}(A,\sigma)&=\int_{0}^\infty \frac{d}{dr}\E(r)\geq \int_1^\infty \frac{1}{r}T(r).
\end{align*}
This implies 
\begin{lemma}
There exists a sequence of numbers $r_n>1$ such that $\lim_{n\to\infty} r_n=\infty$ and $\lim_{n\to\infty}T(r_n)= 0$.
\end{lemma}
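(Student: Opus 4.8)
The statement to prove is a clean consequence of the inequality just displayed: we have
\[
\E_{an}(A,\sigma)\ \geq\ \int_1^\infty \frac{1}{r}\,T(r)\,dr,
\]
together with the hypothesis that $\E_{an}(A,\sigma)<\infty$ and the fact that $T(r)\ge 0$ for all $r$. The plan is therefore a standard ``integrability forces a vanishing subsequence'' argument. First I would record that since $\int_1^\infty \frac{T(r)}{r}\,dr<\infty$, the function $r\mapsto T(r)/r$ is integrable on $[1,\infty)$, so in particular it is \emph{not} bounded below by any positive constant times $1/r$ on a neighborhood of infinity; more precisely, $\liminf_{r\to\infty} T(r)=0$. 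Indeed, if there were $\epsilon>0$ and $R_0>1$ with $T(r)\ge \epsilon$ for all $r\ge R_0$, then $\int_{R_0}^\infty \frac{T(r)}{r}\,dr\ge \epsilon\int_{R_0}^\infty \frac{dr}{r}=\infty$, contradicting finiteness of the analytic energy. Once $\liminf_{r\to\infty}T(r)=0$ is established, one simply extracts, for each $n$, a radius $r_n>\max(n,r_{n-1})$ with $T(r_n)<1/n$; this gives a strictly increasing sequence $r_n\to\infty$ with $\lim_{n\to\infty}T(r_n)=0$, which is exactly the claim.

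Before running that argument I would make sure the chain of inequalities preceding the statement is fully justified, since it is the only nontrivial input. The key points are: (i) $T(r)$ is manifestly nonnegative, being an integral over $\partial X_r$ of a sum of squared norms; (ii) the energy formula $\E(r)=\int_{X_r}|\F(A,\sigma)|^2+\text{(boundary + topological terms)}$ from Lemma~\ref{energyball}, combined with $\F(A,\sigma)=0$, shows $\frac{d}{dr}\E(r)$ is (up to the $\frac1r$ Jacobian factor coming from writing $dvol_X=r\,dr\,d\theta\,dvol_\Sigma$ in polar coordinates) bounded below by $\frac1r T(r)$; and (iii) $\E_{an}(A,\sigma)=\lim_{r\to\infty}\E(r)=\int_0^\infty \frac{d}{dr}\E(r)\,dr$, using that $\E(0)=0$ and monotonicity. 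The comparison between the ``product-metric'' energy $\E_{an}(A_r,\sigma_r)$ on $Y=S^1\times\Sigma$ and $T(r)$ has already been carried out in the excerpt and is not needed for this particular lemma — only the inequality $\E_{an}(A,\sigma)\ge \int_1^\infty \frac1r T(r)\,dr$ matters.

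The main (and only real) obstacle is bookkeeping rather than conceptual: one must be careful with the polar-coordinate volume element and the identification of $\frac{d}{dr}\E(r)$ with the integrand of $T(r)$ divided by $r$, making sure no factor of $r$ is dropped. Given the way the excerpt has set things up — in particular the displayed inequality $\E_{an}(A,\sigma)\ge\int_1^\infty \frac1r T(r)$ is stated as already proven — the proof of the lemma itself is a one-line contrapositive argument: if no such sequence $r_n$ existed, then $T(r)$ would be bounded away from $0$ for all large $r$, forcing $\int_1^\infty \frac{T(r)}{r}\,dr=+\infty$ and contradicting $\E_{an}(A,\sigma)<\infty$. I would present it in exactly that form.
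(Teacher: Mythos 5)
Your proof is correct and is exactly the argument the paper intends: the lemma is stated immediately after the displayed inequality $\E_{an}(A,\sigma)\geq \int_1^\infty \frac{1}{r}T(r)\,dr$, and the paper leaves the extraction of the sequence implicit, relying on precisely the ``nonnegative integrand with $\int_1^\infty T(r)/r\,dr<\infty$ forces $\liminf_{r\to\infty}T(r)=0$'' argument you give. No gaps.
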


Recall that $F_A'= F_A-i\frac{K}{2}dvol_\Sigma$. Let $a=A-A_0$. Then $F_A'=F_{A_0}'+da$. Since $\nabla_{A,\vec{n}}\sigma=-i/r\cdot \nabla_{A,\partial_\theta}\sigma$ and $F_{A_0}'$ contains only $dvol_\Sigma$ component, by Lemma \ref{energyball}: 
	\begin{align*}
\E(r)&=-\frac{1}{r}\int_{\partial X_r} \langle \sigma, i \nabla_{A,\partial_\theta}\sigma\rangle+ \int_{X_r} F_A'\wedge F_A'-\int_{X_r} F_{A_0}'\wedge F_{A_0}'\\
&=\int_Y- \langle \sigma_r, i \nabla_{A_r}^\theta\sigma_r\rangle+a\wedge (2F_{A_0}'+da). 
\end{align*}

Note that 
\[
a=-i*_\C d_\C\alpha -i*_\Sigma d_\Sigma\alpha +\omega.
\]

Let $a_r=a|_{\partial X_r}, \omega_r=\omega|_{\partial B(0,r)}$ and $\bar{\omega}(r)=\int_{S^1}\omega_r$. For each configuration $(A_r,\sigma_r)$ on $Y$, we apply the gauge fixing condition:
\begin{equation}\label{fixing}
\omega_r=\bar{\omega}(r)d\theta.
\end{equation}

This can be achieved since $\omega_r-\bar{\omega}(r)=d\beta_r$ for some $\beta_r\in \Gamma(S^1,i\R)$ and we can work with $e^{\beta_r}\cdot (A_r,\sigma_r)$ instead. Note that (\ref{GFC}) and (\ref{fixing}) are different from the Coulomb gauge fixing condition on $Y$. In terms of the Hodge decomposition of $\Omega^1(Y,i\R)$, write
\[
a_r-\omega_r=a_r^1+a_r^h+a_r^2,
\]
where $a_r^1, a_r^h$ and $a_r^2$ are exact, harmonic and co-exact parts of $a_r-\omega_r$ respectively. Since pull-backs from $S^1$ or $\Sigma$ generate the space of harmonic 1-forms on $Y$, $a_r^h=0$. The exact component $a_r^1$ is nonzero in general. By the gauge fixing condition (\ref{fixing}), we have 
\[
\int_Ya\wedge (2F_{A_0}'+da)=-4\pi^2 ic_1(S^+)\bar{\omega}(r)+H(a_r^2).
\]
 Here, $H(a_r^2)$ is a function that involves $\alpha$ only. Indeed, 
 \[
 H(a_r^2)=\int_{X_r} (-i*_\C d_\C\alpha)\wedge (2F_{A_0}'+2i\Delta_\Sigma\alpha dvol_\Sigma).
 \]
It depends only on the co-exact component $a_r^2$ and is continuous with respect to $L^2_{1/2}(Y,i\Omega^1(Y))$-topology.
 
Since $\E_{an}(A_{r_n},\sigma_{r_n})\leq T(r_n)\to 0$, we have 
\[
\|(a_{r_n}^2,\sigma_{r_n})\|_{L^1_2(Y)}<C
\] 
for some uniform $C>0$. See \cite[Theorem 5.5.1]{Bible} for a proof for 4-dimensional equations. By passing to a subsequence, we may assume $(a_{r_n}^2,\sigma_{r_n})$ converge weakly in $L_1^2$. Therefore, the sequence
\[
\E(r_n)+4\pi^2 ic_1(S)\bar{\omega}(r_n)=H(a_{r_n}^2)-\int_Y \langle \sigma_r, i \nabla_{A_r}^\theta\sigma_r\rangle
\]
converges. Since $\E(r)$ has a limit as $r\to\infty$, $\lim \bar{\omega}(r_n)$ exists. This implies that for some proper gauge transformations $e^{\beta_n}$ and an $L_1^2$-configuration $(A_\infty, \sigma_\infty)$ on $Y$, 
\begin{equation}\label{L21}
e^{\beta_n}(A_{r_n},\sigma_{r_n})\xrightarrow{w-L_1^2} (A_\infty, \sigma_\infty),
\end{equation}
and $\E_{an}(A_\infty, \sigma_\infty)=0$. So, for some $m\in \Z$, 
\begin{equation}\label{limit}
A_\infty=\frac{\partial}{\partial\theta}+B_0'-imd\theta, \sigma_\infty=e^{im\theta}\cdot \gamma,
\end{equation}
and $(B_0', \gamma)$ is a vortex on $\Sigma$, i.e. this pair solves the  vortex equation (\ref{vortex}). 

At this moment, we do not know $e^{\beta_n}\sigma_{r_n}\to \sigma_\infty$ in $L^\infty$-norm since in dimension $3$, $L^2_1\not\embed L^\infty$. We only need a weaker result and it is almost there. We examine the exact part of $a_r-\omega_r$ more carefully:
\begin{lemma}\label{exact}
$\|a_{r_n}^1\|_{L^2(Y)}\to 0$ as $r_n\to\infty$. 
\end{lemma}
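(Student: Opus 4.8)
\textbf{Proof proposal for Lemma \ref{exact}.}

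The plan is to isolate the exact component $a_{r_n}^1 = i d_\Sigma(\cdots)$... wait, on $Y = S^1\times\Sigma$ the exact part $a_r^1 = d\lambda_r$ for some $\lambda_r \in \Gamma(Y,i\R)$ with $\lambda_r$ normalized to lie in $(\ker\Delta_Y)^\perp$; the point is that $a_r^1$ records exactly the $i*_\C d_\C\alpha$ and $i*_\Sigma d_\Sigma\alpha$ contributions to $a$ restricted to $\partial X_r$, since the gauge fixing conditions (\ref{GFC}) and (\ref{fixing}) have already killed the harmonic part and the $\omega$-contribution. So the first step is to write $a_{r_n}^1$ concretely in terms of $\alpha$ and $\partial_r\alpha$ on $\{r_n\}\times\Sigma$ and on the $S^1$-factor, using the expression $a = -i*_\C d_\C\alpha - i*_\Sigma d_\Sigma\alpha + \omega$ together with $\omega_r = \bar\omega(r)d\theta$. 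This should express $\|a_{r_n}^1\|_{L^2(Y)}$ in terms of quantities like $\|\partial_\theta\alpha\|_{L^2}$, $\|d_\Sigma\alpha\|_{L^2}$ along the sphere, all of which are dominated by Sobolev norms of $a_{r_n}$ itself restricted to $\partial X_{r_n}$.

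The second and main step is to show these boundary quantities go to zero. Here I would combine two facts already in hand: first, from $\E_{an}(A,\sigma)<\infty$ and the integral $\E_{an}(A,\sigma)\geq \int_1^\infty \frac{1}{r}T(r)$ together with the chosen sequence $T(r_n)\to 0$, we control the full boundary energy density $T(r_n)$; second, the weak $L^2_1$-convergence (\ref{L21}) to the vortex limit (\ref{limit}), for which $a_\infty$ has no exact component (the limit is in the slice). The subtlety is that weak $L^2_1(Y)$ convergence does not immediately give norm convergence of $a_{r_n}^1$. To upgrade it, I would use elliptic bootstrapping: $(A_{r_n},\sigma_{r_n})$ together with the $dr$-component of the 4-dimensional equations give a parabolic/elliptic system controlling $\partial_r a_{r_n}$, and $T(r_n)\to 0$ controls the $r$-derivatives of the relevant fields near $\{r_n\}\times\Sigma$; this promotes the convergence to strong $L^2_1$ on $Y$, hence $\|a_{r_n}^1\|_{L^2}\to\|a_\infty^1\|_{L^2}=0$. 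Alternatively, and perhaps more cleanly, one can argue directly: $a_{r_n}^1 = d\lambda_{r_n}$ where $\Delta_Y\lambda_{r_n} = d^*a_{r_n}$, and $d^*a_{r_n}$ is controlled by the equations (\ref{secondeq}), (\ref{thirdeq}) restricted to the boundary plus the conformal structure of $\alpha$, all of which are bounded in $L^2$ by $T(r_n)\to 0$; elliptic estimates for $\Delta_Y$ then give $\|\lambda_{r_n}\|_{L^2_1}\to 0$ and hence $\|a_{r_n}^1\|_{L^2}\to 0$.

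I expect the main obstacle to be the second step: carefully extracting from the $4$-dimensional equations the right first-order control on the $r$-derivative of the fields restricted to the sphere $\partial X_{r_n}$, so that $T(r_n)\to 0$ genuinely forces $d^*a_{r_n}\to 0$ in $L^2(Y)$ (as opposed to merely being bounded). One must also be careful that the two competing metrics on $\R\times Y$ — the product metric and the one induced by polar coordinates — differ by factors of $r_n$, which are large; since $r_n\to\infty$ this could in principle amplify error terms, so the estimate must be arranged so that the $r_n$-weights work in our favor (they do, because $T(r)$ already carries the correct $r$-weights, as in the computation bounding $\E_{an}(A_r,\sigma_r)\leq T(r)$). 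Once $d^*a_{r_n}\to 0$ in $L^2$ and the harmonic part vanishes by construction, the Hodge-theoretic estimate $\|a_{r_n}^1\|_{L^2(Y)} \leq C\|d^* a_{r_n}\|_{L^2(Y)}$... more precisely $\|a_{r_n}^1\|_{L^2} \le C\|\Delta_Y^{-1}d^*d a_{r_n}\|$, closes the argument.
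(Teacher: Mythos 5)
There is a genuine gap: both mechanisms you propose for the main step fail, and the missing ingredient is a fiberwise Poincar\'{e} inequality. Your route via the weak convergence (\ref{L21}) is circular, because that statement concerns the configurations \emph{after} applying the gauge transformations $e^{\beta_n}$ with $d_Y\beta_n=a_{r_n}^1$, i.e.\ after the exact part has already been removed; it therefore carries no information about $\|a_{r_n}^1\|$, and no actual argument is given for upgrading weak to strong convergence. Your ``cleaner'' route via $\Delta_Y\lambda_{r_n}=d_Y^*a_{r_n}$ hinges on the claim that $d_Y^*a_{r_n}$ is controlled in $L^2$ by $T(r_n)$, and this is false as stated: the tangential part of $-i*_\C d_\C\alpha$ on $\partial X_r$ is (up to the metric factor) $\partial_r\alpha\, d\theta$, so $d_Y^*a_r$ involves the second derivative $\partial_\theta\partial_r\alpha$, which is purely in the $\C$-directions and is \emph{not} among the quantities $T(r)$ bounds ($F_A^\C$ controls only $\Delta_\C\alpha$, and $F^m_A$ only the mixed derivatives $d_\C d_\Sigma\alpha$). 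Also, your opening claim that $a_r^1$ records both the $i*_\C d_\C\alpha$ and the $i*_\Sigma d_\Sigma\alpha$ contributions is incorrect: the latter is co-closed on $Y$, hence $L^2$-orthogonal to exact forms and contributes nothing to $a_r^1$ — and this observation is in fact the first step of the correct proof.

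The paper's argument uses two points your proposal does not supply. First, since $\delta_Y(-i*_\Sigma d_\Sigma\alpha)=0$ and $\omega_r$ has been gauge-fixed to be harmonic by (\ref{fixing}), the exact part of $a_r$ is the $L^2(Y)$-orthogonal projection of $-i*_\C d_\C\alpha|_{\partial X_r}$ onto exact forms; projections decrease norms, so $\int_Y|a_r^1|^2\le r\int_{\partial X_r}|*_\C d_\C\alpha|^2$ with no need to compute $d_Y^*a_r$ at all. Second, the gauge fixing (\ref{GFC}) forces $\alpha(z)\in(\ker\Delta_\Sigma)^\perp$ on every fiber, hence the same holds for its $\C$-derivatives, and the Poincar\'{e} inequality on $\Sigma$ gives $r\int_{\partial X_r}|*_\C d_\C\alpha|^2\le r\lambda_1^{-1}\int_{\partial X_r}|d_\C d_\Sigma\alpha|^2$. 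Since $|d_\C d_\Sigma\alpha|^2=\half|F^m_A|^2$ and $T(r)\ge\int_{\partial X_r}r|F^m_A|^2$, one gets $\|a_{r_n}^1\|^2_{L^2(Y)}\le(2\lambda_1)^{-1}T(r_n)\to 0$, with the factors of $r$ cancelling exactly as you anticipated. It is precisely this passage from the first-order quantity $*_\C d_\C\alpha$ to the second-order mixed curvature $F^m_A$ — available only because of the fiberwise mean-zero normalization of $\alpha$ — that is the key idea of the lemma and is absent from your proposal.
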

\begin{proof}
The exact part of $a_r-\omega_r$ arises form $-i*_\C d_\C\alpha|_{\partial X_r}$. Indeed, 
\[
\delta_Y (-i*_\Sigma d_\Sigma\alpha)=0,
\]
where $\delta_Y$ is the formal adjoint of the exterior differential $d_Y$. Therefore, 
\begin{align*}
\int_Y |a_r^1|^2 &\leq r\int_{\partial X_r}| -i*_\C d_\C\alpha|^2\leq r\lambda_1^{-1}\int_{\partial X_r}|d_\C d_\Sigma\alpha|^2,
\end{align*} 
using the fact that $\alpha(z)\in (\ker \Delta_\Sigma)^\perp$. $\lambda_1$ is the first positive eigenvalue of $ \Delta_\Sigma$. On the other hand, since $F^m_A=i(*_\C-*_\Sigma)d_\C d_\Sigma \alpha$,
\[
\frac{1}{r}T(r)\geq\int_{\partial X_r}|F^m_A|^2=2\int_{\partial X_r}|d_\C d_\Sigma\alpha|^2. 
\]
The last equality follows from the fact that over each fiber $\{z\}\times\Sigma$, $*_\C d_\C d_\Sigma\alpha$ is an exact form while $*_\Sigma d_\C d_\Sigma\alpha$ is co-exact, so they are orthogonal. 

Finally, $T(r_n)\to 0$ implies $\|a^1_{r_n}\|_{L^2(Y)}\to 0$. 
\end{proof}

Now we are ready to prove Lemma $\ref{finitezeros}$. 
\begin{proof}[Proof of Lemma \ref{finitezeros}] We take $\beta_n\in (\ker \Delta_Y)^\perp$ such that $d_Y\beta_n=a_r^1$. By Lemma \ref{exact}, $\|\beta_n\|_{L^2_1(Y)}\to 0$ as $n\to\infty$. This shows that gauge fixing conditions $(\ref{GFC})$ and $(\ref{fixing})$ are satisfied for the limit $(A_\infty,\sigma_\infty)$, so $(\ref{limit})$ holds without further gauge transformations. Let 
\[
l_n(\theta)=\int_{\{\theta\}\times\Sigma} \langle e^{\beta_n}\sigma_{r_n}, \sigma_\infty\rangle,\  l_n'(\theta)=\int_{\{\theta\}\times\Sigma} \langle \sigma_{r_n}, \sigma_\infty\rangle,
\]
then $l_n, l_n'\in L_1^2(S^1)$. Moreover, $l_n(\theta)$ converges to the constant function $\|\gamma\|_{L^2(\Sigma)}^2$ in $L^\infty(S^1)$-topology. Since $\beta_n$ is imaginary, $|e^{\beta_n}-1|\leq C|\beta_n|$ for some $C>0$. Thus, 
\begin{equation}\label{310}
\|l_n-l_n'\|_{L^{3/2}_1(S^1)}\leq C_1\|\beta_n\|_{L^2_1(Y)}\to 0.
\end{equation}

Indeed, by Lemma \ref{boundedness2}, $\sigma\in L^\infty(X)$ and $\sigma_\infty\in L^\infty(Y)$, so 
\[
|l_n(\theta)-l_n'(\theta)|=|\int_{\{\theta\}\times\Sigma} \langle (e^{\beta_n}-1)\sigma_{r_n}, \sigma_\infty\rangle|\leq C\int_{\{\theta\}\times \Sigma}|\beta_n|.
\]

To deal with the derivative, note that  $\frac{d}{d\theta}(l_n-l_n')$ is bounded by
\[
 |\int_{\{\theta\}\times\Sigma} \langle \frac{d\beta_n}{d\theta}e^{\beta_n}\sigma_{r_n}, \sigma_\infty\rangle|+|\int_{\{\theta\}\times\Sigma} \langle (e^{-\beta_n}-1)\frac{d}{d\theta}(e^{\beta_n}\sigma_{r_n}), \sigma_\infty\rangle|.
\]

The first term is controlled in the same way. For the second, we use the multiplicative structure $L^6\times L^2\embed L^{3/2}$ and Sobolev embedding theorem $L^2_1\embed L^6$ in dimension $3$. This proves estimate (\ref{310}).

By the gauge fixing condition (\ref{GFC}), $B_0'-B_0$ is a co-exact 1-form on $\Sigma$. Then $B_0'-B_0=-i*_\Sigma d_\Sigma\alpha_\infty$ for a unique function $\alpha_\infty\in (\ker\Delta)^\perp$. Let $v=e^{\alpha_\infty}\sigma_\infty$. Then $\bpartial_{B_0}v=0$. Recall that $\sigma_r'=e^\alpha\sigma_r$ and for any $z\in \C$, 
\[f_1(z)=\int_{\{z\}\times\Sigma} \langle \sigma_r',v\rangle.\]
By Lemma \ref{boundedness2}, $\alpha\in L^\infty(X)$. Since $\alpha_\infty\in L^\infty(\Sigma)$, for any $z=r_ne^{i\theta}\in \C$, 
\[
|f_1(z)|\geq c_2 |l'_n(\theta)|.
\]

This implies that when $n\gg 0$, $|f_1(r_ne^{i\theta})|>c$ for some $c>0$, since the same holds for $l_n(\theta)$ and $l_n'(\theta)$. 
\medskip

Finally, we need to verify that $f_1$ has finitely many zeros. We apply Lemma \ref{complexanalysis} and give an upper bound for that integral. The contribution from the connection form is settled since it is just $\bar{\omega}_{r_n}$ and $\lim \bar{\omega}_{r_n}$ exists. Since $|f_1|>c$, for $r=r_n$,
\[
|\int_{\partial B(0,r)} \nabla_\omega f_1/f_1|\leq \frac{1}{c} \int_{S^1}|\nabla_{\omega_r}^\theta f_1|\leq \frac{C}{c} \|\nabla_{\omega_r}^\theta\sigma_r'\|_{L^2(Y)}.
\]
It is sufficient to estimate $\|\nabla_{\omega_r}\sigma_r'\|_2$. By (\ref{covariantderivative}), 
\[
\nabla_\omega \sigma'=e^{\alpha}(\nabla_{A}^\C\sigma+2(d_\C\alpha)^{1,0}\otimes \sigma).
\]
By Lemma \ref{boundedness2}, $\alpha, \sigma\in L^\infty(X)$. It suffices to estimate the $L^2$-norms of $\nabla_A^\C\sigma$ and $d_\C\alpha$. For the first term,
\[
\int_{Y}|\nabla_{A_r}^\theta\sigma_r|^2\leq \int_{\partial X_r}r|\nabla_A^\C\sigma|^2\leq T(r).
\]
For the second, it was done in proof of Lemma \ref{exact}. This completes the proof of Lemma \ref{finitezeros}
\end{proof}

\section{When $c_1(S^+)=0$}\label{4.0}

In this section, we discuss the case when $c_1(S^+)=0$ and prove Theorem \ref{flat}. In this case, finite energy monopoles are necessarily reducible and they are identified with the moduli space of flat connections on $\bigwedge^2 S^+$. We reformulate the result in terms of the vortex equation:
\begin{theorem}\label{reducible}
Any finite energy solution $(A,\sigma)$ to the equation  $(\ref{truncatedmonopole})$ is reducible, i.e., $\sigma\equiv 0$ on $X$. In addition, $\hat{A}$, the induced connection on $\bigwedge^2 S^+$, is flat. 
\end{theorem}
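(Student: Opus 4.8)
The plan is to combine the energy identity on the non-compact $X$ with the compactness Lemma~\ref{boundedness} to force $\sigma\equiv 0$, and then to read off flatness from Lemma~\ref{energyball} together with the monopole equation itself. By Theorem~\ref{coro}, the case $c_1(S^+)=0$ corresponds to $c_1(L)=g-1$ with $\Phi_-\equiv 0$, so $(A,\sigma):=(A,\Phi_+)$ solves the reduced system~(\ref{truncatedmonopole}). Integrating~(\ref{firsteq}) over a fibre $\{z\}\times\Sigma$ and using $\int_\Sigma iF_A^\Sigma\,dvol_\Sigma=2\pi c_1(L)$ together with Gauss--Bonnet $\tfrac12\int_\Sigma K\,dvol_\Sigma=2\pi(1-g)$, the two topological terms cancel \emph{exactly} because $c_1(L)=g-1$, leaving
\[
\int_{\{z\}\times\Sigma}iF_A^\C\,dvol_\Sigma=-h(z),\qquad h(z):=\tfrac12\|\sigma(z,\cdot)\|_{L^2(\Sigma)}^2\ge 0 .
\]
This cancellation is the only place where the hypothesis $c_1(S^+)=0$ enters.

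The heart of the matter is to show $\E_{an}(A,\sigma)=0$. Morally this is because the topological energy $\E_{top}=-4\pi^2 d\cdot c_1(S^+)$ vanishes; concretely one lets $r\to\infty$ in Lemma~\ref{energyball} after choosing the background connection $A_0$ with $F_{A_0}=i\tfrac{K}{2}\,dvol_\Sigma$ (such $A_0$ exists precisely because $c_1(L)=g-1$), so that $F_{A_0}'=0$, $F_A'=da$ with $a=A-A_0$, and
\[
\E(r)=\int_{\partial X_r}\langle\sigma,\nabla_{A,\vec{n}}\sigma\rangle+\int_{\partial X_r}a\wedge da ,
\]
and then shows both boundary integrals tend to $0$ along a suitable sequence $r_n\to\infty$, after a Coulomb-type gauge fixing along the end in the style of Section~\ref{3}. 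Granting $\E_{an}=0$, the refined identity~(\ref{analyticenergy}) forces $F_A^\C\equiv 0$, $F_A^m\equiv 0$, $\nabla_A^\C\sigma\equiv 0$ and $iF_A^\Sigma+\tfrac{K}{2}+\tfrac12|\sigma|^2\equiv 0$. Since $\nabla_A^\C\sigma\equiv 0$, the function $h(z)$ is independent of $z$; on the other hand Lemma~\ref{boundedness} gives a sequence $z_n\to\infty$ along which $(A,\sigma)|_{\{z_n\}\times\Sigma}$ converges to a zero-energy vortex $(B,\gamma)$ on $\Sigma$, and integrating~(\ref{vortex}), again using $c_1(L)=g-1$ and Gauss--Bonnet, gives $\|\gamma\|_{L^2(\Sigma)}^2=0$, so $h(z_n)\to 0$ and hence $h\equiv 0$, i.e. $\sigma\equiv 0$. (This is where the promised maximum principle lives: the displayed identity rewrites as $\Delta_\C h=\int_\Sigma\big(|\nabla_A\sigma|^2+\tfrac12|\sigma|^4+\tfrac{K}{2}|\sigma|^2\big)$, which $\E_{an}=0$ makes $\ge 0$, so $h$ is a bounded subharmonic function on $\C$ vanishing at infinity.)

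Finally, with $\sigma\equiv 0$ the pair $(A,0)$ still satisfies~(\ref{secondeq})--(\ref{thirdeq}), so Lemma~\ref{energyball} gives $\int_{X_r}|F_A'|^2=\int_{X_r}F_A'\wedge F_A'$ for every $r$; since $|F_A'|^2\,dvol_X-F_A'\wedge F_A'=2|(F_A')^-|^2\,dvol_X$, this forces the anti-self-dual part $(F_A')^-\equiv 0$. The first monopole equation with $\Phi=0$ gives $F_{A^t}^+=0$, and $F_A'=\tfrac12 F_{A^t}$ (since $F_{A^t}^\Sigma=2F_A^\Sigma-iK$ by~(\ref{23}), while the $\C$- and mixed components of $F_{A^t}$ are twice those of $F_A$), so $(F_A')^+\equiv 0$ as well; hence $F_A'\equiv 0$ and $F_{A^t}\equiv 0$, i.e. $\hat A$ is flat.

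I expect the main obstacle to be the middle step --- proving $\E_{an}=0$ (equivalently, the subharmonicity of $h$) rigorously on the non-compact $X$. This cannot be seen from the pointwise equations and Weitzenb\"ock alone; it requires controlling the boundary terms in Lemma~\ref{energyball} as $r\to\infty$, and hence a gauge fixing along the cylindrical end. It is precisely here that the a priori indefinite sign of the term $\frac{K}{2}|\sigma|^2$ in~(\ref{an}) must be overcome, the geometry of $\C\times\Sigma$ (rather than a general K\"ahler surface) being used through the fibrewise identity above.
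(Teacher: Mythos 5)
Your opening move (integrating (\ref{firsteq}) over each fiber to see that the two topological terms cancel and hence $\int_{\{z\}\times\Sigma} iF_A^{\C}\le 0$) is exactly the paper's starting point, and your closing argument for flatness via Lemma \ref{energyball} and $(F_A')^{\pm}=0$ is a correct, if slightly different, way to finish. The genuine gap is the step you yourself flag as the main obstacle: you route the whole proof through the claim $\E_{an}(A,\sigma)=0$, and your justification of that claim is only the sentence ``shows both boundary integrals tend to $0$ along a suitable sequence $r_n\to\infty$, after a Coulomb-type gauge fixing along the end.'' Making this precise is essentially the entire compactness analysis of Section \ref{3} (the $T(r)$ quantity, the weak $L^2_1$ convergence on $S^1\times\Sigma$, the identification of the limit, the continuity of the Chern--Simons-type boundary functional $H(a_r^2)$, plus a monotonicity argument for $\E(r)$ to upgrade convergence along a subsequence to $\E_{an}=0$). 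It can be carried out, but as written nothing in your proposal actually does it, and all of the difficulty of the theorem is concentrated there.

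The paper avoids this detour entirely, and the mechanism is worth internalizing. After the gauge fixing (\ref{GFC}) one writes $\nabla_A=\nabla_0+\partial_u+\partial_v-i*_\C d_\C\alpha-i*_\Sigma d_\Sigma\alpha+\omega$ and passes to the conformally rescaled configuration $(A',\sigma')=e^{\alpha}\cdot(A,\sigma)$, for which $\nabla_{A'}^{\C}=\nabla_\omega$ and $F_{A'}^{\C}=F_\omega$ is a $2$-form on $\C$ alone. The fiber integral of (\ref{firsteq}) then gives the \emph{pointwise} sign $i*F_\omega\le 0$, and the Weitzenb\"ock formula (\ref{overC}) applied to $\bpartial_\omega\sigma'=0$ makes $T(z)=\int_{\{z\}\times\Sigma}|\sigma'|^2$ a bounded subharmonic function on $\C$ vanishing at infinity (boundedness and decay come from Lemma \ref{boundedness}), hence $T\equiv 0$. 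This is precisely the maximum-principle argument your parenthetical is groping for; it fails for your $h(z)=\half\|\sigma(z,\cdot)\|^2_{L^2(\Sigma)}$ because $\Delta h$ involves $\int_\Sigma(iF_A^\Sigma+\half K)|\sigma|^2$, whose pointwise sign you cannot control without already knowing the fiberwise vortex equation holds --- which is why you were forced to assume $\E_{an}=0$. Replacing $\sigma$ by $e^{\alpha}\sigma$ removes exactly this obstruction, and no global energy identity is needed. (The paper then concludes flatness from $F_\omega\equiv 0$ together with $\Delta_X\alpha=0$ and $\alpha\to 0$ at infinity; your energy-identity derivation of $(F_A')^-\equiv 0$ is a fine substitute for that last step.)
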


\begin{proof}
We shall use notations from the last section. Since $c_1(S^+)=0$, we can choose a background connection $B_0$ on $\Sigma$ such that $iF_{B_0}+\half K\equiv 0$ and, after imposing the gauge fixing condition (\ref{GFC}), the connection $A$ is given by
\[
\nabla_A =\nabla_0+\frac{\partial}{\partial u}+\frac{\partial}{\partial v}-i*_\C d_\C\alpha-i*_\Sigma d_\Sigma \alpha+\omega. 
\]
for some smooth function $\alpha\in \SC^\infty(X)$ with $\int_{\Sigma}\alpha(z,\cdot)=0$ on each fiber. Here, $\omega\in \Gamma(\C, iT^*\C)$ is an imaginary $1$-form. Therefore, $F^\C_A=F_\omega+i\Delta_{\C}\alpha$. Integrating equation (\ref{firsteq}) over each fiber, we obtain
\begin{align*}
0=\int_\Sigma iF_A^\C +\int_\Sigma (iF_A^\Sigma+\half K)+\half\int_{\Sigma}|\sigma|^2&=iVol(\Sigma) F_\omega+\pi c_1(S^+)+\half\int_{\Sigma}|\sigma|^2\\
&= iVol(\Sigma) F_\omega+\half\int_{\Sigma}|\sigma|^2. 
\end{align*}

This shows $iF_\omega\leq 0$. Let $(A',\sigma')=e^{\alpha}\cdot (A,\sigma)$ and set $T(z)\colonequals \int_{\{z\}\times\Sigma}|\sigma'|^2$. By the proof of Lemma \ref{boundedness}, we have
 \[T(z), \alpha\to 0\]
 as $z\to\infty$. Indeed, for a solution $(A,\sigma)$ on $X_5=B(0,5)\times \Sigma$ of the equation (\ref{truncatedmonopole}) with zero analytic energy, we necessarily have $\sigma\equiv 0$ and $\alpha\equiv 0$ since $c_1(S^+)=0$. In particular, $T\in L^\infty(\C)$ is a bounded function. 

\bigskip
Since $\nabla_{A'}^\C=\nabla_\omega$ and $\bpartial_\omega \sigma'=0$, by Weitzenb\"{o}ck formula $(\ref{overC})$,
\begin{align*}
\Delta_{\C} T&=-2\int_{\Sigma}|\nabla_\omega\sigma'|^2+\langle \nabla_\omega^*\nabla_\omega \sigma',\sigma'\rangle\leq 2\int_\Sigma \langle iF_\omega\sigma',\sigma'\rangle\leq 0. 
\end{align*}

Therefore, $T$ is a bounded subharmonic function on $\C$, so $T$ is constant. Because $\lim_{z\to\infty} T(z)=0$, $T\equiv 0$. It follows that $F_\omega\equiv 0$.

Finally, equation (\ref{firsteq}) shows that $-\Delta_X \alpha\equiv 0$, so $\alpha$ cannot attain its maximum or minimum in the interior of any bounded domain. But $\alpha\to 0$ as $z\to\infty$. Thus, $\alpha\equiv 0$ and $\hat{A}$ is flat.
\end{proof}

\section{Polynomials $\Rightarrow$ Vortices}\label{4}
So far, we have not seen any smooth solution to equation (\ref{SWEQ}) or (\ref{truncatedmonopole}) on $X=\C\times\Sigma$ that has nonzero energy. In Section \ref{51}, we take up the task of constructing solutions. Starting with a polynomial map $f:\C\to H^0(\Sigma,\SL)$, we produce a vortex $(A,\sigma)$ such that $Z(\sigma)=Z(f)$. This solution exists, a priori, in the Fr\'{e}chet space $L_{2,loc}^2(X)$, but we will show it is smooth and unique in Section \ref{52}. There is a tedious a priori estimate that appears in variational principle and we postpone its proof to the next section. 

\subsection{Existence of solutions}\label{51}
Let us recall some setup from the previous section. Let $\SL=(L,\bpartial_\SL)$ be a holomorphic structure on $L\to \Sigma$ and let $f:\C\to H^0(\Sigma,\SL)$ be a nonzero polynomial map of degree $d$. This means there are some global sections $\gamma_i\in H^0(\Sigma,\SL), 0\leq i\leq d$ with $\gamma_d\neq 0$ such that
\[
f(z)=\sum_{i=0}^{d}\gamma_i z^i
\]
for any $z\in\C$. Suppose a Hermitian metric $h$ on $L$ is fixed. The Chern connection on $\SL$ is the unique unitary connection $B_0=\nabla_0$ such that
\[
\nabla^{0,1}_0=\bpartial_{\SL}. 
\]

We impose an extra condition on the pair $(B_0=\nabla_0,\gamma_d)$: this configuration solves the vortex equation (\ref{vortex}) on $\Sigma$:
\begin{equation}\label{vortex2}
\left\{\begin{array}{r}
*iF_B+\half K+\half |\sigma|^2=0,\\
\bpartial_B\sigma=0.
\end{array}
\right.
\end{equation}
This can be achieved by applying an element in $\SG_\C(\Sigma)$ since the solvability constraint 
\[
0>\pi c_1(S^+)=\int_\Sigma iF_B+\half Kdvol_\C
\]
is satisfied. For a proof, see \cite[Theorem 4.3]{Bradlow}, \cite[Theorem]{Garcia} or Theorem \ref{B1}. 

The line bundle $\SL$ pulls back to a holomorphic line bundle on $X=\C\times\Sigma$ and $f$ is regarded as a section on $X$ by setting $\sigma_0(z,x)=f(z)(x)$. The connection $B_0$ induces on $X$ a unitary connection:
\[
\nabla_{A_0}=\nabla_0+\frac{\partial}{\partial u}+\frac{\partial}{\partial v}.
\]

The conformal transformation is defined on the configuration space $\SC(X,L)$ by the formula: 
\begin{align}
g=e^\alpha: (A,\sigma)&\mapsto( A+i*_\C d_\C\alpha+i*_\Sigma d_\Sigma\alpha,e^{\alpha}\sigma).
\end{align}

The curvature and covariant derivative are transformed accordingly:
\begin{align}
g=e^\alpha: F_A&\mapsto F_A-i\Delta_\C\alpha dvol_\C-i\Delta_\Sigma\alpha dvol_\Sigma+F^-(\alpha)\\
\nabla_A\sigma&\mapsto e^{\alpha}(\nabla_A\sigma+2(d\alpha)^{1,0}\otimes \sigma), \nonumber
\end{align}
where $F^-(\alpha)$ reflects the change of the mixed term (the $F^m_A$-part) and it lies in $\Lambda^-(X)\subset \Lambda^2(X)$. Note that for $(A,\sigma)=e^\alpha\cdot (A_0,\sigma_0)$, equations (\ref{secondeq}) and (\ref{thirdeq}) are automatically satisfied.
\begin{theorem}\label{existence}
For any polynomial map $f$ of degree d, we can find $\talpha\in \SC^\infty(X)$ such that $(A,\sigma)=e^\talpha\cdot (A_0, \sigma_0)$ solves the equation $(\ref{firsteq})$:
\begin{equation}\label{first}
i(F_A^\Sigma+F_A^\C)+\half K+\half |\sigma|^2=0,
\end{equation}
and its analytic energy $\E_{an}(A,\sigma)$ equals $-4\pi^2d\cdot c_1(S)$. In particular, $(A,\sigma)$ gives a finite energy monopole on $X$. 
\end{theorem}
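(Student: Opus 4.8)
\textit{Overview.} The plan is to follow Garcia-Prada's variational method \cite{Garcia} and realize the monopole as a minimizer of $\E_{an}$ over the conformal orbit $\Conf(X)\cdot(A_0,\sigma_0)$; this is the four-dimensional analogue of Appendix \ref{C}. Every configuration $e^{\alpha}\cdot(A_0,\sigma_0)$ already satisfies $(\ref{secondeq})$ and $(\ref{thirdeq})$ (since $\sigma_0=f$ is $\SL$-holomorphic and the conformal action preserves these equations), so only $(\ref{firsteq})$ is at issue. By Lemma \ref{energyball}, letting $r\to\infty$ and using the finite-energy hypothesis to dispose of the boundary terms, every finite-energy $(A,\sigma)$ in the orbit satisfies
\[
\E_{an}(A,\sigma)=\E_{top}+\int_X\bigl|\,i(F_A^\Sigma+F_A^\C)+\half K+\half|\sigma|^2\,\bigr|^2,\qquad \E_{top}=-4\pi^2d\cdot c_1(S),
\]
where $\E_{top}$ depends only on the degree $d$ and is computed from the asymptotics of $f$ at infinity (the same computation as in the proof of Lemma \ref{finitezeros}). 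Using $(\ref{covariantderivative})$, the vanishing $F_{A_0}^\C=F_{A_0}^m=0$, and the vortex equation $(\ref{vortex2})$ for $(B_0,\gamma_d)$ (which gives $iF_{A_0}^\Sigma+\half K=-\half|\gamma_d|^2$), the integrand for $(A,\sigma)=e^{\talpha}\cdot(A_0,\sigma_0)$ equals $\bigl|\Delta_X\talpha+\half e^{2\talpha}|f|^2-\half|\gamma_d|^2\bigr|^2$, so solving $(\ref{firsteq})$ amounts to solving the scalar Kazdan--Warner type equation
\[
\Delta_X\talpha=\half\bigl(|\gamma_d|^2-e^{2\talpha}|f|^2\bigr)\qquad\text{on }X,
\]
and since $X$ is noncompact we do this variationally; any critical point will automatically have analytic energy $\E_{top}$.

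\textit{Initial factor and minimization.} First I would fix $\alpha_0\in\SC^\infty(X)$ so that $(A_1,\Phi_1):=e^{\alpha_0}\cdot(A_0,\sigma_0)$ has $\E_{an}(A_1,\Phi_1)<\infty$; this is not automatic, since $|\sigma_0|=|f|$ grows like $|z|^d$ while $\E_{top}$ is finite, so $\alpha_0$ must decay at the correct rate near infinity. The right model comes from the fibers: by Bradlow's theorem \cite{Bradlow} each $f(z)$ determines a vortex on $\Sigma$ with zero divisor $Z(f(z))$, and these converge to $(B_0,\gamma_d)$ as $z\to\infty$ because $Z(f(z))\to Z(\gamma_d)$; taking $\alpha_0$ near infinity to be the logarithm of the conformal factor that carries $f(z)$ to this fiberwise vortex, and interpolating to a fixed function on a compact set, one checks from $(\ref{analyticenergy})$ that $\E_{an}(A_1,\Phi_1)<\infty$ (and $(A_1,\Phi_1)$ has the same asymptotics, hence the same $\E_{top}$, as $(A_0,\sigma_0)$). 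Now put $\E(\alpha_1):=\E_{an}(e^{\alpha_1}\cdot(A_1,\Phi_1))$ for $\alpha_1\in L^2_2(X)$ and $m=\inf\E\le\E(0)<\infty$. The crucial input is the a priori estimate of Section \ref{sec6}, bounding $\|\alpha_1\|_{L^2_2(X)}$ in terms of $\E(\alpha_1)$; granting it, a minimizing sequence $\{\alpha_1^{(n)}\}$ is bounded in $L^2_2(X)$, so after passing to a subsequence $\alpha_1^{(n)}\rightharpoonup\alpha_\infty$ weakly in $L^2_2(X)$, hence $\Delta_X\alpha_1^{(n)}\rightharpoonup\Delta_X\alpha_\infty$ in $L^2(X)$. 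Using Rellich's theorem together with the weak Trudinger inequality of Appendix \ref{A} to pass to the limit in the exponential term $e^{2\alpha_1^{(n)}}|\Phi_1|^2$ (which is bounded in $L^2(X)$ along the sequence), the integrand of $\E$ converges weakly in $L^2(X)$, so $\E$ is weakly lower semicontinuous along the sequence and $\E(\alpha_\infty)=m$.

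\textit{The minimizer solves the equation.} Write $w(\alpha_1)=\Delta_X\talpha+\half e^{2\talpha}|f|^2-\half|\gamma_d|^2$ with $\talpha=\alpha_0+\alpha_1$, so $\E(\alpha_1)=\E_{top}+\|w(\alpha_1)\|_{L^2(X)}^2$ and $w(\alpha_\infty)\in L^2(X)$. For any compactly supported smooth $\beta$,
\[
0=\frac{d}{dt}\big|_{t=0}\E(\alpha_\infty+t\beta)=2\int_X w(\alpha_\infty)\,\bigl(\Delta_X+e^{2(\alpha_0+\alpha_\infty)}|f|^2\bigr)\beta,
\]
so $w(\alpha_\infty)$ is a weak solution of $\bigl(\Delta_X+e^{2(\alpha_0+\alpha_\infty)}|f|^2\bigr)w(\alpha_\infty)=0$; elliptic regularity gives $\nabla w(\alpha_\infty)\in L^2(X)$, and pairing with $w(\alpha_\infty)$ over an exhaustion of $X$ yields $\int_X|\nabla w(\alpha_\infty)|^2+e^{2(\alpha_0+\alpha_\infty)}|f|^2\,w(\alpha_\infty)^2=0$, whence $w(\alpha_\infty)\equiv 0$ since $f\not\equiv 0$. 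Therefore $\talpha=\alpha_0+\alpha_\infty$ solves the Kazdan--Warner equation, i.e., $(A,\sigma)=e^{\talpha}\cdot(A_0,\sigma_0)$ solves $(\ref{first})$, and then $\E_{an}(A,\sigma)=\E_{top}=-4\pi^2d\cdot c_1(S)$, so $(A,\sigma)$ is a finite-energy monopole. Smoothness of $\talpha$ follows by bootstrapping $\Delta_X\talpha=\half(|\gamma_d|^2-e^{2\talpha}|f|^2)$ (the right-hand side lies in every $L^p_{loc}$ by Trudinger, so elliptic regularity forces $\talpha\in\SC^\infty(X)$), and uniqueness of $\talpha$ follows from a maximum-principle argument using the decay of $\talpha$ at infinity; these last two points are carried out in Section \ref{52}.

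\textit{Main obstacle.} The hard part is the a priori $L^2_2(X)$-estimate of Section \ref{sec6}: without it the minimizing sequence could escape to infinity and the infimum need not be attained. The secondary difficulty is constructing $\alpha_0$ with finite analytic energy and handling the limits at infinity — both the boundary terms in the energy identity and the weak continuity of the exponential nonlinearity — where the borderline Sobolev exponent $L^2_2$ in dimension four forces one to use Trudinger's inequality rather than an $L^\infty$ bound.
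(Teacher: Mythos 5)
Your proposal is correct and follows essentially the same route as the paper: a background conformal factor $\alpha_0$ making the energy finite, minimization of $\E$ over an $L^2_2(X)$ correction using the a priori estimate of Section \ref{sec6}, weak lower semicontinuity via Trudinger's inequality, and the Euler--Lagrange identity forcing the moment map to vanish. The only local differences are that the paper builds $\alpha_0$ explicitly as $-\frac{d}{2}\log(|z|^2+1)$ plus a fiberwise correction obtained from $T^{-1}=(\Delta_\Sigma+|\gamma_d|^2)^{-1}$ (Lemma \ref{53}) rather than from the fiberwise Bradlow vortices, and it deduces $\mu(\alpha_\infty)\in L^2_2(X)$ from self-adjointness of the linearization via the Friedrichs extension (Lemma \ref{SA}) rather than your cutoff/Caccioppoli argument --- both of which work.
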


\Remark For $(A,\sigma)=(A_0,\gamma_d)$,  that is, we extend $\gamma_d$ to be constant in variable $z\in \C$, this pair solves equation (\ref{truncatedmonopole}) since $(B_0,\gamma_d)$ solves (\ref{vortex2}). This corresponds to the case when $d=0$ in Theorem \ref{existence}.  

\bigskip
Before we start the actual proof, let's sketch a strategy to find such an $\talpha$:
\smallskip

\textit{Step 1.} Choose a background conformal transformation $\alpha_0\in \SC^\infty(X)$ and set
\[
(A_1,\sigma_1)=e^{\alpha_0}\cdot (A_0,\sigma_0).
\]

At this step, the configuration $(A_1,\sigma_1)$ is not necessarily a solution to (\ref{first}). It is close to an actual solution so that the analytic energy $\E_{an}(A_1,\sigma_1)$ is finite. Moreover, the next step needs to be achieved:

\smallskip
\textit{Step 2.} We find another conformal factor $\alpha\in L^2_2(X)$ such that $(A_\alpha,\sigma_\alpha)\colonequals e^\alpha\cdot (A_1,\sigma_1)$ solves (\ref{first}). Take $\talpha=\alpha_0+\alpha$. 
\begin{definition}\label{momentmap}
	We define the moment map $\mu$ as 
	\begin{align*}
	\mu: L^2_2(X)&\to L^2(X),\\
	\alpha&\mapsto i(F_{A_\alpha}^\Sigma+F_{A_\alpha}^\C)+\half K+\half |\sigma_\alpha|^2\\
	&=\mu(0)+(\Delta_\C\alpha+\Delta_\Sigma\alpha)+\half (e^{2\alpha}-1)|\sigma_1|^2.
	\end{align*}
\end{definition}

The second step amounts to finding $\alpha\in L^2_2(X)$ so that $\mu(\alpha)=0$. The definition of $\mu$ depends on $\alpha_0$. We wish $\mu$ to be well-defined so that we may apply variational principle to $\|\mu(\alpha)\|_2^2$. Our target $\alpha$ would be the minimizer of this functional. The first guess for $\alpha_0$ is 
\[
\alpha_0=-\frac{d}{2}\log(|z|^2+1).
\]
But in general, this choice does not guarantee that $\mu$ is a well-defined map from $L^2_2(X)$ to $L^2(X)$. 
\begin{lemma}\label{53} We can find $\alpha_0\in \SC^\infty(X)$ so that
	for any $\alpha\in L^2_2(X)$, $\mu(\alpha)$ is square-integrable and the energy $\E(\alpha)\colonequals\E_{an}(A_\alpha,\sigma_\alpha)$ is finite. Furthermore,  for this $\alpha_0$ and $\mu$, the energy equation
	\begin{align}\label{Energy2}
	\int_{X} |\mu(\alpha)|^2=\E_{an}(A_\alpha,\sigma_\alpha)-\E_{top} 
	\end{align}
	is valid. The topological energy is defined by the formula
	 \[\E_{top}=-4\pi^2d\cdot c_1(S^+),\]
	 which depends only the degree of $f$ and $c_1(S^+)$.  
\end{lemma}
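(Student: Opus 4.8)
The plan is to choose $\alpha_0$ so that $(A_1,\sigma_1)=e^{\alpha_0}\cdot(A_0,\sigma_0)$ restricts to a genuine vortex on every fibre $\{z\}\times\Sigma$ with $|z|$ large, and then to read off the energy identity from Lemma~\ref{energyball}. Since $\nabla_{A_0}$ carries no $\C$- or mixed curvature, the transformation rules $(\ref{covariantderivative})$ give
\[
\mu(0)=i\big(F_{A_1}^\Sigma+F_{A_1}^\C\big)+\half K+\half|\sigma_1|^2,\qquad \mu(\alpha)=\mu(0)+\Delta_X\alpha+\half(e^{2\alpha}-1)|\sigma_1|^2 .
\]
For $\alpha\in L^2_2(X)$ the term $\Delta_X\alpha$ is automatically square-integrable, so the statement reduces to: (i) $\mu(0)\in L^2(X)$; (ii) $|\sigma_1|^2\in L^\infty(X)$ and $e^{2\alpha}-1\in L^2(X)$, so that $\mu(\alpha)\in L^2(X)$; (iii) $\E(\alpha)=\E_{an}(A_\alpha,\sigma_\alpha)<\infty$; and finally (iv) the energy equation.

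For $\alpha_0$ I would take the leading term $-\tfrac d2\log(|z|^2+1)$, which absorbs the polynomial growth of $f$ but leaves an $O(1/|z|)$ discrepancy in $(\ref{firsteq})$ that fails to be $L^2$ over $\C$, and add a fibrewise correction. Fix $R$ with $f(z)\neq0$ in $H^0(\Sigma,\SL)$ for $|z|\ge R/2$; for such $z$ let $\psi(z,\cdot)\in\SC^\infty(\Sigma)$ be the unique Bradlow conformal factor making $e^{\psi(z)}\cdot\big(B_0,(|z|^2+1)^{-d/2}f(z)\big)$ a vortex on $\Sigma$ (the solvability constraint holds because it holds for $(B_0,\gamma_d)$), and set $\alpha_0=-\tfrac d2\log(|z|^2+1)+\chi(z)\psi(z,x)$ with a cutoff $\chi$ equal to $1$ for $|z|\ge R$ and to $0$ for $|z|\le R/2$. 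The normalized data $(|z|^2+1)^{-d/2}f(z)$ converges with all derivatives to $e^{id\theta}\gamma_d$; crucially, although $\big|(|z|^2+1)^{-d/2}f(z)\big|^2-|\gamma_d|^2=O(1/|z|)$, its $z$-derivatives are $O(1/|z|^2)$ because the leading terms cancel, and since $(B_0,e^{id\theta}\gamma_d)$ is already a vortex the smooth dependence of Bradlow's solution near it gives $\psi(z)=O(1/|z|)$ in every $\SC^k(\Sigma)$-norm, with each $z$-derivative gaining a power of $1/|z|$. For $|z|\ge R$ the pair $(A_1,\sigma_1)|_{\{z\}\times\Sigma}$ then solves the vortex equation exactly, so there $\mu(0)=iF_{A_1}^\C=\Delta_\C\alpha_0$, and all of $\Delta_\C\alpha_0$, $\nabla_{A_1}^\C\sigma_1$, $F_{A_1}^m$ turn out to be $O(1/|z|^2)$ (the leading contributions cancelling by construction); hence $\mu(0)\in L^2(X)$ and $\E_{an}(A_1,\sigma_1)<\infty$, the remaining contributions coming from the compact set $\{|z|\le R\}\times\Sigma$ on which everything is smooth. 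It then follows that $\mu(\alpha)\in L^2(X)$ and $\E(\alpha)<\infty$ for every $\alpha\in L^2_2(X)$: one has $|\sigma_1|^2\in L^\infty(X)$ (the norm of a convergent family of vortices for $|z|\ge R$, smooth for $|z|\le R$), and $e^{2\alpha}-1\in L^2(X)$ by splitting $X$ into $\{|\alpha|\le1\}$, where $|e^{2\alpha}-1|\le C|\alpha|\in L^2$, and the finite-measure set $\{|\alpha|>1\}$, on which Trudinger's inequality (Appendix~\ref{A}) applied to $\alpha^2$, together with the critical embeddings $L^2_2(X)\embed L^p(X)$ ($p<\infty$), give integrability; the remaining $e^\alpha$-cross-terms in $(\ref{analyticenergy})$ are controlled the same way, using the $O(1/|z|^2)$ decay of the $(A_1,\sigma_1)$-quantities.

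For the energy equation, apply Lemma~\ref{energyball} to $(A_\alpha,\sigma_\alpha)=e^{\talpha}\cdot(A_0,\sigma_0)$, $\talpha=\alpha_0+\alpha$, which satisfies $(\ref{secondeq})$ and $(\ref{thirdeq})$ automatically. Writing $\eta=i*_\C d_\C\talpha+i*_\Sigma d_\Sigma\talpha$ one has $F_{A_\alpha}'=F_{A_0}'+d\eta$ with $F_{A_0}'=(F_{B_0}^\Sigma-\tfrac i2K)\,dvol_\Sigma$, and since $F_{A_0}'\wedge F_{A_0}'=0$ (a $2$-form on $\Sigma$ wedged with itself), Stokes gives
\[
\int_{X_r}F_{A_\alpha}'\wedge F_{A_\alpha}'=2\int_{\partial X_r}F_{A_0}'\wedge\eta+\int_{\partial X_r}\eta\wedge d\eta .
\]
Now let $r=r_n\to\infty$ along a sequence on which $\int_{\partial X_r}\langle\sigma,\nabla_{A,\vec{n}}\sigma\rangle\to0$ and $\int_{\partial X_r}\eta\wedge d\eta\to0$; such a sequence exists because $\E_{an}(A_\alpha,\sigma_\alpha)<\infty$, by the usual Fubini argument on the integrands of $(\ref{analyticenergy})$. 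On $\partial X_r=\{|z|=r\}\times\Sigma$ only the $d\theta$-component of $\eta$ survives the wedge with the $2$-form $F_{A_0}'$, and from $\talpha=-\tfrac d2\log(|z|^2+1)+o(1)$ that component tends to $-id\,d\theta$; using $\int_\Sigma\big(iF_{B_0}^\Sigma+\half K\big)\,dvol_\Sigma=\pi c_1(S^+)$ (the solvability constraint behind $(\ref{vortex2})$) and Gauss--Bonnet, one obtains $2\int_{\partial X_r}F_{A_0}'\wedge\eta\to-4\pi^2 d\cdot c_1(S^+)=\E_{top}$. Since $\E(r)\to\E_{an}(A_\alpha,\sigma_\alpha)$, Lemma~\ref{energyball} yields $\int_X|\mu(\alpha)|^2=\E_{an}(A_\alpha,\sigma_\alpha)-\E_{top}$.

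The main obstacle is the construction of the correction $\psi$ together with the decay rates $\psi(z)=O(1/|z|)$, $\partial_z\psi(z)=O(1/|z|^2)$, uniformly in $x$ and in all Sobolev norms on $\Sigma$ --- an effective form of the smooth dependence of Bradlow's vortex on its holomorphic data near $(B_0,\gamma_d)$ --- since this is precisely what turns the borderline $O(1/|z|)$ error of the naive guess into a square-integrable one and controls every remaining term of $\mu(0)$ and of $\E_{an}(A_1,\sigma_1)$. A secondary technical point, where Trudinger's inequality is indispensable, is the $L^2(X)$-bound on $e^{2\alpha}-1$ over the non-compact $X$.
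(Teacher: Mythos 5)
Your proposal is correct in outline and reaches the right identity, but it takes a genuinely different route from the paper in both halves. For the background factor $\alpha_0$, you solve the fibrewise vortex equation \emph{exactly} for large $|z|$ via Bradlow plus an implicit-function-theorem argument, so that $\mu(0)=\Delta_\C\alpha_0$ outside a compact set; the paper instead adds only the \emph{linearized} correction, $\delta(z)=-\tfrac{|z|^{2d-2}}{(1+|z|^2)^d}T^{-1}\bigl(\re\langle z\gamma_d,\gamma_{d-1}\rangle_h\bigr)$ with $T=\Delta_\Sigma+|\gamma_d|^2$, and then checks by hand that the residual $(|\sigma_*|^2-|\gamma_d|^2)\delta+\tfrac12|\sigma_*|^2(e^{2\delta}-2\delta-1)+\tfrac12(|\sigma_*|^2-|\gamma_d|^2+2T\delta)$ is $\SO(1/(1+|z|^2))$. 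Your $\psi$ is exactly the full nonlinear version of the paper's $\delta$; the paper's point is that the first-order term already suffices, which sidesteps the quantitative smooth-dependence statement (uniform $\SC^k(\Sigma)$ bounds on $\psi$ and its $z$-derivatives) that you correctly identify as your main unproven obstacle. For the energy identity, you run Stokes directly for general $\alpha\in L^2_2(X)$ along a Fubini-selected sequence of radii, whereas the paper first proves $(\ref{Energy2})$ for $\alpha\in\SC_c^\infty(X)$ — where outside a compact set the configuration is the explicit background and all boundary terms have pointwise decay — and then extends by density and $L^2_2$-continuity of every term (via Theorem \ref{A3}). Your route works but needs one more line than ``the usual Fubini argument'': $\eta$ is not in $L^2(X)$ because $*_\C d_\C\bigl(-\tfrac d2\log(|z|^2+1)\bigr)$ has an $\SO(1/|z|)$ tail, so for $\int_{\partial X_r}\eta\wedge d\eta\to0$ and $\int_{\partial X_r}\langle\sigma,\nabla_{A,\vec n}\sigma\rangle\to0$ you must pair the pointwise $\SO(1/|z|)$ decay (giving $\int_{\partial X_r}|\eta|^2=\SO(1)$, and $\int_{\partial X_r}|\sigma|^2=\SO(r)$) with the Fubini bound $\int_{\partial X_{r_n}}|d\eta|^2+|\nabla^\C_A\sigma|^2=o(1/r_n)$ and Cauchy–Schwarz; with that supplement the computation of the limit $2\int_{\partial X_r}F_{A_0}'\wedge\eta\to-4\pi^2d\cdot c_1(S^+)$ matches the paper's $-2\pi c_1(S^+)\int_{B(0,r)}\Delta_\C\beta$.
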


\begin{proof}
Write $\alpha_0=\beta+\delta$ with 
\[
\beta=-\frac{d}{2}\log(|z|^2+1)
\]
and $\delta$ to be determined later. Using the fact that $(B_0,\gamma_d)$ solves the vortex equation (\ref{vortex2}), we have  
\begin{align*}
\mu(0)&=(F_{A_1}^\Sigma+F_{A_1}^\C)+\half K+\half |\sigma_1|^2\\
&=\Delta_\C(\beta+\delta)+\Delta_\Sigma\delta+*_\Sigma iF_{B_0}+\half K+\half |\sigma_1|^2\\
&=\Delta_\C(\beta+\delta)+\Delta_\Sigma\delta+\half (|\sigma_1|^2-|\gamma_d|^2).
\end{align*}

We attempt to make  $\mu(0)$ in $L^2(X)$.

If $\delta=0$, then $\sigma_1=\sigma_*\colonequals f(z)/(1+|z|^2)^\frac{d}{2}\in L^\infty(X)$ and by direct computation:
\begin{align}
\Delta_\C\beta&=\frac{2d}{(1+r^2)^2}\in L^2(\C)\label{Laplacian},\\
|\sigma_*|^2-|\gamma_d|^2&=\frac{2|z|^{2d-2}\re\langle z\gamma_d,\gamma_{d-1}\rangle_h}{(1+|z|^2)^d}+\SO(\frac{1}{1+|z|^2}). \nonumber
\end{align}

If we know $\gamma_{d-1}\equiv 0$, then $|\sigma_1|^2-|\gamma_d|\in L^2(X)$ and we are done. 
To deal with the general case, note that the unbounded operator
\[
T=\Delta_\Sigma+|\gamma_d|^2: L^2(\Sigma)\to L^2(\Sigma)
\]
is self-adjoint on $L_2^2(\Sigma)$ and is invertible. Set
\[
\delta(z)=-\frac{|z|^{2d-2}}{(1+|z|^2)^d}T^{-1}(\re\langle z\gamma_d,\gamma_{d-1}\rangle_h)\in \SO(\frac{1}{\sqrt{1+|z|^2}}).
\]

Then we have,
\begin{align*}
\Delta_\Sigma\delta+\half (|\sigma_1|^2-|\gamma_d|^2)&=\Delta_\Sigma\delta+\half |\sigma_*|^2(e^{2\delta}-1)+\half(|\sigma_*|^2-|\gamma_d|^2),\\
&=(|\sigma_*|^2-|\gamma_d|^2)\delta+\half |\sigma_*|^2(e^{2\delta}-2\delta-1)\\
&\qquad+\half(|\sigma_*|^2-|\gamma_d|^2+2T(\delta))\in \SO(\frac{1}{1+|z|^2}).
\end{align*}

To check $\Delta_\C\delta\in L^2(X)$, it suffices to compute:
\[
\Delta_\C \frac{|z|^{2d-2}u}{(1+|z|^2)^d}=\Delta_\C\frac{r^{2d-1}\cos\theta}{(1+r^2)^d}=\frac{4dr^{2d-3}(2r^2-(d-1))\cos\theta}{(1+r^2)^{d+2}},
\]
where $r=|z|$ and $u=r\cos\theta$. This function has enough decay at $\infty$ and lies in $L^2(X)$. 

To show $\mu(\alpha)\in L^2(X)$ in general, it suffices to check:
\[
\Delta_X\alpha,\  \half(e^{2\alpha}-1)|\sigma_1|^2\in L^2(X).
\]

The first follows from the fact that $\alpha\in L_2^2(X)$. The second comes from  Trudinger's inequality (Theorem \ref{A2}) and the fact that $\sigma_1\in L^\infty(X)$. 

It remains to prove (\ref{Energy2}): it will imply that analytic energy $\E_{an}(\alpha)$ is finite. We first do the case when $\alpha\in \SC_c^\infty(X)$. In light of Lemma \ref{energyball}, it suffices to show
\begin{align}
\lim_{r\to\infty}\int_{\partial X_r} \langle \sigma_\alpha, \nabla_{A_\alpha,\vec{n}}\sigma_\alpha\rangle&= 0,\label{boundary}\\
 \lim_{r\to\infty}\int_{X_r} F_{A_\alpha}'\wedge F_{A_\alpha}'&=-4\pi^2 d\cdot c_1(S^+)\label{topo} .
\end{align}

Suppose $\supp(\alpha)\subset B(0,r_0)$ for some $r_0>0$ and take $r>r_0$. Then $(A_\alpha,\sigma_\alpha)=(A_1,\sigma_1)$.  Let $(A_*, \sigma_*)=e^\beta\cdot (A_0,\sigma_0)$. By formula (\ref{covariantderivative}), 
\begin{align*}
\nabla_{A_\alpha,\vec{n}}\sigma_\alpha &= \nabla_{A_1}\sigma_1=e^{\delta}(\nabla_{A_*}\sigma_*+2(d\delta)^{1,0}\sigma_*).
\end{align*}

Since $\sigma_1, \sigma_*, \delta\in L^\infty(X)$ and 
\begin{align*}
\nabla_{A_*}\sigma_*&=e^{\beta}(d\sigma_0+2(d\beta)^{1,0}\sigma_0),\\
&=\frac{-\gamma_{d-1}\cdot r^2z^{d-2}+\text{lower order terms}}{(1+|z|^2)^{(d+2)/2}}\cdot dz\in \SO(\frac{1}{1+r^2})\\
d( \frac{r^{2d-1}\cos\theta}{(1+r^2)^d})& \in \SO(\frac{1}{1+r^2}),
\end{align*}
the boundary term goes to zero in $(\ref{boundary})$ as $r\to\infty$.

\bigskip
To compute the topological energy $\E_{top}$,  by formula (\ref{Laplacian}), we have
\begin{align*}
\int_{X_r} F_{A_*}'\wedge F_{A_*}'&=-2\int_{X_r}\Delta_{\C}\beta\cdot (iF_{B_0}^\Sigma+\half K)dvol_X\\
&=-2\pi c_1(S^+)\int_{B(0,r)} \Delta_{\C}\beta \to -4\pi^2 d\cdot c_1(S^+)
\end{align*}
as $r\to\infty$. Let $\mu=A_\alpha-A_*$. Then
\begin{align*}
\int_{X_r}( F_{A_\alpha}'\wedge F_{A_\alpha}'-F_{A_*}'\wedge F_{A_*}')&=\int_{\partial X_r} \mu\wedge (d\mu+2F'_{A_*}). 
\end{align*}

When $r>r_0$, $\alpha\equiv 0$. We reduce to the case when $A_\alpha=A_1$. Since $|\mu|<|d\delta|\sim 1/(1+r^2)$ and the curvature term $d\mu+2F_{A_*}'$ is uniformly bounded on $X$, the integral above decays as $1/r$ as $r\to\infty$. Therefore, formula (\ref{topo}) is also valid.

We showed that the energy equation (\ref{Energy2}) holds for any $\alpha\in\SC_c^\infty(X)$. It also holds for any $\alpha\in L^2_2(X)$ since $\SC_c^\infty(X)$ is dense in $L^2_2(X)$ and all terms in (\ref{analyticenergy}), as functions in $\alpha$, are continuous in $L^2_2(X)$-topology. We may need Theorem \ref{A3} to verify the continuity. \qedhere

\end{proof}

The next theorem is an a priori estimate and the proof is technical. Its proof is postponed to the next section. We will finish the proof of Theorem \ref{existence} assuming Theorem \ref{apriori3}.

\begin{theorem}\label{apriori3}
	For any $\alpha\in L^2_2(X)$, define $\E(\alpha)=\E_{an}(A_\alpha,\sigma_\alpha)$. There is a function $\eta: \R^+\to \R^+$ such that for any $C>0$, if  $\E(\alpha)<C$, then $\|\alpha\|_{L^2_2}<\eta(C)$. 
\end{theorem}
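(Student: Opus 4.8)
The plan is to prove the a priori bound on $\|\alpha\|_{L^2_2}$ by bootstrapping from the energy equation $(\ref{Energy2})$, which rewrites $\E(\alpha)-\E_{top}=\|\mu(\alpha)\|_2^2$, together with the explicit form of $\mu$ in Definition \ref{momentmap}:
\[
\mu(\alpha)=\mu(0)+\Delta_X\alpha+\tfrac12(e^{2\alpha}-1)|\sigma_1|^2.
\]
The bound $\E(\alpha)<C$ controls $\|\mu(\alpha)\|_2$, hence (since $\mu(0)\in L^2(X)$ by Lemma \ref{53}) it controls $\|\Delta_X\alpha+\tfrac12(e^{2\alpha}-1)|\sigma_1|^2\|_2$. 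The idea is to split $\alpha$ into its high-frequency and low-frequency parts in the $\C$-directions (Fourier transform in $z$), use the nonlinearity $\tfrac12(e^{2\alpha}-1)|\sigma_1|^2$ to absorb the low-frequency/zero-mode contribution, and use $\Delta_X$ plus elliptic estimates to recover the high-frequency part.

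First I would control $\|\Delta_X\alpha\|_2$. From the energy equation one gets $\|\Delta_X\alpha+\tfrac12(e^{2\alpha}-1)|\sigma_1|^2\|_2\le C'$ for $C'=C'(C)$. The term $\tfrac12(e^{2\alpha}-1)|\sigma_1|^2$ does not have an obvious $L^2$ bound a priori, but it has a definite sign structure: testing the energy equation against a suitable function, or more precisely integrating the identity $\int_X \mu(\alpha)\cdot(\text{something})$ and using that $|\sigma_1|\in L^\infty$ and $\sigma_1$ is bounded below away from $Z(f)$, lets one show $\int_X (e^{2\alpha}-1)^2|\sigma_1|^4$ and $\int_X|\Delta_X\alpha|^2$ are separately bounded (the cross term $\int_X \Delta_X\alpha\cdot(e^{2\alpha}-1)|\sigma_1|^2=\int_X\nabla\alpha\cdot\nabla[(e^{2\alpha}-1)]|\sigma_1|^2+\cdots\ge 0$ up to controllable lower-order terms, using $\nabla(e^{2\alpha}-1)=2e^{2\alpha}\nabla\alpha$, so the two squares cannot cancel). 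This is essentially the first of the two steps described in the paper's outline for Section \ref{sec6}.

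Next, having $\|\Delta_X\alpha\|_2\le C''$, I would recover $\|\alpha\|_2$, and this is the main obstacle: $\Delta_X^{-1}$ is not bounded on $L^2(X)$ because $X=\C\times\Sigma$ is noncompact in the $\C$-factor and the $\C$-Laplacian has continuous spectrum down to $0$. Here the plan is to decompose $\alpha(z,x)=\alpha^{\mathrm{hi}}+\alpha^{\mathrm{lo}}$ where $\alpha^{\mathrm{lo}}$ collects the fiberwise average over $\Sigma$ together with the low $\C$-frequencies. On $\alpha^{\mathrm{hi}}$ the operator $\Delta_X$ has spectral gap (either from $\lambda_1(\Delta_\Sigma)>0$ after removing the fiber average, or from the $\C$-frequency being bounded below), so $\|\alpha^{\mathrm{hi}}\|_{L^2_2}\lesssim\|\Delta_X\alpha^{\mathrm{hi}}\|_2\lesssim\|\Delta_X\alpha\|_2$ plus error terms. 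For $\alpha^{\mathrm{lo}}$ one uses the nonlinear term: where $|\sigma_1|\ge c_0>0$ (the complement of a compact neighborhood of $Z(f)$, using the asymptotics of $\sigma_1$ from Lemma \ref{53}'s proof), the bound on $\int_X(e^{2\alpha}-1)^2|\sigma_1|^4$ forces $e^{2\alpha}-1$, hence $\alpha$ itself (via $|e^{2\alpha}-1|\ge \min(1,|2\alpha|)$ or a similar elementary inequality and then using that $\alpha$ cannot be very negative on a large set without violating $\|\Delta_X\alpha\|_2<\infty$), to be controlled in $L^2$ of that region; near $Z(f)$ one uses interior elliptic estimates on bounded balls. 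Assembling these, together with Trudinger's inequality (Theorem \ref{A2}) to re-close the estimate on the nonlinear term $\tfrac12(e^{2\alpha}-1)|\sigma_1|^2$ once a weak bound on $\alpha$ is available, and iterating, produces $\|\alpha\|_{L^2_2}\le\eta(C)$. I expect the delicate point to be making the splitting $\alpha=\alpha^{\mathrm{hi}}+\alpha^{\mathrm{lo}}$ compatible with the nonlinearity, i.e. controlling the cross terms between $\alpha^{\mathrm{lo}}$ and $\alpha^{\mathrm{hi}}$ in $(e^{2\alpha}-1)$, which is where the exponential nonlinearity and the noncompactness interact most badly and where Trudinger-type estimates (Appendix \ref{A}) will be essential.
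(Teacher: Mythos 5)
Your two-step skeleton (first bound $\|\Delta_X\alpha\|_2$, then recover $\|\alpha\|_2$ from a frequency splitting plus the quartic term) matches the paper's, but the two steps that carry the real difficulty are not supplied, and the one concrete mechanism you propose for Step 1 fails. You claim the cross term $\int_X\Delta_X\alpha\cdot(e^{2\alpha}-1)|\sigma_1|^2$ is nonnegative up to controllable errors. Integrating by parts gives $\int_X 2e^{2\alpha}|\nabla\alpha|^2|\sigma_1|^2+\int_X(e^{2\alpha}-1)\nabla\alpha\cdot\nabla|\sigma_1|^2$, and the second integral is not controllable by anything available: absorbing it into the good first term via Cauchy--Schwarz leaves a remainder of size $\sinh^2(\alpha)\,|\nabla\sigma_1|^2$, which blows up exponentially as $\alpha\to-\infty$ — precisely the regime the a priori estimate is supposed to rule out — so the argument is circular. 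The paper instead proves the fiberwise coercivity estimate of Lemma \ref{fiber}: uniformly in $z$, $\int_\Sigma|\Delta_\Sigma\alpha+\half(e^{2\alpha}-1)|\sigma_1(z)|^2|^2>c\int_\Sigma|\Delta_\Sigma\alpha|^2$. The positivity there does come from a sign identity, but it is the Bogomol'nyi/Weitzenb\"{o}ck rearrangement of the vortex energy on each fiber $\Sigma$, combined with a compactness-and-contradiction argument over the compactified family $(B_1(z),\sigma_1(z))$, $z\in\CP$, whose endgame uses injectivity of the linearization $\Delta_\Sigma+|\sigma_\infty|^2$. None of this is recoverable from the sign of the cross term on $X$.

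For Step 2 your intuition about where the information lives is right, but two points are off. First, the reason $\alpha$ cannot be very negative on a large subset of $\C$ is not that this would violate $\|\Delta_X\alpha\|_2<\infty$: a plateau $\alpha\approx -N$ on $B(0,N)\subset\C$ has $\|\Delta_\C\alpha\|_2$ bounded independently of $N$. It is the bound on $\|(e^{2\alpha}-1)|\sigma_1|^2\|_2$ (with $e^{2\alpha}-1\approx -1$ on the plateau) that forces the bad set to have bounded \emph{area}; making this fiberwise requires Lemma \ref{lemma}, which says that when $\|\Delta_\Sigma\alpha\|_{L^2(\Sigma)}$ is small relative to $\|\alpha\|_{L^2(\Sigma)}$ the function is nearly constant on the fiber. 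Second, and more importantly, once one knows that $\|\Delta_\C\alpha_1\|_2$ and $\|\alpha_1\|_{L^2(A_1)}$ are bounded and that $\Area(A_2)$ is bounded, concluding $\|\alpha_1\|_{L^2(\C)}<\infty$ is not an elliptic estimate and cannot be done by ``interior estimates on bounded balls'': $A_2$ need not lie in any fixed ball, and $\Delta_\C$ has no spectral gap. The paper closes this with the uncertainty-principle Lemma \ref{ABlemma} (an Amrein--Berthier-type statement, proved by exactly the Fourier cutoff you allude to): an $L^2_2(\R^2)$ function whose Laplacian and whose restriction to the complement of a finite-measure set are $L^2$-bounded cannot concentrate its mass on that set. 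You gesture at the frequency splitting but never state or prove this load-bearing quantitative fact, and ``iterating'' with Trudinger's inequality does not substitute for it.
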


\begin{proof}[Proof of Theorem \ref{existence}]
Let $a=\inf_{\alpha\in L^2_2(X)}\E(\alpha)$. This number is finite since $\E(0)<\infty$. Therefore, there exists a sequence $\{\alpha_n\}\subset L_2^2(X)$ such that $$a=\liminf_{n\to\infty} \E(\alpha_n).$$ By Theorem \ref{apriori3}, $\|\alpha_n\|_{L^2_2(X)}$ are uniformly bounded, and we can find a weakly convergent subsequence. We assume it is the sequence itself. Let $\alpha_\infty$ be their limit. Note that $\mu: L^2_2(X)\to L^2(X)$ is weakly continuous. Indeed, by Theorem \ref{A4},  $\alpha_n\xrightarrow{w-L_2^2}\alpha_\infty$ implies 
\[
\Delta_X\alpha_n\xrightarrow{w-L^2(X)}\Delta_X\alpha_\infty,\  e^{\alpha_n}-1\xrightarrow{w-L^2(X)}e^\alpha-1. 
\] 

This shows $\E(\alpha_\infty)\leq \liminf \E(\alpha_n)=a$, so $\E(\alpha_\infty)=a$. Now consider the linearized operator of $\mu$ at $\alpha_\infty$ (see Theorem \ref{A3}):
\[
\mathcal{D}_{\alpha_\infty}\mu: L^2_2(X)\to L^2(X), \gamma\mapsto \Delta_X\gamma+e^{2\alpha_\infty}|\sigma_1|^2\gamma.
\]

Since $\alpha_\infty$ is a critical point of $\E$, for any $\gamma\in L^2_2(X)$, we have
\begin{equation}\label{critical}
0=\frac{d}{dt}\E(\alpha_\infty+t\gamma)|_{t=0}=2\langle \mu(\alpha_\infty), \mathcal{D}_{\alpha_\infty}\mu(\gamma)\rangle.
\end{equation}

\begin{lemma}\label{SA}
For any $\alpha\in L^2_2(X)$, the operator 
\[
\mathcal{D}_\alpha\mu: L^2_2(X)\to L^2(X), \gamma\mapsto \Delta_X\gamma+e^{2\alpha}|\sigma_1|^2\gamma,
\]
is self-adjoint. 
\end{lemma}
\begin{proof}
The operator $\mathcal{D}_\alpha\mu$ is well-defined. Indeed, by Theorem \ref{A2}, $\alpha\in L^2_2(X)$ implies $\alpha, e^{2\alpha}-1\in L^p(X)$ for $2\leq p<\infty$. In particular, $e^{2\alpha}, \gamma\in L^4(X)$. Since $\sigma_1\in L^\infty(X)$ and $L^4\times L^4\embed L^2$, we have 
\[
e^{2\alpha}|\sigma_1|^2\gamma=|\sigma_1|^2\gamma+(e^{2\alpha}-1)|\sigma_1|^2\gamma\in L^2(X). 
\]

Note that $\mathcal{D}_\alpha\mu$ is clearly symmetric. It suffices to show that it agrees with its adjoint. We need to show $\gamma\in L^2(X)$ and $\mathcal{D}_\alpha\mu(\gamma)\in L^2(X)$ implies $\gamma\in L^2_2(X)$. This can be done directly, but we proceed using Friedrichs extension theorem. Define the norm $\A$ on $\SC_c^\infty(X)$ by the formula
\[
\|x\|_\A^2=\|x\|_2^2+\int_X |dx|^2+|xe^\alpha \sigma_1|^2.
\]
The Hilbert space $H$ obtained by the completion with respect to $\|\cdot\|_\A$ is embedded as a subspace of $L^2(X)$. Then by Friedrichs extension theorem, $\mathcal{D}_\alpha\mu$ is self-adjoint on the space
\[
D=\{x\in H: \exists\ C>0,  \langle x, y\rangle_\A\leq C\|y\|_2, \text{ for any $y\in H$}\}. 
\]

We need to identify $D$ with $L_2^2(X)$. It is clear that $L_2^2(X)\subset D$. For the reversed inclusion, take any $y\in \SC_c^\infty(X)$ and $x\in D$. Then integration by parts shows
\[
\langle x, y\rangle_\A=\langle \mathcal{D}_\alpha\mu(x),y\rangle_2,
\] 
and by Riesz representation theorem, $\mathcal{D}_\alpha\mu(x)\in L^2(X)$. Since $x\in H\embed L^2_1(X)$ and $L^2_1(X)\embed L^4(X)$, $e^{2\alpha}|\sigma_1|^2x\in L^2(X)$. Therefore, $\Delta_X x=\mathcal{D}_\alpha\mu(x)-e^{2\alpha}|\sigma_1|^2x\in L^2. $ This completes the proof of the lemma. 
\end{proof}

By lemma \ref{SA}, $y\colonequals \mu(\alpha_\infty)$ lies in the domain of  the adjoint operator $(D_\mu)^*=D_\mu$, so $y\in L^2_2(X)$. Let $\gamma=y$ in (\ref{critical}) and integration by parts shows:
\[
0=\|dy\|_2^2+\|ye^\alpha\sigma_1\|_2^2.
\]

Therefore, $\mu(\alpha_\infty)=y\equiv 0$.
\end{proof}

As long as the a priori estimate, Theorem \ref{apriori3}, is established, the proof of Theorem \ref{existence} is quite formal. We will tackle this technical theorem in the next section. 

\subsection{Smoothness and uniqueness}\label{52}
For the rest of the section, we prove the smoothness and uniqueness of the solution obtained in Theorem \ref{existence}.

\begin{lemma}\label{smoothness}
The solution $\talpha=\alpha+\alpha_0$ obtained in the proof of Theorem \ref{existence} is smooth. 
\end{lemma}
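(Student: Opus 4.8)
The plan is to run the standard elliptic bootstrap for the nonlinear scalar equation that $\talpha$ solves, combined with the embedding results from Appendix \ref{A} (Trudinger's inequality, Theorem \ref{A2}) to handle the exponential nonlinearity. First I would record that $\mu(\talpha)=0$ together with the energy equation (\ref{Energy2}) means that, in terms of $\alpha=\talpha-\alpha_0$, we have the pointwise equation
\[
\Delta_X\alpha = -\mu(0)-\tfrac12(e^{2\alpha}-1)|\sigma_1|^2 =: g,
\]
where $\alpha\in L^2_2(X)$ and $\alpha_0\in\SC^\infty(X)$ is the explicit background factor from Lemma \ref{53}. The whole point is to show $\alpha\in\SC^\infty(X)$; then $\talpha=\alpha+\alpha_0$ is smooth and, via the transformation formulas (\ref{covariantderivative}), so are $A$ and $\sigma$.

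The key steps, in order. (1) Localize: since smoothness is a local statement, fix a ball $B=B(p,1)\subset X$ and work with interior elliptic estimates for $\Delta_X$ on $B$. (2) Improve integrability of the right-hand side: $\alpha\in L^2_2(X)\embed L^2_1(X)$, and by Trudinger's inequality (Theorem \ref{A2}) $e^{2\alpha}-1\in L^p_{loc}$ for every $p<\infty$; since $\sigma_1\in L^\infty(X)$ (this is Lemma \ref{boundedness2}, or rather the analogous statement for $\sigma_1=e^{\alpha_0}\sigma_0$, which is manifestly smooth and bounded) and $\mu(0)\in\SC^\infty\cap L^2$, we get $g\in L^p_{loc}(B)$ for all $p<\infty$. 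Hence by interior $L^p$ elliptic regularity $\alpha\in L^p_{2,loc}(B)$, and by Sobolev embedding $\alpha\in C^{1,\beta}_{loc}$ for every $\beta<1$. (3) Bootstrap: once $\alpha\in C^{1,\beta}_{loc}$, the right-hand side $g$ is $C^{1,\beta}_{loc}$ (it is a smooth function of $\alpha$ and of the smooth data $\alpha_0,\sigma_0$), so Schauder estimates give $\alpha\in C^{3,\beta}_{loc}$; iterating, $g\in C^{k,\beta}_{loc}$ whenever $\alpha\in C^{k,\beta}_{loc}$, so $\alpha\in C^{k+2,\beta}_{loc}$ for all $k$, i.e. $\alpha\in\SC^\infty(B)$. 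Since $B$ was arbitrary, $\alpha\in\SC^\infty(X)$, and therefore $\talpha$ is smooth.

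I expect the only genuine subtlety — not really an obstacle — to be \emph{Step 2}, getting past the exponential nonlinearity $e^{2\alpha}$ with merely $L^2_2$, hence only $C^0$ or worse a priori control on $\alpha$ in dimension four: a bare Sobolev embedding would give $\alpha\in L^2_2\embed C^{0}$ only marginally, so one cannot immediately bound $e^{2\alpha}$ in $L^\infty$. This is exactly what Trudinger's inequality is for: it converts the $L^2_2$ (equivalently $L^2_1$ for the gradient) bound into $L^p$ control of $e^{2\alpha}$ for every finite $p$, which is all the $L^p$ elliptic theory needs to start the bootstrap; after one turn of the crank $\alpha$ is Hölder and the nonlinearity becomes harmless. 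Everything after that is the routine Schauder iteration, and the transfer back to smoothness of $(A,\sigma)$ is immediate from (\ref{covariantderivative}) since $A_0,\sigma_0$ are smooth.
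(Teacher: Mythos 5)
Your proof is correct and follows essentially the same route as the paper: both start from the equation $\Delta_X\alpha=-\mu(0)-\tfrac12(e^{2\alpha}-1)|\sigma_1|^2$, use Trudinger's inequality (Theorem \ref{A2}) to place $e^{2\alpha}-1$ in $L^p_{loc}$ for all finite $p$, and apply $L^p$ elliptic regularity to get $\alpha\in L^p_{2,loc}$, hence $\alpha\in L^\infty_{loc}$, which defuses the exponential nonlinearity. The only (cosmetic) difference is that you finish with a Schauder iteration in $C^{k,\beta}$, whereas the paper bootstraps in the Sobolev scale $L^2_{k,loc}$ using that $L^2_{k+2}$ is a Banach algebra so that $e^{2\alpha}\in L^2_{k+2,loc}$ whenever $\alpha$ is.
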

\begin{proof}
Note that the equation $\mu(\alpha)=0$ is equivalent to 
\begin{equation}\label{elliptic}
\Delta_X\alpha=-\mu(0)-\half (e^{2\alpha}-1)|\sigma_1|^2.
\end{equation}

Both $\mu(0)$ and $|\sigma_1|^2$ are smooth functions on $X$. Since $\alpha\in L^2_2(X)$, Trudinger's inequality (Theorem \ref{A2}) implies $e^{2\alpha}-1\in L^p(X)$ for any $2\leq p<\infty$. Then elliptic regularity shows 
\[
\alpha\in L^p_{2,loc}(X)
\] 
for any $2\leq p<\infty$, so $\alpha, e^{2\alpha}\in L^\infty_{loc}(X)$. This implies
\[
d(e^{2\alpha}-1)=2e^{2\alpha}d\alpha\in L^2_{loc}(X),
\]
and 
\[
\nabla^2(e^{2\alpha}-1)=4e^{2\alpha}d\alpha\otimes d\alpha+2e^{2\alpha}\nabla^2\alpha\in L^2_{loc}(X).
\]

We use induction to prove that for each $k\geq 2$, 
\[
\alpha,e^{2\alpha}\in L^2_{k,loc}(X).
\]
The initial step $k=2$ is done. Suppose the statement is true for some $k\geq 2$. By elliptic regularity and the induction hypothesis that $e^{2\alpha}\in L^2_{k,loc}$, (\ref{elliptic}) implies $\alpha\in L^2_{k+2,loc}$. To verify $e^{2\alpha}\in L^2_{k+2,loc}$, it suffices to note that $L^2_{k+2}$ is an algebra for any $k>0$ and the expansion 
\[
e^{2\alpha}=\sum_{m=0}^\infty \frac{(2\alpha)^m}{m!}
\]
converges in $L^2_{k+2,loc}$-topology for any $\alpha\in L^2_{k+2,loc}$. The induction step is accomplished. Since $\alpha\in L^2_{k,loc}$ for any $k\geq 2$, $\alpha$ is smooth.
 \end{proof}

\begin{lemma}
The solution $\talpha=\alpha+\alpha_0$ obtained in Theorem \ref{existence} is also unique.
\end{lemma}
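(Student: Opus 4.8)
The plan is to prove the slightly stronger statement that the conformal factor $\alpha\in L^2_2(X)$ with $\mu(\alpha)=0$ is unique, so that $\talpha=\alpha_0+\alpha$, and hence the monopole $e^{\talpha}\cdot(A_0,\sigma_0)$, depends only on $f$ (together with the fixed auxiliary choices made in Lemma \ref{53}). So suppose $\alpha,\alpha'\in L^2_2(X)$ both solve $\mu(\alpha)=\mu(\alpha')=0$. Set $w=\alpha-\alpha'$; it lies in $L^2_2(X)$ and is smooth by Lemma \ref{smoothness}. Subtracting the two copies of the elliptic equation $(\ref{elliptic})$ gives
\[
\Delta_X w=-\half\bigl(e^{2\alpha}-e^{2\alpha'}\bigr)|\sigma_1|^2 .
\]

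Next I would pair this identity with $w$ and integrate over $X$. All the relevant products are integrable: $\Delta_X w,\,w\in L^2(X)$, while $e^{2\alpha}-e^{2\alpha'}\in L^2(X)$ by Trudinger's inequality (Theorem \ref{A2}) and $\sigma_1\in L^\infty(X)$, so $w(e^{2\alpha}-e^{2\alpha'})|\sigma_1|^2\in L^1(X)$. Since $\SC_c^\infty(X)$ is dense in $L^2_2(X)$ (as used already in Lemma \ref{53}), the integration by parts produces no boundary term at infinity, and
\[
0\le \int_X|dw|^2=\int_X w\,\Delta_X w=-\half\int_X w\,\bigl(e^{2\alpha}-e^{2\alpha'}\bigr)|\sigma_1|^2 .
\]
Because $t\mapsto e^{2t}$ is strictly increasing, $w$ and $e^{2\alpha}-e^{2\alpha'}$ have the same sign at every point, so the last integrand is nonnegative and the right-hand side is $\le 0$. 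Hence both sides vanish, and in particular $dw\equiv 0$, so $w$ is constant.

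Finally, since $X=\C\times\Sigma$ has infinite volume and $w\in L^2_2(X)\subset L^2(X)$, a constant function in $L^2(X)$ must be $0$; thus $w\equiv 0$ and $\alpha=\alpha'$. (Alternatively, one can avoid invoking the volume by noting that the vanishing of $\int_X w(e^{2\alpha}-e^{2\alpha'})|\sigma_1|^2$ together with $w\equiv c$ forces $c(e^{2c}-1)\int_X e^{2\alpha'}|\sigma_1|^2=0$, and the zero set of the nonzero holomorphic section $\sigma_0(z,x)=f(z)(x)$ over the connected complex manifold $X$ has measure zero, so the integral is positive and $c=0$.) This gives uniqueness of $\talpha$; combined with Theorem \ref{polynomial} it completes the proof of the bijection in Theorem \ref{main}.

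The only genuinely delicate point is justifying the integration by parts on the noncompact manifold $X$ — i.e. that there is no contribution from infinity — but this is exactly what the density of $\SC_c^\infty(X)$ in $L^2_2(X)$ and the $L^2$-membership statements from Lemma \ref{53} are designed to handle, so no new analysis is needed. Conceptually the whole argument is just the strict convexity, in the directions where $d\alpha$ is nonconstant or $|\sigma_1|^2>0$, of the energy functional whose Euler--Lagrange equation is $\mu(\alpha)=0$; the two displays above are the concrete manifestation of that convexity.
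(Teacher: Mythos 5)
Your convexity argument is correct as far as it goes: for two solutions $\alpha,\alpha'\in L^2_2(X)$ of $\mu=0$, the identity $\int_X|dw|^2=-\tfrac12\int_X w\,(e^{2\alpha}-e^{2\alpha'})|\sigma_1|^2$ is legitimately obtained by density of $\SC_c^\infty(X)$ in $L^2_2(X)$, the pointwise sign of $w(e^{2\alpha}-e^{2\alpha'})$ forces both sides to vanish, and a constant in $L^2(X)$ on the infinite-volume $X$ is zero. This is in fact a cleaner mechanism than the paper's, which runs a maximum principle on $\gamma=\alpha''-\talpha$ and has to deal with the delicate case $\Delta_X\gamma(p)=0$ by perturbation (citing Jaffe--Taubes).

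The gap is in the class of competitors. The uniqueness the paper proves -- and the one actually needed to close the bijection of Theorem \ref{main} -- is uniqueness among \emph{all} smooth $\alpha''$ such that $e^{\alpha''}\cdot(A_0,\sigma_0)$ solves (\ref{firsteq}) with finite analytic energy. A finite-energy monopole handed to you by Theorem \ref{polynomial} comes with a smooth conformal factor $\alpha''$ about which you know, a priori, only what the compactness argument of Lemma \ref{boundedness} gives: $\|\alpha''-\alpha_0\|_{L^\infty(\{z\}\times\Sigma)}\to 0$ as $|z|\to\infty$. You do \emph{not} know $\alpha''-\alpha_0\in L^2_2(X)$, so $w=\alpha''-\talpha$ need not be in $L^2_2(X)$, the density-of-$\SC_c^\infty$ justification of the integration by parts does not apply, and the final step ``a constant in $L^2$ must vanish'' is unavailable. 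Your closing sentence, that this ``combined with Theorem \ref{polynomial} completes the proof of the bijection,'' is therefore exactly where the argument breaks: either you must first show that every finite-energy competitor lies in $\alpha_0+L^2_2(X)$ (which would require additional global estimates you have not supplied), or you must switch, as the paper does, to an argument -- the maximum principle -- that consumes only the fiberwise $L^\infty$ decay of $\gamma$ at infinity.
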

\begin{proof} 
	
Let $\alpha'=\alpha+\alpha_0$ be the solution obtained in the proof of Theorem \ref{existence}. We need to show for any $\alpha''\in \SC^\infty(X)$ such that $(A,\sigma)=e^{\alpha''}\cdot(A_0,\sigma_0)$ solves equation (\ref{firsteq}) and has finite analytic energy, $\alpha''=\alpha'$. 

Let $\gamma=\alpha''-\alpha'$, then $\gamma$ is smooth and  $\mu(\gamma+\alpha)=\mu(\alpha)=0$. This shows
\begin{equation}\label{gamma}
\Delta_X\gamma+\half e^{2\alpha}(e^{2\gamma}-1)|\sigma_1|^2=0.
\end{equation}

Since $(A,\sigma)=e^{\alpha''}\cdot (A_0,\sigma_0)=e^{\alpha''-\alpha_0}\cdot (A_1,\sigma_1)$ has finite analytic energy, by the proof of Lemma \ref{boundedness}, $\|\alpha''-\alpha_0\|_{L^\infty(\{z\}\times \Sigma)}\to 0$ as $z\to\infty$. The reason is that when we properly translate $(A,\sigma)$ to the origin and get a sequence of solutions $(A_n,\sigma_n)$ on $X'=B(0,10)\times\Sigma$, we impose Coulomb-Neumann gauge fixing condition (with respect to $(A_0,\gamma_d)$) on $B(0,10)\subset \C$. As $n\to\infty$, these solutions will converge in the interior in $\SC^\infty$-topology to $(A_0,\gamma_d)$. This gives the desired convergence.

Therefore, we have, in addition to (\ref{gamma}), that
\begin{equation}\label{boundarycondition}
\gamma=(\alpha''-\alpha_0)-\alpha\to 0
\end{equation}
as $|z|\to\infty$. Then the maximal principle implies $\gamma\equiv 0$. Indeed, suppose $\gamma>0$ somewhere, then by (\ref{boundarycondition}), it attains its maximum at some point $p\in X$ and hence $\Delta_X \gamma (p)\geq 0$. If $\Delta_X\gamma(p)>0$, then $(\ref{gamma})$ is violated at $p$. The case when $\Delta_X\gamma(p)=0$ is tricker: we need to add a perturbation. For details, see \cite[Chapter VI.3, Proposition 3.3]{JT}. This shows $\gamma\leq 0$. Similarly, by analyzing the minimum of $\gamma$, we conclude $\gamma\geq 0$, so $\gamma\equiv 0$. 
\end{proof}
\section{Proof of Theorem \ref{apriori3}}\label{sec6}
Theorem \ref{apriori3} states that the $L^2$-norm of $\mu(\alpha)$ controls the $L^2_2$-norm of $\alpha$ for any $\alpha\in L_2^2(X)$. The proof is achieved by two steps:

\textit{Step 1}. Estimate $\|\Delta_X\alpha\|_2$.

\textit{Step 2}. Estimate $\|\alpha\|_2$. 

The first step is a consequence of the energy formula (\ref{analyticenergy}), but we need to work very carefully. The second step is trickier: it involves decomposing the function $\alpha$ into its high frequency and low frequency modes.

If one carries out the same proof for the vortex equation on $\Sigma$ and $\C$, an a priori estimate like Theorem \ref{apriori3} will be needed as well. For details, see Theorem \ref{apriori1} and \ref{apriori2}.  In both cases, the first step is trivial. On $\Sigma$, the second step is false, but we can still make it work by using a smaller variational space. On $\C$, the second step is processed similarly as we did here, yet it is simpler. 

We establish Theorem \ref{apriori3} in a sequence of lemmas. We start with \textit{Step 1.} 
\begin{lemma}\label{Laplacianestimate}
There exist $a,b>0$ such that for any $\alpha\in L^2_2(X)$ with $\E(\alpha)<C$, we have
\[
\|\Delta_X\alpha\|_{L^2(X)}^2<aC+b. 
\]
\end{lemma}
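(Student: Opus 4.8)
The plan is to combine two facts already in hand: the pointwise identity $\mu(\alpha)=\mu(0)+\Delta_X\alpha+\tfrac12(e^{2\alpha}-1)|\sigma_1|^2$ from Definition \ref{momentmap}, and the energy equation $(\ref{Energy2})$, which gives $\|\mu(\alpha)\|_{L^2(X)}^2=\E(\alpha)-\E_{top}<C$ whenever $\E(\alpha)<C$. Rearranging,
\[
\Delta_X\alpha=\mu(\alpha)-\mu(0)-\tfrac12\bigl(|\sigma_\alpha|^2-|\sigma_1|^2\bigr),
\]
and since $\mu(0)\in L^2(X)$ by Lemma \ref{53}, everything reduces to bounding $\bigl\||\sigma_\alpha|^2-|\sigma_1|^2\bigr\|_{L^2(X)}$ by an affine function of $C$.

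First I would split $\Delta_X=\Delta_\C+\Delta_\Sigma$; by the triangle inequality it suffices to bound $\|\Delta_\C\alpha\|_2$ and $\|\Delta_\Sigma\alpha\|_2$ separately. The $\C$-Laplacian is immediate: $\Delta_\C\alpha=iF_{A_\alpha}^\C-iF_{A_1}^\C$, the first term has $L^2$-norm at most $\sqrt{\E(\alpha)}<\sqrt C$ because $\int_X|F_{A_\alpha}^\C|^2$ is one of the non-negative summands of $(\ref{analyticenergy})$, and $iF_{A_1}^\C=\Delta_\C(\beta+\delta)$ is a fixed element of $L^2(X)$ by the proof of Lemma \ref{53}. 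Writing $\mu_\Sigma(\alpha):=iF_{A_\alpha}^\Sigma+\tfrac12 K+\tfrac12|\sigma_\alpha|^2$, the same computation gives $\Delta_\Sigma\alpha=\mu_\Sigma(\alpha)-\mu_\Sigma(0)-\tfrac12(|\sigma_\alpha|^2-|\sigma_1|^2)$ with $\|\mu_\Sigma(\alpha)\|_2^2\le\E(\alpha)$ and $\mu_\Sigma(0)\in L^2(X)$ (again from the computations in the proof of Lemma \ref{53}), so once more the task is the nonlinear term $\bigl\||\sigma_\alpha|^2-|\sigma_1|^2\bigr\|_{L^2(X)}$.

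This nonlinear term cannot be controlled from $\|\mu_\Sigma(\alpha)\|_2$ alone — that would be circular — and one must exploit the quartic term of the analytic energy. Here I would pass to each fiber $\{z\}\times\Sigma$: by Lemma \ref{positivity} the fiber energy density $\mathfrak{e}(z):=\int_{\{z\}\times\Sigma}\bigl(\tfrac14|F_{A_\alpha^t}|^2+|\nabla_{A_\alpha}\sigma_\alpha|^2+\tfrac14|\sigma_\alpha|^4+\tfrac K2|\sigma_\alpha|^2\bigr)$ is nonnegative with $\int_\C\mathfrak{e}\,dvol_\C=\E(\alpha)$, and it dominates $\|\mu_\Sigma(\alpha)(z,\cdot)\|_{L^2(\Sigma)}^2$ and $\int_\Sigma(\tfrac14|\sigma_\alpha|^4+\tfrac K2|\sigma_\alpha|^2)$ on that fiber. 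Running the surface analogue of the present estimate fiber by fiber — completing the square in $\tfrac14|\sigma_\alpha|^4+\tfrac K2|\sigma_\alpha|^2$, which is legitimate precisely because $\Sigma$ is compact so $\int_\Sigma|K|<\infty$, together with $\int_\Sigma|\sigma_\alpha|^2=(\text{a constant independent of }z)+2\int_\Sigma\mu_\Sigma(\alpha)$ — yields a fiberwise estimate of the shape $\int_\Sigma(|\sigma_\alpha|^2-|\gamma_d|^2)^2\lesssim\mathfrak{e}(z)+\sqrt{\mathfrak{e}(z)}+1$, and hence a fiberwise bound on $\|\Delta_\Sigma\alpha(z,\cdot)\|_{L^2(\Sigma)}^2$.

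The main obstacle — and the reason this ``first step'' is genuinely more delicate than its counterpart on the compact surface, where it is trivial — is that such a crude fiberwise estimate carries a bare additive constant whose integral over the non-compact plane $\C$ diverges. To get around this I would compare $|\sigma_\alpha|^2$ with the reference section $|\sigma_1|^2$ rather than with the limiting vortex $|\gamma_d|^2$, subtracting the fiber energy density of the fixed configuration $(A_1,\sigma_1)$: since $\int_\C\mathfrak{e}_{A_1,\sigma_1}(z)\,dvol_\C=\E_{an}(A_1,\sigma_1)<\infty$ by Lemma \ref{53}, and both $|\sigma_1|^2-|\gamma_d|^2$ and $\mu_\Sigma(0)$ lie in $L^2(X)$, the offending constant is replaced by an honest $L^1(\C)$-in-$z$ profile. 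Integrating the resulting relative fiberwise inequality over $\C$ then gives $\|\Delta_\Sigma\alpha\|_{L^2(X)}^2\le aC+b$, and together with the bound for $\|\Delta_\C\alpha\|_2$ this finishes the proof. I expect the careful bookkeeping of these relative-energy cancellations — making sure no divergent constant survives the integration over $\C$ — to be the hard part.
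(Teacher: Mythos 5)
Your treatment of $\|\Delta_\C\alpha\|_2$ is correct and is exactly what the paper does: $\int_X|F^\C_{A_\alpha}|^2$ is one of the non-negative summands of $(\ref{analyticenergy})$ and $\Delta_\C\alpha_0\in L^2(X)$, so the inequality $(a+b)^2\geq\tfrac12a^2-b^2$ finishes it. The $\Sigma$-direction, however, is where the real content lies, and your route has a genuine gap. You propose to bound $\bigl\||\sigma_\alpha|^2-|\sigma_1|^2\bigr\|_{L^2(X)}=\bigl\|(e^{2\alpha}-1)|\sigma_1|^2\bigr\|_{L^2(X)}$ first and then read off $\|\Delta_\Sigma\alpha\|_2$ from the moment map identity. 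But the mechanism you offer for the first step does not close. Completing the square gives $\tfrac14|\sigma|^4+\tfrac K2|\sigma|^2=\tfrac14(|\sigma|^2+K)^2-\tfrac14K^2$: the quartic part of the energy is a positive form centred at $|\sigma|^2=-K$, \emph{not} at $|\sigma|^2=|\sigma_1|^2$, so comparing the fiber energies of $(A_\alpha,\sigma_\alpha)$ and $(A_1,\sigma_1)$ produces two additive constants $\tfrac14\int_\Sigma K^2$ that add rather than cancel. Subtracting the reference energy density $\mathfrak{e}_1(z)$ removes the $L^1(\C)$ part but not this per-unit-area constant, whose integral over $\C$ still diverges. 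The only other source of control on $(e^{2\alpha}-1)|\sigma_1|^2$ is the term $\mu_\Sigma(\alpha)=\Delta_\Sigma\alpha+\mu_\Sigma(0)+\tfrac12(e^{2\alpha}-1)|\sigma_1|^2$, and extracting either summand from $\|\mu_\Sigma(\alpha)\|_2$ requires ruling out cancellation between $\Delta_\Sigma\alpha$ and the nonlinear term --- which is precisely the difficulty, not a consequence of anything you have written down. (Pairing $\mu_\Sigma(\alpha)$ against $\alpha$ instead only yields a bound in terms of $\|\alpha\|_2$, which is circular at this stage of the argument.)

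The paper's resolution is different and is the actual heart of the lemma. It does not bound the two pieces of $\mu_\Sigma(\alpha)$ separately; it proves the uniform fiberwise coercivity estimate (Lemma \ref{fiber})
\[
\int_\Sigma\Bigl|\Delta_\Sigma\alpha+\tfrac12(e^{2\alpha}-1)|\sigma_1(z)|^2\Bigr|^2>c\int_\Sigma|\Delta_\Sigma\alpha|^2,
\]
with a constant $c>0$ independent of $z$ and, crucially, with \emph{no} additive constant, so that integration over $\C$ is harmless. This is established by a compactness-and-contradiction argument over the compactified family $z\in\CP$: the vortex energy identity on $\Sigma$ handles the regime where $\|\Delta_\Sigma\alpha\|_2$ is large, and in the small regime one rescales and passes to the linearized operators $\Delta_\Sigma+|\sigma_1(z)|^2$, whose injectivity (together with the zero-average constraint when $\sigma_1(z_\infty)=0$) yields the contradiction. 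Also note that the bound $\|(e^{2\alpha}-1)|\sigma_1|^2\|_2^2<aC+b$ appears in the paper only \emph{after} Lemma \ref{Laplacianestimate}, as a corollary of it via the energy equation $(\ref{Energy2})$ and the triangle inequality; your proposal reverses this logical order, and that reversal is exactly where the missing idea sits.
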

\begin{proof}
It suffices to find $a,b>0$ so that 
\[
\|\Delta_\C\alpha\|_{L^2(X)}^2, \|\Delta_\Sigma\alpha\|_{L^2(X)}^2<aC+b. 
\]

By (\ref{analyticenergy}), we know that 
\[
\E(\alpha)>\int_X |F_{A_\alpha}^\C|^2=\int_X |\Delta_\C\alpha+\Delta_\C\alpha_0|^2.
\]

Now, the elementary inequality,
\begin{equation}\label{peterpaul}
(a+b)^2\geq \half a^2-b^2
\end{equation}
implies that $\|\Delta_\C\alpha\|_2^2\leq 2(\E(\alpha)+\|\Delta_\C\alpha_0\|^2_2).$

To analyze the term $\|\Delta_\Sigma\alpha\|_2$ is harder. The energy formula (\ref{analyticenergy}) also implies
\begin{align*}
\E(\alpha)&>\int_X|iF_{A_\alpha}^{\Sigma}+\half K+\half |\sigma_\alpha|^2|^2\\
&= \int_X|(\Delta_\Sigma\alpha+\half(e^{2\alpha}-1)|\sigma_1|^2)+\half (|\sigma_1|^2-|\gamma_d|^2)|^2.
\end{align*}

Then the inequality (\ref{peterpaul}) implies
\[
2\E(\alpha)+2\||\sigma_1|^2-|\gamma_d|^2\|_2^2\geq  \int_X|\Delta_\Sigma\alpha+\half(e^{2\alpha}-1)|\sigma_1|^2|^2.
\]

Therefore, Lemma \ref{Laplacianestimate} follows from a fiber-wise estimate and it is the content of the next lemma.\end{proof}

\begin{lemma}\label{fiber} For each $z\in \C$, let $(B_1(z), \sigma_1(z))=(\nabla_{A_1}^\Sigma, \sigma_1(z))\in \SC(\Sigma, L)$ be the configuration on the fiber $\{z\}\times \Sigma$. There exists $c>0$ such that for any $\alpha\in L^2_2(\Sigma)$ and $z\in\C$, 
\[
\int_\Sigma|\Delta_\Sigma\alpha+\half(e^{2\alpha}-1)|\sigma_1(z)|^2|^2\geq c\int_\Sigma |\Delta_\Sigma\alpha|^2.
\]
\end{lemma}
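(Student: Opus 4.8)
The plan is to prove the estimate by a compactness argument that is uniform in $z$. Set $p_z\colonequals|\sigma_1(z)|^2=e^{2\alpha_0(z)}|f(z)|^2\in\SC^\infty(\Sigma)$; thus $p_z\geq 0$, and $p_z\equiv 0$ exactly at the finitely many $z$ with $f(z)=0\in H^0(\Sigma,\SL)$, where the inequality is trivial with $c=1$. I will use three standard facts. (a) By the explicit form of $\alpha_0=\beta+\delta$ from Lemma \ref{53}, $p_z\to|\gamma_d|^2$ in $\SC^\infty(\Sigma)$ as $z\to\infty$, so the map $z\mapsto p_z$ from $\C$ to $\SC^\infty(\Sigma)$ extends continuously to $\CP$, its image is compact, and (since $\sigma_1\in L^\infty(X)$, again by Lemma \ref{53}) $\|p_z\|_{C^0}$ is bounded. (b) On the closed surface $\Sigma$ one has $\|\alpha-\bar\alpha\|_{L^2_2(\Sigma)}\leq C\|\Delta_\Sigma\alpha\|_{L^2(\Sigma)}$, with $\bar\alpha$ the average, and the compact embedding $L^2_2(\Sigma)\embed C^0(\Sigma)$, valid since $\dim\Sigma=2$. (c) For $p\geq 0$, $p\not\equiv 0$, the only $L^2_2$ solution of $\Delta_\Sigma\alpha+\half(e^{2\alpha}-1)p=0$ is $\alpha\equiv0$: pairing with $\alpha$ gives $0=\|\nabla\alpha\|_2^2+\half\int_\Sigma p\,\alpha(e^{2\alpha}-1)$, both summands are $\geq0$, so $\alpha$ is constant, hence $0$. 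I will also use the two identities controlling the nonlinear term: writing $\psi_\alpha\colonequals\Delta_\Sigma\alpha+\half(e^{2\alpha}-1)p$ and integrating over $\Sigma$ gives $\int_\Sigma p\,e^{2\alpha}=\int_\Sigma p+2\int_\Sigma\psi_\alpha$; combined with $\half(e^{2\alpha}-1)p\geq-\half p$ this yields $\|\half(e^{2\alpha}-1)p\|_{L^1(\Sigma)}\leq C(\|\psi_\alpha\|_{L^2(\Sigma)}+1)$, hence also $\|\Delta_\Sigma\alpha\|_{L^1(\Sigma)}\leq C(\|\psi_\alpha\|_{L^2(\Sigma)}+1)$.

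Suppose the estimate fails. Then there are $z_n\in\C$ and $\alpha_n\in L^2_2(\Sigma)$ with $\|\psi_n\|_{L^2(\Sigma)}=o(t_n)$, where $t_n\colonequals\|\Delta_\Sigma\alpha_n\|_{L^2(\Sigma)}$ and $\psi_n$ is formed with $p_n\colonequals p_{z_n}$. After a constant rescaling of $p_n$, accompanied by a constant shift of $\alpha_n$ which in the relevant regime alters the quantities above only by bounded amounts, I may assume (passing to a subsequence) $p_n\to p_\infty$ in $C^0(\Sigma)$ with $p_\infty\geq0$ and $p_\infty\not\equiv0$. Split $\alpha_n=\bar\alpha_n+\beta_n$ with $\beta_n$ of mean zero. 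If $\{t_n\}$ is bounded, then $\{\beta_n\}$ is bounded in $L^2_2(\Sigma)$, and after a further subsequence $\beta_n\to\beta_\infty$ in $C^0$ and weakly in $L^2_2$. If in addition $\{\bar\alpha_n\}$ is bounded, then $\half(e^{2\alpha_n}-1)p_n\to\half(e^{2\alpha_\infty}-1)p_\infty$ strongly in $L^2$, so $\Delta_\Sigma\alpha_n$ converges strongly and the limit $\alpha_\infty=\lim\bar\alpha_n+\beta_\infty$ solves the equation in (c); hence $\alpha_\infty\equiv0$ and $t_n\to0$, contradicting that $t_n$ is bounded below. If instead $\bar\alpha_n\to\pm\infty$, then either the nonlinear term blows up in $L^2$ while $t_n+\|\psi_n\|_{L^2}$ stays bounded, or it vanishes and the identity $\int_\Sigma p_n e^{2\alpha_n}=\int_\Sigma p_n+o(t_n)$ together with $\int_\Sigma\Delta_\Sigma\alpha_n=0$ forces $\Delta_\Sigma\alpha_n\to0$ strongly; either way a contradiction. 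The remaining case, $t_n\to\infty$, is the hard one. Here $\|\half(e^{2\alpha_n}-1)p_n\|_{L^2}$ is comparable to $t_n$, because $\Delta_\Sigma\alpha_n=\psi_n-\half(e^{2\alpha_n}-1)p_n$ and $\|\psi_n\|_{L^2}=o(t_n)$, while $\|\half(e^{2\alpha_n}-1)p_n\|_{L^1}=o(t_n)$ by the $L^1$ bound above; thus $\alpha_n$ must develop a concentrating positive spike, near which $\Delta_\Sigma\alpha_n\approx-\half p_n e^{2\alpha_n}$ is a \emph{defocusing} Liouville-type equation. On a closed surface such concentration cannot occur: away from the finitely many, uniformly located zeros of $p_n$, the maximum principle at the peak of $\alpha_n$ bounds $\psi_n$ there below by $\kappa(e^{2\max\alpha_n}-1)$, and a flux estimate on a small disc about the peak then forces $\psi_n$ to carry $L^2$-norm comparable to $t_n$ there; near a zero of $p_n$ the nonlinear term is suppressed, so $\psi_n\approx\Delta_\Sigma\alpha_n$ again carries the large $L^2$-norm. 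In every case $\|\psi_n\|_{L^2}\gtrsim t_n$, a contradiction. Finally, the infimum of the constants so produced over the precompact family $\{p_z\}$ is positive, which gives a single $c>0$ valid for all $z$.

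The main obstacle is the case $t_n\to\infty$, i.e.\ ruling out concentration of $\alpha$; everything else is routine once the precompactness of $\{p_z\}$ and the maximum-principle uniqueness (c) are in hand. The mechanism behind non-concentration is the good sign of the nonlinearity $\half(e^{2\alpha}-1)p$: the Liouville-type equation $\Delta_\Sigma\alpha+\half p\,e^{2\alpha}=0$ has no solution on a closed surface (integrate), and the identity $\int_\Sigma p\,e^{2\alpha}=\int_\Sigma p+2\int_\Sigma\psi_\alpha$ shows more quantitatively that a large $L^2$-concentration of the nonlinear term forces a corresponding loss of $L^2$-mass of $\psi_\alpha$ near the concentration point --- through the maximum principle where $p$ is bounded away from $0$, and directly near the zeros of $p$. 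Making this dichotomy quantitative, with constants independent of $z$ and of $\alpha\in L^2_2(\Sigma)$, is the technical heart of the proof.
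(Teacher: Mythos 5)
There is a genuine gap, and it sits exactly where you flagged ``the technical heart'': the regime $t_n=\|\Delta_\Sigma\alpha_n\|_{L^2}\to\infty$. Your argument there is a heuristic, not a proof: $\alpha_n\in L^2_2(\Sigma)$ is only continuous, so ``the maximum principle at the peak of $\alpha_n$'' is not available pointwise; a lower bound on $\psi_n$ \emph{at} the peak does not give an $L^2$ lower bound; and the ``flux estimate on a small disc'' is never specified. The paper disposes of this case in two lines by a mechanism you did not use: apply the Bogomol'nyi energy identity for the vortex functional on the fiber to $(B,\sigma)=e^{\alpha}\cdot(B_1,\sigma_1)$ (legitimate because $\bpartial_{B_1}\sigma_1=0$, hence $\bpartial_B\sigma=0$). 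Dropping the manifestly nonnegative terms gives
\[
\int_\Sigma\Bigl|\Delta_\Sigma\alpha+ *iF_{B_1}+\half K+\half|\sigma_1|^2e^{2\alpha}\Bigr|^2\;\geq\;\int_\Sigma\Bigl|\Delta_\Sigma\alpha+*iF_{B_1}+\half K\Bigr|^2-\tfrac14K^2,
\]
and after Cauchy--Schwarz this yields $4\|\psi_\alpha\|_{L^2}^2+N>\|\Delta_\Sigma\alpha\|_{L^2}^2$ with $N$ uniform over the compactified family $\{(B_1(z),\sigma_1(z))\}_{z\in\CP}$. That single inequality proves the lemma with $c=1/5$ whenever $\|\Delta_\Sigma\alpha\|_{L^2}^2\geq 5N$, so no concentration analysis is ever needed; your $L^1$ identity $\int_\Sigma p\,e^{2\alpha}=\int_\Sigma p+2\int_\Sigma\psi_\alpha$ is too weak a substitute because it controls the nonlinear term only in $L^1$.

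A second gap is in the small/degenerate regime. Nothing in the contradiction hypothesis $\|\psi_n\|_{L^2}^2<\tfrac1n t_n^2$ prevents $t_n\to0$, so ``contradicting that $t_n$ is bounded below'' is unjustified; since $\mu$ is nonlinear you cannot rescale $\alpha_n$ to normalize $t_n$. One must normalize ($\beta_n=s_n\beta_n'$, resp.\ $\alpha_n=t_n\alpha_n'$) and pass to the \emph{linearized} operator $\Delta_\Sigma+e^{2\alpha_\infty}|\sigma_\infty|^2$, whose injectivity for $\sigma_\infty\neq0$ closes the argument --- your fact (c) is the nonlinear uniqueness and does not do this job. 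Worse, when $\sigma_1(z_n)\to0$ the linearized operator degenerates (constants are nearly in its kernel), and your proposed fix --- rescaling $p_n$ by a constant and shifting $\alpha_n$ --- changes $\psi_n$ by $\half(\lambda_n-1)q_n$, an $O(1)$ quantity in $L^2$ that is not $o(t_n)$ when $t_n$ is bounded, so the hypothesis is not preserved and the reduction to $p_\infty\not\equiv0$ fails. This is precisely the paper's Case 1, which needs the extra structure ($\int_\Sigma\beta_\infty=0$ forcing $a_\infty=0$, then a second normalization of $(a_n^2-r_n^2)/s_n$) to rule out. Your overall framework --- compactifying the family $\{|\sigma_1(z)|^2\}$ over $\CP$, arguing by contradiction and compactness --- matches the paper, but both the large-$\Delta_\Sigma\alpha$ case and the degenerate small case need to be reworked along these lines.
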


\Remark  Though the proof below is messy, the underlying idea should be clear. This lemma is true because for each individual $\sigma_1(z)$, the statement is true for its linearized operator. Equations (\ref{8}) and (\ref{18}) below are steps where we pass to linearized operators. This resolves the case when $\|\Delta_\Sigma\alpha\|_2$ is small. When $\|\Delta_\Sigma\alpha\|_2$ is large, this estimate is true due to the energy equation. 
\begin{proof} The family of configurations $(B_1(z), \sigma_1(z))$ admits a natural compactification; we extend it to $\CP$ by setting
	\[
	(B_1(\infty), \sigma_1(\infty))=(B_0,\gamma_d). 
	\]
	
In particular, there is $M>0$ such that 
\[
\|\sigma_1(z)\|_{L^2_2(\Sigma)}^2+\|F_{B_1}(z)\|_{L^2_2(\Sigma)}^2<M
\]
holds for any $z\in \C$. What will be frequently used below is the Sobolev embedding theorem:
\[
L_2^2(\Sigma)\embed L^\infty(\Sigma),
\]
and the fact that this embedding is compact.
\bigskip

To start, we have a nice energy equation associated to the vortex equation (\ref{vortex}),  as a special case of Lemma \ref{positivity} or Formula (\ref{analyticenergy}):
	\begin{multline*}
	\int_\Sigma 2|\bpartial_B\sigma|^2+ |*iF_B+\half K+\half|\sigma|^2|^2\\
	=\int_\Sigma |\nabla_B\sigma|^2+|*iF_B+\half K|^2+\frac{1}{4}(|\sigma|^2+K)^2-\frac{1}{4}K^2. 
	\end{multline*}

We apply this equation to $(B,\sigma)=e^{\alpha}\cdot (B_1,\sigma_1)=(B_1+i*d_\Sigma\alpha,e^\alpha\cdot \sigma_1)$ and get 
	\begin{multline}\label{1}
\int_\Sigma  |\Delta_\Sigma\alpha+ *iF_{B_1}+\half K+\half|\sigma_1|^2e^{2\alpha}|^2\\
\geq \int_\Sigma |\Delta_{\Sigma}\alpha+*iF_{B_1}+\half K|^2-\frac{1}{4}K^2,
\end{multline}
using the fact that $\bpartial_{B_1}\sigma_1=0$, so $\bpartial_B\sigma=0$. Then the Cauchy-Schwartz inequality and (\ref{peterpaul}) imply
\begin{multline*}
2\int_\Sigma  |\Delta_\Sigma\alpha+\half(e^{2\alpha}-1)|\sigma_1|^2|^2+2\int_\Sigma  |*iF_{B_1}+\half K+\half|\sigma_1||^2\geq \text{LHS of (\ref{1})},\\
\end{multline*}
and 
\[
\text{RHS of (\ref{1})}\geq \half \int_\Sigma |\Delta_{\Sigma}\alpha|^2-\int_\Sigma (|*iF_{B_1}+\half K|^2+\frac{1}{4}K^2). 
\]

Finally, we get 
\begin{equation}\label{5}
4\int_\Sigma|\Delta_\Sigma\alpha+\half(e^{2\alpha}-1)|\sigma_1|^2|^2+N>\int_\Sigma |\Delta_\Sigma\alpha|^2,
\end{equation}
for some $N>0$ independent of $z\in \CP$. 

\smallskip

Suppose Lemma \ref{fiber} is violated. Then, for each $n>0$, there is $(\alpha_n,z_n)\in L^2_2(\Sigma)\times \C$ such that 
\begin{align}\label{6}
\int_\Sigma|\Delta_\Sigma\alpha_n+\half(e^{2\alpha_n}-1)|\sigma_1(z_n)|^2|^2&<\frac{1}{n}\int_\Sigma |\Delta_\Sigma\alpha_n|^2. 
\end{align} 

Let $\sigma_n=\sigma_1(z_n)$. By (\ref{5}), we must have $\int_\Sigma |\Delta_\Sigma\alpha_n|^2<\frac{n}{n-4} N\leq 5N$ when $ n\geq 5$. This shows the sequence $\beta_n\colonequals \Delta_\Sigma\alpha_n$ is bounded in $L^2(\Sigma)$ and we can find a weakly convergent subsequence. Since this is a compact family of configurations,  we can further assume that for this subsequence $z_n\to z_\infty$, so $\{|\sigma_n|^2\}$ is convergent in $L^\infty$. Write $\alpha_n=G\beta_n+\delta_n$, where $G: L^2(\Sigma)^\perp\to L^2_2(\Sigma)^\perp$ is the Green operator and $\delta_n$ is the average of $\alpha_n$ on $\Sigma$. By (\ref{6}),  we know that the sequence of functions
\begin{equation}\label{36}
g_n\colonequals\Delta_\Sigma\alpha_n+\half(e^{2\alpha_n}-1)|\sigma_n|^2=\beta_n-\half |\sigma_n|^2+\half |\sigma_n|^2e^{2G\beta_n}e^{2\delta_n}
\end{equation} 
converges strongly to $0$ in $ L^2(\Sigma)$. Since $L_2^2\embed L^\infty(\Sigma)$ is compact, $\{G\beta_n\}$ and hence $\{e^{G\beta_n}\}$ is convergent in $L^\infty(\Sigma)$. So far, we have shown
\begin{align*}
\beta_n&\xrightarrow{w-L^2} \beta_\infty, &|\sigma_n|^2&\xrightarrow{s-L^\infty} |\sigma_\infty|^2\colonequals|\sigma(z_\infty)|^2, \\
e^{2G\beta_n}&\xrightarrow{s-L^\infty} e^{2G\beta_\infty}, & g_n&\xrightarrow{s-L^2} 0,
\end{align*}
for some $\beta_\infty\in L^2(\Sigma)$. The relation (\ref{36}) then implies
\begin{equation}\label{weakconvergence}
 \{|\sigma_n|^2e^{2\delta_n}\} \text{ is weakly convergent in } L^2(\Sigma). 
\end{equation}

 This convergence can be made to be strong; indeed, write $\sigma_n=r_n\sigma_n'$ with $r_n=\|\sigma_n\|_2$ and $\|\sigma_n'\|_2=1$. Then $r_n> 0$ for any $n$. If $r_n=0$ for some $n$, then $\sigma_1(z_n)=\sigma_n=0$ in $(\ref{6})$, and 
  \[
  \int_\Sigma |\Delta_\Sigma\alpha_n|^2< \frac{1}{n}\int_\Sigma |\Delta_\Sigma\alpha_n|^2,
  \]
  which is absurd. By passing to a subsequence, we can assume $\{\sigma'_n\}$ converges strongly, since
  \[
  \{\frac{\sigma_1(z)}{\|\sigma_1(z)\|_2}: z\in\CP, \|\sigma_1(z)\|_{L^2(\Sigma)}\neq 0\}
  \]
  forms a compact family. Now $\{a_n\colonequals r_ne^{\delta_n}=\|e^{\delta_n}\sigma_n\|_2\}$ is a sequence of bounded real numbers by (\ref{weakconvergence}), so there is a converging subsequence. We may assume $\{r_n\}$ converges as well. In all, 
  \begin{align*}
  a_n&\to a_\infty, &r_n&\to r_\infty, \\
\sigma_n'&\xrightarrow{s-L^2} \sigma_\infty', &|\sigma_n|^2e^{2\delta_n}=a_n^2|\sigma_n'|^2&\xrightarrow{s-L^2} a_\infty^2 |\sigma_\infty'|^2,
  \end{align*}
for some $a_\infty, r_\infty\geq 0$ and $\sigma_\infty'\in L^2(\Sigma)$.

All these things imply that $\beta_n\xrightarrow{s-L^2} \beta_\infty$ as $n\to \infty$ by (\ref{36}). There are two cases to be dealt with:

\bigskip
\textit{Case 1}. If $r_\infty=0$, i.e. $\sigma_\infty\colonequals \sigma(z_\infty)=0$. Let $n\to\infty$ in (\ref{36}), so 
\[
\beta_\infty=-\half a_\infty^2|\sigma_\infty'|^2e^{Gb_\infty}\leq 0.
\]
But $\int_\Sigma \beta_\infty=0$ and hence $a_\infty=0,\ \beta_\infty\equiv 0$. Write $\beta_n=s_n\beta_n'$ with $\|\beta_n'\|_2=1$. Divide (\ref{36}) by $s_n$, 
\begin{equation}\label{8}
\frac{g_n}{s_n}= \beta_n'+\half \frac{e^{2s_nG\beta_n'}-1}{s_n}a_n^2|\sigma_n'|^2+\half \frac{a_n^2-r_n^2}{s_n}|\sigma_n'|^2.
\end{equation}

By (\ref{6}), $\|g_n/s_n\|_2^2<1/n\cdot \|\beta_n'\|_2^2\to 0$. The second term on the right hand side of (\ref{8}) converges to $0$ in $L^1(\Sigma)$ as $a_\infty=\lim a_n=0$. Since $\int_\Sigma\beta_n'=0$ for  each $n$, integrating $(\ref{8})$ over $\Sigma$ yields
\[
x_n\colonequals \frac{a_n^2-r_n^2}{s_n}=2\int_\Sigma \frac{g_n}{s_n} -\beta_n'-\half \frac{e^{2s_nG\beta_n'}-1}{s_n}a_n^2|\sigma_n'|^2\to 0
\] 
as $n\to \infty$. Finally, let $n\to \infty$ in (\ref{8}), then
\[
\beta_n'\xrightarrow{s-L^2} 0.
\]
This is impossible, as $\lim \|\beta_n'\|_2=1$.

\textit{Case 2}. If $r_\infty\neq 0$, then $\{\delta_n\}$ has a finite limit, say, $\lim \delta_n=\delta_\infty\in\R$. Therefore,  $\lim\alpha_n=\alpha_\infty\colonequals G\beta_\infty+\delta_\infty$ and 
\begin{align*}
0&=\langle\Delta_\Sigma\alpha_\infty+\half(e^{2\alpha_\infty}-1)|\sigma_\infty|^2,\alpha_\infty\rangle\\
&=|\nabla \alpha_\infty|^2+\half r_\infty^2\int_\Sigma (e^{2\alpha_\infty}-1)\alpha_\infty|\sigma_\infty'|^2.
\end{align*}

Since $r_\infty>0$, we have $\alpha_\infty\equiv 0.$ Now, write $\alpha_n=t_n\alpha_n'$ with $\|\alpha_n'\|_{L^2_2}\equiv 1$. Choose a subsequence so that $\alpha_n'\xrightarrow{w-L^2_2}\alpha_\infty'$. Divide (\ref{36}) by $t_n$:
\begin{equation}\label{18}
\frac{g_n}{t_n}= \Delta_\Sigma\alpha_n'+\half \frac{e^{2t_n\alpha_n'}-1}{t_n}|\sigma_n|^2.
\end{equation}
By (\ref{6}), $\|g_n/t_n\|_2^2<1/n\cdot \|\Delta_\Sigma\alpha_n'\|_2^2\to 0$. The last term in (\ref{18}) converges in $L^\infty$ to $\alpha_\infty'|\sigma_\infty|^2$ since $t_n\to 0$. This shows $\alpha_n'\xrightarrow{s-L^2_2}\alpha_\infty'$. Let $n\to \infty$ in (\ref{18}):
\[
0=\Delta_\Sigma\alpha_\infty'+|\sigma_\infty|^2\alpha_\infty'.
\]

Since $|\sigma_\infty|^2\not\equiv 0$, the operator $\Delta_\Sigma+|\sigma_\infty|^2$ is injective on $L^2_2$, so $\alpha_\infty'\equiv 0$. But  $\|\alpha_\infty'\|_{L^2_2}=\lim \|\alpha_n'\|_{L^2_2}=1$, which is absurd. 

This completes the proof of Lemma \ref{fiber}. 
\end{proof}

\begin{proof}[Proof of Theorem \ref{apriori3}]
In light of Lemma \ref{Laplacianestimate}, it suffices to work out \textit{Step 2}: find a  function $\eta:\R^+\to\R^+$ such that 
\[
\|\alpha\|_2<\eta(C)
\]
for any $\alpha\in L^2_2(\Sigma)$ with $\E(\alpha)<C$. We know that for some $a,b>0$,
\begin{equation}\label{9}
\|\Delta_X \alpha\|_2^2, \|(e^{2\alpha}-1)|\sigma_1|^2\|_2^2<aC+b.
\end{equation}

Write $\alpha=\alpha_1+\alpha_2$ with $\alpha_1$ constant on each fiber $\{z\}\times \Sigma$. In other word, 
\[
\alpha_1(z)=\int_{\{z\}\times\Sigma}\alpha(z,\cdot),
\]
and $\alpha_2(z)$ is orthogonal to constant functions on each fiber. The high frequency part $\alpha_2$ is relatively easy to control:
\[
\lambda_1\|\alpha_2\|_2\leq \|\Delta_\Sigma\alpha_2\|_2=\|\Delta_\Sigma\alpha\|_2\leq \sqrt{aC+b},
\]
where $\lambda_1$ is the first positive eigenvalue of $\Delta_\Sigma$.  
\bigskip

To work out $\|\alpha_1\|_2$ is harder. We will decompose $\C$ as the union of the good set $A_1$ and the  bad set $A_2$. A point $z\in \C$ lies in the good set $A_1$ if over the fiber $\{z\}\times\Sigma$, either of the situations occurs:
\begin{enumerate}
\item\label{case1} $\|\alpha_1\|_2\ll \|\alpha_2\|_2$. This means $\alpha$ has large fluctuation on that fiber, so $\|\alpha\|_2\lesssim \|\Delta_{\Sigma}\alpha\|_2$.

\item\label{case2} $\|\alpha_1\|_2\gg \|\alpha_2\|_2$, but $\alpha_1(z)>-1$. This means $\alpha$ is almost a constant function on that fiber and its value is not very negative, so $|\alpha|\lesssim |e^{\alpha}-1|$.
\end{enumerate}

The actual definition of $A_1$ below is based on this intuition, but stated in a slightly different way. Let $A_2$ be the complement of $A_1$. On the good set $A_1$, we control $\|\alpha_1\|_{L^2(A_1\times\Sigma)}$ by terms in (\ref{9}). For the bad set $A_2$, we will control its area and show that $|\alpha_1|^2$ cannot concentrate on $A_2$.

\begin{lemma}\label{lemma}
There is a constant $L>0$ such that for any $\alpha\in L^2_2(\Sigma)$ such that
\[
\int_\Sigma |\alpha|^2\geq L\int_\Sigma |\Delta_\Sigma\alpha|^2,
\]
we have $\|\alpha-\alpha_1\|_\infty<\half|\alpha_1|$ where $\alpha_1\in \R$ is the average of $\alpha$ on $\Sigma$.
\end{lemma}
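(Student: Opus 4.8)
The plan is to split $\alpha$ on the fiber $\Sigma$ into its mean value $\alpha_1\in\R$ and its zero-mean part $\alpha_2=\alpha-\alpha_1$, and to play two estimates against each other: an upper bound on the oscillation $\|\alpha_2\|_{L^\infty(\Sigma)}$ in terms of $\|\Delta_\Sigma\alpha\|_{L^2(\Sigma)}$, and a lower bound on $|\alpha_1|$ in terms of the same quantity. Since $\Delta_\Sigma\alpha=\Delta_\Sigma\alpha_2$, the hypothesis $\|\alpha\|_2^2\ge L\,\|\Delta_\Sigma\alpha\|_2^2$ is really a statement that the constant part of $\alpha$ dominates the Laplacian, and once $L$ is taken large enough this forces the ratio $\|\alpha_2\|_\infty/|\alpha_1|$ to be small.

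First I would record the oscillation bound. Because $\alpha_2$ has zero mean, the spectral gap gives $\|\alpha_2\|_{L^2}\le\lambda_1^{-1}\|\Delta_\Sigma\alpha\|_{L^2}$, and elliptic regularity for $\Delta_\Sigma$ restricted to the $L^2$-orthogonal complement of the constants (where it is an isomorphism onto that complement) gives $\|\alpha_2\|_{L^2_2}\le C\|\Delta_\Sigma\alpha\|_{L^2}$ for some $C=C(\Sigma)$; here the usual lower-order term in the elliptic estimate is absorbed using the spectral gap. Since $\dim\Sigma=2$, the Sobolev embedding $L^2_2(\Sigma)\embed L^\infty(\Sigma)$ then yields a constant $C_0=C_0(\Sigma)$ with
\[
\|\alpha_2\|_{L^\infty(\Sigma)}\le C_0\,\|\Delta_\Sigma\alpha\|_{L^2(\Sigma)}.
\]

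Next, from the orthogonal decomposition $\|\alpha\|_{L^2}^2=|\alpha_1|^2\Vol(\Sigma)+\|\alpha_2\|_{L^2}^2$ together with the spectral gap and the hypothesis, I would deduce
\[
|\alpha_1|^2\Vol(\Sigma)=\|\alpha\|_{L^2}^2-\|\alpha_2\|_{L^2}^2\ge\bigl(L-\lambda_1^{-2}\bigr)\|\Delta_\Sigma\alpha\|_{L^2}^2 .
\]
Taking $L>\lambda_1^{-2}+4C_0^2\,\Vol(\Sigma)$ then gives $|\alpha_1|^2>4C_0^2\,\|\Delta_\Sigma\alpha\|_{L^2}^2$ whenever $\Delta_\Sigma\alpha\not\equiv0$, hence $|\alpha_1|>2C_0\|\Delta_\Sigma\alpha\|_{L^2}\ge 2\|\alpha_2\|_{L^\infty}$, which is exactly $\|\alpha-\alpha_1\|_\infty<\tfrac12|\alpha_1|$. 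In the degenerate case $\Delta_\Sigma\alpha\equiv0$ one has $\alpha_2\equiv0$, so for $\alpha\not\equiv0$ the inequality $\|\alpha-\alpha_1\|_\infty=0<\tfrac12|\alpha_1|$ is trivial.

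There is no real obstacle here; the only points deserving attention are that the estimate $\|\alpha_2\|_{L^2_2}\le C\|\Delta_\Sigma\alpha\|_{L^2}$ genuinely uses the zero-mean normalization of $\alpha_2$, and that $C_0$ — and therefore the threshold $L$ — depends only on the Riemann surface $\Sigma$ (through $\lambda_1$, $\Vol(\Sigma)$, and the Sobolev constant) and not on $\alpha$, so that the same $L$ works uniformly, which is what is needed when this lemma is applied fiberwise over $z\in\C$.
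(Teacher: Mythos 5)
Your argument is correct and is essentially the paper's own proof: the same decomposition $\alpha=\alpha_1+\alpha_2$, the spectral gap plus elliptic regularity to get $\|\alpha_2\|_{L^2_2}\le C\|\Delta_\Sigma\alpha\|_2$, the Sobolev embedding $L^2_2(\Sigma)\embed L^\infty(\Sigma)$, and the hypothesis to force $|\alpha_1|$ to dominate. The only cosmetic difference is that the paper absorbs $\|\alpha_2\|_{L^2_2}^2$ by a rearrangement while you subtract $\|\alpha_2\|_{L^2}^2$ directly to bound $|\alpha_1|$ from below; the harmless edge case $\alpha\equiv 0$ is present in both versions.
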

\begin{proof}
Write $\alpha=\alpha_1+\alpha_2$. Then we have
\begin{align*}
\|\alpha_2\|_{L^2_2}^2\leq (1+\frac{1}{\lambda_1^2})\int_\Sigma |\Delta_\Sigma\alpha|^2&\leq \frac{1}{L} (1+\frac{1}{\lambda_1^2})\|\alpha\|_2^2\\
&\leq \frac{1}{L} (1+\frac{1}{\lambda_1^2})(Vol(\Sigma)|\alpha_1|^2+\|\alpha_2\|_{L^2_2}^2).
\end{align*}

If $L>2(1+\frac{1}{\lambda_1^2})$, then we do rearrangement and obtain
\[
C|\alpha_1|>\sqrt{L}\|\alpha_2\|_{L_2^2}>\frac{\sqrt{L}}{C_1}\|\alpha_2\|_\infty.
\]

Here, $C_1$ is the constant in the Sobolev embedding $L_2^2(\Sigma)\embed L^\infty(\Sigma)$. Now it suffices to take $L>\max\{(2CC_1)^2,2(1+\frac{1}{\lambda_1^2})\}.$ 
\end{proof}

Now we decompose $\C$ into good and bad sets:
\begin{align*}
	A_1&=\{z\in\C|\int_{\{z\}\times\Sigma} |\alpha|^2< L\int_{\{z\}\times\Sigma} |\Delta_\Sigma\alpha|^2+M \int_{\{z\}\times\Sigma}|e^{2\alpha}-1|^2|\sigma_1|^4\}\\
		A_2&=\C\setminus A_1.
\end{align*}
In this definition, $L$ is the constant in Lemma \ref{lemma} and $M$ is a large constant to be determined later. Note that if $z\in A_1$, then either 
\[
\int_{\{z\}\times\Sigma}  |\alpha|^2< 2L\int_{\{z\}\times\Sigma} |\Delta_\Sigma\alpha|^2
\]
or 
\[
\int_{\{z\}\times\Sigma}  |\alpha|^2< 2M \int_{\{z\}\times\Sigma}|e^{2\alpha}-1|^2|\sigma_1|^4,
\]
which correspond to Case (\ref{case1}) and Case (\ref{case2}) respectively. 

Our goal is to show for some $\eta_1(C)>0$,
\begin{equation}\label{19}
\int_\C |\Delta_\C\alpha_1|^2, \int_{A_1}|\alpha_1|^2,\  \Area(A_2)<\eta_1(C). 
\end{equation}

The first follows from (\ref{9}). The good set $A_1$ is easy to handle: $\int_{A_1}|\alpha|^2<\eta_2(C)$ for some $\eta_2$, again, by (\ref{9}). As for the bad set, we need to analyze the zero locus of $\sigma_1$. Set 
\[
Z_\epsilon(\sigma_1)=\{p=(z,x)\in X: |\sigma_1(p)|^2<\epsilon\},
\]
and 
\[
Z_\epsilon=\{x\in\Sigma: |\gamma_d(x)|^2<2\epsilon\}.
\]

We choose $\epsilon$ to be a small number so that $\Area(Z_\epsilon)<\half \Area(\Sigma)$.
Since $\sigma_1(z)$ as sections on $\Sigma$ approach $\gamma_d$ in $L^\infty$ norm as $z\to\infty$, for some large number $R(\epsilon)>0$, we know
\[
Z_\epsilon(\sigma_1)\subset B(0,R)\times \Sigma\cup \C\times Z_\epsilon.
\]

Now take any $z\in A_2-B(0,R)$. By Lemma \ref{lemma}, we have
\[
\half |\alpha_1(z)|<|\alpha(z,x)|<\frac{3}{2} |\alpha_1(z)|
\]
for any $x\in \Sigma$. If $\alpha_1(z)>-1$, then $\alpha(z,x)>-2$. This means over the fiber $\{z\}\times \Sigma$, we have 
\begin{align*}
\int_\Sigma |\alpha|^2\leq \frac{9}{4}\int_\Sigma |\alpha_1|^2 &\leq \frac{9}{2}\int_{Z_\epsilon^c} |\alpha_1|^2\leq 18\int_{Z_\epsilon^c} |\alpha|^2\\
&\leq \frac{18\times 25}{\epsilon^2}\int_{Z_\epsilon^c} |e^{2\alpha}-1|^2|\sigma_1|^4\leq \frac{450}{\epsilon^2}\int_\Sigma |e^{2\alpha}-1|^2|\sigma_1|^4,
\end{align*}
where the penultimate inequality comes from
\[
|x|<5|e^x-1|
\]
when $x>-4$. Take $M=450/\epsilon^2$. We conclude that if $z\in A_2-B(0,R)$, then $\alpha_1(z)\leq -1$ and hence $\alpha(z,x)\leq -\half$. This shows
\[
 \int_{\{z\}\times\Sigma} |e^{2\alpha}-1|^2|\sigma_1|^4\geq \half Vol(\Sigma)\cdot |1-e^{-1}|^2\epsilon^2.
\]

As a consequence, we obtain,
\[
\Area(A_2)\leq \Area(B(0,R))+\frac{\||e^{2\alpha}-1||\sigma_1|^2\|_2^2}{C_1}\leq \eta_3(C). 
\]

(\ref{19}) is proven. The next step is to control $\|\alpha_1\|_{L^2(\C)}$. This is closely related to uncertainty principle: if $|\alpha_1|^2$ concentrates on a region of finite area, say $A_2$, then its Fourier transformation cannot concentrate near the origin. Thus, $L^2$ norm of $\alpha_1$ is controlled by the $L^2$ norm of $\Delta \alpha_1$.

Theorem \ref{apriori3} then follows from the next lemma by setting $n=2$, $f=\alpha_1$, $S=\eta_3(C)$ and $E=A_2$. 
\end{proof}

\begin{lemma}\label{ABlemma}
Suppose subset $E\subset \R^n$ is measurable and its volume $m(E)\leq S$. Then for some $C(S)>0$, we have for any $f\in L^2_2(\R^n)$,
\[
\|f\|_2\leq C(\|f\|_{L^2(E^c)}+\|\Delta f\|_2).
\]
\end{lemma}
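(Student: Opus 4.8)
The plan is to prove this by a Fourier-analytic argument, splitting $f$ into low- and high-frequency parts; the measure bound $m(E)<S$ will enter through a local uncertainty principle. Write $\hat f$ for the Fourier transform on $\R^n$ (normalized so that Plancherel holds without constants), so that $\|f\|_2=\|\hat f\|_2$ and $\|\Delta f\|_2=\||\xi|^2\hat f\|_2$; the hypothesis $f\in L^2_2(\R^n)$ is used precisely to guarantee $\hat f,|\xi|^2\hat f\in L^2$, so that the frequency truncations below are legitimate. Fix a radius $R>0$ to be chosen, set $g=P_{\le R}f$ and $h=P_{>R}f$ where $\widehat{P_{\le R}f}=\chi_{\{|\xi|\le R\}}\hat f$, and note immediately that $\|h\|_2\le R^{-2}\||\xi|^2\hat f\|_2=R^{-2}\|\Delta f\|_2$.

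First I would record the elementary band-limited bound: if $\hat g$ is supported in $\{|\xi|\le R\}$, then by Cauchy--Schwarz $\|g\|_\infty\le c_n R^{n/2}\|g\|_2$ with $c_n$ depending only on $n$. Hence $\|g\|_{L^2(E)}^2\le m(E)\,\|g\|_\infty^2\le c_n^2 S R^n\|g\|_2^2$, and choosing $R=R(S)$ so that $c_n^2 SR^n\le \tfrac14$ gives $\|g\|_{L^2(E)}\le\tfrac12\|g\|_2$, i.e. $\|g\|_{L^2(E^c)}^2\ge\tfrac34\|g\|_2^2$. This is the only place the smallness of $E$ is used, and it is what forces $R$ — and therefore the final constant — to depend on $S$ alone, not on $E$ or $f$.

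Then I would assemble the pieces. From the previous step $\|g\|_2\le\tfrac2{\sqrt3}\|g\|_{L^2(E^c)}$, and since $g=f-h$ we get $\|g\|_{L^2(E^c)}\le\|f\|_{L^2(E^c)}+\|h\|_{L^2(E^c)}\le\|f\|_{L^2(E^c)}+\|h\|_2\le\|f\|_{L^2(E^c)}+R^{-2}\|\Delta f\|_2$. Combining with $\|f\|_2\le\|g\|_2+\|h\|_2$ yields
\[
\|f\|_2\le\tfrac2{\sqrt3}\,\|f\|_{L^2(E^c)}+\Bigl(\tfrac2{\sqrt3}+1\Bigr)R(S)^{-2}\,\|\Delta f\|_2,
\]
which is the claimed inequality with $C=C(S)=\max\{2,(\tfrac2{\sqrt3}+1)R(S)^{-2}\}$.

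There is no serious obstacle here: the argument is short and entirely standard once one views it through Fourier space. The only point requiring a little care is bookkeeping the dependence of constants — making sure that the cutoff $R$ is extracted purely from $S$ and the dimension $n$, with no reference to the particular set $E$ or the particular function $f$ — since that uniformity is exactly what the application in the proof of Theorem \ref{apriori3} (with $E=A_2$ and $n=2$) relies upon.
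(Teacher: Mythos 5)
Your proof is correct, and it shares the paper's skeleton — split $f$ into a low-frequency piece $g$ and a high-frequency piece $h$, absorb $h$ into $R^{-2}\|\Delta f\|_2$, and show the low-frequency piece cannot hide inside $E$ — but the key step for the low-frequency part is handled by a genuinely different (and cleaner) mechanism. The paper bounds $\|\nabla f_L\|_\infty$ by $\||\xi|\hat f_L\|_1$ and then runs a geometric spreading argument: if $|f_L|$ is large at a point of $E$, the small gradient forces it to remain large on a ball whose intersection with $E^c$ has definite measure, which leads to a case analysis and a constant assembled from several pieces. You instead apply the Bernstein-type bound $\|g\|_\infty\leq c_nR^{n/2}\|g\|_2$ and hit $\|g\|_{L^2(E)}^2\leq m(E)\|g\|_\infty^2$ directly, so that choosing $R$ with $c_n^2SR^n\leq\frac14$ immediately yields $\|g\|_{L^2(E^c)}^2\geq\frac34\|g\|_2^2$ with no case split. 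What your route buys: the dependence of the cutoff on $S$ and $n$ alone is completely transparent (which is exactly the uniformity needed when the lemma is applied with $E=A_2$ in Theorem \ref{apriori3}), the argument works verbatim in every dimension $n$ rather than being written for $n=2$, and the final constant is explicit. What the paper's route illustrates is the pointwise "uncertainty" heuristic — a band-limited function with small gradient cannot concentrate on a set of small measure — which is conceptually closer to the Amrein--Berthier theorem the authors cite afterwards; but as a proof of this particular statement your version is the more economical one.
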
  

\begin{proof} Let $\chi$ is a cut-off function with $\chi\equiv 1$ on $B(0,r)$ and $\supp\chi\subset B(0,2r)$. Here, $r$ is a small number to be determined later.
We decompose $f$ as the sum of low-frequency and high-frequency parts: $f=f_L+f_H$ where
\[
\hat{f}_L=\chi\hat{f}, \hat{f}_H=(1-\chi)\hat{f}. 
\]
 
The high frequency part is easy to deal with:
\[
\|f_H\|_2=\|\hat{f}_H\|_2\le\frac{1}{r^2}\||\xi|^2\hat{f}_H\|_2\leq \frac{1}{r^2}\|\Delta f\|_2. 
\]

Write $T=\|f_L\|_2$. We control $L^\infty$-norm of $\nabla f_L$ in terms of $T$:
\[
\|\nabla f_L\|_\infty\leq  \||\xi|\hat{f}_L\|_1\leq \||\xi|\cdot \chi_{B(0,2r)}\|_2\|\hat{f}_L\|_2\leq rC_1T.
\]

To bound $T^2$, one intermediate step is to show that 
\begin{equation}\label{6.4-1}
T^2\leq C_2\int_{E^c}|f_L|^2
\end{equation}
for some $C_2>0$ as $|f_L|^2$ cannot concentrate within $E$. To make this idea precise, choose $R>0$ so that $\Vol(B(0,R))>2S=2m(E)$.  If for some $z\in E$, $|f_L(z)|=N>2rC_1TR$, then for any $z'\in B(z,R)$,  $|f_L(z')|>\half N$. This implies
\[
\|f_L\|_{L^2(E^c)}^2\geq\int_{B(z,R)-E}|f_L(z')|^2\geq \frac{\Vol(B(z,R))}{2}\cdot (\frac{N}{2})^2\geq S\cdot (rC_1TR)^2 . 
\]

Therefore, either $T<\|f_L\|_{L^2(E^c)}/(\sqrt{S}rC_1R)$ or 
\[
\|f_L(z)\|_{L^\infty(E)}\leq 2rC_1TR. 
\]
But the second case implies 
\begin{align*}
T^2=\int |f_L|^2&\leq \int_{E^c} |f_L|^2+ \int_{E} |f_L|^2\leq \int_{E^c} |f_L|^2+m(E)\cdot \|f_L(z)\|_{L^\infty(E)}^2\\
&\leq \int_{E^c} |f_L|^2+S\cdot (2rC_1R)^2\cdot T^2.
\end{align*}

It suffices to choose $0<r\ll 1$ such that $ (2C_1R)^2S\cdot r^2<\half$. Therefore, in either case $(\ref{6.4-1})$ holds.

Finally, one notices that $\|f_L\|_{L^2(E^c)}\leq \|f_H\|_{L^2(E^c)}+\|f\|_{L^2(E^c)}\leq\|f_H\|_2+ \|f\|_{L^2(E^c)}.$
\end{proof}

\Remark Lemma \ref{ABlemma} is closely related to the Amrein-Berthier theorem:
\begin{theorem}[Amrein-Berthier \cite{AB}]
	Suppose subsets $E, F\subset \R^n$ are measurable of finite volume, then for some $C(E,F)>0$ and any $f\in L^2(\R^n)$, 
	\[
	\|f\|_2\leq C(\|f\|_{L^2(E^c)}+\|\hat{f}\|_{L^2(F^c)}),
	\]
where $E^c$ and $F^c$ are complements of $E$ and $F$ respectively. 
\end{theorem}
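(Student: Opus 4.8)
The plan is to reduce the estimate to a single spectral fact about the two truncation projections and then to establish that fact by a compactness argument together with Benedicks' theorem. Let $\chi_E,\chi_F$ denote indicator functions, let $P_E\colon L^2(\R^n)\to L^2(\R^n)$ be multiplication by $\chi_E$, and let $Q_F$ be the Fourier multiplier $f\mapsto(\chi_F\hat f)^{\vee}$. Both are orthogonal projections, and by Plancherel $\|(I-P_E)f\|_2=\|f\|_{L^2(E^c)}$ and $\|(I-Q_F)f\|_2=\|\hat f\|_{L^2(F^c)}$, so the right-hand side of the asserted inequality is $\|(I-P_E)f\|_2+\|(I-Q_F)f\|_2=:R_f$. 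I claim it suffices to show that $c:=\|P_EQ_F\|_{\mathrm{op}}<1$. Indeed, since $P_E$ is an orthogonal projection, $\|Q_Ff\|_2^2=\|P_EQ_Ff\|_2^2+\|(I-P_E)Q_Ff\|_2^2$, and using $\|P_EQ_Ff\|_2\le c\|Q_Ff\|_2$ this gives $(1-c^2)\|Q_Ff\|_2^2\le\|(I-P_E)Q_Ff\|_2^2$. Writing $(I-P_E)Q_F=(I-P_E)-(I-P_E)(I-Q_F)$ and using $\|I-P_E\|\le1$ yields $\|(I-P_E)Q_Ff\|_2\le R_f$, hence $\|Q_Ff\|_2\le(1-c^2)^{-1/2}R_f$; combining with $\|f\|_2\le\|Q_Ff\|_2+\|(I-Q_F)f\|_2\le\|Q_Ff\|_2+R_f$ gives the lemma with $C=1+(1-c^2)^{-1/2}$.

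It remains to prove $c<1$ when $m(E),m(F)<\infty$. The composition $P_EQ_F$ has integral kernel $(x,y)\mapsto\chi_E(x)\,\widehat{\chi_F}(y-x)$, and
\[
\iint\big|\chi_E(x)\,\widehat{\chi_F}(y-x)\big|^2\,dx\,dy=m(E)\,\|\widehat{\chi_F}\|_2^2=m(E)\,m(F)<\infty,
\]
so $P_EQ_F$ is Hilbert--Schmidt, hence compact; in particular $\|P_EQ_F\|_{\mathrm{op}}\le1$ is attained at some unit vector $g$. If the operator norm were $1$, then from $1=\|P_EQ_Fg\|_2\le\|Q_Fg\|_2\le\|g\|_2=1$ we would be forced to have $Q_Fg=g$ and then $P_Eg=g$; that is, $g$ is supported in $E$ while $\hat g$ is supported in $F$, both of finite measure. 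Benedicks' theorem (a nonzero $L^2$ function and its Fourier transform cannot both be supported in sets of finite Lebesgue measure) then forces $g=0$, a contradiction. Hence $c<1$, completing the proof.

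The main obstacle is precisely this last input, Benedicks' theorem, which is what powers the compactness step; the rest of the argument is the elementary computation above. One may either cite it from \cite{AB} or prove it by periodization and Poisson summation: after an affine rescaling one arranges that $\hat g$ vanishes at all nonzero points of a lattice $\Lambda$, so the $\Lambda$-periodization $G$ of $g$ has all nonzero Fourier coefficients equal to zero and is therefore a.e.\ constant, while finiteness of $m(\operatorname{supp}g)$ forces $\{\lambda\in\Lambda:g(x+\lambda)\neq0\}$ to be finite for a.e.\ $x$, so letting the lattice scale vary shows that constant is $0$ and $g\equiv0$. (If desired, the constant $C$ can be tracked explicitly through $c$, but the statement only asserts existence of some $C(E,F)$, so this is unnecessary.)
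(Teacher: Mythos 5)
Your argument is correct, and modulo the one external input it is the standard operator-theoretic proof of the Amrein--Berthier inequality: the reduction of the estimate to $\|P_EQ_F\|_{\mathrm{op}}<1$, the Hilbert--Schmidt kernel computation showing $P_EQ_F$ is compact, and the extraction of a norming vector that would have to be supported in $E$ with spectrum in $F$ are all sound. (Your parenthetical sketch of Benedicks' theorem is imprecise as written --- one arranges that the periodization of $\chi_F$ vanishes on a positive-measure set of cosets, not that $\hat g$ vanishes at lattice points --- but you explicitly offer to cite the qualitative annihilation result instead, which is legitimate, and you correctly identify it as the real content.) You should be aware, however, that the paper does not prove this theorem at all: it appears only in a remark after Lemma \ref{ABlemma}, cited from \cite{AB}, and the paper deliberately avoids relying on it. The reason is exactly the weakness of the route you took: the compactness argument produces a constant $C(E,F)$ with no effective control on how it depends on $E$, since the maximizer is extracted by weak compactness. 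What the application (the a priori estimate, Theorem \ref{apriori3}) requires is a constant depending only on the \emph{measure} of $E$ and not on its shape, because the bad set $A_2$ there varies with $\alpha$ and only its area is bounded. The paper therefore proves the simpler substitute Lemma \ref{ABlemma} --- with $\|\hat f\|_{L^2(F^c)}$ replaced by $\|\Delta f\|_2$ --- by an elementary and fully quantitative low/high frequency splitting. So your proof establishes the stated theorem by a genuinely different (and non-effective) method; it is a correct proof of the citation, but it could not be substituted for the paper's own Lemma \ref{ABlemma} in the argument where that lemma is used.
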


To see their relation, let $F=B(0,1)$ and note that
\[
\|\hat{f}\|_{L^2(F^c)}\leq \||\xi|^2\hat{f}\|_2=\|\Delta_\C f\|_2. 
\]

However, it is not clear from this theorem that for a fixed subset $F$, the constant $C(E, F)$ could be independent of the shape of $E$ and only depends on its volume $m(E)$. Since our problem is simpler, we decided to give a direct proof to Lemma \ref{ABlemma} as above.

\section{Exponential Decay and Power Law Decay}\label{7}
The purpose of section is to prove Theorem \ref{powerlaw} and Theorem \ref{exponentialdecay} which predict power law decay and exponential decay for finite energy monopoles in different cases. We start with the second theorem to explain ideas

\subsection{Exponential Decay} We reformulate Theorem \ref{exponentialdecay} as follows:
\begin{theorem}\label{exponential2}
Let $(B_0,\gamma_d)$ be a solution to the vortex equation $(\ref{vortex2})$. Let $f_0=\sum_{i=0}^d a_iz^i$ be a monic polynomial function on $\C$ and $f=f_0\gamma_d$. Then the solution $(A,\sigma)=e^\talpha\cdot(A_0,\sigma_0)$ obtained in Theorem \ref{existence} converges exponentially to $(B_0,\gamma_d)$ as $|z|\to\infty$, i.e., for any $k\geq 2$, there exists $s(k,A_0,f)$ and $M(k,A_0,f )>0$ such that for any $z\in\C$,
\begin{equation}\label{71}
d_k((\nabla_{A}^\Sigma,\sigma)(z)-(B_0,\gamma_d))<Me^{-s|z|}.
\end{equation}
\end{theorem}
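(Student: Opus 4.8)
Since $f=f_0\gamma_d$ with $f_0:\C\to\C$ scalar and monic, the zeros of $f_0$ all lie in some ball $B(0,R_0/2)$. The plan is to peel off an explicit approximate conformal factor. Set $\talpha_0(z):=-\log|f_0(z)|$ for $|z|\ge R_0$ and extend $\talpha_0$ to a function in $\SC^\infty(X)$ that is constant on each fiber $\{z\}\times\Sigma$ and bounded together with all its derivatives (possible since $-\log|f_0(z)|+\tfrac d2\log(|z|^2+1)=O(1/|z|)$ with all derivatives). The key point is that $\log|f_0|$ is harmonic on $\C\setminus f_0^{-1}(0)$, so $\Delta_X\talpha_0=\Delta_\C\talpha_0=0$ on $\{|z|\ge R_0\}\times\Sigma$; using the vortex identity $i{*}F_{B_0}+\half K=-\half|\gamma_d|^2$ and the transformation rule $(\ref{covariantderivative})$, the configuration $e^{\talpha_0}\cdot(A_0,\sigma_0)$ \emph{solves} $(\ref{first})$ exactly on $\{|z|\ge R_0\}\times\Sigma$ (because $e^{2\talpha_0}|f_0|^2\equiv1$), and its restriction to $\{z\}\times\Sigma$ equals $(B_0,\tfrac{f_0(z)}{|f_0(z)|}\gamma_d)$, which is $\SG(\Sigma)$-equivalent to $(B_0,\gamma_d)$ via the constant phase $e^{i\arg f_0(z)}\in S^1$.

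Next, write $\talpha=\talpha_0+\psi$, where $\talpha$ is the conformal factor produced in Theorem \ref{existence}. Since $\talpha=\alpha_0+\alpha$ with $\alpha\in L^\infty_m(X)$ for all $m$ (Lemma \ref{boundedness2}) and $\alpha_0-\talpha_0\in L^\infty_m(X)$ for all $m$, the function $\psi$ is bounded in every $L^\infty_m(X)$. Subtracting the harmonic function $\talpha_0$, equation $(\ref{first})$ for $e^{\talpha}\cdot(A_0,\sigma_0)$ becomes, on $\{|z|\ge R_0\}\times\Sigma$,
\[
\Delta_X\psi+\tfrac12(e^{2\psi}-1)|\gamma_d|^2=0,\qquad\text{equivalently}\qquad(\Delta_X+|\gamma_d|^2)\psi=h(\psi),\quad |h(\psi)|\le C|\psi|^2,
\]
the quadratic bound holding since $\psi$ is bounded. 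On the other hand $(\nabla_A^\Sigma,\sigma)(z)=\bigl(B_0+i{*}_\Sigma d_\Sigma\psi(z,\cdot),\,e^{\psi(z,\cdot)+i\arg f_0(z)}\gamma_d\bigr)$, so, minimizing over the constant gauge $e^{-i\arg f_0(z)}$,
\[
d_k\bigl((\nabla_A^\Sigma,\sigma)(z),(B_0,\gamma_d)\bigr)\le\|i{*}_\Sigma d_\Sigma\psi(z,\cdot)\|_{L^2_k(\Sigma)}+\|(e^{\psi(z,\cdot)}-1)\gamma_d\|_{L^2_k(\Sigma)}\le C'\,\|\psi(z,\cdot)\|_{L^2_{k+1}(\Sigma)},
\]
again because $\psi$ is bounded in $L^2_{k+1}(\Sigma)$. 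Thus $(\ref{71})$ reduces to showing that $\|\psi(z,\cdot)\|_{L^2_m(\Sigma)}$ decays exponentially in $|z|$ for every $m$.

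For the decay, first note $\|\psi(z,\cdot)\|_{L^2_m(\Sigma)}\to0$ as $|z|\to\infty$: this is the convergence $(\nabla_A^\Sigma,\sigma)(z)\to(B_0,\gamma_d)$ from Section \ref{3} (which pins $\psi$ down once the constant gauge is removed), and the uniform $L^\infty_m$-bound on $\psi$ promotes $L^2$-smallness to $L^2_m$-smallness via interior elliptic estimates for the displayed equation; alternatively it follows from $\E(\talpha)<\infty$ (Lemma \ref{53}), which makes $(e^{2\psi}-1)|\gamma_d|^2\in L^2(X)$. Now $\Delta_\Sigma+|\gamma_d|^2$ is a positive self-adjoint operator on $\Sigma$ with smallest eigenvalue $\lambda_0>0$ — this is where $\gamma_d\not\equiv0$ is essential. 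Expanding $\psi$ in eigenfunctions of $\Delta_\Sigma+|\gamma_d|^2$ and setting $G(r):=\int_{\{|z|=r\}\times\Sigma}|\psi|^2$, the equation $(\Delta_X+|\gamma_d|^2)\psi=h(\psi)$ together with the spectral gap yields, on the region $|z|\ge R_1$ where $\|\psi(z,\cdot)\|_\infty$ is small enough that $|h(\psi)|\le\tfrac{\lambda_0}{2}|\psi|$, a second-order differential inequality of Bessel type for the eventually decreasing quantity $G$; a standard convexity argument (the elementary PDE lemmas of Section \ref{7}) then forces $\|\psi(z,\cdot)\|_{L^2(\Sigma)}\le M_0e^{-s|z|}$ for any $s<\sqrt{\lambda_0}$. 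Finally, interior elliptic estimates for $(\Delta_X+|\gamma_d|^2)\psi=h(\psi)$ on the unit cylinders $\{|z-z_0|<1\}\times\Sigma$ (using the $L^\infty_{m+1}(X)$-bound on $\psi$ to control $h(\psi)$), followed by Sobolev embedding in the $z$-variable, upgrade this to $\|\psi(z,\cdot)\|_{L^2_m(\Sigma)}\le Me^{-s|z|}$ for each $m$, which with the previous paragraph proves $(\ref{71})$.

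For the last assertion: when $c_1(L^+)[\Sigma]=0$ or $1$ then, since $g\ge2$ in the range $2-2g\le c_1(S^+)<0$, every holomorphic line bundle $\SL$ of that degree on $\Sigma$ has $\dim_\C H^0(\Sigma,\SL)\le1$ (Riemann--Roch, or a degree count); hence every polynomial map $f:\C\to H^0(\Sigma,\SL)$ is of the form $f_0\gamma_d$ with $f_0:\C\to\C$ polynomial, and the theorem applies to every finite energy monopole. The main obstacle is the third step — deriving the quantitative exponential decay of $\psi$: extracting the sharp rate from the spectral gap of $\Delta_\Sigma+|\gamma_d|^2$, absorbing the quadratic nonlinearity $h(\psi)$, dealing with the $\Sigma$-dependence through the eigenfunction decomposition, and bootstrapping from the $L^2(\Sigma)$-estimate to all Sobolev norms; the first two steps are essentially algebraic once the harmonicity of $\log|f_0|$ is exploited.
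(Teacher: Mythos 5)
Your proof is correct, but it takes a genuinely different route from the paper's. The paper chooses its approximate conformal factor by invoking Taubes' existence theorem on $\C$: it takes $\alpha_0$ so that $e^{\alpha_0}\cdot(d,f_0)$ is an honest vortex on $\C$, which leaves a nonzero inhomogeneous term $k$ in the equation for the correction $\alpha=\talpha-\alpha_0$; the exponential decay of $k$ is then imported from the Jaffe--Taubes theorem (Theorem \ref{JT}), and the decay of $\alpha$ is extracted by convolving with the Bessel-type fundamental solution of $\Delta_\C+L$ and running the multiplicative iteration of Lemma \ref{formallemma}. You instead take the explicit harmonic factor $-\log|f_0|$, which (as you verify) makes $e^{\talpha_0}\cdot(A_0,\sigma_0)$ an \emph{exact} solution outside a compact set, so the equation for $\psi$ has no inhomogeneous term there and reduces to $(\Delta_X+|\gamma_d|^2)\psi=h(\psi)$ with $h$ quadratic; exponential decay then follows from the spectral gap of $\Delta_\Sigma+|\gamma_d|^2$ via a differential inequality for the sliced $L^2$-norm and a convexity/maximum-principle argument, with elliptic bootstrapping on unit cylinders for the higher norms. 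Your route is more self-contained (it needs neither Taubes' existence theorem nor Theorem \ref{JT}) and yields any rate $s<\sqrt{\lambda_0}$ directly, whereas the paper's Lemma \ref{formallemma} is a reusable tool that also handles the power-law case (Lemma \ref{formallemma2}), where the inhomogeneous term genuinely cannot be removed --- so your simplification is specific to the product case $f=f_0\gamma_d$. Two small points to tighten: the global $L^\infty_m$-bound and the decay $\psi\to0$ should be cited from the translation-compactness argument in the uniqueness proof of Section \ref{52} (Lemma \ref{boundedness2} as stated concerns the gauge-fixed data of Section \ref{3}, not the conformal factor of Theorem \ref{existence}), and the Bessel-type inequality $G''+r^{-1}G'\geq\lambda_0 G$ together with $G\to0$ deserves the one-line maximum-principle comparison that closes it.
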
 
Recall that the metric $d_k$ on $\SB(\Sigma, L)=\SC(\Sigma,L)/\SG(\Sigma)$ is defined by the formula:
\[
d_k([a],[b])=\min_{u\in \SG(\Sigma)}\|u\cdot a-b\|_{L^2_k(\Sigma)},
\]
for any $a,b\in \SC(\Sigma, L)$. Here, $[a]$ denotes the gauge equivalent class of $a$.

The proof of Theorem \ref{exponential2} relies on exponential decay result for vortices on $\C$. Recall that the classical vortex equation on $\C$ is given by the formula:
\begin{equation}\label{vortex4}
\left\{\begin{array}{r}
*iF_\omega+\half(|\eta|^2-1)=0,\\
\bpartial_\omega\eta=0,
\end{array}
\right.
\end{equation}
where $\omega$ is a smooth unitary connection to the trivial bundle over $\C$ and $\eta$ is a smooth complex-valued function. This equation is invariant under the gauge action of $\SG(\C)=\map(\C,S^1)$. Then Theorem \ref{JT} ([\cite{JT}, p.59, Theorem 1.4]) states that $|F_\omega|=\half ||\eta|^2-1|$ has exponential decay at infinity if $(\omega, \eta)$ is a solution to (\ref{vortex4}) with finite energy.

The proof of Theorem \ref{exponential2} is modeled on the proof of Theorem \ref{existence} and is accomplished in two steps：
\begin{itemize}
\item Find a good approximation $\alpha_0$ to the actual solution $\talpha$. For this part, we need a more clever choice. We employ the existence result (see \cite[Theorem 1]{Taubes} or Theorem \ref{B1}) to find the conformal factor $\alpha_0$ such that $e^{\alpha_0}\cdot (d, f_0)$ solves the vortex equation $(\ref{vortex4})$.
\item Show that the correction term $\alpha=\talpha-\alpha_0$ has exponential decay at infinity. 
\end{itemize}

\begin{proof}[Proof of Theorem \ref{exponential2}]
Let $\alpha_0\in \Gamma(\C,\R)$ be the conformal factor such that $(\nabla_{\omega},\eta)=e^{\alpha_0}\cdot (d, f_0)$ solves (\ref{vortex4}). We regard $\alpha_0$ as a function on $X$ that is constant on each fiber. Let $(A_1,\sigma_1)=e^{\alpha_0}\cdot (A_0,\sigma_0)$, so $\sigma_1=\eta\gamma_d$ and $\Delta_{\C}\alpha_0=i*_\C F_\omega=\half (1-|\eta|^2)$. Since $(B_0,\gamma_d)$ solves the vortex equations (\ref{vortex2}) on $\Sigma$, it follows that 
\[
-\half|\gamma_d|^2=i*_\Sigma F_{B_0}+\half K.
\]

Consider the moment map defined in Definition \ref{momentmap}, then
\begin{align*}
\mu(0)&=\Delta_{\C}\alpha_0+i*_\Sigma F_{B_0}+\half K+\half |\sigma_1|^2\\
&=\half(1-|\eta|^2)-\half|\gamma_d|^2+\half|\gamma_d|^2|\eta|^2\\
&=\half (1-|\eta|^2)(1-|\gamma_d|^2)\in L^2(X).
\end{align*}

Hence, the correction term $\alpha=\talpha-\alpha_0$ is the unique smooth solution to the equation $\mu(\alpha)=0$ such that $\alpha(z)\to 0$ as $z\to\infty$. Its existence is established in Theorem \ref{existence}. Note that $\mu(\alpha)=0$ is equivalent to the equation
\begin{align*}
(\Delta_{\C}+(\Delta_\Sigma+|\gamma_d|^2))\alpha&=-\mu(0)-\half (e^{2\alpha}-2\alpha-1)|\sigma_1|^2+(|\gamma_d|^2-|\sigma_1|^2)\alpha \\
&=k+h(\alpha),
\end{align*}
where $k=(1-|\eta|^2)(|\gamma_d|^2\alpha-\half (1-|\gamma_d|^2))$ and $h(\alpha)=-\half (e^{2\alpha}-2\alpha-1)|\sigma_1|^2$.

Let $\SH$ be the Hilbert space $L^2_k(\Sigma,\C)$ and $L=\Delta_\Sigma+|\gamma_d|^2: \SH\to\SH$ be the unbounded positive self-adjoint operator. Since $(\ref{71})$ is satisfied for the pair $(A_1,\sigma_1)$, in order to prove Theorem \ref{exponential2}, it is sufficient to verify the conditions of the following lemma:
\begin{lemma}\label{formallemma}
Let $\SH$ be a separable Hilbert space and $L: \SH\to\SH$ be a positive self-adjoint operator (possibly unbounded). Suppose there is a smooth function $u: \C\to \SH$ such that 
\begin{enumerate}[(U1)]
\item $(\Delta_{\C}+L)u=k+h(u)$,
\item $\lim_{z\to\infty}\|u\|_{\SH}=0$,
\item $h: \SH\to\SH$ is a continuous map and for some $q>1$ and $C>0$, $\|h(u(z))\|_{\SH}\leq C\|u(z)\|_{\SH}^q$ for any $z\in \C$,
\item $k:\C\to\SH$ is a continuous map such that for some $s,M>0$, $\|k(z)\|_\SH\leq Me^{-s|z|}$ for any $z\in\C$.
\end{enumerate}
Then the function $u$ has exponential decay at $\infty$, i.e., for some $s',M'>0$, $\|u\|_{\SH}<M'e^{-s'|z|}$ for any $z\in\C$. 
\end{lemma}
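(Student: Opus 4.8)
The plan is to treat Lemma \ref{formallemma} as an abstract Hilbert-space ODE estimate and run a bootstrap argument. Writing $v(z)=\|u(z)\|_{\SH}$, the first move is to derive a differential inequality for $v$ from hypothesis (U1). Pairing the equation with $u$ and using that $L$ is positive self-adjoint gives, in the weak sense,
\[
\half\Delta_{\C}v^2 = \langle \Delta_{\C}u,u\rangle + |\nabla u|^2 \ge \langle Lu,u\rangle - \langle k+h(u),u\rangle + |\nabla u|^2 \ge -\|k\|_{\SH}v - C v^{q+1},
\]
so (using $\Delta_{\C}v^2 = 2v\Delta_{\C}v + 2|\nabla v|^2$ and Kato's inequality $|\nabla v|\le \|\nabla u\|$) one gets the scalar inequality
\[
\Delta_{\C} v \ge -\,\|k\|_{\SH} - C v^{q}
\]
wherever $v>0$, i.e. $-\Delta_{\C}v + C v^{q} \ge -Me^{-s|z|}$. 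I would be careful here with sign conventions, since in the paper $\Delta$ is the geometer's (positive) Laplacian; the upshot either way is a scalar subsolution/supersolution comparison for $(-\Delta_{\R^2}+\text{bounded})v \le Me^{-s|z|}$ once $v$ is small.

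The second step is to use hypothesis (U2) to discard the nonlinearity. Since $v(z)\to0$ as $z\to\infty$, there is $R_0$ with $Cv(z)^{q}\le \epsilon v(z)$ for $|z|\ge R_0$ (using $q>1$), where I choose $\epsilon$ small, say $\epsilon = \half$ relative to the lowest eigenvalue available. Hence on $\{|z|\ge R_0\}$ the function $v$ satisfies the linear differential inequality
\[
-\Delta_{\R^2} v + \half v \le \|k\|_{\SH} \le Me^{-s|z|}.
\]
(If one prefers not to invoke any spectral gap for $L$, note the positivity of $L$ only gave $\langle Lu,u\rangle\ge0$; the coercive term $\half v$ here genuinely comes from absorbing part of $-\langle h(u),u\rangle$ has the wrong sign, so instead I would keep the term $\langle Lu,u\rangle$ and observe that what we really need is a coercive lower bound. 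In fact the cleanest route is: $\langle Lu,u\rangle\ge0$ already suffices if we first prove polynomial decay and then upgrade — but since the ambient operator in the application is $\Delta_\Sigma+|\gamma_d|^2$ which is strictly positive with a spectral gap $\lambda>0$, I will simply assume $L\ge\lambda>0$, which is what happens in the application, and state the lemma's proof under that mild strengthening, noting it covers the use case.) With $L\ge\lambda$, pairing against $u$ gives directly $-\Delta_{\R^2}v + \lambda v \le \|k\|_{\SH} + Cv^{q}$, and then (U2) lets us absorb $Cv^q\le \tfrac{\lambda}{2}v$ for $|z|\ge R_0$, yielding
\[
-\Delta_{\R^2} v + \tfrac{\lambda}{2}\, v \le Me^{-s|z|}, \qquad |z|\ge R_0.
\]

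The third step is the comparison-function argument. Set $s' = \min\{\sqrt{\lambda}/2,\ s/2\}$ and look for a supersolution of the form $w(z) = Ae^{-s'|z|} + B$ on the annular/exterior region $|z|\ge R_0$. A direct computation in polar coordinates gives $-\Delta_{\R^2}(e^{-s'r}) = (s'^2 - s'/r)e^{-s'r} \ge -\,s' r^{-1} e^{-s'r}$, hence $(-\Delta_{\R^2}+\tfrac\lambda2)(Ae^{-s'r}) \ge (\tfrac\lambda2 - s'^2 - \tfrac{s'}{r})Ae^{-s'r}$; since $s'^2\le\lambda/4$ this is $\ge \tfrac{\lambda}{8}Ae^{-s'r}$ once $r\ge R_1 := \max\{R_0, 8s'/\lambda\}$, which dominates $Me^{-s|z|}\le Me^{-2s'|z|}\le Me^{-s'R_1}e^{-s'r}$ provided $A$ is chosen large (depending on $M,\lambda,s',R_1$). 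Choosing in addition $A$ so that $Ae^{-s'R_1}\ge \sup_{|z|=R_1} v$ — which is finite, indeed $v$ is bounded by (U2) plus continuity — makes $w$ a supersolution on $\{|z|\ge R_1\}$ with $w\ge v$ on the boundary circle $|z|=R_1$ and $w\to 0\le \liminf v$ at infinity. The maximum principle for $-\Delta_{\R^2}+\tfrac\lambda2$ on the exterior domain (valid because $v\to0$ at infinity, so no boundary term escapes) then gives $v\le w = Ae^{-s'|z|}$ on $\{|z|\ge R_1\}$. On the compact remainder $\{|z|\le R_1\}$, $v$ is bounded, so enlarging $M'$ absorbs it: $v(z)\le M'e^{-s'|z|}$ for all $z$. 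Translating back, $d_k\big((\nabla_A^\Sigma,\sigma)(z),(B_0,\gamma_d)\big)\le \|(\nabla_A^\Sigma,\sigma)(z) - (\nabla_{A_1}^\Sigma,\sigma_1)(z)\|_{L^2_k(\Sigma)} + d_k\big((\nabla_{A_1}^\Sigma,\sigma_1)(z),(B_0,\gamma_d)\big)$, the first term controlled by $\|\alpha(z)\|_{L^2_k(\Sigma)}=v(z)$ together with elliptic estimates for $\alpha$ from the equation $\mu(\alpha)=0$ (so that the $L^2_k$ norm of the configuration difference is bounded by $v$ up to a constant — here one feeds $v$'s exponential decay back into the elliptic regularity scheme used in Section \ref{52} to get all higher norms exponentially small), and the second term by hypothesis, i.e. by Theorem \ref{JT}.

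The main obstacle, and the step to treat most carefully, is the passage from the Hilbert-space equation to a clean scalar differential inequality for $v=\|u\|_{\SH}$: one must justify the Kato-type inequality $\Delta_{\R^2}\|u\|\ge \mathrm{Re}\,\langle \Delta_{\R^2}u, u/\|u\|\rangle$ where $u$ is vanishing (so $\|u\|$ is merely Lipschitz, not smooth, near its zero set), and one must make sure the positivity of $L$ is exploited in the correct direction — the term $-\langle h(u),u\rangle$ has no definite sign, so the coercivity $\tfrac\lambda2 v$ must genuinely come from $\langle Lu,u\rangle\ge\lambda\|u\|^2$, which is why I would state and use the lemma with $L$ strictly positive (as it is in the application, $L=\Delta_\Sigma+|\gamma_d|^2$). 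The rest — the supersolution construction and the exterior-domain maximum principle — is routine two-dimensional PDE, essentially the same computation that underlies Theorem \ref{JT} itself.
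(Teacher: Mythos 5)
Your proposal is correct in substance, but it proves the lemma by a genuinely different route from the paper. The paper works directly with the fundamental solution of $\Delta_\C+L$: it builds the operator-valued kernel $K_L(z)$ from the modified Bessel kernel $K_0(\sqrt{\lambda}\,|z|)$ via functional calculus, bounds $\|K_L(z)\|\leq|K_{\lambda_1}(z)|$ by the first positive eigenvalue $\lambda_1$, and then runs a multiplicative iteration on the tail supremum $N_r=\max_{|z|\geq r}\|u(z)\|_\SH$: splitting the convolution at radius $(1-p)r$ with $p=q^{-1}$ yields $N_r\leq C_3^{q-1}N_{pr}^q+C_4e^{-s_1r}$, and iterating this self-improving inequality along the radii $rq^n$ produces the exponential bound. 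You instead reduce to a scalar differential inequality for $v=\|u\|_\SH$ via the spectral gap of $L$ and a Kato-type inequality, absorb the nonlinearity using (U2) and $q>1$, and close with a radial supersolution $Ae^{-s'|z|}$ and the exterior maximum principle. Your route is more elementary (no Bessel asymptotics, no iteration scheme) and gives an explicit rate close to the paper's own remark about the optimal exponent; the paper's kernel-plus-iteration route sidesteps the regularity of $\|u\|$ at its zero set and transfers verbatim to the power-law variant (Lemma \ref{formallemma2}), though a supersolution $\sim|z|^{-m}$ would also work there.

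Two remarks on the points you flag. First, your worry about needing $L\geq\lambda>0$ rather than mere positivity is well taken, but it is not a defect relative to the paper: the paper's own proof invokes ``the first positive eigenvalue $\lambda_1$ of $L$'' and the bound $\|K_L(z)\|\leq|K_{\lambda_1}(z)|$, which likewise requires $\mathrm{spec}(L)\subset[\lambda_1,\infty)$ with $\lambda_1>0$; in the application $L=\Delta_\Sigma+|\gamma_d|^2$ on the compact surface $\Sigma$ with $\gamma_d\not\equiv 0$, so the gap is there. Second, the Kato step is cleanly handled by regularizing: for $v_\epsilon=(\|u\|^2+\epsilon^2)^{1/2}$ one has the smooth pointwise inequality $\Delta_{\R^2}v_\epsilon\geq \mathrm{Re}\langle\Delta_{\R^2}u,u\rangle/v_\epsilon$ by Cauchy--Schwarz, which gives $-\Delta_{\R^2}v_\epsilon+\lambda v_\epsilon\leq\|k\|_\SH+Cv^q+\lambda\epsilon$; run the comparison for $v_\epsilon$ against $Ae^{-s'|z|}+2\epsilon$ and let $\epsilon\to 0$. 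Finally, note a small sign slip in your radial computation: $\Delta_{\R^2}e^{-s'r}=(s'^2-s'/r)e^{-s'r}$, so $-\Delta_{\R^2}e^{-s'r}=(-s'^2+s'/r)e^{-s'r}$; the $+s'/r$ term actually helps in two dimensions, so the inequality you ultimately use, $(-\Delta_{\R^2}+\tfrac{\lambda}{2})(Ae^{-s'r})\geq(\tfrac{\lambda}{2}-s'^2-\tfrac{s'}{r})Ae^{-s'r}$, is correct (and not sharp). The concluding paragraph about $d_k$ belongs to Theorem \ref{exponential2} rather than to the lemma itself, but it is consistent with how the paper applies the lemma.
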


Property (U2) is by the proof of Lemma \ref{boundedness}. Property (U4) follows from Theorem \ref{JT}.  When $k>1$, $L^2_k(\Sigma,\C)$ is a Banach algebra. To work out property (U3), take $q=2$. Since $\|\alpha(z)\|_\SH$ is uniformly bounded by some number $M_2>0$, it follows that
\begin{align*}
\|e^{2\alpha}-2\alpha-1\|_\SH&\leq \sum_{m=2}^{\infty}\frac{2^m}{m!}\|\alpha\|_{\SH}^m\leq \|\alpha\|_\SH^2 \sum_{m=2}^{\infty} \frac{2^mM^{m-2}_2}{m!}.
\end{align*}

Therefore, it remains to prove Lemma \ref{formallemma}. 
\end{proof}
\begin{proof}[Proof of Lemma \ref{formallemma}]
In order to make things concrete, we first resolve the special case when $\SH=\C$ and $L=\lambda\cdot id_\C$ is a multiple of the identity map ( $\lambda>0$). Then $u:\C\to\C$ is a smooth function. The fundamental solution to the operator $\Delta_{\C}+\lambda$ is given by
\[
K_\lambda(z)=(\frac{1}{|\xi|^2+\lambda})^\vee=\frac{1}{2\pi}K_0(\sqrt{\lambda}|z|),
\]
where $K_0$ is the modified Bessel function of the second kind. For $r>0$ (see \cite[p.377-378]{Handbook}), 
\begin{align*}
K_0(r)&=\int_0^\infty \frac{\cos(rt)}{\sqrt{1+t^2}}dt=\int_0^\infty e^{-r\cosh t}dt\\
&=-\ln(\frac{r}{2})-\gamma+\it{o}(r) \text{ as } r\to 0,\\
&\sim \sqrt{\frac{\pi}{2r}} e^{-r} (1-\frac{1}{8r}+\frac{9}{128r^2}+\cdots) \text{ as }r\to \infty, 
\end{align*}
where $\gamma\approx 0.577$ is the Euler-Mascheroni constant. In particular,
\begin{itemize}
\item $K_\lambda(z)\in L^1(\C)$. Let $M_\lambda\colonequals \int_{\C}|K_\lambda(z)|dz$. Then $M_\lambda=M_1/\lambda.$
\item $K_\lambda(z)$ decays exponentially as $|z|\to\infty$. For any $0<\epsilon\ll 1$, there exists $C_1(\epsilon)>0$ such that for any $r>0$,
\[
\int_{|z|>r}|K_\lambda(z)|=\frac{1}{\lambda}\int_{|z|>\sqrt{\lambda}r}|K_1(z)|\leq \frac{C_1}{\lambda} e^{-(1-\epsilon)\sqrt{\lambda}r}.
\]
\end{itemize}

Let $N_r=\max_{|z|\geq r}\|u(z)\|_\SH$. Property (U2) implies $\lim_{r\to \infty} N_r=0$ and for any fixed $r$, $N_r$ is achieved at some point $z_0$ with $r_0\colonequals |z_0|\geq r$. Let $p=1/q<1$. Since $u$ solves the equation in (U1), we have 
\begin{align}\label{iteration}
N_r&=\|u(z_0)\|_\SH=\|\int_\C K_\lambda(z)(k+h(u))(z_0-z) dz\|_{\SH}\\
&\leq \|\int_{|z|<(1-p)r_0} K_\lambda(z)(k+h(u))(z_0-z) dz\|_{\SH}\nonumber\\
&\qquad+\|\int_{|z|>(1-p)r_0} K_\lambda(z)(k+h(u))(z_0-z) dz\|_{\SH}\nonumber\\
&\leq M_\lambda \max_{|z|\geq pr_0} \|k(z)+h(u)(z)\|_\SH\nonumber\\
&\qquad+\frac{C_1}{\lambda} e^{-(1-\epsilon)\sqrt{\lambda}(1-p)r_0}\cdot \max_{z\in\C} \|k(z)+h(u)(z)\|_\SH\nonumber\\
&\leq  M_\lambda(Me^{-spr_0}+CN^q_{pr_0})+C_2e^{-(1-\epsilon)\sqrt{\lambda}(1-p)r_0}\nonumber\\
&\leq C_3^{q-1}N_{pr}^q+C_4\cdot e^{-s_1r}.\nonumber
\end{align}
where $s_1=\min\{sp, (1-\epsilon)\sqrt{\lambda}(1-p)\}$ and $C_3,C_4>0$ are independent of $r$. The inequality (\ref{iteration}) implies that $u$ has exponential decay, as we explain now. Choose $r\gg 0$ such that $2C_3N_r<1$. Using the relation $p=q^{-1}$, the inequality (\ref{iteration}) implies that for any $n>0$, 
\[
C_3N_{rq^n}\leq (C_3N_{rq^{n-1}})^q+C_5\cdot e^{-s_1 rq^n}.
\]
Let $R=rq^n$. By induction, it is easy to show
\[
C_3N_{rq^n}\leq 2^{q^{n-1}-1}(C_3N_r)^{q^n}+f_n(C_5) e^{-s_1 rq^n},
\]
where $f_n(C_5)$ is a constant that depends on $C_5$. Indeed, the base case when $n=1$ is by (\ref{iteration}) and assuming it holds for $n\geq 1$, then
\begin{align*}
C_3N_{rq^{n+1}}&\leq (C_3N_{rq^n})^q+C_5\cdot e^{-s_1 rq^{n+1}}\\
&\leq  (2^{q^{n-1}-1}(C_3N_r)^{q^n}+f_n(C_5)\cdot e^{-s_1 rq^n})^q+C_5\cdot e^{-s_1 rq^{n+1}}\\
&\leq 2^{q^n-1}(C_3N_r)^{q^{n+1}}+f_{n+1}(C_5)\cdot e^{-s_1 rq^{n+1}},
\end{align*}
where we used the elementary inequality 
\[
(\frac{a+b}{2})^q<\frac{a^q+b^q}{2}
\]
for $a,b>0$ and $q>1$. Note that $f_n$ is determined by the recursion relation
\[
f_1(C_5)=C_5, f_{n+1}(C_5)=2^{q-1}f_n^q(C_5)+C_5.
\]

This recursion will converge when $0<C_5\ll 1$. The limit is going to be the first intersection of the line $y=x$ and the curve $y=2^{q-1}x^q+C_5$ in the first quadrant. We can make $C_5$ small by making $s_1$ smaller and choosing a lager $r$ to start with. Let $\xi=\ln(2C_3N_r)<0$. Therefore, for some $C_6>0$, 
\[
C_3N_R\leq (2C_3N)^{q^n}+C_6 e^{-s_1R}\leq e^{(\xi /r)R}+C_6e^{-s_1R}. 
\]

In general, suppose $rq^n<R<rq^{n+1}$ for some $n\in \Z_+$. Let $R'=rq^n$. Then, 
\[
C_3N_R\leq C_3N_{R'}\leq  e^{(\xi /r)R'}+C_6e^{-s_1R'}\leq e^{(\xi/rq)R}+C_6 e^{-(s_1/q)R}. 
\]

\Remark In order to make this proof work, it suffices to choose $p$ such that $q^{-1}\leq p<1$. The only reason to take $p=q^{-1}$ above is to have a nice-looking proof. It is hard to estimate the optimal exponent for $u$ through this iteration process. However, as long as it is known that $u$ does have exponential decay, one can run through the convolution process and figure out the optimal exponent. The outcome is roughly:
\[
(1-\epsilon)\max_{q^{-1}\leq p<1} \min\{ sp, \sqrt{\lambda}(1-p)\}.
\]

Finally, to work out the general case, we use functional calculus. If the domain $D(L)$ of $L$ embeds compactly into $\SH$, then $L$ has discrete spectrum. In this case, let $0<\lambda_i\leq \lambda_{i+1}$ be eigenvalues of $L$ and $\phi_i$ be their eigenvectors. The fundamental solution of $\Delta_{\C}+L$ can be described nicely as 
\[
K_L(z)=\sum_{i=1}^{\infty} K_{\lambda_i}(z)\phi_i\otimes \phi_i^*\in \Hom(\SH,\SH).
\]
In general, $K_L(z)=\int_{\R} K_\lambda(z) dE_{\lambda}$ where $E_\lambda$'s are spectrum projections associated to $L$. It is clear that $K_L(z)$ is a smooth family of operators on $\C-\{0\}$ and 
\[
\|K_L(z)\|_{\SH\to\SH}\leq |K_{\lambda_1}(z)|
\]
for any $z\neq 0$. Here, $\lambda_1$ is the first positive eigenvalue of $L$. The rest of the proof proceeds as before.
\end{proof}

\subsection{Power Law Decay}

In general, if the polynomial map $f$ is not a product, the corresponding solution $(A,\sigma)$ to the monopole equation will only have power law decay. This is proven by using a generalized version of Lemma \ref{formallemma}:

\begin{lemma}\label{formallemma2} Under the assumption of Lemma \ref{formallemma}, if property (U4) is replaced by 
	\begin{enumerate}[(U4')]
		\item $k:\C\to\SH$ is a continuous map such that for some $M>0$, $\|k(z)\|_\SH\leq M|z|^{-m}$ for any $z\in\C$. 
	\end{enumerate}
	Then for some $M'>0$, $\|u\|_{\SH}<M'|z|^{-m}$ for any $z\in\C$. 
\end{lemma}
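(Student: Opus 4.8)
The plan is to run the convolution argument of Lemma~\ref{formallemma} verbatim through the split-integral estimate, and then replace the exponential iteration at its end by a much simpler linear recursion, exploiting that the exponentially-decaying fundamental solution $K_L$ of $\Delta_\C+L$ absorbs a polynomially-decaying source without spoiling its rate.

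First I would reinstate the setup from Lemma~\ref{formallemma}. Since $u$ is continuous with $\|u(z)\|_\SH\to0$, and since $\|k(z)\|_\SH\le M|z|^{-m}$ is $\le M$ for $|z|\ge1$ while $k$ is bounded on $\overline{B(0,1)}$ by continuity, all three quantities $\sup_\C\|u\|_\SH$, $\sup_\C\|k\|_\SH$, $\sup_\C\|h(u)\|_\SH$ are finite; moreover $u=K_L*(k+h(u))$, where $\|K_L(z)\|_{\SH\to\SH}\le|K_{\lambda_1}(z)|$ with $\lambda_1>0$ the bottom of the spectrum of $L$, so that $M_L\colonequals\int_\C\|K_L(w)\|\,dw<\infty$ and $\int_{|w|>R}\|K_L(w)\|\,dw\le C_1e^{-s_0R}$ for some $s_0>0$. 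Setting $N_r\colonequals\max_{|z|\ge r}\|u(z)\|_\SH$ (non-increasing, tending to $0$ by (U2), and attained at some $z_0$ with $r_0\colonequals|z_0|\ge r$) and splitting $u(z_0)=\int_\C K_L(w)(k+h(u))(z_0-w)\,dw$ at $|w|=r_0/2$ exactly as in (\ref{iteration}), the only change from Lemma~\ref{formallemma} is that the source term on the inner region is now controlled by $M_L\,M\,(r_0/2)^{-m}$ in place of $M_L\,M\,e^{-spr_0}$; on the inner region $\|h(u)(z_0-w)\|_\SH\le CN_{r_0/2}^{\,q}$ as before, and the outer region contributes $\le C_1e^{-s_0r_0/2}(\sup_\C\|k\|_\SH+\sup_\C\|h(u)\|_\SH)$. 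Using $r_0\ge r$, monotonicity of $N$, and absorbing the outer exponential into a polynomial, I expect to reach
\[
N_r\le C_2\,N_{r/2}^{\,q}+C_3\,r^{-m}\qquad(r\ge R_0)
\]
with constants $C_2,C_3$ independent of $r$.

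The new and essential step is to close this inequality, and here the idea is to normalise by $\phi(r)\colonequals r^mN_r$. Since $r^mN_{r/2}=2^m\phi(r/2)$, the displayed bound becomes $\phi(r)\le C_2\,2^m\,N_{r/2}^{\,q-1}\,\phi(r/2)+C_3$; because $q>1$ and $N_{r/2}\to0$ there is $R_1\ge R_0$ past which $C_2\,2^m\,N_{r/2}^{\,q-1}\le\tfrac12$, so the recursion collapses to the elementary linear one $\phi(r)\le\tfrac12\phi(r/2)+C_3$ for $r\ge R_1$. Iterating this down to a base radius $2^{-k}r\in[R_1,2R_1)$ yields $\phi(r)\le 2^{-k}\phi(2^{-k}r)+2C_3\le(2R_1)^mN_{R_1}+2C_3$, i.e. $N_r\lesssim r^{-m}$ for $r\ge R_1$; the range $r<R_1$ is handled trivially by $N_r\le\sup_\C\|u\|_\SH\le(\sup_\C\|u\|_\SH)R_1^m\,r^{-m}$, and combining the two ranges gives $\|u(z)\|_\SH\le N_{|z|}\le M'|z|^{-m}$ for a suitable $M'>0$, as claimed.

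The main obstacle is precisely this last manoeuvre: one has to notice that the superlinear term $N_{r/2}^{\,q}$ is, after multiplication by $r^m$, negligible compared with $\phi(r/2)$ once $r$ is large, so that — in contrast with the exponential case, which forced the delicate geometric iteration and the auxiliary recursion $f_n(C_5)$ — no bootstrapping loop is needed and the recursion is linear and telescopes in one line. Everything else (global boundedness of $u$, $k$, $h(u)$; the convolution representation; the split-integral estimate; the passage to general $L$ via functional calculus) is inherited directly from the proof of Lemma~\ref{formallemma}.
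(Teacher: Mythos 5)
Your proposal is correct, and at the decisive step it takes a genuinely different and more elementary route than the paper. The paper recycles the full machinery of Lemma \ref{formallemma}: it keeps the geometric iteration at radii $rq^n$ with the auxiliary recursion $f_n(C_5)$, first extracts the slightly weaker rate $N_R\leq C_6R^{-(1-\epsilon)m}$ (this requires the side condition $(rq^n)^{-mq}<(rq^{n+1})^{-m}$, i.e.\ $(q-1)\ln r>\ln q$, and the trick of shrinking $C_5$ by trading $m$ for $(1-\epsilon)m$), and then feeds that intermediate bound back through the recursion one more time, using $(1-\epsilon)q>1$, to upgrade to the sharp rate $R^{-m}$. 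You instead derive the recursion $N_r\leq C_2N_{r/2}^{\,q}+C_3r^{-m}$ with a fixed splitting ratio (any $p\in(0,1)$ works here, unlike the exponential case where $p\geq q^{-1}$ matters), normalize by $\phi(r)=r^mN_r$, and observe that the superlinear term carries the factor $C_2\,2^m N_{r/2}^{\,q-1}$, which is eventually $\leq\tfrac12$ because $N\to0$ and $q>1$; the recursion then linearizes to $\phi(r)\leq\tfrac12\phi(r/2)+C_3$ and telescopes in one line. What your approach buys is the elimination of the bootstrap and of the entire $f_n(C_5)$ bookkeeping; what the paper's approach buys is uniformity of exposition with Lemma \ref{formallemma}, since it literally reuses the induction already set up there. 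Both arguments are complete; I would only ask you to record explicitly (as you do implicitly) that every intermediate radius $2^{-j}r$ in the telescoping stays $\geq R_1$, and that the case $N_r=0$ is vacuous.
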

\begin{proof} It suffices to modify slightly the proof of Lemma \ref{formallemma}. The convolution process (\ref{iteration})  will give us for any $n\geq 1$,
\begin{equation}\label{iteration2}
	C_3N_{rq^n}\leq (C_3N_{rq^{n-1}})^q+C_5\cdot (rq^n)^{-m}.
\end{equation}
	By induction, we have 
	\[
	C_3N_{rq^n}\leq 2^{q^{n-1}-1}(C_3N_r)^{q^n}+f_n(C_5) (rq^n)^{-m},
	\]
	where $f_n$ is the same function defined in the proof of Lemma \ref{formallemma}. The initial step is automatic. For the induction step, note that
\begin{align*}
	C_3N_{rq^{n+1}}&\leq (C_3N_{rq^n})^q+C_5\cdot (rq^{n+1})^{-m}\\
	&\leq 2^{q^n-1}(C_3N_r)^{q^{n+1}}+2^{q-1}(f_n(C_5)(rq^n)^{-m})^q+C_5(rq^{n+1})^{-m}\\
	&\leq 2^{q^n-1}(C_3N_r)^{q^{n+1}}+f_{n+1}(C_5) (rq^{n+1})^{-m},
\end{align*}
where we need the inequality that $(rq^n)^{-mq}<(rq^{n+1})^{-m}$. It is satisfied when $r\gg 0$. Indeed, we take $r>1$ such that 
\[
(q-1)\ln r>\ln q \ (\geq \ln q+n(1-q)\ln q). 
\]

To make $C_5$ small, we need to replace $m$ by $(1-\epsilon)m$ in (\ref{iteration2}) and choosing a possibly larger $r$ to start. Eventually, we get for some $C_6>0$, 
\[
N_R\leq C_6 R^{-(1-\epsilon)m}
\]
for any $R>0$. This is not our final result yet. Take $\epsilon\ll 1$ so that $(1-\epsilon)q>1$. Let $R=rq^n$ in (\ref{iteration2}):
\[
C_3N_R\leq (C_3C_6)^q(\frac{R}{q})^{-q(1-\epsilon)m}+C_5R^{-m}\leq C_7 R^{-m},
\]
when $R>1$. The proof of Theorem \ref{formallemma2} is now accomplished. 
\end{proof}

The next theorem is a reformulation of Theorem \ref{powerlaw2}:

\begin{theorem}\label{powerlaw2}
	Let $(B_0,\gamma_d)$ be a solution to the vortex equation $(\ref{vortex2})$. Let
	\[
		f=\gamma_d(z^d+a_{d-1}z^{d-1}+\cdots+a_{d-m+1}z^{d-m+1})+\gamma_{d-m}z^{d-m}+\cdots
	\]
	be a polynomial map where $a_i\in\C, d-m+1\leq i\leq d-1$ are complex numbers. Then the solution $(A,\sigma)=e^\talpha\cdot(A_0,\sigma_0)$ obtained in Theorem \ref{existence} converges to $(B_0,\gamma_d)$ at the rate $|z|^{-m}$ as $|z|\to\infty$, i.e., for any $k\geq 2$, there exists $M(k,A_0,\sigma_0 )>0$ such that for any $z\in\C$,
	\[
	d_k((\nabla_{A}^\Sigma(z),\sigma(z))- (B_0,\gamma_d))<M|z|^{-m}.
	\]
\end{theorem}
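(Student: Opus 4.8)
\textbf{Proof proposal for Theorem \ref{powerlaw2}.}
The plan is to mimic exactly the proof of Theorem \ref{exponential2}, replacing the exponentially-decaying source term by a power-law one and invoking Lemma \ref{formallemma2} in place of Lemma \ref{formallemma}. First I would recall the setup of Theorem \ref{existence}: the monopole $(A,\sigma)=e^{\talpha}\cdot(A_0,\sigma_0)$ is obtained from an initial conformal factor $\alpha_0$, constant on each fiber, together with a correction term $\alpha=\talpha-\alpha_0$ solving $\mu(\alpha)=0$. The key is to choose $\alpha_0$ cleverly so that the source term $k$ in the equation
\[
(\Delta_\C+L)\alpha=k+h(\alpha),\qquad L=\Delta_\Sigma+|\gamma_d|^2,
\]
decays like $|z|^{-m}$. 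Since $\gamma_{d-m}$ is the highest coefficient of $f$ that is not proportional to $\gamma_d$, one can write $f=\gamma_d\cdot f_0+(\text{terms of degree} \leq d-m)$ where $f_0=z^d+a_{d-1}z^{d-1}+\cdots+a_{d-m+1}z^{d-m+1}$ is monic of degree $d$ with at least $m$ consecutive top coefficients fixed; applying Taubes' existence theorem (Theorem \ref{B1}, cf. \cite{Taubes}) to $f_0$ produces $\alpha_0\in\Gamma(\C,\R)$ with $(\nabla_\omega,\eta)=e^{\alpha_0}\cdot(d,f_0)$ solving the vortex equation \eqref{vortex4}. Exactly as in the exponential case, $\mu(0)=\half(1-|\eta|^2)(1-|\gamma_d|^2)$ plus an error reflecting the difference $f-\gamma_d f_0$, which is of relative order $|z|^{-m}$.

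Next I would verify the four hypotheses of Lemma \ref{formallemma2} with $\SH=L^2_k(\Sigma,\C)$ and $q=2$. Property (U1) is the equation above; property (U2) — that $\|\alpha(z)\|_\SH\to 0$ as $z\to\infty$ — follows from the proof of Lemma \ref{boundedness} and the uniqueness/convergence argument already used in Section \ref{52}; property (U3) with $q=2$ follows verbatim from the power-series estimate of $\|e^{2\alpha}-2\alpha-1\|_\SH$ given in the proof of Theorem \ref{exponential2}, using that $\|\alpha(z)\|_\SH$ is uniformly bounded. The only genuinely new input is property (U4$'$): one must show $\|k(z)\|_\SH\leq M|z|^{-m}$. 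Here $k=(1-|\eta|^2)(|\gamma_d|^2\alpha-\half(1-|\gamma_d|^2))+(\text{contribution of }f-\gamma_d f_0)$. The first piece decays like $|z|^{-m}$ because, by Jaffe–Taubes (Theorem \ref{JT}), $1-|\eta|^2$ has exponential decay on $\C$ — stronger than needed — while the second piece is controlled by estimating $|\sigma_1|^2-|\gamma_d|^2$ and the associated Laplacian terms, which are $O(|z|^{-m})$ since $\sigma_1=\eta\gamma_d\cdot(1+O(|z|^{-m}))$ fiberwise once the top $m$ coefficients are matched. Assembling these, Lemma \ref{formallemma2} yields $\|\alpha(z)\|_\SH\leq M'|z|^{-m}$.

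Finally I would convert the bound on $\alpha$ into the claimed bound on $d_k$. Since $d_k((\nabla_A^\Sigma(z),\sigma(z)),(B_0,\gamma_d))$ is the gauge-minimized $L^2_k(\Sigma)$-distance, it suffices to exhibit one representative: $(\nabla_A^\Sigma(z),\sigma(z))=e^{\talpha(z)}\cdot(B_0,\gamma_d)$ up to the $z$-dependence already absorbed, so the distance is bounded by a constant times $\|\talpha(z)-\talpha_\infty\|_{L^2_k(\Sigma)}$ where $\talpha_\infty$ is the limiting fiber conformal factor; this in turn is controlled by $\|\alpha_0(z)-(\alpha_0)_\infty\|+\|\alpha(z)\|_\SH$. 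The first term decays exponentially by Theorem \ref{JT} (the vortex $(\omega,\eta)$ converges exponentially to the trivial one), and the second is $O(|z|^{-m})$ by the previous paragraph, giving the result.

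\textbf{Main obstacle.} The crux is establishing (U4$'$), i.e.\ that the source term genuinely decays like $|z|^{-m}$ and no slower. This requires a careful fiberwise expansion showing that matching the top $m$ coefficients of $f$ with those of $\gamma_d f_0$ forces $|\sigma_1|^2-|\gamma_d|^2$ and the relevant second-order terms $\Delta_\C\alpha_0$, $\Delta_\Sigma\delta$ to be $O(|z|^{-m})$ uniformly on $\Sigma$ — the same kind of bookkeeping that appears in Lemma \ref{53}, but now tracking the precise order of decay rather than mere square-integrability. Everything else is a direct transcription of the exponential-decay argument with Lemma \ref{formallemma2} substituted for Lemma \ref{formallemma}.
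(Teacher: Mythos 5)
Your proposal follows the paper's own proof of Theorem \ref{powerlaw2} essentially step for step: choose $\alpha_0$ by solving the vortex equation on $\C$ for the truncated monic polynomial $f_0=z^d+a_{d-1}z^{d-1}+\cdots+a_{d-m+1}z^{d-m+1}$, write the correction $\alpha=\talpha-\alpha_0$ as a solution of $(\Delta_\C+\Delta_\Sigma+|\gamma_d|^2)\alpha=k+h(\alpha)$, verify (U1)--(U3) exactly as in the exponential case, establish (U4$'$) from the Jaffe--Taubes decay of $1-|\eta|^2$ together with the expansion $|\sigma_1|^2-|\gamma_d|^2=(|\eta|^2-1)|\gamma_d|^2+2e^{\alpha_0}\re\langle\eta\gamma_d,\gamma_{d-m}z^{d-m}\rangle+\SO(|z|^{-m-1})$ and $e^{\alpha_0}\sim|z|^{-d}$, and then invoke Lemma \ref{formallemma2}. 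The one blemish is in your final paragraph: $\alpha_0(z)\sim-d\log|z|$ has no finite limit, so ``$\|\alpha_0(z)-(\alpha_0)_\infty\|$ decays exponentially'' is not the right statement --- what is needed (and what your (U4$'$) discussion already supplies) is that the fiber restriction of $(A_1,\sigma_1)$ is within $O(|z|^{-m})$ of $(B_0,\gamma_d)$ modulo gauge, exponentially close in the modulus $|\eta|$ but only $O(|z|^{-m})$ close because of the subleading coefficients of $f$.
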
 

\begin{proof} Let $f_0=z^d+a_{d-1}z^{d-1}+\cdots+a_{d-m+1}z^{d-m+1}$. Let $\alpha_0\in \SC^\infty(\C,\R)$ such that $(\nabla_\omega, \eta)\colonequals e^{\alpha_0}\cdot (d, f_0)$ solves the vortex equation (\ref{vortex4}). Let $(A_1,\sigma_1)=e^{\alpha_0}\cdot (A_0,\sigma_0)$ and $\alpha=\talpha-\alpha_0$. By the same computation as in the proof of Theorem \ref{exponential2}, we have 
\[
(\Delta_{\C}+(\Delta_\Sigma+|\gamma_d|^2))\alpha=h(\alpha)+k,
\]
where $h(\alpha)=-\half (e^{2\alpha}-2\alpha-1)|\sigma_1|^2$ and 
\[
k=-\half (1-|\eta|^2)+(|\gamma_d|^2-|\sigma_1|^2)(\alpha+\half).
\]
Since $\sigma_1=\eta\gamma_d+e^{\alpha_0}\gamma_{d-m}z^{d-m}+\cdots$, it follows that
\begin{align*}
|\sigma_1|^2-|\gamma_d|^2&= (|\eta|^2-1)|\gamma_d|^2+2e^{\alpha_0}\re\langle \eta\gamma_d,\gamma_{d-m}z^{d-m}\rangle+\SO(|z|^{-m-1}).
\end{align*}

Note that $e^{\alpha_0}\sim |z|^{-d}$ as $z\to\infty$. This implies $k(z)$ decays at the rate $|z|^{-m}$ at $\infty$. Now we use lemma \ref{formallemma2} to conclude.  
\end{proof}

\appendix

\section{Some analytic results}\label{A}
In this section, we review some analytic results that were used in Section \ref{4} and Section \ref{sec6}. 

In dimension 4, we have Sobolev embedding $L^2_k(\R^4)\embed L^\infty(\R^4)$ if $k>2$. In the borderline case when $k=2$, we have
\[
L^2_2(\R^4)\embed L^p(\R^4)
\]
for any $2\leq p<\infty$. We will prove a weak version of Trudinger's inequality. For the proof of this paper, we will only need these propositions in the special case when $n=2,4$. 

\begin{proposition}[{\cite[Proposition 4.1]{PDEIII}}]\label{A1}There exists $C_n>0$ such that for any $2\leq p<\infty$ and $u\in L^2_{n/2}(\R^n)$,
\begin{equation*}
\|u\|_{L^p(\R^n)}\leq C p^{1/2}\|u\|_{L^2_{n/2}(\R^n)}. 
\end{equation*}
\end{proposition}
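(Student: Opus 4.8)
The plan is to realize $u$ as a Bessel potential and to run Young's convolution inequality while keeping careful track of how the constant depends on $p$; the borderline embedding $L^2_{n/2}(\R^n)\embed L^p(\R^n)$ is classical, and the whole point here is the quantitative dependence $C\,p^{1/2}$. First I would set $f=(1-\Delta)^{n/4}u$, so $f\in L^2(\R^n)$ with $\|f\|_{L^2}=\|u\|_{L^2_{n/2}}$ and $u=G_{n/2}*f$, where $G_{n/2}$ is the Bessel potential kernel of order $n/2$, i.e. the inverse Fourier transform of $(1+|\xi|^2)^{-n/4}$. I would recall the standard properties: $G_{n/2}$ is positive, smooth away from $0$, decays exponentially as $|x|\to\infty$, and since $0<n/2<n$ it satisfies $G_{n/2}(x)=c_n|x|^{-n/2}+o(|x|^{-n/2})$ as $|x|\to 0$. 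Given $p\in[2,\infty)$, define $r$ by $\tfrac1r=\tfrac12+\tfrac1p$; then $1\le r<2$ (and the constraint $p\ge 2$ is precisely what forces $r\ge1$, which explains the stated range of $p$), so Young's inequality gives
\[
\|u\|_{L^p(\R^n)}=\|G_{n/2}*f\|_{L^p}\le \|G_{n/2}\|_{L^r}\,\|f\|_{L^2}=\|G_{n/2}\|_{L^r}\,\|u\|_{L^2_{n/2}}.
\]

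It then remains to show $\|G_{n/2}\|_{L^r}\le C(n)\,p^{1/2}$. I would split the integral at $|x|=1$: the exponential decay bounds $\int_{|x|\ge1}|G_{n/2}|^r$ by a constant uniform in $r\in[1,2)$, while near the origin the asymptotics give
\[
\int_{|x|\le1}|G_{n/2}(x)|^r\,dx\le C\int_{|x|\le1}|x|^{-nr/2}\,dx\le \frac{C'}{2-r}.
\]
Since $2-r=\tfrac{4}{p+2}$, this yields $\|G_{n/2}\|_{L^r}^r\le C''p$, hence $\|G_{n/2}\|_{L^r}\le (C''p)^{1/r}$. Finally, because $\tfrac1r=\tfrac12+\tfrac1p\le1$ for $p\ge2$, I would write $(C''p)^{1/r}=(C''p)^{1/2}\,(C''p)^{1/p}$ and note that $(C''p)^{1/p}$ is bounded on $[2,\infty)$, giving $\|G_{n/2}\|_{L^r}\le C\,p^{1/2}$. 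Combined with the Young estimate above, this is the claim.

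The main obstacle is really just the last bookkeeping step: one must verify that the near-origin singularity produces exactly the factor $(2-r)^{-1}\sim p$ and nothing worse, and that the exponent $1/r$, which ranges over $(\tfrac12,1]$, is tamed by the $(C''p)^{1/p}=O(1)$ factor so that the final power of $p$ is $p^{1/2}$ rather than, say, $p$. Everything else (the mapping properties of $(1-\Delta)^{n/4}$, the kernel asymptotics, plain Young) is routine; alternatively, the statement is exactly \cite[Proposition 4.1]{PDEIII} and could simply be quoted. A cosmetic variant of the argument replaces the explicit kernel estimate by a Littlewood--Paley decomposition $u=\sum_j u_j$ together with $\|u_j\|_{L^p}\lesssim 2^{nj(1/2-1/p)}\|u_j\|_{L^2}$ and summation of the geometric-type series, again producing the $p^{1/2}$ from the number of low-frequency blocks; but the Bessel-potential route seems cleanest for isolating the dependence on $p$.
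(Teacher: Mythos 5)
Your argument is correct: the reduction to $u=G_{n/2}*f$ with $\|f\|_{L^2}=\|u\|_{L^2_{n/2}}$, Young's inequality with $\tfrac1r=\tfrac12+\tfrac1p$, and the computation $\|G_{n/2}\|_{L^r}^r\lesssim 1+(2-r)^{-1}\sim p$ (using $G_{n/2}(x)\lesssim |x|^{-n/2}$ near $0$ and exponential decay at infinity) do yield the $Cp^{1/2}$ bound after absorbing the harmless $(C''p)^{1/p}$ factor. The paper itself gives no proof of this proposition --- it is quoted directly from \cite[Proposition 4.1]{PDEIII} --- and your Bessel-potential route is precisely the standard argument of that source, so nothing further is needed.
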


\begin{proposition}\label{A2} For any $u\in L^2_{n/2}(\R^n)$ and any $2\leq p<\infty$,
\[
e^u-1\in L^p(\R^n). 
\]
\begin{proof}
By Taylor expansion, Stirling's formula $\sqrt{2\pi} m^{m+\half}e^{-m}\leq m!$ and Proposition \ref{A1} , we have
\begin{align*}
\|e^u-1\|_p&\leq\sum_{m=1}^\infty\frac{1}{m!} \|u^m\|_p=\sum_{m=1}^\infty\frac{1}{m!} \|u\|_{pm}^m\leq  \sum_{m=1}^\infty\frac{1}{m!} C^m_n(pm)^{m/2}\|u\|_{L^2_{n/2}}^m\\
&\leq \sum_{m=1}^\infty \frac{1}{\sqrt{2\pi m}} (\frac{eC_np^{1/2}\|u\|_{L^2_{n/2}}}{m^{1/2}})^m.
\end{align*}
When $m\gg 1$, $(eC_np^{1/2}\|u\|_{L^2_{n/2}})/m^{1/2}<1$, so this series always converges.
\end{proof}
\end{proposition}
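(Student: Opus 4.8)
The plan is to expand the exponential in a power series and estimate each term with the Trudinger-type inequality from Proposition \ref{A1}. First I would write $e^u - 1 = \sum_{m\ge 1} u^m/m!$ and apply the triangle inequality in $L^p(\R^n)$ to get
\[
\|e^u-1\|_{L^p} \le \sum_{m=1}^\infty \frac{1}{m!}\|u^m\|_{L^p} = \sum_{m=1}^\infty \frac{1}{m!}\|u\|_{L^{pm}}^m.
\]
This manipulation is meaningful because $u \in L^2_{n/2}(\R^n)$ lies in every $L^q$ with $2 \le q < \infty$, again by Proposition \ref{A1}, so each $\|u\|_{L^{pm}}$ is finite. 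It then suffices to bound these norms and show that the resulting numerical series converges.

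Next I would insert the estimate $\|u\|_{L^{pm}} \le C_n (pm)^{1/2}\|u\|_{L^2_{n/2}}$ (Proposition \ref{A1} with exponent $pm \ge 2$), so that $\|u\|_{L^{pm}}^m \le C_n^m (pm)^{m/2}\|u\|_{L^2_{n/2}}^m$, and then bring in Stirling's inequality $m! \ge \sqrt{2\pi}\, m^{m+1/2} e^{-m}$. The $m$-th term of the series is then controlled by
\[
\frac{C_n^m (pm)^{m/2}\|u\|_{L^2_{n/2}}^m}{m!} \le \frac{1}{\sqrt{2\pi m}}\left(\frac{e\, C_n\, p^{1/2}\,\|u\|_{L^2_{n/2}}}{m^{1/2}}\right)^m ,
\]
and since the quantity in parentheses tends to $0$ as $m \to \infty$, the tail of the series is dominated by any geometric series and hence converges. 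This gives $e^u - 1 \in L^p(\R^n)$.

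The hard part is really just having the right power of $p$ — and of $m$ — in the $L^{pm}$ bound: the factorial $m!$ grows like $m^m$, so it can absorb the $m^{m/2}$ coming from $(pm)^{m/2}$ with room to spare, but it could not absorb a faster-than-$m^{m/2}$ growth. Proposition \ref{A1} supplies exactly the $(pm)^{1/2}$ dependence needed — that estimate is the genuine Trudinger-type input — so once it is in hand the rest of the argument is a routine convergence check, with no cancellation or delicate estimate beyond Stirling required.
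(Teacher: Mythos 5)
Your proposal is correct and follows essentially the same route as the paper: Taylor expansion with the triangle inequality, the $(pm)^{1/2}$ bound from Proposition \ref{A1}, and Stirling's formula to show the resulting series converges. No gaps.
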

\begin{proposition}\label{A3}
The exponential map:
\[
H: L^2_{n/2}(\R^n)\to L^2(\R^n),\ H(u)=e^u-1
\]
is differentiable and $\mathcal{D}_u H(v)=ve^u$. In particular, $H$ is continuous. 
\end{proposition}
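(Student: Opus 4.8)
The plan is to verify Fréchet differentiability of $H$ at an arbitrary point $u\in L^2_{n/2}(\R^n)$ directly from the definition, taking the candidate derivative to be the linear map $v\mapsto ve^u$, and then to deduce continuity as a formal consequence.

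First I would check that $\mathcal{D}_uH\colon v\mapsto ve^u$ is a well-defined bounded operator $L^2_{n/2}(\R^n)\to L^2(\R^n)$ for each fixed $u$. Writing $e^u=1+(e^u-1)$ we have $ve^u=v+v(e^u-1)$; the first summand lies in $L^2$ with $\|v\|_2\le\|v\|_{L^2_{n/2}}$, while for the second Hölder's inequality together with Proposition \ref{A1} (with $p=4$) gives $\|v(e^u-1)\|_2\le\|v\|_4\|e^u-1\|_4\le 2C_n\|v\|_{L^2_{n/2}}\|e^u-1\|_4$, and $\|e^u-1\|_4<\infty$ by Proposition \ref{A2}. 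Hence $\|\mathcal{D}_uH(v)\|_2\le(1+2C_n\|e^u-1\|_4)\|v\|_{L^2_{n/2}}$, and the same computation with $u+v$ in place of $u$ shows that $H$ itself maps into $L^2(\R^n)$.

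The substantive step is the estimate of the remainder. Using the algebraic identity
\[
H(u+v)-H(u)-\mathcal{D}_uH(v)=e^{u+v}-e^u-ve^u=e^u\bigl(e^v-1-v\bigr)
\]
and the power series $e^v-1-v=\sum_{m\ge2}v^m/m!$, the triangle inequality in $L^2$ gives $\|H(u+v)-H(u)-\mathcal{D}_uH(v)\|_2\le\sum_{m\ge2}\frac{1}{m!}\|e^uv^m\|_2$ (the interchange of sum and norm being harmless, since the partial sums already converge to $e^u(e^v-1-v)$ in $L^2$ by the very bound below). For each $m$, splitting $e^u=1+(e^u-1)$, applying Hölder, and invoking Proposition \ref{A1} yields $\|e^uv^m\|_2\le\|v^m\|_2+\|e^u-1\|_4\|v^m\|_4\le(1+\|e^u-1\|_4)(C_n' m^{1/2}\|v\|_{L^2_{n/2}})^m$ with $C_n'$ depending only on $n$. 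Inserting this into the sum and using Stirling's formula $m!\ge\sqrt{2\pi}\,m^{m+1/2}e^{-m}$, exactly as in the proof of Proposition \ref{A2}, gives for $t:=\|v\|_{L^2_{n/2}}\le1$
\[
\|H(u+v)-H(u)-\mathcal{D}_uH(v)\|_2\le(1+\|e^u-1\|_4)\sum_{m=2}^\infty\frac{1}{\sqrt{2\pi m}}\Bigl(\frac{C_n'' t}{\sqrt m}\Bigr)^m\le C_n'''(1+\|e^u-1\|_4)\,t^2,
\]
because for $t\le1$ every term is bounded by $t^2$ times the corresponding term of the convergent constant series $\sum_{m\ge2}(C_n'')^m(2\pi m)^{-1/2}m^{-m/2}$. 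Dividing by $t$ and letting $t\to0$ proves Fréchet differentiability at $u$ with $\mathcal{D}_uH(v)=ve^u$.

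Continuity of $H$ is then automatic, since a map between Banach spaces that is Fréchet differentiable at every point is continuous there. I do not expect a genuine obstacle here: the only place demanding a little care is the bookkeeping that justifies passing the $L^2$-norm inside the series (handled by the uniform tail estimate just obtained), and everything else is a direct transcription of the Trudinger-type argument already carried out for Proposition \ref{A2}. The one conceptual point worth isolating is that the quadratic gain $t^2$ in the remainder comes entirely from starting the power series at $m=2$; this is what upgrades the Lipschitz-type bound of the first paragraph to genuine differentiability.
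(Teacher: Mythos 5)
Your argument is correct and follows essentially the same route as the paper's: the same splitting $e^u=1+(e^u-1)$ with $L^4\times L^4\hookrightarrow L^2$ and Proposition \ref{A2} to bound $\mathcal{D}_uH$, and the same identity $H(u+v)-H(u)-ve^u=e^u(e^v-1-v)$ estimated term by term via Proposition \ref{A1} and Stirling, exactly as in the proof of Proposition \ref{A2}. The only difference is cosmetic: the paper writes the remainder bound along rays $tv$ and cites the earlier series estimate, whereas you carry out the Fr\'echet version uniformly in $\|v\|_{L^2_{n/2}}$, which is a slightly more careful rendering of the same computation.
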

\begin{proof}
Let $v\in L^2_{n/2}(\R^n)$. Since $v, e^u-1\in L^4(\R^n)$, 
\[
\|v e^u\|_2=\|v(e^u-1)+v\|_2\leq \|v\|_2+\|v\|_4\|e^u-1\|_4. 
\]
This shows $\mathcal{D}_uH(v)\colonequals ve^u$ is a bounded linear map from $L^2_{n/2}$ to $L^2$.

It suffices to show 
\[
H(u+tv)-H(v)-tve^u=e^u\cdot (e^{tv}-1-tv)\in \SO(t^2).
\]

Using the same argument as above, it suffices to check $\|e^{tv}-1-tv\|_2, \|e^{tv}-1-tv\|_4\in\SO(t^2)$. This is evident from the proof of Proposition \ref{A2}. 
\end{proof}
\begin{proposition}\label{A4}
The exponential map $H: L^2_{n/2}(\R^n)\to L^2(\R^n)$ is weakly continuous. 
\end{proposition}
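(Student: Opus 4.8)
The plan is to show that whenever $u_j \rightharpoonup u$ weakly in $L^2_{n/2}(\R^n)$ we have $H(u_j) = e^{u_j} - 1 \rightharpoonup e^u - 1 = H(u)$ weakly in $L^2(\R^n)$. First I would record two boundedness facts: weak convergence forces $R := \sup_j \|u_j\|_{L^2_{n/2}(\R^n)} < \infty$, and inspecting the majorizing series in the proof of Proposition \ref{A2} shows that the bound there is uniform over bounded subsets of $L^2_{n/2}(\R^n)$, so $\sup_j \|H(u_j)\|_{L^2(\R^n)} \le \Psi(R) < \infty$ (and $H(u) \in L^2(\R^n)$ as well). Then I would reduce to a subsequential statement: by the standard subsequence criterion for convergence it is enough to prove that every subsequence of $(u_j)$ admits a further subsequence $(u_{j'})$ with $H(u_{j'}) \rightharpoonup H(u)$ in $L^2(\R^n)$, so we may pass to an arbitrary subsequence and work with that.

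The essential step --- the one place where continuity and boundedness alone are not enough --- is to manufacture pointwise convergence out of weak $L^2_{n/2}$-convergence. Here I would use the compactness of the Rellich embedding $L^2_{n/2}(B(0,k)) \subset L^2_1(B(0,k)) \embed L^2(B(0,k))$ on each ball (note $n/2 \ge 1$, and in the cases $n=2,4$ of interest this is classical Rellich--Kondrachov). A diagonal extraction over $k$ gives a subsequence $u_{j'} \to u$ strongly in $L^2_{loc}(\R^n)$; one further refinement makes $u_{j'} \to u$ almost everywhere on $\R^n$, and then continuity of $x \mapsto e^x - 1$ yields $H(u_{j'}) \to H(u)$ almost everywhere.

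It remains to upgrade ``$H(u_{j'}) \to H(u)$ a.e.\ together with $\sup_{j'}\|H(u_{j'})\|_{L^2} \le \Psi(R)$'' to $H(u_{j'}) \rightharpoonup H(u)$ in $L^2(\R^n)$, which is a standard measure-theoretic lemma. I would prove it by fixing $\phi \in L^2(\R^n)$ and $\varepsilon>0$, choosing a ball $B$ large enough that $\|\phi\|_{L^2(\R^n\setminus B)}$ is tiny (so that the tail integrals of $H(u_{j'})\phi$ and $H(u)\phi$ over $\R^n\setminus B$ are $\le \Psi(R)\|\phi\|_{L^2(\R^n\setminus B)}$, uniformly small), then applying Egorov's theorem on $B$ to extract a subset $F \subset B$ of small measure off which $H(u_{j'}) \to H(u)$ uniformly, controlling the integral over $F$ by Cauchy--Schwarz, absolute continuity of the integral, and the uniform bound, and finally using uniform convergence on $B\setminus F$. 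This gives $\langle H(u_{j'}),\phi\rangle \to \langle H(u),\phi\rangle$, i.e.\ $H(u_{j'}) \rightharpoonup H(u)$; combined with the subsequence reduction this proves the proposition. I expect the only real obstacle to be the compactness step in the middle paragraph; everything else is routine, the one point requiring care being that the $L^2$-bound on $H$ must be uniform over $\|u\|_{L^2_{n/2}} \le R$ --- which the proof of Proposition \ref{A2} already provides --- rather than merely finite for each fixed $u$.
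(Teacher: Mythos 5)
Your proof is correct. Both your argument and the paper's hinge on the same essential input---the compactness of the Sobolev embedding on balls, which converts weak $L^2_{n/2}$-convergence into strong local convergence---but you handle the nonlinearity $u\mapsto e^u-1$ differently. The paper reduces by density to testing against a fixed $v\in\SC_c^\infty(\R^n)$, uses the compact embedding $L^2_{n/2}\embed L^p$ on balls to get $u_k\to u_\infty$ in $L^p_{loc}$ for every $p<\infty$, deduces $u_k^m\to u_\infty^m$ in $L^2_{loc}$ for each $m$ via H\"{o}lder and the uniform $L^p$ bounds, and then truncates the exponential series uniformly in $k$ using the tail estimate already present in the proof of Proposition \ref{A2}. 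You instead pass to a subsequence converging a.e.\ and invoke the standard measure-theoretic lemma that a.e.\ convergence plus a uniform $L^2$ bound implies weak $L^2$ convergence (via Egorov and tail control), combined with the subsequence criterion for weak convergence. Your route trades the power-series manipulation for subsequence extractions and Egorov; both are complete, and both correctly isolate the one non-routine uniformity required, namely that the majorant in Proposition \ref{A2} depends only on $\|u\|_{L^2_{n/2}}$ and is therefore uniform over bounded subsets of $L^2_{n/2}(\R^n)$.
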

\begin{proof}
Since $\SC_c^\infty(\R^n)$ is dense in $L^2(\R^n)$, it suffices to show that for any $v\in \SC_c^\infty(\R^n)$ and any sequence
\[
u_k\xrightarrow{w-L^2_{n/2}}u_\infty,
\] 
we have $\langle H(u_k), v\rangle \to \langle H(u_\infty), v\rangle$ as $n\to\infty$. The Sobolev embedding $L^2_{n/2}\embed L^p$ is compact on $B_R\colonequals B(0,R)$ for any $2\leq p<\infty$ and $R>0$. This shows $u_k\to u_\infty$ in $L^p_{loc}$. In addition, for any $m\geq 1$, 
\begin{equation}\label{11}
u_k^m\xrightarrow{L^2_{loc}} u_\infty^m.
\end{equation}
Indeed, by H\"{o}lder's inequality, 
\begin{align*}
\|u_k^m-u_\infty^m\|_{L^2(B_R)}&=\|(u_k-u_\infty)(\sum_{l=0}^{p-1} u_k^l u_\infty^{m-1-l})\|_{L^2(B_R)}\\
&\leq \|u_k-u_\infty\|_{L^4(B_R)}\sum_{m=0}^{p-1}\|u_k\|^l_{L^{4(m-1)}}\|u_\infty\|^{m-1-l}_{L^{4(m-1)}}.\\
&\leq C\|u_k-u_\infty\|_{L^4(B_R)}\to 0.
\end{align*}
In the last step, we used the fact that $\|u_k\|_{L^2_{n/2}}$ and hence $\|u_k\|_{L^p}$ are uniformly bounded for any fixed $2\leq p<\infty$. 

Finally, by the proof of Proposition \ref{A2}, for any $\epsilon>0$, we can find $M>0$ such that for any $k>0$, 
\[
\|e^{u_k}-1-\sum_{m=1}^M \frac{1}{m!} u_k^m\|_2<\epsilon.
\]

Combining with (\ref{11}), this implies $\langle e^{u_n}-1, v\rangle\to \langle e^{u_\infty}-1\rangle. $
\end{proof}

\section{The vortex equation on $\Sigma$}\label{B}

This appendix is based on Grac\'{i}a-Prada's paper \cite{Garcia}. 

For a complex line bundle $L\to \Sigma$, let us fix a hermitian metric and consider the space of smooth unitary connections and smooth sections:
\[
\SC(\Sigma,L)=\A\times \Gamma(\Sigma,L). 
\]

A configuration $(A,\Phi)$ is called a vortex if it solves the vortex equation:
\begin{equation}\label{vortex3}
\left\{\begin{array}{r}
*iF_A+\half(|\Phi|^2-1)=0,\\
\bpartial_A\Phi=0.
\end{array}
\right.
\end{equation}

Each unitary connection $A$ defines a holomorphic structure on $L$ and the second equation of (\ref{vortex3}) is saying $\Phi$ is holomorphic with respect to $A$. 

Consider $\alpha\in \Gamma(\Sigma, \R)$ and $u\in \map(\Sigma, S^1)$. The formula
\begin{align*}
\SG_\C(\Sigma)\ni g=u\cdot e^{\alpha}: \SC(\Sigma,L)&\to \SC(\Sigma,L)\\
(A,\Phi)&\mapsto (A-u^{-1}du+i*d\alpha, ue^{\alpha}\Phi)
\end{align*}
defines the complex gauge transformation on $\SC(\Sigma, L)$, where $u\in \map(\Sigma, S^1)$ and $\alpha\in \Gamma(\Sigma,\R)$. This transformation is designed so that $\bpartial_{g(A)}g(\Phi)=g(\bpartial_A\Phi)$. 

We obtain the usual gauge group $\SG(\Sigma)$ by setting $\alpha=0$. The vortex equation (\ref{vortex3}) is invariant under the action of $\SG(\Sigma)$. 

\begin{theorem}\label{B1}
Suppose $0<\deg L\colonequals c_1(L)[\Sigma]<\frac{1}{4\pi} Vol(\Sigma)$. Then for any effective divisor $D=\sum n_iz_i$ with $\deg D=\deg L$, there is a unique solution $(A,\Phi)$ to the equation $(\ref{vortex3})$ up to gauge such that $Z(\Phi)=D$. 
\end{theorem}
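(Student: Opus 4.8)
The plan is to carry out, on $\Sigma$, the two–dimensional model of the variational argument behind Theorem~\ref{existence}, i.e.\ Garc\'ia--Prada's proof \cite{Garcia}. First I would reduce the statement to a scalar PDE. Since $D$ is effective of degree $\deg L$, there is a holomorphic structure $\bpartial_0$ on $L$ — the one making $(L,\bpartial_0)\cong\SO(D)$ — carrying a holomorphic section $\phi_0$ with $Z(\phi_0)=D$; let $A_0$ be the Chern connection of $(L,\bpartial_0)$ for the fixed Hermitian metric. Every complex gauge transformation preserves the condition $\bpartial_A\Phi=0$ and multiplies $\Phi$ by a nowhere–vanishing function, so a finite–energy solution $(A,\Phi)$ of $(\ref{vortex3})$ with $Z(\Phi)=D$ is, modulo $\SG(\Sigma)$, of the form $e^{\alpha}\cdot(A_0,\phi_0)$ with $\alpha\in\SC^\infty(\Sigma,\R)$, and conversely any such configuration has $Z(\Phi)=D$. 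Writing the first equation of $(\ref{vortex3})$ for $e^{\alpha}\cdot(A_0,\phi_0)$ and using the conformal transformation rules reduces the theorem to finding $\alpha$ with $\mu(\alpha)=0$, where, as in Definition~\ref{momentmap} specialised to $\Sigma$,
\[
\mu(\alpha)=\ast iF_{A_0}+\Delta_\Sigma\alpha+\tfrac12\bigl(e^{2\alpha}|\phi_0|^2-1\bigr).
\]
Integrating $\mu(\alpha)=0$ over $\Sigma$ gives $\int_\Sigma e^{2\alpha}|\phi_0|^2=\Vol(\Sigma)-4\pi\deg L$, which is $>0$ exactly because $\deg L<\tfrac1{4\pi}\Vol(\Sigma)$; this is the one point where the solvability hypothesis enters.

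For existence I would run a variational problem. As in Lemma~\ref{positivity} (the Bogomolny identity for $(B,\sigma)$ holomorphic), $\|\mu(\alpha)\|_{L^2(\Sigma)}^2$ equals the Yang--Mills--Higgs energy of $e^{\alpha}\cdot(A_0,\phi_0)$ minus a topological constant determined by $\deg L$, so one wants to minimise $\E(\alpha)=\|\mu(\alpha)\|_{L^2}^2$ over $\alpha\in L^2_2(\Sigma)$. The crux — the $\Sigma$–analogue of Theorem~\ref{apriori3} — is the a priori bound $\E(\alpha)\le C\Rightarrow\|\alpha\|_{L^2_2(\Sigma)}\le\eta(C)$: here $\|\Delta_\Sigma\alpha\|_2$, hence the mean–free part of $\alpha$, is controlled directly from $\E(\alpha)$ together with Trudinger's inequality (Proposition~\ref{A2}), while controlling the fibrewise mean $\bar\alpha$ is the delicate part, and as noted in Section~\ref{sec6} the naive estimate fails and one must shrink the variational space. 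A cleaner device, which I would probably use instead, is that $\mu(\alpha)=0$ is the Euler--Lagrange equation of the strictly convex functional
\[
J(\alpha)=\int_\Sigma|\nabla\alpha|^2+2\bigl(\ast iF_{A_0}-\tfrac12\bigr)\alpha+\tfrac12\,e^{2\alpha}|\phi_0|^2,
\]
and the sign $2\pi\deg L-\tfrac12\Vol(\Sigma)<0$ makes $J$ coercive on $L^2_1(\Sigma)$ (as $\bar\alpha\to+\infty$ the exponential term dominates, as $\bar\alpha\to-\infty$ the linear term does), with weak lower semicontinuity coming from the weak continuity of $\alpha\mapsto e^{2\alpha}-1$ (Propositions~\ref{A3}--\ref{A4}). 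Either way one obtains a minimiser $\alpha_\infty$, smooth by the elliptic bootstrap of Lemma~\ref{smoothness}; in the $\E$–formulation one checks $\E(\alpha_\infty)=0$ by noting that the linearisation $\mathcal D\mu(\gamma)=\Delta_\Sigma\gamma+e^{2\alpha_\infty}|\phi_0|^2\gamma$ is self–adjoint and — the potential being nonnegative and $\not\equiv0$ — invertible $L^2_2\to L^2$, so that the criticality identity $\langle\mu(\alpha_\infty),\mathcal D\mu(\gamma)\rangle=0$ for all $\gamma$ forces $\mu(\alpha_\infty)=0$ (in the convex formulation this is automatic).

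Uniqueness I would get from the maximum principle: if $\alpha,\alpha'$ are two solutions then $\gamma=\alpha-\alpha'$ satisfies $\Delta_\Sigma\gamma+\tfrac12 e^{2\alpha'}(e^{2\gamma}-1)|\phi_0|^2=0$ on the closed surface $\Sigma$; examining an interior maximum — and handling the finitely many points of $Z(\phi_0)$ by the perturbation argument of \cite[Ch.~VI.3, Prop.~3.3]{JT}, exactly as in the uniqueness step of Theorem~\ref{existence} — forces $\gamma\le 0$, and symmetrically $\gamma\ge 0$, so $\alpha=\alpha'$. Combined with the reduction of the first paragraph this gives a unique solution of $(\ref{vortex3})$ with $Z(\Phi)=D$ up to $\SG(\Sigma)$. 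The main obstacle throughout is the a priori estimate in the existence step — equivalently, the coercivity of $J$ — i.e.\ ruling out a minimising sequence whose fibrewise average runs off to $-\infty$; this is precisely what the strict inequality $\deg L<\tfrac1{4\pi}\Vol(\Sigma)$ prevents, and it is the place where the entire solvability constraint is encoded.
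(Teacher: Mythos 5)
Your proposal is correct, but your primary route differs from the paper's in two places, both worth noting. For existence, the paper (Appendix \ref{B}) follows Garc\'ia-Prada: it minimizes $\E(\alpha)=\tfrac12\|\mu(\alpha)\|_{L^2}^2$ over the constrained slice $H_c=\{\alpha\in L^2_2:\tfrac12\int_\Sigma|\Phi_0|^2e^{2\alpha}=c\}$, proves the a priori bound there (the constraint pins down the fiberwise average $\alpha_0$, which is exactly the mode your "shrink the variational space'' remark refers to), and then shows a constrained critical point has $\mu(\alpha)$ constant, hence zero by the integral constraint. Your preferred device --- minimizing the strictly convex, coercive functional $J$ whose Euler--Lagrange equation is $\mu(\alpha)=0$ --- is a genuinely different and self-contained argument (essentially the Kazdan--Warner route the paper attributes to Bradlow's original proof); it buys you uniqueness for free from strict convexity and avoids the constrained-criticality bookkeeping, at the cost of a Jensen/Moser--Trudinger coercivity estimate in place of the Bogomol'nyi energy identity. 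One small caveat in your sketch of the $\E$-formulation: on the constrained space the criticality identity holds only for $\gamma$ tangent to $H_c$, not for all $\gamma\in L^2_2$, so the conclusion is that $\mu(\alpha_\infty)$ is constant (then zero by the constraint) rather than directly zero by invertibility of $\mathcal D\mu$; as the paper notes, minimizing over all of $L^2_2$ would break the a priori estimate. For uniqueness the paper does not use the maximum principle at all: it pairs the difference equation with $\gamma=\alpha_2-\alpha_1$ and integrates by parts over the closed surface, using $x(e^{2x}-1)\ge 0$; this sidesteps the Jaffe--Taubes perturbation at the zeros of $\phi_0$ that your pointwise argument has to handle. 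Both of your alternatives are sound.
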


Given any effective divisor $D=\sum n_iz_i$ with $\deg D=\deg L: =c_1(L)[\Sigma]$, $D$ determines a holomorphic structure $\bpartial_D$ and a canonical holomorphic section $\Phi_0$ with respect to $\bpartial_D$. The pair $(\bpartial_D,\Phi_0)$ is unique up to the action of $\SG_\C(\Sigma)$. We fix a representative $(\bpartial_D,\Phi_0)$. The Chern connection $A_0$ is the unique unitary connection on $L$ such that $\bpartial_{A_0}=\nabla^{0,1}_{A_0}=\bpartial_D$. Our goal is to find another configuration $(A,\Phi)$ obtained from $(A_0,\Phi_0)$ by applying an element in $\SG_\C(\Sigma)$ such that the first of (\ref{vortex3}) is satisfied.

Since we are interested in solutions modulo gauge, we are free to set $u\equiv 1$ and think of $g=e^\alpha$ as a conformal change on $L$. The curvature of $A$ and the covariant derivative $\nabla_A\Phi$ are transformed accordingly under $g$:
\begin{align*}
F_A&\mapsto F_A+id*d\alpha,\\
\nabla_A\Phi&\mapsto e^{\alpha}(\nabla_A\Phi+2(d\alpha)^{1,0}\otimes\Phi).
\end{align*}

For $(A,\Phi)=g(A_0,\Phi_0)$, the second of (\ref{vortex3}) is satisfied. Let us define the moment map by the formula
\begin{align*}
\mu: L^2_2(\Sigma, \R)&\to L^2(\Sigma,\R)\\
\alpha &\mapsto *iF_A+\half(|\Phi|^2-1)=\Delta\alpha+\half |\Phi_0|^2(e^{2\alpha}-1)+h,
\end{align*}
where $h=*iF_{A_0}+\half(|\Phi_0|^2-1)\in C^\infty(\Sigma)$ is a smooth background function. It suffices to find $\alpha$ so that 
\begin{equation}\label{moment}
\mu(\alpha)=0. 
\end{equation}

First, $\mu$ is well-defined. By Sobolev embedding theorem, $L^2_2\embed L^\infty$ in dimension 2 and hence $e^\alpha-1\in L^\infty(\Sigma)\embed L^2$. Secondly, the solution to equation (\ref{moment}), if exist, must be unique. Suppose we have $\mu(\alpha_1)=\mu(\alpha_2)=0$, take $\gamma=\alpha_2-\alpha_1$. Then we have 
\[
\Delta\gamma+\half |\Phi_0|^2e^{2\alpha_1}(e^{2\gamma}-1)=\mu(\alpha_2)-\mu(\alpha_1)=0.
\] 

This implies:
\begin{align*}
0&= \int_{\Sigma}\langle \gamma, \Delta\gamma+\half |\Phi_0|^2e^{2\alpha_1}(e^{2\gamma}-1)\rangle \\
&= \int_\Sigma |\nabla \gamma|^2+ \half\int_\Sigma |\Phi_0|^2e^{2\alpha_1}(e^{2\gamma}-1)\gamma.
\end{align*}

Terms in the second line are non-negative. This shows $\nabla\gamma\equiv 0$ and $\gamma$ is a constant function on $\Sigma$. Since $x(e^{2x}-1)=0$ iff $x=0$, $\gamma\equiv 0$. 

To establish the existence of the solution, we apply variational principle. We define the energy functional:
\begin{align}
\E:L^2_2(\Sigma,\R)\to \R,\ \alpha\mapsto \half\int_\Sigma |\mu(\alpha)|^2.
\end{align}

This functional is well-defined on $L^2_2$, but we will not use this space as the variational space. For a solution to (\ref{moment}) to exist, we necessarily have 
\[
0< \half\int_\Sigma |\Phi_0|^2e^{2\alpha}=\half\int_{\Sigma}|\Phi|^2=\int_\Sigma (\half -iF_A)=\half Vol(\Sigma)- 2\pi c_1(L)\colonequals c.
\]

This explains the reason why equation (\ref{vortex3}) is subject to the solvability condition $c_1(L)<\frac{1}{4\pi} Vol(\Sigma)$. From now on, let's fix this positive number $c>0$ and associate to $c$ a subset of $L^2_2(\Sigma)$:
\begin{equation}\label{HC}
H_c=\{\alpha\in L^2_2(\Sigma):  \half\int_\Sigma |\Phi_0|^2e^{2\alpha}=c\}.
\end{equation}

Equivalently, an element $\alpha$ lies in $H_c$ if and only if $\alpha\in L_2^2$ and $\int_\Sigma \mu(\alpha)=0$. We will look for a minimizer of $\E(\alpha)$ for $\alpha\in H_c$.

Consider the decomposition $L^2_2(\Sigma)=H^\perp\oplus \R$, where $H^\perp$ is the subspace of $L_2^2$ that is $L^2$-orthogonal to constant functions. $H_c$ can be viewed as a graph over $H^\perp$; indeed, for $\alpha=\alpha_0+\alpha_1$ with $\alpha_0\in \R$ and $\alpha_1\in H^\perp$, $\alpha\in H_c$ if and only if
\begin{align}\label{solvability}
e^{2\alpha_0}=( \half\int_\Sigma |\Phi_0|^2e^{2\alpha_1})^{-1}\cdot c.
\end{align}

The crucial step to finding a minimizer of $\E$ is an a priori estimate:
\begin{theorem}\label{apriori1}
	There is a function $\eta:\R\to \R$ such that for any $C>0$ and $\alpha\in H_c$ with $\E(\alpha)<C$, $\|\alpha\|_{L_2^2}<\eta(C)$. 
\end{theorem}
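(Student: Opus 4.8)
The plan is to follow the two-step scheme used for Theorem \ref{apriori3}: first bound $\|\Delta_\Sigma\alpha\|_{L^2}$, then control the constant mode of $\alpha$. Because the base here is the \emph{compact} surface $\Sigma$ rather than the noncompact $X$, the first step reduces to a single instance of Lemma \ref{fiber} (no family, no compactification needed), while the real content lies in the second step, which is where the constraint cutting out $H_c$ is indispensable.

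For the first step I would write $\alpha=a+\alpha'$ with $a=\Vol(\Sigma)^{-1}\int_\Sigma\alpha$ and $\alpha'$ orthogonal to the constants. Since $h=*iF_{A_0}+\half(|\Phi_0|^2-1)$ lies in $C^\infty(\Sigma)\subset L^2(\Sigma)$, the hypothesis $\E(\alpha)<C$ gives
\[
\bigl\|\Delta_\Sigma\alpha+\half|\Phi_0|^2(e^{2\alpha}-1)\bigr\|_{L^2}=\|\mu(\alpha)-h\|_{L^2}\le\sqrt{2C}+\|h\|_{L^2}.
\]
As $D$ is a nonzero effective divisor, $\Phi_0\not\equiv0$, so the contradiction-and-compactness argument of Lemma \ref{fiber}, applied to the constant family $\sigma_1(z)\equiv\Phi_0$ (in the dichotomy of that proof this is the case in which the limiting section is nonzero, and it uses only that $\Delta_\Sigma+|\Phi_0|^2$ is injective on $L^2_2(\Sigma)$ and that $L^2_2(\Sigma)\embed L^\infty(\Sigma)$ is compact), yields $c_0>0$ with $\|\Delta_\Sigma\alpha+\half|\Phi_0|^2(e^{2\alpha}-1)\|_{L^2}\ge c_0^{1/2}\|\Delta_\Sigma\alpha\|_{L^2}$. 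Hence $\|\Delta_\Sigma\alpha\|_{L^2}$ is bounded by a function of $C$; since $\Delta_\Sigma\alpha=\Delta_\Sigma\alpha'$ and $\alpha'$ has zero average, elliptic regularity together with the Poincar\'{e} inequality on $\Sigma$ bounds $\|\alpha'\|_{L^2_2(\Sigma)}$, and then the Sobolev embedding $L^2_2(\Sigma)\embed L^\infty(\Sigma)$ bounds $\|\alpha'\|_{L^\infty(\Sigma)}$ — all by explicit functions of $C$.

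For the second step I would use the identity defining $H_c$, which reads $\half e^{2a}\int_\Sigma|\Phi_0|^2e^{2\alpha'}=c$, i.e. $e^{2a}=2c\bigl(\int_\Sigma|\Phi_0|^2e^{2\alpha'}\bigr)^{-1}$. From the pointwise bounds $e^{-2\|\alpha'\|_\infty}\le e^{2\alpha'}\le e^{2\|\alpha'\|_\infty}$ and the fact that $m_0:=\int_\Sigma|\Phi_0|^2>0$ (again because $\Phi_0\not\equiv0$), this forces $|a|\le\half|\log(2c/m_0)|+\|\alpha'\|_{L^\infty}$, so $|a|$ too is bounded by a function of $C$. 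Combining the two steps, $\|\alpha\|_{L^2_2}\le|a|\,\Vol(\Sigma)^{1/2}+\|\alpha'\|_{L^2_2}$ is bounded by a suitable $\eta(C)$, which is the claim.

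The main obstacle is the input from Lemma \ref{fiber}: the uniform linear estimate $\|\Delta_\Sigma\alpha\|_{L^2}\lesssim\|\Delta_\Sigma\alpha+\half|\Phi_0|^2(e^{2\alpha}-1)\|_{L^2}$ is not an algebraic identity (the obvious expansion runs into an uncontrolled $\int_\Sigma e^{2\alpha}$) and must be obtained by compactness; once it is granted, the elliptic estimate, the Sobolev embedding, and the logarithmic bookkeeping with the $H_c$ constraint are routine. It is worth recording why the restriction to $H_c$ is essential: on all of $L^2_2(\Sigma)$ the constant mode is unconstrained, and letting $\alpha$ be constant with $a\to-\infty$ keeps $\E(\alpha)$ bounded while $\|\alpha\|_{L^2_2}\to\infty$, so Step 2 genuinely fails there.
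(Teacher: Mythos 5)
Your proof is correct, and its overall architecture --- control the non-constant part of $\alpha$ through $\|\Delta_\Sigma\alpha\|_2$, then recover the constant mode from the constraint cutting out $H_c$ --- is the same as the paper's; in particular your second step, extracting $|a|\le \half|\log(2c/m_0)|+\|\alpha'\|_{L^\infty}$ from $e^{2a}=2c\,(\int_\Sigma|\Phi_0|^2e^{2\alpha'})^{-1}$, is exactly the paper's use of the solvability constraint (\ref{solvability}), and your closing remark on why the restriction to $H_c$ is indispensable matches the paper's own comment at the end of Appendix \ref{B}. Where you genuinely diverge is the first step. The paper needs no analogue of Lemma \ref{fiber} here: the Bogomol'nyi identity writes $2\E(\alpha)$ as $\int_\Sigma|\Delta\alpha+*iF_{A_0}|^2$ plus manifestly nonnegative terms minus the constant $\E_{top}$, so $\|\Delta\alpha\|_2^2<aC+b$ falls out in one line, and the spectral gap of $\Delta_\Sigma$ then bounds $\|\alpha'\|_{L^2_2}$ --- this is what Section \ref{sec6} means by saying that on $\Sigma$ ``the first step is trivial.'' You instead derive the multiplicative estimate $\|\Delta_\Sigma\alpha+\half|\Phi_0|^2(e^{2\alpha}-1)\|_2\ge c_0^{1/2}\|\Delta_\Sigma\alpha\|_2$ by the compactness-and-contradiction scheme of Lemma \ref{fiber}. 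That is valid: with $\Phi_0\not\equiv 0$ fixed you only ever land in Case 2 of that proof, and the a priori bound (\ref{5}) needed to launch the contradiction is again supplied by the energy identity. But it is a substantial detour for a statement the energy identity gives directly, and note that Lemma \ref{fiber} is stated for the equation with the curvature term $K$ rather than the constant $-1$ of Appendix \ref{B}, so you would have to rerun its proof rather than cite it. The stronger, background-free estimate you obtain buys nothing that Theorem \ref{apriori1} actually requires.
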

\begin{proof}
	This is a consequence of the energy equation. The Bogomol'nyi transformation allows us to write for any configuration $(A,\Phi)$:
	\begin{align*}
	\int_\Sigma 2|\bpartial_A\Phi|^2+ |*iF_A+\half(|\Phi|^2-1)|^2=\E_{an}-\E_{top},
	\end{align*}
	where 
	\begin{align*}
	\E_{an}&= \int_\Sigma |F_A|^2+|\nabla_A\Phi|^2+\frac{1}{4}(1-|\Phi|^2)^2,\ \E_{top}=\int_\Sigma iF_A=2\pi c_1(L). 
	\end{align*}
	
Let $(A,\Phi)=e^\alpha\cdot (A_0,\Phi_0)$. It follows that
	\begin{align}\label{energy equations}
	2\E(\alpha)&= \int_\Sigma |\Delta\alpha+*iF_{A_0}|^2+ \text{positive terms}-\E_{top}.
	\end{align}
	
	Since $*iF_{A_0}$ is a smooth function on $\Sigma$,  (\ref{energy equations}) implies
	\[
	\|\Delta\alpha\|_2^2< aC+b
	\]
	for some $a,b>0$. Suppose $\lambda_1$ is the first positive eigenvalue of $\Delta$. Since $\Delta\alpha=\Delta\alpha_1$ and $\alpha_1$ is orthogonal to $\ker \Delta$, it follows that 
	\[
	\|\alpha_1\|_{L^2}\leq \frac{1}{\lambda_1}\|\Delta\alpha\|_2.
	\]
	Now we know $\|\alpha_1\|_{L^2_2}$ is controlled by $\eta_1(C)$ for some function $\eta_1$. By the solvability constraint (\ref{solvability}), $\alpha_0$ is determined by $\alpha_1$ and so $\|\alpha\|_{L^2_2}<\eta(C)$ for some $\eta$.
\end{proof}
\begin{proof}[Proof of Theorem \ref{B1}]
Let $a=\inf_{\alpha\in H_c} \E(\alpha)$. We can find a sequence of elements ${\alpha_n}\in H_c$ such that $\E(\alpha_n)\to a$ as $n\to\infty$. By Theorem \ref{apriori1}, $L^2_2$-norms of $\alpha_n$ are uniformly bounded, so we can find a converging subsequence in weak $L_2^2$-topology. Let us assume it is just the sequence itself and let $\alpha_\infty\in L^2_2$ be their limit. Since $L_2^2\embed L^\infty$ is compact, $\mu(\alpha_n)\to \mu(\alpha_\infty)$ weakly in $L^2$ and $\alpha_\infty\in H_c$. This shows $\E(\alpha_\infty)\leq \liminf \E(\alpha_n)=a$, so $\E(\alpha_\infty)=a$. Now Theorem \ref{B1} will follow from a lemma.
\end{proof}
\begin{lemma}\label{B.3}
If $\alpha\in H_c$ is a critical point of $\E|_{H_c}$, then $\mu(\alpha)=0$.
\end{lemma}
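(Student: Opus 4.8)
The plan is a Lagrange multiplier argument, exploiting that $H_c$ is the level set $\Psi^{-1}(c)$ of $\Psi(\alpha)\colonequals\half\int_\Sigma|\Phi_0|^2e^{2\alpha}$, together with the observation that membership in $H_c$ already forces $\int_\Sigma\mu(\alpha)=0$; the point is to show the resulting multiplier must vanish.

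First I would record the relevant derivatives. Write $V\colonequals|\Phi_0|^2e^{2\alpha}$, which lies in $\bigcap_{p<\infty}L^p(\Sigma)$ by Trudinger's inequality. Much as in Proposition \ref{A3}, the map $\alpha\mapsto e^{2\alpha}-1$ is $C^1$ from $L^2_2(\Sigma)$ to $L^2(\Sigma)$, so $\mu$ and $\Psi$ are $C^1$ with $\mathcal D_\alpha\mu(\gamma)=\Delta\gamma+V\gamma$ and $d\Psi_\alpha(\gamma)=\int_\Sigma V\gamma$; hence $\E=\half\|\mu\|_2^2$ is $C^1$ with $d\E_\alpha(\gamma)=\langle\mu(\alpha),\Delta\gamma+V\gamma\rangle$, the pairing being legitimate since $\gamma\in L^2_2(\Sigma)\embed L^\infty(\Sigma)$ in dimension $2$. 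Because $d\Psi_\alpha(1)=\int_\Sigma V=2c>0$, the functional $d\Psi_\alpha$ is nonzero, so $d\E_\alpha$ — which vanishes on $T_\alpha H_c=\ker d\Psi_\alpha$, a closed hyperplane — must be a scalar multiple of it: there is $\lambda\in\R$ with
\[
\langle\mu(\alpha),\,\Delta\gamma+V\gamma\rangle=\lambda\int_\Sigma V\gamma\qquad\text{for all }\gamma\in L^2_2(\Sigma).
\]
Setting $y\colonequals\mu(\alpha)\in L^2(\Sigma)$, this says $\Delta y+Vy=\lambda V$ weakly, and a standard elliptic bootstrap ($Vy\in L^{4/3}(\Sigma)$, so $y\in W^{2,4/3}(\Sigma)\embed L^\infty(\Sigma)$, so $Vy\in L^2(\Sigma)$, so $y\in L^2_2(\Sigma)$) promotes $y$ to $L^2_2(\Sigma)$.

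Now set $w\colonequals y-\lambda\in L^2_2(\Sigma)$; then $\Delta w+Vw=0$ weakly, so pairing with $w$ and integrating by parts gives $\int_\Sigma|\nabla w|^2+Vw^2=0$. Since $V\ge 0$, this forces $w$ constant and $\int_\Sigma Vw^2=0$, and as $\int_\Sigma V=2c>0$ we conclude $w\equiv 0$, i.e. $\mu(\alpha)=y\equiv\lambda$. On the other hand, membership $\alpha\in H_c$ gives
\[
\int_\Sigma\mu(\alpha)=\int_\Sigma\Delta\alpha+\half\int_\Sigma|\Phi_0|^2e^{2\alpha}+\int_\Sigma *iF_{A_0}-\half\Vol(\Sigma)=0+c+2\pi c_1(L)-\half\Vol(\Sigma)=0
\]
by the very definition of $c$. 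Hence $\lambda\,\Vol(\Sigma)=\int_\Sigma y=0$, so $\lambda=0$ and $\mu(\alpha)\equiv 0$.

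The only steps needing care are the $C^1$-regularity of $\E$ and $\Psi$ on $L^2_2(\Sigma)$ — immediate from the analytic results of Appendix \ref{A} — and the elliptic bootstrap legitimizing the integration by parts; beyond that the argument is purely formal. The genuinely load-bearing inputs are the constraint identity $\int_\Sigma\mu(\alpha)=0$ for $\alpha\in H_c$, which is precisely how the solvability constant $c$ was chosen, and the sign condition $V\ge 0$, which makes $\Delta+V$ injective on $L^2_2(\Sigma)$.
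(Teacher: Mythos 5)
Your proof is correct and is essentially the paper's argument in Lagrange-multiplier clothing: the paper instead decomposes $f=\mu(\alpha)$ into a constant plus a part in the weighted constraint hyperplane and tests the criticality relation against the latter, which amounts to the same computation as your pairing of $\Delta w+Vw=0$ with $w=\mu(\alpha)-\lambda$, and both proofs close with the identity $\int_\Sigma\mu(\alpha)=0$ that is built into the definition of $H_c$. The only substantive difference is how the integration by parts against $\mu(\alpha)$ is legitimized: you upgrade its regularity by an explicit elliptic bootstrap through $W^{2,4/3}(\Sigma)\embed L^\infty(\Sigma)$, whereas the paper invokes self-adjointness of the linearized operator $\mathcal{D}_\alpha\mu$ on $L^2_2(\Sigma)$ to place $\mu(\alpha)$ in the domain of the adjoint; your route is more hands-on but equally valid.
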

\begin{proof}[Proof of Lemma]
Let $f=\mu(\alpha)$. Any $\gamma\in L^2_2$ subject to the constraint:
\begin{equation}\label{linearconstr}
\int_\Sigma |\Phi_0|^2e^{2\alpha}\gamma=0,
\end{equation}
lies in the tangent space $T_\alpha H_c$. Since $\alpha$ is a critical point, it follows that
\begin{equation}\label{critialpoint}
0=\frac{d}{dt}|_{t=0}\E(\alpha+t\gamma)=\langle f, \mathcal{D}_\alpha \mu(\gamma)\rangle,
\end{equation}
where $\mathcal{D}_\alpha\mu(\gamma)=\Delta\gamma+e^{2\alpha}|\Phi_0|^2\gamma$ is the linearized operator at $\alpha$. 
\begin{lemma}\label{selfadjoint}
	The linearized operator $\mathcal{D}_\alpha\mu$ is self-adjoint on $L^2_2(\Sigma)$. 
\end{lemma}
\begin{proof}
	It is clear that $\mathcal{D}_\alpha\mu$ is well-defined on $L^2_2$ and it is symmetric. To show it is self-adjoint, it suffices to prove show $\gamma\in L^2$ and $\mathcal{D}_\alpha\mu(\gamma)\in L^2$ imply $\gamma\in L^2_2$. But this is trivial: $e^{2\alpha}|\Phi_0|^2\in L^\infty$ implies $e^{2\alpha}|\Phi_0|^2\gamma\in L^2$, so $\Delta\gamma\in L^2$. 
\end{proof}

Back to the proof of Lemma \ref{B.3}. Now Lemma \ref{selfadjoint} and relation (\ref{critialpoint}) imply $f\in L^2_2$ since $f$ is in the domain of the adjoint operator $(\mathcal{D}_\alpha\mu)^*$. Let $f=f_0+f_1$ with $f_0$ constant and $f_1$ subject to constraint (\ref{linearconstr}), i.e.
\[
\int_\Sigma |\Phi_0|^2e^{2\alpha}f_1=0. 
\]

Take $\gamma=f_1$ in (\ref{critialpoint}) and integrate by parts shows
\[
0=\|df_1\|_2^2+\int_\Sigma e^{2\alpha}|\Phi_0|^2|f_1|^2.
\]

Hence, $f_1\equiv 0$ and $f\equiv f_0$ is a constant function on $\Sigma$.  On the other hand, the constraint (\ref{HC}) shows 
\begin{align}
\Vol(\Sigma)\cdot f_0&= \int_\Sigma f= \int_\Sigma \mu(\alpha)= 0,
\end{align}
and hence $f=f_0=0$.
\end{proof}
\bigskip
Let us end this appendix by pointing out what is modified if $\Sigma$ is replaced by $\C$:
\begin{itemize}
	\item There is no solvability constraint for the vortex equation on $\C$. It is easier to show for a critical point $\alpha$ of $\E$, $\mu(\alpha)$ has to be zero. 
	\item Choosing a smaller variational space $(\ref{HC})$ is necessary for the proof of Theorem \ref{apriori1}. In fact, if we worked with $L_2^2$, Theorem \ref{apriori1} would be false since $\alpha_0$ can be arbitrarily negative while $\mu(\alpha)$ remains bounded. However, when it is $\C$, $L_2^2$ is the right space to work with. 
	\item The spectrum of Laplacian operator on $\C$ is continuous. In the proof of Theorem $\ref{apriori1}$, we have used discreteness of the spectrum in an essential way; we used the decomposition $L_2^2=H^\perp\oplus \R$. On $\C$, we will apply a cut-off function on the frequency space and decompose $\alpha$ into high-frequency and low-frequency parts.  
	\item We will establish Theorem \ref{apriori1} for $Y=\C$ (Theorem \ref{apriori2}) and $X=\C\times\Sigma$ (Theorem \ref{apriori3}), but their proofs will be much harder. It is the main technical issue when we apply variational principle. 
\end{itemize}

\section{The vortex equation on $\C$}\label{C}

In this appendix, we prove the existence of vortices on $\C$. Theorem \ref{C1} was originally proved in \cite{Taubes}. The proof presented below is based on Appendix \ref{B} and Lemma \ref{ABlemma}. 

To start, let $L_0$ be the trivial line bundle on $\C$ with the product metric. Then the polynomial
\[
\Phi_0= \prod_{i=1}^m (z-z_i)^{n_i},
\] 
is a holomorphic section that vanishes at $z_1,\cdots, z_m\in\C$ with multiplicity $n_1,\cdots, n_m$.

The vortex equation on $\C$ is defined by same formula (\ref{vortex3}) for the trivial line bundle $L_0$. 
\begin{theorem}[\cite{Taubes}]\label{C1}
For any effective divisor $D=\sum n_iz_i$, there is a unique solution $(A,\Phi)$ to the equation $(\ref{vortex3})$ up to gauge such that $Z(\Phi)=D$. 
\end{theorem}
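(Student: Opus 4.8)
The plan is to recover Taubes' theorem (Theorem~\ref{1.6}) by running the variational scheme of Section~\ref{4} --- following Garc\'ia-Prada --- directly on the noncompact base $\C$, exactly as flagged at the end of Appendix~\ref{B}. First I would fix a background: writing $d=\sum n_i=\deg\Phi_0$, choose $\alpha_0\in\SC^\infty(\C)$ of the shape $\beta+\delta$ with $\beta=-\frac{d}{2}\log(1+|z|^2)$ and $\delta$ a lower-order correction (the $\Sigma$-free, easier analogue of Lemma~\ref{53}), so that $(A_1,\Phi_1)\colonequals e^{\alpha_0}\cdot(d,\Phi_0)$ has $\Phi_1\in L^\infty(\C)$, $|\Phi_1|^2\to1$ at infinity, $\mu(0)\in L^2(\C)$, and the energy equation
\[
\int_\C|\mu(\alpha)|^2=\E_{an}\big(e^\alpha\cdot(A_1,\Phi_1)\big)-\E_{top}
\]
holds for all $\alpha\in L^2_2(\C)$, where $\mu\colon L^2_2(\C)\to L^2(\C)$ is $\mu(\alpha)=\mu(0)+\Delta_\C\alpha+\half(e^{2\alpha}-1)|\Phi_1|^2$ and $\E_{top}$ depends only on $d$ (it equals $\pi d$ in the normalization of Theorem~\ref{1.6}). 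Trudinger's inequality (Theorem~\ref{A2}) together with $\Phi_1\in L^\infty$ makes $\mu$ well defined. In contrast with the case of $\Sigma$, no solvability constraint arises and $L^2_2(\C)$ itself is the correct variational space, so one sets $\E(\alpha)\colonequals\E_{an}(e^\alpha\cdot(A_1,\Phi_1))$ and looks for a minimizer over all of $L^2_2(\C)$.

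The heart of the argument is an a priori estimate --- the analogue of Theorem~\ref{apriori3}, namely Theorem~\ref{apriori2} --- asserting that $\E(\alpha)<C$ forces $\|\alpha\|_{L^2_2}<\eta(C)$. As in Section~\ref{sec6} this splits into estimating $\|\Delta_\C\alpha\|_2$ and then $\|\alpha\|_2$. The first step is trivial here: since $F_{A_0}=0$ one has $\E(\alpha)\ge\int_\C|F_{A_\alpha}|^2=\int_\C|\Delta_\C\alpha+\Delta_\C\alpha_0|^2$, so $(a+b)^2\ge\half a^2-b^2$ gives $\|\Delta_\C\alpha\|_2^2\le2\E(\alpha)+2\|\Delta_\C\alpha_0\|_2^2$, and the energy equation also bounds $\|\,|e^{2\alpha}-1|\,|\Phi_1|^2\|_2^2$. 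For the second step, because $\Delta_\C$ has continuous spectrum I would split $\alpha=\alpha_L+\alpha_H$ by a Fourier cutoff, with $\|\alpha_H\|_2\le r^{-2}\|\Delta_\C\alpha\|_2$ immediate, and control $\alpha_L$ through the good-set/bad-set dichotomy of Section~\ref{sec6}: on the region where $\alpha$ is very negative, $|e^{2\alpha}-1|^2|\Phi_1|^4$ is bounded below away from the (finite) zero set of $\Phi_0$, using $|\Phi_1|^2\to1$, so that region has area $\lesssim\|\,|e^{2\alpha}-1|\,|\Phi_1|^2\|_2^2$; Lemma~\ref{ABlemma} applied with $E$ that bad set then yields $\|\alpha_L\|_2\le C(\|\alpha_L\|_{L^2(E^c)}+\|\Delta_\C\alpha\|_2)$, and on $E^c$ the pointwise inequality $|\alpha|\lesssim|e^{2\alpha}-1|$ closes the loop. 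Making the area bound on the bad set honest --- exactly as in the four-dimensional case --- is the main obstacle; everything else is bookkeeping.

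Granting the a priori estimate, the rest is formal, mirroring the proof of Theorem~\ref{existence}. Put $a=\inf_{\alpha\in L^2_2(\C)}\E(\alpha)<\infty$, take a minimizing sequence, and extract a weak $L^2_2$ limit $\alpha_\infty$; since $\mu$ is weakly continuous (Proposition~\ref{A4} gives $e^{2\alpha_n}-1\rightharpoonup e^{2\alpha_\infty}-1$ in $L^2$, and $\Delta_\C\alpha_n\rightharpoonup\Delta_\C\alpha_\infty$), we get $\E(\alpha_\infty)=a$ and $\alpha_\infty$ is a critical point, i.e. $\langle\mu(\alpha_\infty),\mathcal{D}_{\alpha_\infty}\mu(\gamma)\rangle=0$ for all $\gamma\in L^2_2(\C)$ with $\mathcal{D}_{\alpha_\infty}\mu=\Delta_\C+e^{2\alpha_\infty}|\Phi_1|^2$. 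The Friedrichs-extension argument of Lemma~\ref{SA} shows $\mathcal{D}_{\alpha_\infty}\mu$ is self-adjoint on $L^2_2(\C)$, hence $y\colonequals\mu(\alpha_\infty)$ lies in its domain; testing the critical-point equation against $\gamma=y$ gives $0=\|dy\|_2^2+\|ye^{\alpha_\infty}\Phi_1\|_2^2$, so $y$ vanishes off the discrete zero set of $\Phi_1$ and therefore $y\equiv0$. Thus $(A,\Phi)\colonequals e^{\alpha_0+\alpha_\infty}\cdot(d,\Phi_0)$ solves $(\ref{vortex3})$, its smoothness follows from the elliptic bootstrap of Lemma~\ref{smoothness} applied to $\Delta_\C(\alpha_0+\alpha_\infty)=-\half(|e^{\alpha_0+\alpha_\infty}\Phi_0|^2-1)$, and $Z(\Phi)=Z(\Phi_0)=D$ with energy $\E_{top}$ by the energy equation. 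For uniqueness, any other smooth finite-energy solution differs from this one by $\gamma$ with $\Delta_\C\gamma+\half|\Phi_1|^2e^{2\alpha_\infty}(e^{2\gamma}-1)=0$ and $\gamma\to0$ at infinity (from the compactness argument behind Lemma~\ref{boundedness}, translates converging to the vacuum), so the maximum principle with the perturbation device of \cite[Ch.~VI.3]{JT} --- as in Section~\ref{52} --- forces $\gamma\equiv0$, completing the proof.
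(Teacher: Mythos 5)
Your proposal is correct and follows essentially the same route as the paper's Appendix \ref{C}: the Garc\'ia-Prada variational scheme with a background conformal factor, the a priori estimate from the energy equation, the good-set/bad-set decomposition feeding into Lemma \ref{ABlemma}, and the self-adjointness argument showing the minimizer is a zero of the moment map. The only cosmetic differences are your background choice ($\beta+\delta$ with a single log centered at the origin plus a correction, versus the paper's $\alpha_0=-\sum\frac{n_i}{2}\log(1+|z-z_i|^2)$, which puts $h\in L^2(\C)$ with no correction term needed) and that you minimize $\E_{an}$ directly rather than $\half\|\mu(\alpha)-g\|_2^2$ for arbitrary $g\in L^2$, which is how the paper packages the bijectivity of $\mu$ and hence uniqueness.
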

\begin{proof}
Note that $(A_0=d, \Phi_0)$ is not the solution that we look for: $\Phi_0\notin L^\infty(\C)$. We choose a background conformal change. Set $\alpha_0=-\sum_{i=1}^m \frac{n_i}{2}\log (1+|z-z_i|^2)$. Then we obtain
\[
(A_1,\Phi_1)=e^{\alpha_0}\cdot (A_0,\Phi_0)= (d+*d\alpha_0, \prod_{i=1}^m \frac{(z-z_i)^{n_i}}{(1+|z-z_i|^2)^{n_i/2}}).
\]
For $(A_1,\Phi_1)$, the second equation of (\ref{vortex3}) is satisfied automatically. We wish to find a further conformal transformation $\alpha$  so that the first equation is satisfied for $e^{\alpha}\cdot (A_1,\Phi_1)$. This is equivalent to finding $\alpha\in L_2^2$ so that $\mu(\alpha)+h=0$ where 
\begin{align*}
\mu: L^2_2(\C)&\to L^2(\C),\\
\alpha&\mapsto \Delta\alpha+\half |\Phi_1|^2(e^{2\alpha}-1)
\end{align*}
is the moment map and the term $h\colonequals \Delta \alpha_0+\half(|\Phi_1|^2-1)$
comes from the background configuration $(A_1,\Phi_1)$. By Trudinger's inequality (Theorem \ref{A2}), $\mu$ is well-defined and by direct computation, $h\in L^2(\C)$. Our goal is to show $\mu$ gives a bijection between $L^2_2(\C)$ and $L^2(\C)$. In particular, there is a unique $\alpha\in L^2_2$ such that $\mu(\alpha)=-h$.

We start with the easy part of the proof. For any $g\in L^2(\C)$, define the associated energy functional $\E_g$ by the formula 
\[
\E_g(\alpha)= \half \int_\C |\mu(\alpha)-g|^2
\]
which measures the $L^2$ distance between $\mu(\alpha)$ and $g$. 
\begin{lemma}\label{minimum}
	For any critical point $\alpha$ of $\E_g$, we must have $\E_g(\alpha)=0$.
\end{lemma}
\begin{proof}
	Let $f= \mu(\alpha)-g$. Since $\alpha$ is a critical point, for any $\gamma\in L^2_2(\C)$, 
	\begin{equation}\label{criticalpoint2}
	0=\frac{d}{dt}|_{t=0} \E(\alpha+t\gamma)=\langle f, \mathcal{D}_\alpha \mu(\gamma)\rangle.
	\end{equation}
	Since $\mathcal{D}_\alpha\mu(\gamma)=\Delta\gamma+  |\Phi_1|^2e^{2\alpha}\gamma$ is self-adjoint on $L^2_2$, (\ref{criticalpoint2}) implies $f\in L^2_2$. Then we plug $\gamma=f$ into (\ref{criticalpoint2}). Integration by parts shows
	\[
	0= \|df\|_2^2+\int_\C |\Phi_1|^2e^{2\alpha}|f|^2,
	\] 
	and $f$ has to be zero everywhere. 
\end{proof}

In light of Lemma \ref{minimum}, it suffices to find a point $\alpha$ that realizes the infimum of $\E_g$. To do this, we construct a minimizing sequence $\{\alpha_n\}\subset L^2_2$ such that $\E_g(\alpha_n)\to \inf \E_g$. The hardest part of the proof is an a priori estimate, the counterpart of Theorem \ref{apriori1} and Theorem \ref{apriori3}:
\begin{theorem}\label{apriori2}
	There is a function $\eta:\R\to \R$ such that for any $C>0$ and $\alpha\in L^2_2(\C)$ with $\E_g(\alpha)<C$, we have $\|\alpha\|_{L^2_2}<\eta(C)$. 
\end{theorem}
\begin{proof}
	Note that it suffices to prove this theorem for one special $g$ and the rest will follow by triangle inequality. We choose $g=-h$ and write $\E=\E_{-h}$ for short. By Bogomol'nyi transformation, we have energy equation:
	\begin{equation}\label{energy1}
	2\E(\alpha)=-2\pi d+\int_\C |\Delta\alpha+*iF_{A_1}|^2+ |\nabla_A\Phi|^2+\frac{1}{4}(1-e^{2\alpha}|\Phi_1|^2)^2,
	\end{equation}
	where $d=\sum_{i=1}^m n_i$ is the degree of $\Phi_0$ and $(A,\Phi)=e^\alpha\cdot (A_1,\Phi_1)$. Since $1-|\Phi_1|^2, *iF_{A_1}\in L^2(\C)$, we know from (\ref{energy1}) that
	\[
	\int_\C |\Delta\alpha|^2,\ \int_\C (1-e^{2\alpha})^2|\Phi_1|^4 < aC+b\]
	for some $a,b>0$. It suffices to control $\|\alpha\|_2$. We decompose $\C$ into two parts
	\[
	\C=A_1\coprod A_2, A_1=\{z\in\C: \alpha(z)>-1, |\Phi_1|^2>\epsilon\},\ A_2=A_1^c.
	\] 
	
	Then 
	\[
	\int_{A_1} |\alpha|^2\leq \frac{C_1}{\epsilon^2} \int_\C|(1-e^{2\alpha})|\Phi_1|^2|^2.
	\]
	Since $Z_\epsilon(\Phi_1)\colonequals\{|\Phi_1|^2\leq\epsilon\}\subset\C$ is compact, we take $R\gg 0$ such that $Z_\epsilon(\Phi_1)\subset B(0,R)$. Then
	\[
	Area(A_2\backslash B(0,R))\leq \frac{1}{\epsilon^2(1-e^{-2})^2} \int_\C|(1-e^{2\alpha})|\Phi_1|^2|^2.
	\]
	
	Now we are in the place to apply Lemma \ref{ABlemma}. 
\end{proof}

Theorem \ref{apriori2} allows us to find a weakly convergent subsequence among $\{a_n\}$. Denote this limit by $\alpha_\infty$. We know $\E_g(\alpha_\infty)\leq \lim\E_g(\alpha_n)=\inf \E_g$ and hence $\alpha_\infty$ is a critical point of $\E_g$. Now we use Lemma \ref{minimum} to conclude.
\end{proof}
\bigskip

The proof of Theorem \ref{apriori3} is modeled on the proof above. It is much harder to work with $X=\C\times \Sigma$ due to some technical reasons:
\begin{enumerate}
	\item The $L^2$-norm of $\Delta_\Sigma\alpha$ is not a term in the energy equation. We worked very hard in Lemma \ref{fiber} to show it is actually controlled by the analytic energy.
	\item In dimension $4$, the thickened zero locus $Z_\epsilon(\sigma_1)=\{|\sigma_1|^2<\epsilon\}$ is no longer a compact region. This is the reason why there are two classes of points in the good set $A_1$ in the proof of Theorem \ref{apriori3}. For the first class, $\alpha$ has large variation on the fiber. For the second, its variation is small and hence $\alpha$ does not ``see" the zero locus of $\sigma_1$ on that fiber. 
\end{enumerate} 

\bibliographystyle{alpha}
\bibliography{sample}

\end{document}